\def\hmath$#1${\texorpdfstring{{\rmfamily\textit{#1}}}{#1}}
\newtheorem{theorem}{Theorem}
\newtheorem{corollary}[theorem]{Corollary}
\newtheorem{observation}[theorem]{Observation}
\newtheorem{conjecture}[theorem]{Conjecture}
\newtheorem{definition}[theorem]{Definition}
\newtheorem{lemma}[theorem]{Lemma}
\newtheorem{remark}[theorem]{Remark}
\newtheorem{example}[theorem]{Example}
\newtheorem{proposition}[theorem]{Proposition}
\newtheorem{problem}[theorem]{Problem}
\newtheorem{claim}[theorem]{Claim}
\newcommand{\indicator}{\mathds{1}}
\newcommand{\hypercubegraph}{\mathsf{RAG}(n, \{\pm 1\}^d, p,\sigma)}
\newcommand{\orthogonalgroup}{\mathcal{O}}
\newcommand{\dsphere}{\mathbb{S}^{d-1}}
\newcommand{\wishart}{\mathcal{W}}
\newcommand{\sm}{\mathsf{sm}}
\newcommand{\PLSG}{\mathsf{PLSG}(n, \Omega, \mathcal{D}, \sigma, p)}
\newcommand{\plsg}{\mathsf{PLSG}}
\newcommand{\RAG}{\mathsf{RAG}}
\newcommand{\RGG}{\mathsf{RGG}}
\newcommand{\RRAG}{\mathsf{RRAG}}
\newcommand{\LRAG}{\mathsf{LRAG}}
\newcommand{\rag}{\RAG(n, \Group, p,\sigma)}
\newcommand{\ergraph}{\mathsf{G}(n,p)}
\newcommand{\sbm}{\mathsf{SBM}}
\newcommand{\hypercube}{\{\pm 1\}^d}
\newcommand{\ergraphhalf}{\mathsf{G}(n, 1/2)}
\newcommand{\unif}{\mathsf{Unif}}
\newcommand{\Bernoulli}{\mathsf{Bernoulli}}
\newcommand{\postuniform}{\mathsf{PostU}}
\newcommand{\anteuniform}{\mathsf{AnteU}}
\newcommand{\polylog}{\mathsf{polylog}}
\newcommand{\bfx}{\mathbf{x}}
\newcommand{\bfv}{\mathbf{v}}
\newcommand{\bfy}{\mathbf{y}}
\newcommand{\bfz}{\mathbf{z}}
\newcommand{\bfg}{\mathbf{g}}
\newcommand{\bfh}{\mathbf{h}}
\newcommand{\bfa}{\mathbf{a}}
\newcommand{\bfb}{\mathbf{b}}
\newcommand{\bfA}{\mathbf{A}}
\newcommand{\bfB}{\mathbf{B}}
\newcommand{\bfX}{\mathbf{X}}
\newcommand{\expect}{{\mathbf{E}}}
\newcommand{\distribution}{{\mathcal{D}}}
\newcommand{\fluctuation}{{\mathsf{Fl}}}
\newcommand{\threshold}{{\mathbf{T}}}
\newcommand{\doublethreshold}{\mathbf{D}}
\newcommand{\maj}{{\mathbf{Maj}}}
\newcommand{\stab}{{\mathbf{Stab}}}
\newcommand{\hnmaj}{{\mathbf{HNMaj}}}
\newcommand{\weight}[2]{\mathcal{W}^{#1}({#2})}
\newcommand{\Var}{{\mathbf{Var}}}
\newcommand{\Cov}{{\mathbf{Cov}}}
\newcommand{\TV}{{\mathsf{TV}}}
\newcommand{\KL}{{\mathsf{KL}}}
\newcommand{\prob}{{\mathbf{P}}}
\newcommand\norm[1]{\left\lVert#1\right\rVert}
\newcommand{\ER}{Erd{\H o}s-R\'enyi }
\newcommand{\B}{\Big}
\newcommand{\Group}{\mathcal{G}}
\newcommand{\cycle}{\mathcal{C}}
\newcommand{\Orbit}{\mathcal{O}}
\newcommand{\quadand}{\quad \text{and} \quad}
\newcommand{\Bgn}{\Big\|}
\newcommand{\bggn}{\bigg\|}
\title{Random Algebraic Graphs and \\ Their Convergence to \ER}
\author{Kiril Bangachev\thanks{Dept. of EECS, MIT. \texttt{kirilb@mit.edu} } \quad Guy Bresler\thanks{Dept. of EECS, MIT. \texttt{guy@mit.edu}. Supported by NSF Career Award CCF-1940205.}}
\begin{document}

\pagenumbering{roman}

\maketitle

\begin{abstract}
    We introduce a new  probabilistic model of latent space graphs, which we call \textit{random algebraic graphs}. A (right) random algebraic graph is defined by a group $\mathcal{\Group}$ with a well-defined ``uniform'' distribution $\mathcal{D}$ over it and a measurable function $\sigma:\Group \longrightarrow [0,1]$ with expectation $p$ over $\distribution,$ satisfying $\sigma(\bfg) = \sigma(\bfg^{-1}),$ which we call a ``connection''.  The random graph $\rag$ with vertex set $[n]$ is formed as follows. First, $n$ independent latent vectors $\bfx_1, \bfx_2, \ldots, \bfx_n$ are sampled according to $\distribution.$ Then, every two vertices $i,j$ are connected with probability $\sigma(\bfx_i\bfx_j^{-1}).$ The random algebraic model
    captures random geometric graphs with latent space the unit sphere and the hypercube, certain regimes of the stochastic block model, and random subgraphs of Cayley graphs.
    
    The main question of interest to the current paper is: for what parameters is a random algebraic graph $\rag$ statistically and/or computationally distinguishable from an \ER  random graph $\ergraph$? Our results fall into two main categories.
    \begin{enumerate}
        \item \textit{Geometric:} We mostly focus on the hypercube case $\Group =\hypercube,$ where we use Fourier-analytic tools. 
    We expect that some of the insights will also transfer to the sphere. For hard thresholds, we match  \cite{Liu2022STOC} for $p = \omega(1/n)$ and for connections that are $\frac{1}{r\sqrt{d}}$-Lipschitz we extend the results of \cite{Liu2021APV} when $d = \Omega(n\log n)$ to the non-monotone setting. We also study other connections such as indicators of interval unions and low-degree polynomials. One novel phenomenon is that when $\sigma$ is even, that is $\sigma(\bfx) = \sigma(-\bfx),$ indistinguishability from $\ergraph$ occurs at $d  = \Theta (n^{3/2})$ instead of $d = \Theta(n^3),$ the latter condition (up to problem-dependent hidden parameters) appearing in most previous results.
        \item \textit{Algebraic:} We provide evidence for the following exponential statistical-computational gap. Consider any finite group $\Group$ and let $A\subseteq \Group$ be a 
        set of elements formed by including each set of the form $\{\bfg, \bfg^{-1}\}$ independently with probability $p = 1/2.$ Let $\Gamma_n(\Group,A)$ be the distribution of random graphs formed by taking a uniformly random induced subgraph of size $n$ of the Cayley graph $\Gamma(\Group,A).$ Then,  $\Gamma_n(\Group, A)$ and $\ergraphhalf$ are statistically indistinguishable with high probability over $A$ if and only if $\log |\Group| \gtrsim n.$ However, low-degree polynomial tests fail to distinguish $\Gamma_n(\Group, A)$ and $\ergraphhalf$ with high probability over $A$ when $\log |\Group| = \log^{\Omega(1)}n.$
    \end{enumerate}
    
    En-route, we also obtain two novel, to the best of our knowledge, probabilistic results that might be of independent interest. First, we give a nearly sharp bound on the moments of elementary symmetric polynomials of independent Rademacher variables in certain regimes (\cref{thm:introelemntarysymmetric}).
    Second, we show that the difference of two independent $n\times n$ Wishart matrices with parameter $d$ converges to an appropriately scaled $n\times n$ GOE matrix in KL when $d = \omega(n^2).$ This is polynomially smaller than the $d = \omega(n^3)$ required for the convergence of a single $n\times n$ Wishart matrix with parameter $d$ to a GOE matrix, occurring at $d = \omega(n^3)$ \cite{Bubeck14RGG,Jiang2013ApproximationOR}.
    
\end{abstract}

\setcounter{tocdepth}{2}
{
  \hypersetup{linkcolor=black}
  \tableofcontents
}
\newpage
\pagenumbering{arabic}
\section{Introduction}
Latent space random graphs are a family of random graph models in which edge-formation depends on hidden (latent) vectors associated to nodes. One of many examples of latent space graphs---appearing in \cite{Smith19}, for example---is that of social networks, which form based on unobserved features (such as age, occupation, geographic location, and others). One reason these models are studied in the literature is that, unlike the more common \ER random graphs, latent space graphs capture and explain real-world phenomena such as ``the friend of my friend is also (likely) a friend of mine'' \cite{Smith19}.  

A mathematical model that captures this structure is given by a probability distribution $\distribution$ over some latent space $\Omega,$ an integer $n$, and a \textit{connection} function $\sigma:\Omega\times \Omega\longrightarrow [0,1]$ such that $\sigma(\bfx,\bfy) = \sigma(\bfy,\bfx)$ a.s.\  with respect to $\distribution.$ Denote
$p = \expect_{\bfx, \bfy\sim_{iid}\distribution}[\sigma(\bfx,\bfy)] \in [0,1].$
The probabilistic latent space graph $\PLSG$ is defined as follows \cite{Liu2021APV}. 
For an adjacency matrix $\bfA = (\bfA_{i,j})_{i,j\in [n]},$
\begin{align}
\prob(G = \bfA) = 
\expect_{\bfx_1, \bfx_2, \ldots, \bfx_n \sim_{iid} \distribution}\Bigg[
\prod_{1\le i < j\le n} \sigma(\bfx_i,\bfx_j)^{\bfA_{i,j}}
(1-\sigma(\bfx_i,\bfx_j))^{1-\bfA_{i,j}}\Bigg].
\end{align}
In words, for each node $i\in [n] = \{1,2,\ldots, n\},$ an independent latent vector $\bfx_i$ is drawn from $\Omega$ according to $\distribution$ and then for each pair of vertices $i$ and $j$ an edge is drawn independently with probability $\sigma(\bfx_i,\bfx_j).$  

Note that if $\sigma$ is equal to $p$ a.s., this is just the \ER distribution $\ergraph.$ More interesting examples occur when there is richer structure in $(\Omega, \mathcal{D},\sigma).$

The focus of this paper is the case when $\Omega$ is a group and $\sigma$ is ``compatible'' with the group structure as in \cref{def:rag}. Before delving into this novel algebraic setting, we provide some context and motivation for our work by describing the setting with a geometric structure, which is widely studied in the literature. 

Suppose that $\Omega$ is a metric space (most commonly $\Omega \subseteq \mathbb{R}^d$ with the induced $\| \cdot\|_2$ metric) and $\sigma(\bfx, \bfy)$ depends only on the 
distance between $\bfx$ and $\bfy.$ Such graphs are called \textit{random geometric graphs} and we write $\RGG$ instead of $\mathsf{PLSG}.$ In practice, random geometric graphs have found applications in 
wireless networks \cite{haenggi_2012}, consensus dynamics \cite{ESTRADA201620}, and protein-protein interactions \cite{higham08} among others (see \cite{penrose03,duchemin22} for more applications). On the more theoretical side, random geometric graphs are, by construction, random graph models with correlated edges which provide an interesting and fruitful parallel theory to the better understood \ER model. Khot, Tulsiani, and Worah use random geometric graphs on the sphere to provide approximation lower-bounds for constraint satisfaction problems \cite{Khot14}, and Liu et al. recently showed that random geometric graphs are high-dimensional expanders in certain regimes \cite{Liu22Expander}. Finally, the study of random geometric graphs has catalyzed the development of other areas in probability such as the convergence of Wishart matrices to GOE matrices \cite{Bubeck15ntropicCLT,Bubeck14RGG, Brennan21DeFinetti}, which in turn has applications such as average-case reductions between statistical problems \cite{Brennan19Reductions}.

Starting with \cite{Devroye11,Bubeck14RGG}, the high-dimensional setting (in which $\Omega\subseteq \mathbb{R}^d$ and $d$ grows with $n$) has gained considerable attention in recent years \cite{Brennan21DeFinetti,Liu2021APV,Liu2022STOC,Liu22Expander,Mikulincer20,Pieters22CommunityDetection,RaczRichet2019SmoothTransition,Liu21PhaseTransition,Brennan19PhaseTransition, Brennan22AnisotropicRGG}. The overarching direction of study in most of these papers and also our current work is based on the following observation, first made in \cite{Devroye11}. When $d\longrightarrow +\infty$ for a fixed $n,$ random geometric graphs typically lose their geometric structure and become indistinguishable from \ER graphs of the same expected density. This motivates the following question. How large does the dimension $d$ need to be so that edges become independent?

More concretely, we view each of $\Omega, \distribution, p,\sigma$ as an implicit sequence indexed by $n$ and take $n\longrightarrow+\infty.$ This gives rise to the following hypothesis testing problem for latent space graphs:
\begin{equation}
    \label{eq:hypoithesistesting}
    H_0: G\sim \ergraph\quad  \text{versus}\quad  H_1:G\sim \PLSG. 
\end{equation}
Associated to these hypotheses are (at least) two different questions:
\begin{enumerate}
    \item \textit{Statistical:} 
    % First, we can view \cref{eq:hypoithesistesting} as a statistical question. That is, we are solely interested in $\TV(\ergraph,\latentspacegraph).$ 
    First, when is there a consistent test?
    % the optimal Type I plus Type II error (without regard to computational complexity) is equal to $\frac12 - \frac12 \TV(\ergraph,\latentspacegraph)$.
    To this end, we aim to characterize the parameter regimes in which the total variation between the two distributions tends to zero or instead to one. 
    % is of order $o(1)$ and regimes in which it is of order $1 - o(1).$ 
    \item \textit{Computational:} Second, we can ask for a computationally efficient test. In particular, when does there exist a polynomial-time test solving \cref{eq:hypoithesistesting} with high probability?
\end{enumerate}

The literature has been, thus far, predominantly interested in geometric models for which there has been no evidence that the answers to the statistical and computational question are different. In particular, an efficient test (counting signed subgraphs, which we will see in \cref{sec:detection}) always matches or nearly matches the statistical threshold. In \cref{sec:polydetection}, we give an example of a model with evidence of an \emph{exponential} gap between the statistical and computational detection thresholds. Evidence for the gap is provided in the increasingly popular framework of low-degree polynomial tests (see \cite{Schramm_2022}, for example).

To evince this gap (as well as numerous other new phenomena depending on the choice of $\sigma,$ see \cref{sec:introhypercube}), we first focus on the case of the Boolean hypercube\footnote{Throughout the entire paper, when we say \textit{hypercube}, we mean the Boolean hypercube $\hypercube$ rather than $[0,1]^d.$ This distinction is very important as $[0,1]^d$ with its $\norm{\cdot}_2$ metric is not a homogeneous metric space, which makes its behaviour substantially different from the behaviour of the unit sphere and Boolean hypercube. Specifically, recent work \cite{Erba20} suggests that random geometric graphs over $[0,1]^d$ do not necessarily converge to \ER.}, $\Omega = \{\pm 1\}^d\subseteq \mathbb{R}^d.$ Most previous papers in the high-dimensional setting study the unit sphere or Gauss space. The advantage of the hypercube model is that it possesses a very simple algebraic structure --- it is a product of $d$ groups of order 2 --- in addition to its geometric structure. The underlying group structure facilitates the use of Fourier-analytic tools which lead to our results for much more general connections $\sigma$ than previously considered.
% which help us circumvent the dependence of $\sigma$ on the Euclidean norm, appearing in most prior works.  

Our results for the hypercube motivate us to analyze more general latent space graphs for which $\Omega$ is a group. We call such graphs, defined in \cref{sec:grouptheorymodel}, \textit{random algebraic graphs}. It turns out that a natural condition in this setting is that $\sigma(\bfx,\bfy)$ depend only on $\bfx\bfy^{-1}$ (or, alternatively, on $\bfx^{-1}\bfy$). As we will see in \cref{sec:grouptheorymodel}, random algebraic graphs turn out to be extremely expressive. 

\begin{figure}[!htb]
    \centering
    \includegraphics[width = 0.6\linewidth]{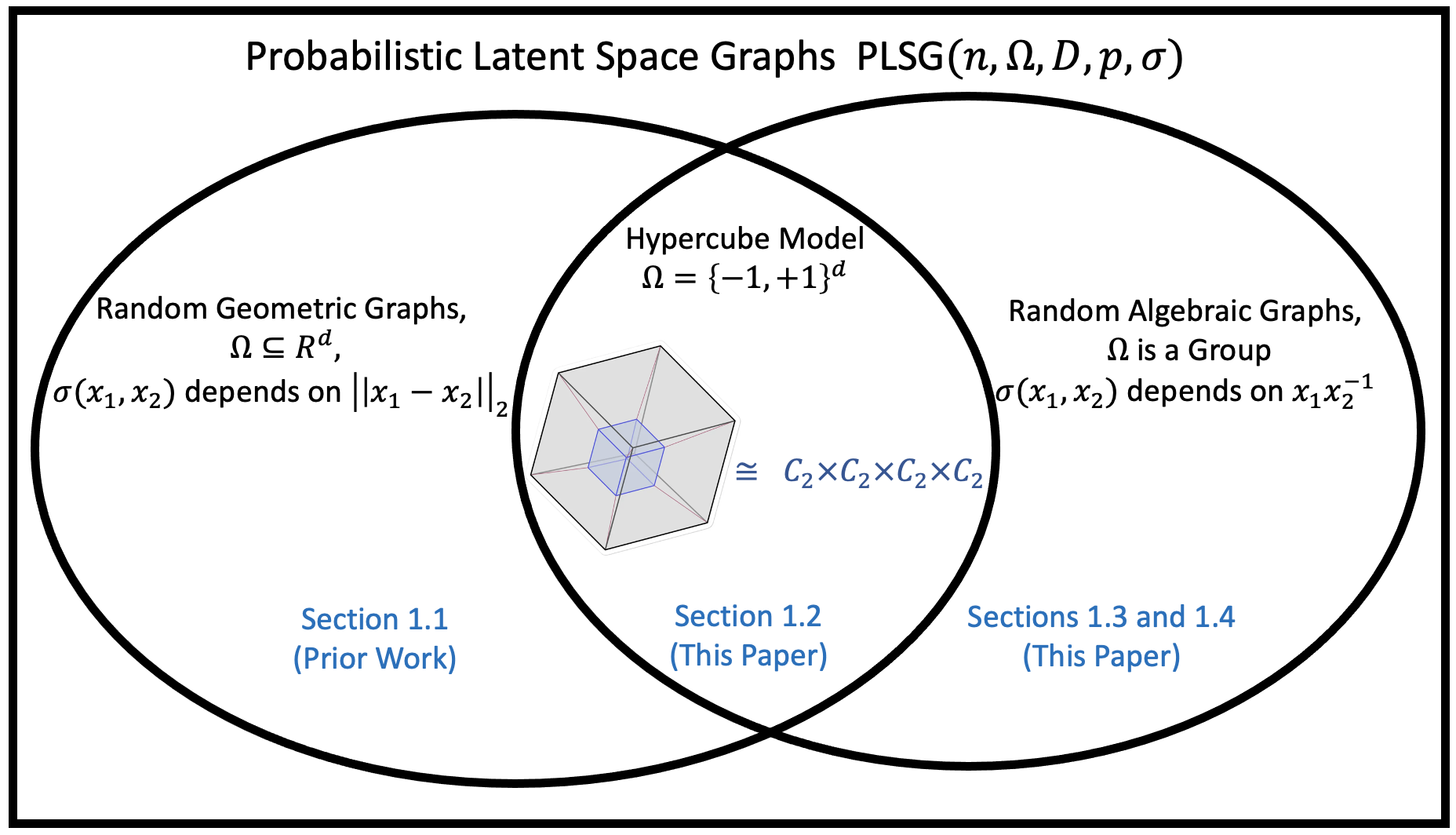}
    \caption{Different probabilistic latent space graph models depending on the underlying structure of $\Omega.$ Denoted are the sections in the introduction discussing the respective models.}
    \label{fig:plsg_map}
\end{figure}

The hypercube model can be viewed as a bridge between random algebraic graphs and random geometric graphs. Of course, one needs to be careful when interpreting results for the hypercube model since they can have their origin both in the underlying geometry or underlying algebra. In \cref{sec:discussion}, we discuss the dependence of our results on geometry and algebra.

\subsection{Prior Work}
\label{sec:previouswork}
\paragraph{1. Hard Thresholds on The Unit Sphere and Gauss Space.}A majority of the papers on high-dimensional random geometric graphs deal with hard thresholds on the unit sphere: the latent space is $\Omega = \dsphere$ and $\distribution$ is the uniform distribution $\unif$ over it. The connection $\sigma$ is given by $\threshold_p(\bfx, \bfy) := \indicator[\langle \bfx, \bfy\rangle\ge \tau_{p,d}],$ where $\tau_{p,d}$ is chosen so that $\expect [\threshold_{p,d}(\bfx, \bfy)] = p.$ For example, $\tau_{1/2,d} = 0.$ 
%Note that whenever $\sigma$ is an indicator, i.e.\ it is $\{0,1\}$-valued, the edges of the random geometric graph are deterministic functions of the latent vectors.
In \cite{Devroye11}, the authors initiated this line of work by showing that when $d = \exp(\Omega(n^2)),$ the two graph models are indistinguishable. Bubeck et.\ al.\ make an exponential improvement to $d = {\omega}(n^3)$ in \cite{Bubeck14RGG}.  Implicitly, the authors of \cite{Bubeck14RGG} also showed that the signed-triangle count statistic (which can be computed in polynomial time) distinguishes $\ergraph$ and $\RGG(n, \dsphere, \unif,p, \threshold_p)$ when $d = \Tilde{O}(n^3p^3)$ (explicitly, the calculation appears in \cite{Liu2022STOC}, for example). It is believed that the the signed triangle statistic gives the tight bound.

\begin{conjecture}[Implicit in \cite{Bubeck14RGG,Brennan19PhaseTransition,Liu2022STOC} and others]
\label{conj:sphericalhardthresholds}
$$\TV\Big(\ergraph,\RGG(n, \dsphere, \unif, p, \threshold_p)\Big) = o_n(1)$$ when $d = \tilde{\Omega}(n^3p^3)$ for $p = \Omega(\frac{1}{n}).$ 
\end{conjecture}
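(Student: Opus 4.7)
The plan is to prove \cref{conj:sphericalhardthresholds} via a sharp second-moment (chi-squared) analysis, showing that the triangle contribution dominates the entire signed-subgraph expansion. By the standard two-copy formula for the likelihood ratio,
\begin{equation}
\chi^2\bigl(\RGG(n, \dsphere, \unif, p, \threshold_p) \,\big\|\, \ergraph\bigr) + 1
= \expect \prod_{1 \le i < j \le n}\Bigl(1 + \frac{(\threshold_p(\langle \bfx_i,\bfx_j\rangle) - p)(\threshold_p(\langle \bfy_i,\bfy_j\rangle) - p)}{p(1-p)}\Bigr),
\end{equation}
where $(\bfx_i)_{i \in [n]}$ and $(\bfy_i)_{i \in [n]}$ are two independent i.i.d.\ samples from $\unif(\dsphere)$. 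A Pinsker-type inequality then reduces the task to showing $\chi^2 = o(1)$, and expanding the product naturally indexes the resulting sum by subgraphs $H \subseteq K_n$.

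First, I would decompose $\threshold_p - p = \sum_{k \ge 1} \hat{\sigma}_k \, G_k^{(d)}(\langle \bfx, \bfy\rangle)$ in the Gegenbauer (ultraspherical) basis adapted to $\dsphere$, so that each edge of $H$ acquires a harmonic degree $k \ge 1$, and then integrate out the latent vectors. By Funk--Hecke-type orthogonality on the sphere, only ``consistent'' colorings of $H$ survive the expectation, and the resulting weight factorizes over connected components and along cycles into products of Gegenbauer coefficients $\hat{\sigma}_k$. The triangle with all edges at degree $k=1$ already recovers the $\tilde{\Theta}(n^3 p^3 / d)$ signed-triangle contribution that matches the detection threshold.

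Second, the crucial analytic step is a sharp estimate of $\hat{\sigma}_k^2$ for the hard spherical-cap threshold at density $p$: since $\tau_{p,d} \approx \sqrt{2\log(1/p)/d}$ for $p = \Omega(1/n)$, classical asymptotics for Gegenbauer coefficients of indicator functions of small caps should give bounds roughly of the form $\hat{\sigma}_k^2 \lesssim p^2 \cdot C^k / (k!\, d^{k/2})$ up to polylogarithmic factors. Plugging this into the subgraph sum and bounding the combinatorics over consistent edge-colorings should yield $\chi^2 \le \exp(\tilde{O}(n^3 p^3 / d)) - 1$, which is $o(1)$ precisely when $d = \tilde{\Omega}(n^3 p^3)$.

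The main obstacle, in my view, is twofold. First, one must rule out dense subgraphs such as cliques as competing extremizers: this calls for a partition or Cauchy--Schwarz argument that reduces the weight of an arbitrary $H$ to a product over edge-disjoint triangles, presumably exploiting log-convexity of $\hat{\sigma}_k^2$ in $k$ for small $p$. Second, in the regime $p \approx 1/n$, the threshold is the indicator of a very small cap and its Gegenbauer spectrum decays more slowly across many harmonic levels; handling this likely requires truncating the expansion at degree $\tilde{O}(\log n)$ and controlling the tail via the trivial $L^\infty$ bound $|\threshold_p - p| \le 1$.
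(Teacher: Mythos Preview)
This statement is an \emph{open conjecture}, not a theorem the paper proves. The paper explicitly presents it as such and immediately afterwards summarizes the state of the art: the best known indistinguishability result for $p = \omega(1/n)$ is $d = \tilde{\Omega}(n^3p^2)$ due to \cite{Liu2022STOC}, which is off by a factor of $p$ from the conjectured threshold. So there is no ``paper's own proof'' to compare against, and your proposal should be read as an attempted attack on an open problem.

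The approach you sketch---a $\chi^2$ expansion over subgraphs with each edge decorated by a Gegenbauer degree---is exactly the natural thing to try, and its close relatives (the $\KL$ bound of \cref{claim:RaczLiuIndistinguishability}, which the paper uses throughout) are what produce the current $n^3p^2$ barrier. The gap is not a missing technicality but the heart of the problem: your two ``obstacles'' are the conjecture itself. Concretely, the step ``plugging this into the subgraph sum \ldots\ should yield $\chi^2 \le \exp(\tilde{O}(n^3p^3/d)) - 1$'' hides everything. To get the exponent $n^3p^3/d$ rather than $n^3p^2/d$, you would need to show that among all connected subgraphs $H$ and all consistent Gegenbauer colorings, the triangle with all edges at level $1$ is the extremizer \emph{including the $p$-dependence}. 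But for a single edge at level $1$ the weight is $\hat{\sigma}_1^2/(p(1-p)) \sim p\log(1/p)/d$, and when you assemble these into denser graphs the vertex-counting does not obviously beat the edge-counting by the right power of $p$. The paper's own analysis (see \cref{sec:improvingautocorrelation}) shows that the moment-based route via \cref{claim:RaczLiuIndistinguishability} provably cannot reach the conjectured rate in related models, and there is no known mechanism by which switching from $\KL$ to $\chi^2$ recovers the missing factor of $p$. Your Gegenbauer coefficient estimate $\hat{\sigma}_k^2 \lesssim p^2 C^k/(k!\,d^{k/2})$ is essentially the level-$k$ inequality (cf.\ \cref{thm:levelkinequalities}), and using it in the obvious way reproduces the $n^3p^2$ bound, not $n^3p^3$.
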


The current progress towards this conjecture is the following. In the sparse regime, when $p = \Theta(\frac{1}{n}),$ Liu et. al. resolve the conjecture in \cite{Liu2022STOC} by showing indistinguishability when $d = \Omega(\log^{36}n).$  When $p \le \frac{1}{2}$ and $p = \omega(\frac{1}{n}),$ the same paper shows that $\TV\Big(\ergraph,\RGG(n, \dsphere, \unif, p, \threshold_p)\Big) = o(1)$ holds when $d = \tilde{\Omega}(n^3p^2),$ missing a factor $p$ relative to \cref{conj:sphericalhardthresholds}.

Several papers, including \cite{Bubeck14RGG}, also study the related model when $(\dsphere, \unif)$ is replaced by\linebreak  $(\mathbb{R}^d,\mathcal{N}(0,I_d)^{\otimes n}).$ The two models are nearly equivalent since if $\bfx'\sim \mathcal{N}(0,I_d),$ then \linebreak $\bfx'\times \norm{\bfx'}^{-1}_2\sim \unif(\dsphere)$ and $\norm{\bfx'}^2_2$ is extremely well concentrated around $d.$
We will say more about the significance of the Gauss space model when discussing Wishart matrices momentarily.

\paragraph{2. Wishart Matrices.} For $X\sim \mathcal{N}(0,I_d)^{\otimes n}\in \mathbb{R}^{n\times d},$ denote by $\wishart(n,d),$ the law of the Wishart matrix $XX^T.$ In \cite{Bubeck14RGG}, the authors prove and use the following result. A GOE matrix $M(n)$ is a symmetric $n\times n$ matrix with random Gaussian entries, which are independent on the main diagonal and above. The variance of each entry on the main diagonal is $2$ and above the diagonal it is 1.  

\begin{theorem}
[\cite{Bubeck14RGG,Jiang2013ApproximationOR}]
\label{thm:wisharttogoe}
When $d = \omega(n^3),$
$\displaystyle
\TV\Big(\wishart(n,d), \sqrt{d}M(n) + I_n\Big) = o(1).
$
\end{theorem}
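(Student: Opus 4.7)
The plan is to invoke Pinsker's inequality $\TV^2 \le \KL/2$ and bound the KL divergence $\KL\bigl(\wishart(n,d)\,\big\|\,\sqrt{d}\,M(n) + dI_n\bigr) = O(n^3/d)$ directly from explicit densities. The Wishart density on positive-definite matrices is $p_W(A) \propto |A|^{(d-n-1)/2}\exp(-\mathrm{tr}(A)/2)$, while $\sqrt{d}\,M(n) + dI_n$ has density $q(A) \propto \exp(-\mathrm{tr}((A-dI_n)^2)/(4d))$ on symmetric matrices (the mean-matching shift $dI_n$ is what centers the Wishart). Reparametrizing to $B = (A - dI_n)/\sqrt{d}$ makes both laws act on the same variable with Jacobians cancelling, reducing the problem to computing an expectation under the law of $B$ of an explicit deterministic function.

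The core analytic step is the Taylor expansion
\[
\log\det(I_n + B/\sqrt{d}) = \frac{\mathrm{tr}(B)}{\sqrt{d}} - \frac{\mathrm{tr}(B^2)}{2d} + \frac{\mathrm{tr}(B^3)}{3d^{3/2}} - \frac{\mathrm{tr}(B^4)}{4d^2} + \cdots,
\]
valid on the high-probability event $\{\|B\|_{\mathrm{op}} < \sqrt{d}/2\}$, which holds with probability $1 - e^{-\Omega(d)}$ by standard operator-norm concentration for Wishart matrices (since $\|B\|_{\mathrm{op}} = O(\sqrt{n})$ w.h.p.~and $d \gg n$). Multiplying the expansion by the Wishart prefactor $(d-n-1)/2$ and combining with the linear Wishart term $-\sqrt{d}\,\mathrm{tr}(B)/2$ and the quadratic GOE term $\mathrm{tr}(B^2)/4$ produces cancellations at leading order, leaving
\[
\log\frac{p_W}{q}(B) = C(n,d) - \frac{(n+1)\,\mathrm{tr}(B)}{2\sqrt{d}} + \frac{(n+1)\,\mathrm{tr}(B^2)}{4d} + \frac{\mathrm{tr}(B^3)}{6\sqrt{d}} - \frac{\mathrm{tr}(B^4)}{8d} + \mathrm{tail}(B),
\]
where $C(n,d)$ is a constant governed by Stirling expansion of the multivariate gamma $\Gamma_n(d/2)$.

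I would then take expectations under $\wishart(n,d)$ using the representation $W = \sum_{\ell=1}^d X_\ell X_\ell^T$ with $X_\ell \sim \mathcal{N}(0,I_n)$. Standard trace-moment computations yield $\expect[\mathrm{tr}(B)] = 0$, $\expect[\mathrm{tr}(B^2)] = n^2 + n$, $|\expect[\mathrm{tr}(B^3)]| = O(n^2/\sqrt{d})$, $\expect[\mathrm{tr}(B^4)] = O(n^3)$, and the $k$-th moment contributes at most $O(n^{\lceil k/2\rceil + 1} d^{-(k-2)/2})$ via noncrossing pair-partition counts. Summing, every surviving contribution is $O(n^3/d)$, giving $\KL = O(n^3/d)$, which is $o(1)$ under $d = \omega(n^3)$, and the rare event $\{\|B\|_{\mathrm{op}} \ge \sqrt{d}/2\}$ is absorbed into a separate $o(1)$ TV bound. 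The main obstacle is the precise bookkeeping of the deterministic piece $C(n,d)$: the multivariate Stirling asymptotic for $\log\Gamma_n(d/2)$ must be expanded to order $n^3/d$ so that it exactly cancels the constants produced by $\expect[\mathrm{tr}(B^2)] = n^2 + n$ and by the Gaussian normalization of $q$. Any surviving mismatch would contribute an irremovable additive constant to the KL, so the delicate part of the argument is verifying this cancellation to the required precision rather than the moment estimates themselves.
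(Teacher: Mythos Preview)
The paper does not prove this statement; it is quoted from \cite{Bubeck14RGG,Jiang2013ApproximationOR} as background in the Prior Work section, so there is no ``paper's own proof'' to compare against. Your outline is in fact the standard route taken in those references: write the explicit Wishart and GOE densities, recenter by $B=(A-dI_n)/\sqrt d$, Taylor-expand $\log\det(I_n+B/\sqrt d)$, and control the KL via trace-moment estimates together with the Stirling expansion of the multivariate gamma normalizer. You have correctly identified the one genuinely delicate point, namely that the deterministic constant $C(n,d)$ must be expanded to order $n^3/d$ and shown to cancel exactly against the contributions from $\expect[\mathrm{tr}(B^2)]$ and the Gaussian normalizer; in the cited works this is carried out in full and the cancellation does occur.

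Two minor remarks. First, the paper's statement writes $\sqrt d\,M(n)+I_n$, but the correct centering for $\wishart(n,d)=XX^T$ with $X\in\mathbb R^{n\times d}$ standard Gaussian is $dI_n$, as you used; the $I_n$ in the displayed theorem is a typo (it recurs in the bipartite-mask theorem quoted just after). Second, your handling of the bad event $\{\|B\|_{\mathrm{op}}\ge\sqrt d/2\}$ is fine for the TV conclusion, but if you want a clean KL bound you should be slightly more careful: the log-density ratio is not bounded on that event, so one typically either works with a truncated version of the Taylor expansion that is globally valid, or bounds the TV directly via Hellinger/chi-square on the good event and adds the bad-event probability. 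Either route closes the argument.
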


This result shows that when $d$ is sufficiently large, $\wishart(n,d)$ is, in total variation, a Gaussian matrix with i.i.d. entries. In the context of random geometric graphs, this implies that inner products of latent vectors are independent (in total variation) and, thus, the corresponding random geometric graph has independent edges. The result \cref{thm:wisharttogoe} has been generalized in several ways since. In \cite{Bubeck15ntropicCLT}, the authors obtain similar results when the Gaussian density used in the construction of the Wishart ensemble is replaced with a log-concave density. In \cite{RaczRichet2019SmoothTransition}, the authors show that the transition between a Wishart ensemble and GOE is ``smooth'' by calculating $
\TV\Big(\wishart(n,d), \sqrt{d}M(n) + I_n\Big) $ when $\lim_{n\longrightarrow +\infty}d/n^3 = c$ for a fixed constant $c.$ Finally, in \cite{Brennan21DeFinetti}, the authors consider the convergence of a Wishart matrix with hidden entries to a GOE with hidden entries. They consider both structured and random ``masks'' hiding the entries. A special case of one of their results for bipartite masks, which we will use later on, is the following. Let $\mathcal{M}$ be an $n\times n$ matrix split into four $\frac{n}{2}\times \frac{n}{2}$ blocks, where the upper-left and lower-right ones contain only zeros and lower-left and upper-right contain only ones. Let $\odot$ be the usual Schur (entrywise) product. 

\begin{theorem}[\cite{Brennan21DeFinetti}]
\label{thm:bipartitemaskwishart}
When $d = {\omega}(n^2),$
$\displaystyle
\TV\Big(\mathcal{M}\odot \wishart(n,d), \mathcal{M}\odot(\sqrt{d}M(n) + I_n)\Big) = o_n(1).
$
\end{theorem}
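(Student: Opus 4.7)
The plan is to reduce the question to comparing the off-diagonal block $X_1X_2^T$ against an i.i.d.\ Gaussian matrix and then bound the $\chi^2$-divergence via a second-moment argument that exploits the cancellation available when averaging over \emph{two} independent copies of the latent matrix. Split $X = \binom{X_1}{X_2}$ into two halves of size $(n/2)\times d$. The mask $\mathcal{M}$ annihilates both the block-diagonal of $XX^T$ and the identity summand $I_n$, so it suffices to prove $\TV(X_1X_2^T,\,G) = o(1)$, where $G$ is an $(n/2)\times(n/2)$ matrix with i.i.d.\ $\mathcal{N}(0,d)$ entries.

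The na\"ive route goes through a conditional KL bound: conditional on $X_1$, the columns of $X_1 X_2^T$ are i.i.d.\ $\mathcal{N}(0, X_1X_1^T)$, so a second-order expansion of the Gaussian KL together with Wishart concentration $\mathbb{E}\|X_1X_1^T - dI\|_F^2 \asymp dn^2$ yields $\mathbb{E}\,\KL(H_1 \mid X_1\,\|\,H_0) \asymp n^3/d$, which only closes the gap for $d = \omega(n^3)$. To recover a factor of $n$, compute the $\chi^2$-divergence after marginalising $X_1$. Writing the Wishart-side marginal as the mixture $\mathbb{E}_{X_1}[P_{X_1}]$ and carrying out the per-column Gaussian integral, a second-order expansion in $F = X_1X_1^T/d - I$ and $F' = X_1'X_1'^T/d - I$ gives
\[
1 + \chi^2(H_1, H_0) \;\approx\; \mathbb{E}_{X_1,X_1'}\exp\!\Bigl(\tfrac{n}{4d^2}\operatorname{tr}\bigl((X_1X_1^T - dI)(X_1'X_1'^T - dI)\bigr)\Bigr).
\]
The exponent is now a mean-zero bilinear form in two \emph{independent} centered Wisharts; conditioning on $X_1$ reduces the inner expectation to a Gaussian chaos MGF. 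Applying the standard bound $\log\mathbb{E}[\exp(t(x^TAx - \operatorname{tr} A))] \lesssim t^2\|A\|_F^2$ for $x\sim\mathcal{N}(0,I_{n/2})$ to each of the $d$ columns of $X_1'$ gives $\log\mathbb{E}_{X_1'}[\,\cdot\, \mid X_1] \lesssim n^2\|X_1X_1^T - dI\|_F^2/d^3$, and taking the outer expectation (again via Wishart concentration) bounds the whole quantity by $\exp(O(n^4/d^2))$. Hence $\chi^2(H_1, H_0) = O(n^4/d^2)$ and $\TV \le \tfrac12\sqrt{\chi^2} = O(n^2/d) = o(1)$ precisely when $d = \omega(n^2)$.

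The main obstacle is controlling the outer MGF $\mathbb{E}_{X_1}\exp\bigl(c n^2\|X_1X_1^T - dI\|_F^2/d^3\bigr)$: one must verify that the argument lies within the sub-exponential range of the centered Wishart Frobenius norm throughout $d = \omega(n^2)$, so that the MGF does not blow up from the tail and stays $e^{O(n^4/d^2)}$. Equivalently, one must check that the factor-of-$n$ improvement over the na\"ive conditional KL bound --- which comes from replacing a squared perturbation $\sum_i \epsilon_i^2$ by a mean-zero bilinear $\sum_i \epsilon_i\epsilon_i'$ across two independent latents --- survives Gaussian-chaos tail control.
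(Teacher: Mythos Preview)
The paper does not prove this statement: it is quoted verbatim from \cite{Brennan21DeFinetti} and only \emph{used} later (in the proof of \cref{thm:differenceofwisharts}). So there is no ``paper's own proof'' to compare against.

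That said, your sketch is the right one and matches the argument in the cited source. The reduction to the single off-diagonal block $X_1X_2^T$ versus an i.i.d.\ $\mathcal{N}(0,d)$ matrix is exactly what the bipartite mask buys you, and the $\chi^2$/second-moment calculation with two independent replicas $X_1,X_1'$ is precisely how \cite{Brennan21DeFinetti} recovers the extra factor of $n$ over the na\"ive conditional-KL route. The mechanism you describe --- replacing $\sum_i\epsilon_i^2$ by the centred bilinear $\sum_i\epsilon_i\epsilon_i'$ --- is the heart of the improvement.

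Two places where your outline would need tightening to be a proof rather than a plan: (i) the ``second-order expansion'' of the Gaussian $\chi^2$ kernel is not an equality, and one must either carry the exact determinant expression $\det\bigl(I-(I-d\Sigma_1^{-1})(I-d\Sigma_2^{-1})\bigr)^{-n/4}$ and bound it directly, or justify truncating to the high-probability event where $\|F\|_{\mathrm{op}},\|F'\|_{\mathrm{op}}$ are small so that the expansion is valid and the remainder is controlled; (ii) the final MGF step, which you correctly flag, requires checking that $cn^2/d^3$ sits inside the sub-exponential parameter of $\|X_1X_1^T-dI\|_F^2$, i.e.\ that $cn^2/d^3\cdot\Theta(d)\ll 1$, which holds exactly when $d=\omega(n^2)$. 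Both are handled in \cite{Brennan21DeFinetti}; your identification of (ii) as the main obstacle is on target.
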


We also mention that Ch'etelat and Wells establish a sequence of phase transitions of the density of $\wishart$ when $n^{K + 3} = \omega(d^{K+1})$ for all $K \in \mathbb{N}$ \cite{Chetelat2017TheMA}.

\paragraph{3. Soft Thresholds.}A line of work by Liu and Racz \cite{Liu21PhaseTransition,Liu2021APV} studies a soft threshold model. In it, edges are random rather than deterministic functions of $\langle \bfx_i,\bfx_j\rangle,$ which corresponds to 
$\sigma$ taking other values than just $0$ and $1.$ In \cite{Liu21PhaseTransition}, the connection of interest is given as a convex combination between pure noise 
$\sigma \equiv p,$ and $\threshold_p,$ namely
$\phi_q(\bfx, \bfy) = (1-q)p + q \threshold_p(\bfx, \bfy)$ for some $q\in [0,1].$ In \cite{Liu2021APV}, on the other hand, the authors consider smooth monotone connections formed as follows. Take $(\Omega, \distribution) = (\mathbb{R}^d, \mathcal{N}(0, I_d)).$ Suppose that $f$ is a $C_2$ $\alpha$-Lipschitz density corresponding to a zero-mean distribution over $\mathbb{R}.$  Let $F$ be the corresponding CDF. Let $\sigma_f(\bfx, \bfy):=F\big(\frac{\langle \bfx,\bfy\rangle - \mu_{p,d,r}}{r\sqrt{d}}\big),$ where $r\ge 1$ and $\mu_{p,d,r}$ is chosen so that $\expect[\sigma_f(\bfx,\bfy)] = p.$ The fact that $f$ is $\alpha$-Lipschitz implies that $\sigma_f$ is $O_{\alpha}(\frac{1}{r\sqrt{d}})$-Lipschitz when viewed as a function of $\langle\bfx,\bfy\rangle$ (this follows directly from \cite[Lemma 2.6.]{Liu2021APV}).
The main result is the following.

\begin{theorem}[\cite{Liu2021APV}]
\label{thm:liuraczlipschitzconnections}
Let $ \RGG(n, \mathbb{R}^d, \mathcal{N}(0, I_d), p, \sigma_f)$ be defined as above.\\ 
1) If 
$d = \omega_{p,\alpha}(\frac{n^3}{r^4}),$ then 
$$
\TV\Big(\ergraph, \RGG(n, \mathbb{R}^d, \mathcal{N}(0, I_d), p, \sigma_f)\Big) = o(1).
$$
2) If $d = o_{p,\alpha}(\frac{n^3}{r^6})$ and, additionally, $d/\log^2d = \omega(r^6)$ or $r/\log^2(r) = \omega(d^{1/6}),$
then 
$$
\TV\Big(\ergraph, \RGG(n, \mathbb{R}^d, \mathcal{N}(0, I_d), p, \sigma_f)\Big) = 1 - o(1).
$$  
\end{theorem}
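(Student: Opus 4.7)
The plan is to bound $\TV(\RGG, \ergraph)$ via the $\chi^2$-divergence, which for a latent-variable model admits the clean expansion
\[
\chi^2(\RGG, \ergraph) = \sum_{\emptyset \neq H \subseteq K_n} \frac{1}{(p(1-p))^{|E(H)|}}\Bigl(\expect_X \prod_{\{i,j\} \in E(H)} (\sigma_f(\langle x_i, x_j\rangle) - p)\Bigr)^{\!2},
\]
obtained by expanding $\expect_{X, X'}\prod_{ij}(1+\delta_{ij}^X \delta_{ij}^{X'}/(p(1-p)))$, with $\delta^X_{ij} := \sigma_f(\langle x_i, x_j\rangle) - p$, over two independent copies $X, X'$ of the latent vectors and using that the two copies factor. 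The problem reduces to controlling ``graph moments'' of $\sigma_f - p$ and summing them over all $H$.

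For part (1), I would exploit smoothness through a Hermite expansion $\sigma_f(\langle x, y\rangle/\sqrt d) = p + \sum_{k\ge 1} c_k H_k(\langle x, y\rangle/\sqrt d)$ in normalized Hermite polynomials under the standard Gaussian. The $O(1/(r\sqrt d))$-Lipschitz hypothesis on $\sigma_f$ translates, via Gaussian Poincar\'e in Hermite coordinates, into $\sum_{k\ge 1} k c_k^2 = O(1/r^2)$, so the higher modes decay fast. Substituting the expansion into each graph moment produces a sum over edge-labellings $(k_e)_{e\in E(H)}$ of $\prod_e c_{k_e}$ times a Wick-style contraction of $\prod_e H_{k_e}(\langle x_i, x_j\rangle/\sqrt d)$ under the approximately Gaussian latent law. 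Such a contraction survives only for ``Eulerian'' labellings (in which each coordinate of each latent vector appears an even number of times across all factors), contributing $d^{-\sum k_e/2}$ times a bounded combinatorial count. Summing the squared moments over $H$, the dominant contributions come from triangle-like Eulerian subgraphs with $k_e \equiv 1$, and careful bookkeeping collapses the total to $o(1)$ under the dimension hypothesis.

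For part (2), I would test with the signed triangle count $T(G) = \sum_{i<j<k}(A_{ij}-p)(A_{jk}-p)(A_{ik}-p)$. Under $\ergraph$, $\expect[T] = 0$ and $\Var(T) = \Theta(n^3(p(1-p))^3)$. Under $\RGG$, the $k=1$ Hermite mode dominates and a Wick computation yields
\[
\expect_\RGG[T] \asymp \binom{n}{3} c_1^3 \cdot \expect\bigl[H_1(\langle x_1,x_2\rangle/\sqrt d)H_1(\langle x_2,x_3\rangle/\sqrt d)H_1(\langle x_1,x_3\rangle/\sqrt d)\bigr] \asymp \frac{c_1^3 n^3}{\sqrt d},
\]
using the Wick identity $\expect[\langle x_1,x_2\rangle\langle x_2,x_3\rangle\langle x_1,x_3\rangle] = d$ for $x_i \sim \mathcal{N}(0,I_d)$. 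With $c_1^2 \asymp 1/r^2$, the mean shift is $\asymp n^3/(r^3 \sqrt d)$, which exceeds the $\ergraph$-standard-deviation precisely when $d = o_{p}(n^3/r^6)$. Distinguishability by Chebyshev then follows, provided $\Var_\RGG(T) = O(\Var_\ergraph(T))$, and this variance control is the role of the auxiliary condition $d/\log^2 d = \omega(r^6)$ or $r/\log^2 r = \omega(d^{1/6})$, which handles rare latent configurations that could otherwise blow up the $\RGG$-variance.

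The main technical obstacle is part (1): the subgraph expansion of $\chi^2$ has super-exponentially many terms, and one must simultaneously exploit the Hermite decay $\sum k c_k^2 = O(1/r^2)$, the combinatorial vanishing of non-Eulerian mode assignments (whose Wick contractions collapse), and the replacement of the true Wishart inner-product law by its Gaussian proxy (whose error is polynomial in $1/d$ and must be absorbed into the bound). The gap between $r^4$ in part (1) and $r^6$ in part (2) indicates the argument is not tight on at least one end, and closing it plausibly requires either a refined combinatorial accounting of graph moments or a direct TV coupling bypassing the $\chi^2$-method altogether.
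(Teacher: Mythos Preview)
This theorem is a cited result from Liu and Racz, and the present paper does not reprove it. What the paper does record, immediately after the statement, is the mechanism Liu and Racz actually use for part~(1): the $\KL$-chain-rule bound of Claim~\ref{claim:RaczLiuIndistinguishability}, which reduces indistinguishability to controlling moments of the autocorrelation kernel $\gamma_p(\bfx,\bfy)=\expect_{\bfz}[(\sigma(\bfx,\bfz)-p)(\sigma(\bfz,\bfy)-p)]$. Your proposal takes a genuinely different route, via the $\chi^2$-divergence and its subgraph-moment expansion. For part~(2), your signed-triangle plan is the standard and correct argument, as the paper also notes.

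For part~(1), the divergence from the Liu--Racz argument is structural. Their method is \emph{sequential}: the $\KL$ chain rule plus a convexity step collapses the problem to bounding $\expect_{\bfx,\bfy}[(1+\gamma_p(\bfx,\bfy)/(p(1-p)))^k]$ for each $k\le n$, i.e.\ to moments of a single bivariate kernel, which they then estimate with Gaussian-analytic tools. Your $\chi^2$ expansion instead requires controlling the graph moment $\expect_X\prod_{e\in H}\delta_e^X$ for \emph{every} $H\subseteq K_n$ and summing the squares with weights $(p(1-p))^{-|E(H)|}$. Two issues arise. First, the Hermite/Wick heuristic treats the family $\{\langle x_i,x_j\rangle/\sqrt d\}$ as jointly Gaussian, but edges sharing a vertex are correlated through $\|x_i\|^2$; that approximation error has to survive summation over super-exponentially many $H$, and you have not supplied a mechanism for this. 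Second, even at the heuristic level your accounting is internally inconsistent with the stated rate: the triangle contribution alone is $\asymp n^3 c_1^6/d\asymp n^3/(r^6 d)$, which is $o(1)$ already when $d=\omega(n^3/r^6)$---strictly weaker than the hypothesis $d=\omega(n^3/r^4)$. So either non-triangle subgraphs (paths of length two, unions of triangles, etc.) dominate and force the $r^4$ rate---terms you have not analyzed---or your method would close the very gap Liu and Racz leave open, which would be a substantial new result and certainly requires more than ``careful bookkeeping.''
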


The authors conjecture that the lower bound 2), which is derived via the signed triangle statistic, is tight \cite{Liu2021APV}. They derive their upper bounds in 1) using the following general claim on latent space random graphs. This claim is the starting point for the indistinguishability results in the current paper.
\begin{claim}[\cite{Liu2021APV}]
\label{claim:RaczLiuIndistinguishability} Let $p,q\in [0,1]$ be two not-necessarily equal probabilities.
Consider the graph $\plsg(n, \Omega, \distribution, \sigma, q)$ for some $n,\Omega, \distribution, \sigma, q.$ Define 
\begin{equation}
    \label{eq:gammadefinition}
\gamma_p(\bfx,\bfy) = \expect_{\bfz\sim \distribution}\bigg[(\sigma(\bfx,\bfz)-p)(\sigma(\bfz,\bfy)-p)\bigg].
\end{equation}
Then,
$$
\KL\Big(\plsg(n, \Omega, \distribution, \sigma, q) \|\ergraph\Big)\le  
\sum_{k = 0}^{n-1}
    \log \expect_{(\bfx,\bfy)\sim \mathcal{D}\times\mathcal{D}}\bigg[\Big(1 + \frac{\gamma_p(\bfx,\bfy)}{p(1-p)}\Big)^k\bigg].
$$
\end{claim}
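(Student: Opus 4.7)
The plan is to invoke the chain rule for KL divergence to decompose $\KL(\plsg \| \ergraph)$ vertex by vertex, bound each conditional KL by the logarithm of a $\chi^2$-divergence via Jensen, and then apply a symmetrization step that exchanges two correlated posterior copies for a single prior sample paired with itself. This last step is what turns a naive product of one-sided averages into the ``diagonal'' quantity $\gamma_p(\bfx,\bfy)$.

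For $k = 0, 1, \ldots, n-1$, let $G_k$ be the subgraph induced on vertices $\{1, \ldots, k\}$ and $\bfA^{(k+1)} = (A_{1, k+1}, \ldots, A_{k, k+1})$ the $k$-vector of edges from vertex $k{+}1$ to the previously revealed vertices. Under $\ergraph$, $\bfA^{(k+1)}$ is $\Bernoulli(p)^{\otimes k}$ independently of $G_k$. Writing $P = \plsg$ and $Q = \ergraph$, the chain rule gives
$$
\KL(P\|Q) \;=\; \sum_{k=0}^{n-1}\expect_{G_k \sim P}\KL\bigl(P(\bfA^{(k+1)} \mid G_k) \;\big\|\; \Bernoulli(p)^{\otimes k}\bigr).
$$
Two applications of Jensen---first $\KL(R\|S) \le \log \expect_S[(R/S)^2]$ on each summand, then concavity of $\log$ to push $\expect_{G_k}$ inside---reduce the task to upper-bounding $T_k := \expect_{G_k \sim P} \expect_Q[(P(\bfA^{(k+1)}\mid G_k)/Q(\bfA^{(k+1)}))^2]$.

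To compute $T_k$, introduce two independent copies $X = (X_1, \ldots, X_k)$ and $X' = (X'_1, \ldots, X'_k)$ from the posterior $\pi_{G_k}$ of $(\bfx_1, \ldots, \bfx_k)$ given $G_k$, together with fresh $X_{k+1}, X'_{k+1} \sim \distribution$. The edge-wise identity $\sigma\sigma'/p + (1-\sigma)(1-\sigma')/(1-p) = 1 + (\sigma-p)(\sigma'-p)/(p(1-p))$ yields
$$
T_k \;=\; \expect \prod_{i=1}^{k}\left[1 + \frac{(\sigma(X_i, X_{k+1}) - p)(\sigma(X'_i, X'_{k+1}) - p)}{p(1-p)}\right].
$$
Expanding the product as $\sum_{S \subseteq [k]} \prod_{i \in S}(\cdot)$ and integrating out the independent $X_{k+1}, X'_{k+1}$ factorizes each term into $\expect_\rho[\alpha_S(X) \alpha_S(X')]$, where $\rho$ is the joint law of $(X, X')$ after integrating out $G_k$ and
$$
\alpha_S(Y_1, \ldots, Y_k) \;:=\; (p(1-p))^{-|S|/2}\, \expect_{Z \sim \distribution} \prod_{i \in S}(\sigma(Y_i, Z) - p).
$$

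The heart of the argument is the following symmetrization. The joint $\rho$ is exchangeable in $X \leftrightarrow X'$ and its marginal in each coordinate equals the prior $\distribution^{\otimes k}$ (since marginally $X_i$ and $X'_i$ are just one sample from the posterior, hence distributed as $\distribution$). The elementary inequality $ab \le (a^2 + b^2)/2$ combined with exchangeability then gives, for each $S$,
$$
\expect_\rho[\alpha_S(X)\alpha_S(X')] \;\le\; \expect_\rho[\alpha_S(X)^2] \;=\; \expect_{X \sim \distribution^{\otimes k}}[\alpha_S(X)^2].
$$
Under the prior the $X_i$ are independent, so the single integrating variable per coordinate forces the two copies of $\sigma-p$ inside $\alpha_S(X)^2$ to share an argument, producing $\expect[\alpha_S(X)^2] = (p(1-p))^{-|S|}\expect_{\bfx, \bfy \sim \distribution^{\otimes 2}}[\gamma_p(\bfx, \bfy)^{|S|}]$. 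Summing over $S \subseteq [k]$ with multiplicity $\binom{k}{j}$ for $|S| = j$ reassembles the binomial series $\expect_{\bfx, \bfy}[(1 + \gamma_p(\bfx, \bfy)/(p(1-p)))^k]$, and combining with the chain-rule and Jensen bounds produces the claimed inequality. The main obstacle is precisely this symmetrization: a naive replacement of $\rho$ by the product prior would yield $m(\bfx) m(\bfy)$ with $m(\bfx) := \expect_{\bfz}[\sigma(\bfz, \bfx)] - p$ instead of $\gamma_p(\bfx, \bfy)$, and the two quantities are in general incomparable; the AM--GM-plus-exchangeability trick is what collapses the two independent posterior copies of the latent positions into a single prior sample paired with itself, producing the correct shared $\bfz$ inside $\gamma_p$.
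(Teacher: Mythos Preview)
Your argument is correct and arrives at exactly the same bound, but it proceeds by a different route from the one the paper (following \cite{Liu2021APV}) describes in \cref{sec:improvingautocorrelation}. There, the key step after the chain rule is \emph{KL convexity}: one bounds $\KL(\bfa_{k+1}\mid \bfA_k \,\|\, \bfb_{k+1}) \le \expect_{\bfX_k}\KL(\bfa_{k+1}\mid \bfX_k \,\|\, \bfb_{k+1})$, i.e.\ one conditions further on the latent vectors $\bfX_k$ before passing to $\chi^2$. Once $\bfX_k$ is revealed, the prior is in play immediately, the $x_i$ are independent, and the product collapses directly to $\expect_{\bfx,\bfy}\big[(1+\gamma_p(\bfx,\bfy)/p(1-p))^k\big]$ with no symmetrization needed.

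You instead bound by $\chi^2$ while still conditioning only on $G_k$, which forces you to work with two independent posterior copies $X,X'\sim\pi_{G_k}$, and then you use AM--GM plus exchangeability to replace $\expect_\rho[\alpha_S(X)\alpha_S(X')]$ by $\expect_{\text{prior}}[\alpha_S(X)^2]$. This works, and in fact your AM--GM step is exactly the same convexification as the paper's KL-convexity step, just applied after the $\chi^2$ bound rather than before it: both amount to the inequality $(\expect_{\pi_{G_k}}\alpha_S)^2 \le \expect_{\pi_{G_k}}[\alpha_S^2]$, averaged over $G_k$. Consequently your proof inherits precisely the inefficiency the paper discusses in \cref{sec:improvingautocorrelation}, where conditioning on $\bfX_k$ (equivalently, your symmetrization) is shown to be the lossy step. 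The Liu--Racz ordering is slightly more economical since it avoids ever introducing the posterior, but your version is a legitimate alternative derivation.
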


Typically, we take $p = q.$ We state the result in this more general form for the purely technical reason that in \cref{sec:ragindistinguishability} we will need it for $q = p(1+\epsilon),$ where $\epsilon$ is exponentially smaller than $p.$

In light of Pinsker's inequlity (see \cref{thm:pinsker}), showing that the KL-divergence is of order $o(1)$ implies that the total variation is also of order $o(1).$
Expanding the expression above, we obtain
\begin{equation}
\label{eq:firsteqonindist}
    \expect_{(\bfx,\bfy)\sim \mathcal{D}\times\mathcal{D}}\Big[\Big(1 + \frac{\gamma_p(\bfx,\bfy)}{p(1-p)}\Big)^k\Big] = 
    \sum_{t = 0}^k \binom{k}{t}\frac{\expect[\gamma_p^t]}{p^t(1-p)^t}.
\end{equation}
Thus, \cref{claim:RaczLiuIndistinguishability} reduces the task of proving indistinguishability bounds to bounding moments of $\gamma_p.$ In \cite{Liu2021APV}, the authors achieve this via a wide range of tools in Gaussian analysis. Our approach is quite different and it relies on 
interpreting $\gamma_p$ as an autocorrelation function when $\Omega$ has a group-theoretic structure.

Unfortunately, as noted by \cite{Liu2021APV}, the claim is likely not tight. In \cref{sec:improvingautocorrelation}, we demonstrate new instances in which (and reasons why) it is not tight, which hopefully will shed light on how to improve \cref{claim:RaczLiuIndistinguishability}. Improving the claim has the potential to resolve several open problems such as \cref{conj:sphericalhardthresholds} and the gap in \cref{thm:liuraczlipschitzconnections}.

\paragraph{4. Other Random Geometric Graphs.}Several other random geometric graph models have been studied recently as well. For example, \cite{Brennan22AnisotropicRGG,Mikulincer20} consider the hard thresholds model for anisotropic Gaussian vectors. Together, the two papers prove a tight condition on the vector of eigenvalues of the covariance matrix under which the total variation is of order $o(1).$

\subsection{Our Results on The Hypercube}
\label{sec:introhypercube}
There is a large gap in the literature, with all prior works focusing on connections $\sigma$ with the following two properties:
\begin{enumerate}
\item \textit{Monotonicity:}  Monotonicity of $\langle \bfx, \bfy\rangle\longrightarrow \sigma(\langle\bfx,\bfy\rangle)$ is a natural assumption which can be interpreted as closer/more aligned vectors are more likely to form a connection. 
Nevertheless, many interesting choices of $\sigma$ are not monotone. For example, connections could be formed when vectors are instead weakly correlated, corresponding to the non-monotone $\sigma(\bfx,\bfy) = \indicator[|\langle \bfx,\bfy\rangle|\le \delta]$ for some $\delta.$  As we will see in \cref{sec:symmetricconnectionsapplications}, the underlying symmetry around $0,$ i.e., $\sigma(\bfx,\bfy) =\sigma(\bfx, -\bfy)$ leads to a different indistinguishability rate.

\item \textit{Symmetry:} Connections $\sigma(\bfx,\bfy)$ depending only on $\langle \bfx,\bfy\rangle$ are symmetric with respect to permutations of coordinates. However, in some interesting examples different coordinates can influence the connection in qualitatively different ways. For example, suppose that the latent vectors correspond to characteristics of people and edges in the graph to friendships among the set of people. Similarity in some characteristics---such as geographical location---can make people more likely to form a friendship, but others---such as competition for scarce resources---can make them less likely to become friends due to competitiveness. 
In \cref{sec:transformationsofsymmetric} we use a simple mathematical model of such connections, which again leads to different indistinguishability rates.
\end{enumerate}

\cref{thm:maintheoremindistingishability} in the current paper makes a step towards filling this gap. 
It addresses a large family of probabilistic latent space graphs over the hypercube $\Omega = \{\pm 1 \}^d$ with the uniform measure $\distribution = \unif$ for which neither monotonicity nor symmetry holds. We remark that because our proofs are based on Fourier-analytic tools, which have generalizations to products of abelian groups and tori, our results likely hold in greater generality.  
In the interest of obtaining simple and interpretable results, we do not pursue this here.

The main insight is that over the hypercube, Fourier-analytic tools facilitate the analysis of connections depending on the coordinate-wise (group-theoretic) product $(\bfx,\bfy)\longrightarrow \bfx\bfy\in \hypercube.$ This setup is strictly more general than the setting of inner products since $\sum_{i = 1}^d (\bfx\bfy)_i = \langle \bfx,\bfy\rangle.$ When $\sigma$ only depends on $\bfx\bfy,$ we use the notation $\RAG$ (standing for random algebraic graph, see \cref{def:rag}) instead of $\mathsf{PLSG}.$ \cref{thm:maintheoremindistingishability} yields indistinguishability rates depending on the largest magnitude of Fourier coefficients on each level (see \cref{sec:booleanfourier} for preliminaries on Boolean Fourier analysis). Informally, we have:

\begin{theorem}[Informal \cref{thm:maintheoremindistingishability}]
\label{thm:mainintro}
Suppose that $d = \Omega(n),$ $\sigma:\{\pm 1 \}^d\longrightarrow [0,1]$ has expectation $p,$ and  
$m\in \mathbb{N}$ is a constant. Let $\sigma = \sum_{S\subseteq [d]}\widehat{\sigma}(S)\omega_S$.
For $1\le i \le d,$ let $ B_i = \max \big\{ |\widehat{\sigma}(S)|\binom{d}{i}^{1/2} : 
|S| = i\big\}
$. Let
\begin{equation*}
C_m = \sum_{i = m+1}^{ d/2en} B_i^2 + \sum_{i = d-\frac{d}{2en}}^{d-m-1} B_i^2\quadand
D  = \sum_{\frac{d}{2en}\le 
j
\le d - \frac{d}{2en}
} B_i^2.
\end{equation*}
If $C_m, B_u^2, B_{d-u}^2\le p(1-p)$ for $0\le u \le m,$ then
\begin{align*}
        & \KL\Big( \hypercubegraph\| \ergraph\Big)
        \\ & \qquad \qquad = 
        O\Bigg(
        \frac{n^3}{p^2(1-p)^2}\times 
\left(
\sum_{i = 1}^m
\frac{B_i^4}{d^i} + 
\sum_{i = d-m}^d
\frac{B_i^4}{d^i} 
+ \frac{C^2_m}{{d}^{m+1}} + 
{D^2}\times \exp\left( - \frac{d}{2en}\right)
\right)\Bigg).
\end{align*}
\end{theorem}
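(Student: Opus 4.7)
The plan is to apply \cref{claim:RaczLiuIndistinguishability}, which reduces bounding the KL divergence to controlling the moments $\expect[\gamma_p^t]$ for $t = 0, 1, \ldots, k$ and $k \le n - 1$. Since $\Omega = \{\pm 1\}^d$ is an abelian group and $\sigma$ depends only on $\bfx \bfy^{-1} = \bfx \bfy$, Boolean Fourier analysis applies directly. Writing $\sigma - p = \sum_{S \neq \emptyset} \widehat{\sigma}(S) \omega_S$ and using orthogonality of characters under the uniform measure yields the autocorrelation identity
\begin{equation*}
\gamma_p(\bfx, \bfy) = \sum_{\emptyset \neq S \subseteq [d]} \widehat{\sigma}(S)^2 \, \omega_S(\bfx \bfy),
\end{equation*}
so $\gamma_p$ is realized as a Fourier series on $\{\pm 1\}^d$ with nonnegative coefficients $\widehat{\sigma}(S)^2$. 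Raising to the $t$-th power via $\omega_S \omega_T = \omega_{S \triangle T}$ and integrating over independent uniform $\bfx, \bfy$ (so $\bfx \bfy$ is also uniform) produces
\begin{equation*}
\expect \bigl[ \gamma_p^t \bigr] = \sum_{\substack{S_1, \ldots, S_t \neq \emptyset \\ S_1 \triangle \cdots \triangle S_t = \emptyset}} \prod_{j = 1}^{t} \widehat{\sigma}(S_j)^2.
\end{equation*}
Every summand is nonnegative, so the task becomes a purely combinatorial bound on this sum of positive terms.

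I would then partition tuples $(S_1, \ldots, S_t)$ according to which of four regimes each $|S_j|$ lies in: the low regime $1 \le |S_j| \le m$, the mid-low regime $m+1 \le |S_j| \le \tfrac{d}{2en}$, the bulk regime $\tfrac{d}{2en} \le |S_j| \le d - \tfrac{d}{2en}$, and two symmetric high regimes above $|S_j| \ge d - \tfrac{d}{2en}$. In every regime I use the pointwise bound $\widehat{\sigma}(S_j)^2 \le B_{|S_j|}^2 / \binom{d}{|S_j|}$ and exploit the XOR constraint $S_1 \triangle \cdots \triangle S_t = \emptyset$, which forces the last set to be determined by the first $t - 1$, saving a factor of $\binom{d}{|S_t|}$ in the count. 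A single low level $|S_j| = i \le m$ yields the $B_i^4 / d^i$ summand via $\binom{d}{i} \ge (d/i)^i$. For the mid-low regime, summing the bounds $B_i^2$ across levels absorbs into $C_m$ while the minimum exponent $i \ge m + 1$ contributes the $1/d^{m+1}$ factor. For the bulk regime, $\binom{d}{i} \ge (2en)^{d/(2en)} \ge \exp(d/(2en))$ produces the exponential decay. The high regimes near $|S_j| = d$ are handled analogously, using $\omega_S = \omega_{[d]} \omega_{[d] \setminus S}$ to reinterpret large sets as their small complements inside the XOR constraint.

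Finally, I plug these bounds into $\sum_{t = 0}^{k} \binom{k}{t} \expect[\gamma_p^t] / (p(1-p))^t$, invoke $\log(1 + y) \le y$ (legitimate because the hypotheses $C_m, B_u^2, B_{d-u}^2 \le p(1-p)$ keep this quantity $O(1)$), and sum over $k \le n - 1$ so that the leading $k^2$-type contribution integrates to the claimed $n^3$ factor. The main obstacle is the combinatorial counting in the bulk regime: I must extract the $\exp(-d/(2en))$ factor from the XOR constraint even though each $\binom{d}{|S_j|}$ is individually exponentially large in $d$. A secondary subtlety is handling tuples that mix sets from different regimes --- one must either argue that the weakest-decaying regime present in the tuple dominates, or else organize the contribution as a multilinear form in the level weights $B_i^2$ and apply Cauchy--Schwarz carefully to decouple the regimes without losing the regime-specific decay.
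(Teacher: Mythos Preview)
Your overall architecture is right: reduce to moments of $\gamma_p$ via \cref{claim:RaczLiuIndistinguishability}, express $\gamma_p$ as the Fourier autocorrelation, bound $\widehat{\sigma}(S)^2$ by $B_{|S|}^2/\binom{d}{|S|}$, and split levels into low, mid-low, bulk, and high regimes. This matches the paper. But the concrete mechanism you propose for extracting decay --- ``the XOR constraint determines $S_t$ from $S_1,\ldots,S_{t-1}$, saving one factor of $\binom{d}{|S_t|}$'' --- is too weak in both the low and the bulk regimes, and this is a genuine gap.

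Concretely, after symmetrizing, the moment $\expect[\gamma_p^t]$ restricted to a single level $i$ is exactly $\bigl(B_i^2/\binom{d}{i}\bigr)^t \|e_i\|_t^t$, so your ``save one binomial'' argument is the bound $\|e_i\|_t^t \le \binom{d}{i}^{t-1}$. Plugging this into $\sum_{t\ge 2}\binom{k}{t}(p(1-p))^{-t}B_i^{2t}/\binom{d}{i}$ gives a single factor $\binom{d}{i}^{-1}$ times a sum that is $(1+B_i^2/p(1-p))^k$. Under the hypothesis $B_i^2\le p(1-p)$ this is still of order $2^k$, so for $i=1$ you would need $d\gtrsim 2^n$, and for $i$ at the bulk boundary $d/(2en)$ you would need $d\gtrsim n^2/\log n$; both are far from the claimed $d=\Omega(n)$. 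The point is that a single saved binomial does not scale with $t$, whereas the sum over $t$ up to $n$ forces you to control terms of all orders simultaneously.

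The paper fixes this with two analytic inputs you are missing. For the low (and mid-low) levels it uses hypercontractivity, $\|e_i\|_t\le (t-1)^{i/2}\binom{d}{i}^{1/2}$, which gives a $t$-dependent factor and, after checking a ratio test, geometric decay in $t$. For the bulk it proves a new inequality (\cref{thm:symmetricpolybounds}): when $\min(s,d-s)\ge d/(2et)$ one has $\|e_s\|_t\le \binom{d}{s}\exp(-d/(4et))$, established by writing $\omega_S=\omega_T\omega_A$ with $|A|=d/(2et)$, applying the triangle inequality over $T$, and then hypercontractivity on the low-degree factor. This is precisely what converts the bulk contribution into $D^t\exp(-dt/(4en))$, giving the required decay in $t$. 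The paper also replaces your Cauchy--Schwarz decoupling of mixed regimes by a convexity argument: the map $(W^1,\ldots,W^d)\mapsto \bigl\|\sum_i (W^i/\binom{d}{i})e_i\bigr\|_t$ is convex, so on the product-of-simplices polytope determined by $B_i^2,C_m,D$ the supremum is attained at a vertex, reducing everything to single elementary symmetric polynomials per regime.
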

Qualitatively, the $\KL$ divergence is small whenever all Fourier coefficients of $\sigma$ are small. In \cref{rmk:qualitativeconverseofmain}, we show the converse -- a single large Fourier coefficient makes the $\KL$ divergence large.

In the beginning of \cref{sec:indistinguishability}, we summarize the four main steps in our proof of \cref{thm:mainintro} before we fill in all the technical details. Here, we highlight the following inequality we derive which is crucial to the proof. Its utility comes into play as we 
derive an upper bound of the desired $\KL$ divergence via the moments of a symmetric function. Symmetric functions over the hypercube are weighted sums of elementary symmetric polynomials.
\begin{theorem}[\cref{thm:symmetricpolybounds}]
\label{thm:introelemntarysymmetric}
Let $d\in \mathbb{N}.$ Consider the elementary symmetric polynomial $e_s :\mathbb{R}^d\longrightarrow \mathbb{R}^d$ given by $e_s(\bfx) = \sum_{S\subseteq [d]\; : \; |S| =s}\prod_{i \in S}x_i.$ If $t\in \mathbb{N}$ is such that $\frac{d}{2et}<s<d - \frac{d}{2et},$ then
$$
\binom{d}{s} 
\exp\Big( - \ln 2\times \frac{d}{t}\Big)\le 
\expect_{\bfx\sim \unif(\hypercube)}\big[|e_s(\bfx)|^t\big]^{1/t} \le \binom{d}{s}\exp\Big( - \frac{d}{2et}\Big).
$$
\end{theorem}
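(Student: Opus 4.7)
The lower bound is immediate: at the all-ones vector $\bfx = (1,\ldots,1) \in \hypercube$, one has $e_s(\bfx) = \binom{d}{s}$ (the pointwise maximum of $|e_s|$), and this outcome has probability $2^{-d}$. Thus $\expect[|e_s(\bfx)|^t] \ge 2^{-d}\binom{d}{s}^t$, which upon taking $t$-th roots gives $\binom{d}{s}\exp(-\ln 2 \cdot d/t)$, as required.

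For the upper bound, I would first use the identity $e_{d-s}(\bfx) = (\prod_i x_i)\,e_s(\bfx)$ on $\hypercube$, which implies $|e_{d-s}| = |e_s|$ pointwise, to reduce to the case $s \le d/2$. The heart of the argument is the Cauchy integral representation: for any $r > 0$,
\[
e_s(\bfx) \;=\; \frac{r^{-s}}{2\pi}\int_0^{2\pi}\prod_{i=1}^d(1 + re^{i\theta}x_i)\,e^{-is\theta}\,d\theta.
\]
Applying Jensen's inequality for $t \ge 1$, taking expectation over $\bfx$, and using independence of the coordinates yields
\[
\expect[|e_s(\bfx)|^t] \;\le\; r^{-st}\cdot\frac{1}{2\pi}\int_0^{2\pi}G(\theta)^d\,d\theta, \qquad G(\theta) := \frac{|1+re^{i\theta}|^t + |1-re^{i\theta}|^t}{2}.
\]

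The $\theta$-integral is handled by Laplace's method: $G$ is maximized at $\theta = 0, \pi$ with value $G(0) = \tfrac{1}{2}((1+r)^t+(1-r)^t)$, producing twin Gaussian-like peaks whose polynomial widths contribute a sub-exponential factor. With the saddle choice $r \approx s/(d-s)$, one has $r^{-s}(1+r)^d \asymp \binom{d}{s}$, so the $t$-th root of the bound is
\[
\|e_s\|_t \;\lesssim\; \binom{d}{s}\cdot\bigg(\frac{G(0)}{(1+r)^t}\bigg)^{d/t} \;=\; \binom{d}{s}\cdot\bigg(\frac{1 + \big(\tfrac{1-r}{1+r}\big)^t}{2}\bigg)^{d/t}.
\]
Plugging in $r = s/(d-s)$ gives $(1-r)/(1+r) = 1 - 2s/d$. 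The hypothesis $s > d/(2et)$ forces $(1 - 2s/d)^t \le e^{-1/e}$, so the bracketed quantity is bounded away from $1$, and raising it to $d/t$ yields an exponential decay matching the claimed factor $\exp(-d/(2et))$.

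The main obstacle is producing the precise constant $1/(2e)$ rather than a weaker constant. This requires tuning $r$ carefully (deviating slightly from the naive Stirling saddle $s/(d-s)$) to balance the prefactor $r^{-s}(1+r)^d$ against $G(0)^{d/t}$ simultaneously, and absorbing the Stirling correction $\sqrt{s(d-s)/d}$ together with the Laplace polynomial factor $\sqrt{G(0)/(d|G''(0)|)}$. These polynomial corrections are sub-exponential in $d/t$ precisely because the hypothesis $s > d/(2et)$ implies $d/(st) < 2e$, which keeps their logarithms small compared to $d/t$, so they do not disturb the target exponential rate.
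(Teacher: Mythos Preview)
Your lower bound is identical to the paper's. Your upper bound strategy---via the Cauchy integral representation of $e_s$, Jensen's inequality, independence of coordinates, and a Laplace-type estimate on the resulting $\theta$-integral---is a genuinely different route from the paper. The paper instead decomposes each monomial $\omega_S$ with $|S|=s$ as $\omega_T\omega_A$ where $|A|=s-\psi=d/(2et)$, applies the triangle inequality over $T$, uses the key fact $\|\omega_T q\|_t=\|q\|_t$ to strip off the high-degree factor, and then invokes Bonami hypercontractivity on the remaining degree-$d/(2et)$ polynomial. An exact combinatorial identity $\binom{d}{\psi}\binom{d-\psi}{s-\psi}=\binom{s}{\psi}\binom{d}{s}$ then produces $\binom{d}{s}$ on the nose, and the rest is a short computation.

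Your approach, however, has a genuine gap in the treatment of the polynomial prefactors. With the saddle choice $r=s/(d-s)$, one has $r^{-s}(1+r)^d = d^d/(s^s(d-s)^{d-s}) \asymp \binom{d}{s}\sqrt{s(d-s)/d}$ by Stirling; this $\sqrt{s(d-s)/d}$ factor sits \emph{outside} the $t$-th root. The Laplace width factor you invoke sits \emph{inside} the $t$-th root, contributing only $(\mathrm{poly}(d))^{1/t}$, which is near $1$ for large $t$ and cannot cancel the Stirling factor. To absorb $\sqrt{s(d-s)/d}$ into $\exp(-c\,d/t)$ you need $\log s\ll d/t$. Your justification---that $s>d/(2et)$ gives $d/(st)<2e$---only bounds $d/t$ from \emph{above} by $2es$; it provides no lower bound on $d/t$. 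Concretely, take $s=d/2$ and $t=d$ (which the hypothesis permits): then $d/t=1$ while the Stirling factor is $\Theta(\sqrt{d})$, and your bound degrades to $\|e_s\|_t\lesssim\sqrt{d}\,\binom{d}{s}$, worse than the trivial $\|e_s\|_\infty=\binom{d}{s}$ and certainly exceeding the target $\binom{d}{s}e^{-1/(2e)}$. The paper's route avoids this entirely because the combinatorial identity delivers $\binom{d}{s}$ exactly, with no Stirling approximation to absorb.
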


We have not tried to optimize the constants in the exponent. The upper bound also holds for other distributions such as uniform on the sphere and Gaussian when $s<{d}/{2}.$

We now describe some applications of \cref{thm:mainintro} 
to various choices of $\sigma$. 
 While the claim of the theorem is rather long, it turns out to be relatively simple to apply. % to concrete connections $\sigma.$ 
  We organize our results based on the degree of symmetry in the respective connections. Note that in the symmetric case over $\hypercube,$ $\sigma(\bfx, \bfy) = \sigma(\bfx\bfy)$ only depends on 
$\sum_{i = 1}^d x_i y_i = \langle \bfx, \bfy\rangle$ and this is the usual inner-product model.

\subsubsection{Symmetric Connections}
\label{sec:introsymmetric}
 Most previous work on indistinguishability focuses on monotone connections and proves a necessary condition of the form $d = \Tilde{\Omega}(n^3)$ (with hidden constants, depending on $p,$ Lipschitzness, etc).  We show that even some of the simplest instances of non-monotone, but still symmetric, connections violate this trend.

\begin{theorem}[Informal, \cref{cor:doublethresholdconnectionsindist,cor:evenlipschitzindist,cor:fluctuationindist}] If $\sigma$ is symmetric and even (i.e., $\sigma(\bfg) = \sigma(-\bfg)$), then $\TV\big(\hypercubegraph, \ergraph\big) = o(1),$ whenever $d = \tilde{\Omega}(n^{3/2})$.
\end{theorem}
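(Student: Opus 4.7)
The plan is to obtain the claim as a direct application of \cref{thm:mainintro} with $m = 2$, exploiting two Fourier cancellations that the symmetry and evenness hypotheses together guarantee. Coordinate symmetry of $\sigma$ forces $\widehat{\sigma}(S)$ to depend only on $|S|$, so for every $i \ge 1$,
\begin{equation*}
B_i^2 = \binom{d}{i}\widehat{\sigma}(S)^2 = \sum_{|S|=i}\widehat{\sigma}(S)^2
\end{equation*}
coincides with the entire level-$i$ Fourier mass. Parseval then yields $\sum_{i\ge 1} B_i^2 = \Var(\sigma) \le p(1-p)$, and in particular $B_i^2, C_m, D \le p(1-p)$. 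Evenness $\sigma(\bfg)=\sigma(-\bfg)$ translates in the Fourier basis into the identity $\widehat{\sigma}(S)(1-(-1)^{|S|}) = 0$, so $\widehat{\sigma}(S) = 0$ for every odd $|S|$; crucially, $B_1 = 0$, which eliminates the level that typically drives the $d = \Theta(n^3)$ threshold.

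Having established these Fourier facts, I would plug $m=2$ into \cref{thm:mainintro}. The only surviving small-index term is $B_2^4/d^2 \le p^2(1-p)^2/d^2$; each high-index term $B_{d-i}^4/d^{d-i}$ is at most $p^2(1-p)^2/d^{d-2}$ and therefore negligible; $C_2^2/d^3$ is dominated by $B_2^4/d^2$; and $D^2 \exp(-d/(2en)) \le p^2(1-p)^2 \exp(-d/(2en))$. After cancelling the outer $p^{-2}(1-p)^{-2}$ factor, the bound collapses to
\begin{equation*}
\KL\Big(\hypercubegraph \,\big\|\, \ergraph\Big) \le O\Big(\frac{n^3}{d^2} + n^3 \exp\big(-d/(2en)\big)\Big),
\end{equation*}
which is $o(1)$ for $d = \tilde{\Omega}(n^{3/2})$: the polynomial term vanishes as soon as $d = \omega(n^{3/2})$, and the exponential term decays superpolynomially once $d/n \gg \log n$ (already true when $d \gtrsim n^{3/2}$). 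Pinsker's inequality then delivers $\TV = o(1)$.

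The main technical subtlety is a minor one: the hypothesis of \cref{thm:mainintro} imposes $B_0^2 = p^2 \le p(1-p)$ and thus implicitly $p \le 1/2$, which for $p > 1/2$ I would handle by passing to the graph complement---the connection $1-\sigma$ is still symmetric and even, has expectation $1-p \le 1/2$, and preserves total variation distance to the corresponding \ER graph. I do not expect a genuine obstacle anywhere else: the heavy combinatorial lifting (moment bounds on elementary symmetric polynomials via \cref{thm:introelemntarysymmetric}, and the telescoping into Fourier levels) is absorbed into \cref{thm:mainintro}, and the qualitative improvement from $n^3$ to $n^{3/2}$ is entirely driven by the vanishing of $B_1$ caused by evenness.
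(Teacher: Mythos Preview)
Your argument has a genuine gap at the high-index Fourier levels. You write that ``each high-index term $B_{d-i}^4/d^{d-i}$ is at most $p^2(1-p)^2/d^{d-2}$ and therefore negligible,'' but this is not the correct shape of the bound. The expression $\sum_{i=d-m}^d B_i^4/d^i$ in the statement of \cref{thm:mainintro} is a typo; if you follow the proof of the main theorem (Case~1, which isolates level $d$, and Case~4, which handles levels $d-1,\dots,d-m$) or simply read off \cref{cor:5relevantlevelsforsuperlinearithmic}, the contribution from level $d-i$ carries a denominator of $d^{i}$, not $d^{d-i}$. In particular, level $d$ contributes
\[
\frac{n^3\,B_d^4}{p^2(1-p)^2}\;=\;\frac{n^3\,\weight{d}{\sigma}^2}{p^2(1-p)^2}
\]
with \emph{no} polynomial-in-$d$ denominator at all.

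This matters because evenness only kills the odd levels; when $d$ is even the weight $\weight{d}{\sigma}$ survives, and for a general even symmetric connection it can be as large as $p(1-p)$. Concretely, $\pi_{1/2}(\bfg)=\tfrac12+\tfrac12\,\omega_{[d]}(\bfg)$ is symmetric and even (for $d$ even) with $\weight{d}{\pi_{1/2}}=1/4$, and the paper shows $\RAG(n,\hypercube,1/2,\pi_{1/2})$ is distinguishable from $\ergraphhalf$ for \emph{every} $d$---so no uniform $\tilde\Omega(n^{3/2})$ statement can hold for all even symmetric $\sigma$. The three corollaries referenced in the informal theorem each invoke an \emph{extra} structural hypothesis---Lipschitzness, the double-threshold form, or bounded fluctuation $\fluctuation(\sigma)$---precisely in order to obtain $\weight{d}{\sigma}=O(1/d)$ via \cref{lem:lastfourier} and its specializations. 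Only after that step does the level-$d$ contribution drop to $O(n^3/d^2)$, match the level-$2$ term, and yield the $d=\tilde\Omega(n^{3/2})$ threshold. Your proposal omits this step entirely, so the claimed $O\big(n^3/d^2 + n^3 e^{-d/(2en)}\big)$ bound is not valid as stated.
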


The reason why this holds is that odd Fourier levels of $\sigma$ vanish for even connections (so, in particular $B_1 = 0$ in \cref{thm:mainintro}). We generalize this result with the following conjecture, which, unfortunately, does not follow from our \cref{thm:maintheoremindistingishability} for $m>3$ as the latter only applies to the regime $d = \Omega(n).$

\begin{conjecture}[\cref{conj:nolowandhighdegreeterms,}]
\label{conj:intronolowandhigh}
Suppose that $\sigma$ is a symmetric connection such that $\widehat{\sigma}(S) = 0$ whenever $1\le |S|\le m-1$ or $d-m-1\le |S|\le d$ for a fixed constant $m.$ If $d = \tilde{\Omega}(n^{3/m})$ holds, 
$\TV\big(\ergraph, \hypercubegraph\big) = o(1).$
\end{conjecture}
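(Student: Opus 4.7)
The plan is to apply \cref{claim:RaczLiuIndistinguishability} and argue that, under the stated Fourier vanishing hypothesis, the second moment of $\gamma_p$ already captures the conjectured threshold, while higher moments contribute subleading corrections. For $\sigma$ on the hypercube depending only on $\bfx\bfy$, one checks by a change of variables that $\gamma_p(\bfx,\bfy) = \tilde\gamma(\bfx\bfy)$, where $\tilde\gamma$ is the group autoconvolution $(\sigma-p)\star(\sigma-p)$. Fourier inversion then gives $\widehat{\tilde\gamma}(S) = \widehat{\sigma}(S)^2$ for every nonempty $S$. By the hypothesis, only characters with $|S|\in[m, d-m]$ contribute, and for symmetric $\sigma$ one has $\widehat{\sigma}(S) = a_{|S|}$ with the variance constraint $\sum_{i=m}^{d-m}\binom{d}{i} a_i^2 \le p(1-p)$.

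The second moment becomes
$$\expect[\tilde\gamma^2] = \sum_{i=m}^{d-m}\binom{d}{i} a_i^4 \le \max_i a_i^2 \cdot \sum_i \binom{d}{i} a_i^2 \le \frac{p^2(1-p)^2}{\binom{d}{m}},$$
since $a_i^2 \le p(1-p)/\binom{d}{i}$ and $\binom{d}{i}$ is minimized on the interval $[m,d-m]$ at the endpoints. Inserting this into \cref{eq:firsteqonindist}, the $t=2$ contribution to the KL is $O(\binom{n}{3}/\binom{d}{m}) = O(n^3 m^m/d^m)$, which is $o(1)$ precisely when $d = \omega(n^{3/m})$. Notably, this matches the rate at which signed triangles stop distinguishing, which is the expected threshold and the one that \cref{conj:intronolowandhigh} predicts.

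The hard part is showing that the terms $\binom{k}{t}\expect[\tilde\gamma^t]/(p(1-p))^t$ with $t\ge 3$ do not dominate in the regime $d = o(n)$, which is precisely the regime where \cref{thm:maintheoremindistingishability} breaks down. A natural attempt is to decompose $\tilde\gamma = \sum_i a_i^2 e_i$ and bound each $\|e_i\|_q$ via Bonami--Beckner hypercontractivity, $\|e_i\|_q \le (q-1)^{i/2}\sqrt{\binom{d}{i}}$, for small $q$ and via \cref{thm:introelemntarysymmetric} for $q \gtrsim d/m$. Either tool alone suffices when $d = \Omega(n)$, but in the conjectured regime $n^{3/m} \ll d \ll n$ the hypercontractive factor $(q-1)^{mq/2}$ accumulates faster than $\binom{n}{q+1}$ can absorb it, while \cref{thm:introelemntarysymmetric} only starts to save by $\exp(-d/(2eq))$ at $q$ too large to be useful. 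This is exactly the mechanism that forces the $D^2\exp(-d/(2en))$ term in \cref{thm:maintheoremindistingishability} and ultimately the hypothesis $d = \Omega(n)$.

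I expect the main obstacle to be finding a moment estimate for $e_m$ that smoothly interpolates between the hypercontractive regime and the regime of \cref{thm:introelemntarysymmetric}, or, alternatively, an improvement over \cref{claim:RaczLiuIndistinguishability} itself so that one does not pay a separate $\log\expect[(1+\gamma_p/p(1-p))^k]$ at every $k$ up to $n$. In view of \cref{sec:improvingautocorrelation} of the paper, which already flags \cref{claim:RaczLiuIndistinguishability} as loose, I would try to formulate a directly Fourier-based upper bound on $\TV\bigl(\hypercubegraph,\ergraph\bigr)$ whose leading term is exactly $n^3 \sum_{|S|\ge m}\widehat{\sigma}(S)^4/(p(1-p))^2$, matching the second-moment heuristic above; proving such a sharper inequality is where the real work lies and is what seemingly must precede the full resolution of \cref{conj:intronolowandhigh}.
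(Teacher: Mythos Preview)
This statement is a \emph{conjecture} in the paper, not a theorem; the paper does not prove it and explicitly lists it as open (see \cref{conj:nolowandhighdegreeterms} and the discussion in \cref{sec:nolowdegreeandhighdegreeterms}). The paper establishes the conclusion only for $m\in\{1,2,3\}$ via \cref{thm:maintheoremindistingishability}, precisely because that theorem requires $d = \Omega(n\log n)$, which for $m\ge 4$ is already stronger than the conjectured $d=\tilde\Omega(n^{3/m})$.

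Your proposal is not a proof either, and you are candid about this. Your second-moment computation is correct and does recover the conjectured rate; your diagnosis of the obstacle---that higher moments of $\sum_i a_i^2 e_i$ cannot currently be controlled in the regime $n^{3/m}\ll d\ll n$ by either hypercontractivity or \cref{thm:introelemntarysymmetric}, and that \cref{claim:RaczLiuIndistinguishability} may itself need to be sharpened---matches exactly what the paper says in \cref{sec:improvingautocorrelation} and in the paragraph following \cref{thm:maintheoremindistingishability}. In short, you have correctly identified both the heuristic and the gap; there is no proof in the paper to compare against.
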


In \cref{prop:nolowandhighdegreeterms} we show that, if correct, this conjecture is tight up to logarithmic factors. Specifically, there exist connections $\sigma$ for which the conditions of  \cref{conj:nolowandhighdegreeterms} are satisfied and the signed-triangle statistic distinguishes the two models whenever $d = \tilde{o}(n^{3/m}).$

Some further nearly immediate applications of  \cref{thm:mainintro} in the symmetric setting are the following.
\begin{enumerate}
\item \textbf{Hard Thresholds Connections.} When $\sigma = \threshold_p$ (defined over the hypercube analogously to the definition of the unit sphere), we prove in \cref{cor:classicrggindistinguishability} that if $d = \max\big(\tilde{\omega}(n^3p^2), \Omega(n\log n)\big)$ holds, then $\TV\big(\RAG(n,\hypercube,p, \threshold_p), \ergraph\big) = o(1).$ This matches the state of the art result on the sphere in the regime $p = \omega(n^{-1})$ in \cite{Liu2022STOC}. For the even analogue of 
$\threshold_p$ given by $\doublethreshold_p(\bfx,\bfy) := \indicator\left[|\langle \bfx,\bfy\rangle|\ge \tau_{p/2,d}|
\right]$ with expectation $p,$ $\ergraph$ and $\RAG(n,\hypercube, p,\doublethreshold_p)$ are indistinguishable when $d = \max\big((\tilde{\omega}(n^{3/2}p),\Omega(n\log n)\big)$ and distinguishable when $d = o(n^{3/2}p^{3/2})$ (see \cref{cor:doublethresholdconnectionsindist,cor:doublethresholddetection}).
\item \textbf{Lipschitz Connections.} In \cref{cor:generallipschitzindist} we show that if $d = \max\big(\Omega(n^3/r^4), \Omega(n\log n)\big)$ for an $\frac{1}{r\sqrt{d}}$-Lipshcitz connection $\sigma,$ $\TV\big(\RAG(n,\hypercube,p, \sigma), \ergraph\big) = o(1)$, extending the work of \cite{Liu2021APV} to the non-monotone case. Again, we improve the dependence to \linebreak $d =\max\big(\Omega( n^{3/2}/r^2), \Omega(n\log n)\big)$ in the even case in \cref{cor:evenlipschitzindist}.
\item \textbf{Interval Unions and Fluctuations.} In \cref{sec:fluctuations} we extend the classical threshold model in which $\sigma$ is an indicator of a single interval, i.e. $[\tau_{p,d},+\infty)$ to the case when $\sigma$ is an indicator of a union of $s$ disjoint intervals. Namely, we have indistinguishability when $d =\Omega( n^3 + n^{3/2}s^2)$ in the general case and $d = \Omega(n^{3/2}s^2)$ in the even case.
We give lower bounds in \cref{sec:nonmonotonedetection} when $d = o(ns^2),$
 showing that the dependence on $s$ is tight.
\item \textbf{Low-Degree Connections.} In \cref{sec:lowdegreepolyindist} we consider the case when $\sigma$ is a symmetric polynomial of constant degree. In that case, we prove that the two graph models are indistinguishable when $d = \Omega(n)$ and $d = \tilde{\omega}(\min(n^3p^2, np^{-2/3})).$ In \cref{sec:lowdegreedetection}, we give a detection lower bound for $d = o((np)^{3/4}),$ which leaves a polynomial gap in this setting.
\end{enumerate}

\subsubsection{Modifications of Symmetric Connections}
As \cref{thm:maintheoremindistingishability} only depends on the size of Fourier coefficients on each level, it also applies to transformations of symmetric connections which do not increase the absolute values of Fourier coefficients. We analyse two such transofrmations, which we call ``coefficients contractions'' and ``repulsion-attraction'' twists. Both have natural interpretations, see \cref{sec:coefficientcontractions}.

A special case of a coefficient contraction is the following. For a symmetric connection $\sigma: \hypercube\longrightarrow [0,1]$ and fixed $\bfg\in \hypercube,$ the coefficient contraction $\sigma_\bfg$ is given by $\sigma_\bfg(\bfx):=\sigma(\bfx\bfg)$ (which simply negates the coordinates $x_i$ for which $g_i = -1$). All of our indistinguishability results from \cref{sec:introsymmetric} continue to hold with the exact same quantitative bounds on $d$ if we replace the connection $\sigma$ with an arbitrary coefficient contraction $\sigma_\bfg$ of it. While it is a virtue of our proof techniques for indistinguishability results that they capture these more general non-symmetric connections, it is also a drawback. It turns out that certain coefficient contractions become statistically indistinguishable from $\ergraph$  for polynomially smaller dimensions, but this phenomenon is not captured by \cref{thm:mainintro}.

A particularly interesting case which  illustrates this is the following. In \cref{sec:doom}, we consider coefficient contractions with $\bfh = \underbrace{1,1,\ldots, 1}_{d_1}\underbrace{-1,-1,\ldots, -1}_{d_1}$ for $d = 2d_1.$ This choice of $\bfh$ negates exactly half the variables. In that case,  $\sigma_\bfh(\bfx^{-1}\bfy) = \sigma_\bfh(\bfx\bfy)$ depends on $\sum_{i \le d_1}x_iy_i - \sum_{j> d_1}x_jy_j.$ Thus, edges depend on a difference of inner products rather than inner products (equivalently, inner products with signature 
$(\underbrace{+,+,\ldots, +}_{d_1}\underbrace{-,-,\ldots, -}_{d_1})$
). The corresponding analogue of Wishart matrices is of the form $XX^T - YY^T,$ where $X, Y \in \mathbb{R}^{ n\times d_1}.$ In the Gaussian case, we prove the following statement. 

\begin{theorem}[\cref{thm:differenceofwisharts}]
\label{thm:informaldifferenceofwisharts}
Define the law of the difference of two Wishart matrices as follows.\linebreak $\wishart(n,d_1,-d_1)$ is the law of $XX^T - YY^T,$ where $X,Y\in \mathbb{R}^{n\times d_1}$ are iid matrices with independent standard Gaussian entries. If 
$d_1 = {\omega}(n^2),$ then
$\displaystyle
\TV\big(\wishart(n,d_1,-d_1), \sqrt{d_1}M(n)\big) = o_n(1).
$
\end{theorem}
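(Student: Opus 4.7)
The plan is to reduce the difference-of-Wisharts law to a single Wishart with a bipartite mask, for which \cref{thm:bipartitemaskwishart} already gives convergence at dimension $\omega(n^2)$.

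First, I perform an orthogonal change of variables. Set $A = (X+Y)/\sqrt{2}$ and $B = (X-Y)/\sqrt{2}$. Since $X$ and $Y$ are independent matrices with iid standard Gaussian entries, so are $A$ and $B$, and a direct calculation yields
$$XX^T - YY^T = AB^T + BA^T.$$
So it suffices to show that $AB^T + BA^T$ is close in TV to an appropriately scaled GOE matrix. Next, embed $A$ and $B$ into a single Gaussian matrix: let $W = \begin{pmatrix} A \\ B \end{pmatrix} \in \mathbb{R}^{2n \times d_1}$, whose entries are iid $\mathcal{N}(0,1)$, so that $WW^T \sim \wishart(2n, d_1)$ and
$$WW^T = \begin{pmatrix} AA^T & AB^T \\ BA^T & BB^T \end{pmatrix}.$$
Let $\mathcal{M}$ be the $2n \times 2n$ bipartite mask with $n \times n$ zero diagonal blocks and all-ones off-diagonal blocks. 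Applying \cref{thm:bipartitemaskwishart} with $n \mapsto 2n$ gives, whenever $d_1 = \omega(n^2)$,
$$\TV\bigl(\mathcal{M}\odot WW^T,\; \mathcal{M}\odot (\sqrt{d_1}\, M(2n) + I_{2n})\bigr) = o(1).$$

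The final step is to post-process both sides by the same deterministic linear map $\Phi : \mathbb{R}^{2n\times 2n} \to \mathbb{R}^{n\times n}$ defined by $\Phi(Z) = Z_{[1:n],\,[n+1:2n]} + Z_{[n+1:2n],\,[1:n]}$. On the data side, $\Phi(\mathcal{M}\odot WW^T) = AB^T + BA^T$ exactly. On the target side, the identity shift $I_{2n}$ is killed by the mask, and if $G$ denotes the upper-right $n\times n$ block of $\sqrt{d_1}\,M(2n)$ (whose entries are iid $\mathcal{N}(0, d_1)$ by the GOE definition), then $\Phi$ outputs $G + G^T$, a symmetric matrix with $\mathcal{N}(0, 4d_1)$ diagonal and $\mathcal{N}(0, 2d_1)$ above-diagonal entries, i.e., the diagonal-to-off-diagonal variance ratio is $2{:}1$ and the matrix is an appropriately scaled copy of $M(n)$. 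By the data-processing inequality for total variation ($\TV(\Phi_*\mu, \Phi_*\nu) \le \TV(\mu,\nu)$),
$$\TV\bigl(AB^T + BA^T,\; \text{scaled } M(n)\bigr) \le o(1),$$
which, combined with the first step, is the desired statement.

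The reduction is essentially a clean black-box call to the bipartite-mask Wishart theorem of \cite{Brennan21DeFinetti}; the one thing to verify carefully is the variance normalization, which dictates the precise constant multiplying $M(n)$ in the target. There is no substantial analytic obstacle: the improvement from the single-Wishart threshold $\omega(n^3)$ to $\omega(n^2)$ is inherited for free from \cref{thm:bipartitemaskwishart}, whose intuition is that killing the diagonal blocks removes the rigid constraint that each $\|X_i\|_2^2$ concentrate sharply around $d$, which is the obstruction forcing the stricter single-Wishart threshold.
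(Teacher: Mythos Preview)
Your proof is correct and is essentially the paper's own argument: both perform the orthogonal change $(X,Y)\mapsto(X+Y,X-Y)/\sqrt2$, stack into a $2n\times d_1$ Gaussian matrix, invoke \cref{thm:bipartitemaskwishart} on the off-diagonal block, and finish with data processing via $V\mapsto V+V^T$. Your caution about the scaling constant is warranted; the target is $\sqrt{2d_1}\,M(n)$ rather than $\sqrt{d_1}\,M(n)$, but this is immaterial for the intended application.
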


Compared to \cref{thm:wisharttogoe}, this statement differs only in dimension. A Wishart matrix converges to GOE when $d = \omega(n^3),$ but a difference of independent Wishart matrices converges to GOE when $d = \omega(n^2).$ 

This proves that in the Gaussian case, for any connection $\sigma(\bfx, \bfy)$ over $\mathbb{R}^{d}\times \mathbb{R}^{d}$
that only depends on $\sum_{i \le d_1}x_iy_i - \sum_{j> d_1}x_jy_j,$ it is the case that 
$\TV\big(\RGG(n,\mathbb{R}^{2d_1},\mathcal{N}(0,I_{2d_1}), p,  \sigma), \ergraph\big) = o(1)$ whenever $d = {\omega}(n^2).$ We expect that the same thing holds for the hypercube model, but our \cref{thm:maintheoremindistingishability} yields dependence $d = {\omega}(n^3).$ 

In \cref{sec:improvingautocorrelation}, we use \cref{thm:informaldifferenceofwisharts} to rigorously show that there is an inefficiency in deriving \cref{claim:RaczLiuIndistinguishability} arising from the use of KL-convexity, as predicted in \cite{Liu2021APV}.

\subsubsection{Typical Indicator Connections Induce a Statistical-Computational Gap}
Finally, we study the behaviour of $\RAG(n, \hypercube, p_A, \sigma_A)$ where $A\subseteq \hypercube$ is a ``typical'' subset of the hypercube, $\sigma_A$ is its indicator, and $p_A = |A|/2^d$ is the expectation. We obtain strong evidence for a nearly-exponential statistical-computational gap. 

\begin{theorem}[Informal, \cref{thm:typicaldense,thm:entropyargument,thm:lowdegreepolyagainstrandomsubsets,thm:birthdaypaardox}]
\label{thm:informalhypercubegap}
In the setup above,
\begin{enumerate}[label=(\alph*)]
    \item If $d = \Omega(n\log n),$ then $\TV\Big(\RAG(n, \hypercube, p_A, \sigma_A)\|\ergraphhalf\Big) = o(1)$ for a $1 - 2^{-\Omega(d)}$ fraction of the subsets $A$ of $\hypercube.$
    \item If $d  = \frac{n}{2} - \omega(1),$ then 
    $\TV\Big(\RAG(n, \hypercube, p_A, \sigma_A)\|\ergraphhalf\Big) = 1- o(1)$ for all subsets $A$ of $\hypercube.$
    \item If $d = \log^{\Omega(1)}n,$ no low-degree polynomial test can distinguish $\RAG(n, \hypercube, p_A, \sigma_A)$ and $\ergraphhalf$ for a $1 - 2^{-\Omega(d)}$ fraction of the subsets $A$ of $\hypercube.$ 
\end{enumerate}
\end{theorem}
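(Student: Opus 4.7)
The plan is to invoke \cref{thm:mainintro} with $p = 1/2$. Writing $\indicator[\bfx \in A] = (1+\eta_\bfx)/2$ for independent Rademacher variables $(\eta_\bfx)_{\bfx \in \hypercube}$, for every nonempty $S$ we have $\widehat{\sigma_A}(S) = 2^{-d-1}\sum_{\bfx} \eta_\bfx \omega_S(\bfx)$, a sum of $2^d$ independent centered terms of magnitude $2^{-d-1}$. Hoeffding's inequality and a union bound over the $2^d$ choices of $S$ yield that, with probability at least $1 - 2^{-\Omega(d)}$ over the choice of $A$, $|\widehat{\sigma_A}(S)| \leq C\sqrt{d}\cdot 2^{-d/2}$ for every nonempty $S$ and $|p_A - 1/2| \leq C\sqrt{d}\cdot 2^{-d/2}$. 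Consequently $B_i^2 \leq C^2 d\cdot 2^{-d}\binom{d}{i}$ for all $i$, and plugging these into \cref{thm:mainintro} (taking, say, $m=1$), each of the four contributions inherits a factor $2^{-d}$ that crushes any polynomial-in-$n$ prefactor once $d \geq K n\log n$ with $K$ large enough. Thus $\KL = o(1)$, and Pinsker's inequality gives $\TV = o(1)$.

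\textbf{Part (b).} This is a support-size (entropy) argument valid for every $A$. Since $\sigma_A$ is $\{0,1\}$-valued, conditional on the latent vectors $(\bfx_1,\ldots,\bfx_n)$ the graph $G$ is a deterministic function of them, so $\RAG(n,\hypercube, p_A,\sigma_A)$ is supported on at most $|\hypercube|^n = 2^{nd}$ graphs. Since $\ergraphhalf$ is uniform on $2^{\binom{n}{2}}$ graphs, the total mass it assigns to this support is at most $2^{nd-\binom{n}{2}}$. When $d \leq n/2 - \omega(1)$ we have $nd - \binom{n}{2} \leq -\omega(n)$, and hence $\TV \geq 1 - 2^{-\omega(n)} = 1 - o(1)$.

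\textbf{Part (c).} Let $P_A$ denote the RAG distribution for fixed $A$ and $Q = \ergraphhalf$. We work in the low-degree polynomial framework in the Fourier basis $\chi_\alpha(G) = \prod_{e \in \alpha}(2G_e - 1)$, aiming to bound $\sum_{1 \le |\alpha|\le D} \expect_{P_A}[\chi_\alpha]^2 = o(1)$ with high probability over $A$ for $D = \log^{O(1)} n$. Setting $\tau_A = 2\sigma_A - 1 \in \{\pm 1\}$ and using multiplicativity of characters on $\hypercube$, namely $\omega_S(\bfx_i \bfx_j) = \omega_S(\bfx_i)\omega_S(\bfx_j)$, vertex-wise integration over the latent vectors gives
\[
\expect_{P_A}[\chi_\alpha] = \sum_{(S_e)_{e \in \alpha}\,\textup{flow-balanced}} \prod_{e\in\alpha} \widehat{\tau_A}(S_e),
\]
where ``flow-balanced'' means $\bigoplus_{e \ni v} S_e = \emptyset$ at every vertex $v$ incident to $\alpha$. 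Since $\widehat{\tau_A}(S) = 2^{-d}\sum_\bfx \eta_\bfx \omega_S(\bfx)$ are mean-zero and orthonormal under $\expect_A$ with variance $2^{-d}$, the diagonal pairing of $\expect_A \expect_{P_A}[\chi_\alpha]^2$ contributes $2^{-d|\alpha|}$ per flow-balanced labelling, and the number of such labellings equals $2^{d\beta_1(\alpha)}$, where $\beta_1(\alpha) = |\alpha| - |V(\alpha)| + c(\alpha)$ is the cycle rank. Hence
\[
\expect_A\expect_{P_A}[\chi_\alpha]^2 \lesssim (2D)^{O(D)} \cdot 2^{-d(|V(\alpha)| - c(\alpha))},
\]
the prefactor absorbing non-diagonal pairings in the Wick/moment expansion. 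Summing over subgraphs of at most $D$ edges and vertex-support $v$ gives $\sum_{v \geq 2} n^v \cdot 2^{-d(v-c)} = o(1)$ whenever $d \geq K D\log n$, which is satisfied when $d = \log^{\Omega(1)} n$ with an appropriate exponent; a final Markov step transfers the $A$-averaged bound to a $1 - 2^{-\Omega(d)}$ high-probability statement. The main obstacle is the control of the non-diagonal contributions in the $A$-average: products of the random Fourier coefficients of a Rademacher function do not factorize exactly, and one must verify that the combinatorial structure imposed by flow-balance (specifically, cycle-rank suppression of every non-trivial pairing of the $2|\alpha|$ factors) keeps all higher-moment contributions on the same order as the diagonal one.
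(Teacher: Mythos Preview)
For parts (a) and (b) your arguments are essentially the paper's. One correction in (a): the term $D = \sum_{d/(2en)<i<d-d/(2en)} B_i^2$ does \emph{not} inherit a $2^{-d}$ factor, since $\sum_i \binom{d}{i}\,2^{-d} = 1$ gives only $D = O(d)$; what kills this contribution in the KL bound is the multiplicative $\exp(-d/(2en))$, and this is precisely where $d = \Omega(n\log n)$ (not merely $d = \Omega(n)$) is used. You also need a final triangle-inequality step to pass from $G(n,p_A)$ to $G(n,1/2)$, since $p_A \neq 1/2$ exactly; the paper does this via a simple coupling.

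Part (c) takes a genuinely different route from the paper, and the gap you flag is real. The paper never Fourier-expands over $\hypercube$. Instead it writes $(\expect_{P_A}\psi_{\mathcal H})^2 = \expect_{P_A}\psi_{\mathcal K}$ for $\mathcal K$ a vertex-disjoint union of two copies of $\mathcal H$, then swaps the order of expectation:
\[
\expect_A\,\expect_{\bfx_1,\ldots,\bfx_s}\prod_{(i,j)\in E(\mathcal K)}\frac{\indicator[\bfx_i\bfx_j^{-1}\in A]-p}{\sqrt{p(1-p)}}.
\]
Now one works with the $\eta_\bfx = 2\indicator[\bfx\in A]-1$ directly rather than with their Fourier transform. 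Whenever the group elements $\{\bfx_i\bfx_j^{-1}:(i,j)\in E(\mathcal K)\}$ are pairwise distinct and not inverses of each other, the factors are independent over $A$ and the inner expectation vanishes exactly; this ``good'' event fails with probability $O(k^4/|\Group|)$, and on the complement the integrand is crudely bounded. Markov over $A$ then finishes. This completely sidesteps the obstacle you identify: the variables $\widehat{\tau_A}(S)$ are uncorrelated but not independent, so the higher moment $\expect_A\prod_i\widehat{\tau_A}(S_i)$ does not Wick-factorize, and bounding the non-diagonal pairings under the flow-balance constraint is genuinely delicate. Your cycle-rank picture is correct for the diagonal term and is conceptually nice, but turning it into a proof would require controlling those cross terms, whereas the paper's argument is a one-line independence observation once the expectations are swapped.
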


Our result is more general and holds for other values of $p$ beyond $1/2.$ We describe this in more detail in the next section, when we extend \cref{thm:informalhypercubegap} to arbitrary groups of appropriate size. The advantage of the Fourier-analytic proof in the hypercube setting over our proof in the setting of general groups is that it gives an explicit construction of connections for which the statistical limit occurs at  $d = \tilde{\Theta}(n).$ In particular, this is the case for all $A$ such that $\sigma_A$ is ${{d^C}}/{2^{d/2}}$-regular for a fixed constant $C$ (see \cref{claim:fouriercoefftypicalindicators,rmk:constrcutionwithgeularfunctions}). 

\subsection{Random Algebraic Graphs}
\label{sec:grouptheorymodel}
The main insight about the Boolean hypercube is that we can analyze connections $\sigma(\bfx, \bfy)$ depending only the group product, which is much more general than the inner product induced by $\mathbb{R}^d.$ It turns out that this construction can be naturally extended to a wide class of groups.

\begin{definition}[Random Algebraic Graphs]
\label{def:rag}
Suppose that $\Omega$ is a unimodular locally compact Hausdorff topological group $\Group$ with the unique left- and right-invariant probability Haar measure $\mu$ corresponding to distribution $\mathcal{D}.$\footnote{
If $\Group$ is finite, this measure is just the uniform measure over the group elements. If $\Group$ is, say, the orthogonal group, this is the Haar measure over it. Importantly, since the group is unimodular, the left and right Haar measures coincide.
} Suppose further that the connection $\zeta :\Group \times \Group \longrightarrow [0,1]$ given by $\zeta(\bfx, \bfy)$ only depends on $\bfx^{-1}\bfy,$ that is $\zeta(\bfx, \bfy) = \sigma(\bfx^{-1}\bfy)$ for some $\sigma:\Group\longrightarrow[0,1].$ Then, we call $\mathsf{PLSG}(n, \Group, \distribution, p, \zeta)$ a left random algebraic graph. We use the notation
$
\LRAG(n,\Group,p,\sigma).
$ Similarly, define a right random algebraic graph and denote by $\RRAG$ for connections depending only on $\bfx\bfy^{-1}.$
\end{definition}

Clearly, for abelian groups such as $\hypercube$, right- and left- random algebraic graphs coincide, for which reason we will use the simpler notation $\RAG.$ We now demonstrate the expressivity of random algebraic graphs by relating them  to (approximate versions) of other well-studied graph models.

\paragraph{1. Random Geometric Graphs over the Sphere and Hypercube.} Random geometric graphs over the hypercube are random algebraic graphs. More surprisingly, any random geometric graph defined by $\Omega = \dsphere, \distribution = \unif,$ and a connection $\zeta$ depending only on inner products
can be represented as a random algebraic graph even though $\dsphere$ does not possess a group structure when $d>3.$
Namely, consider a different latent space - the orthogonal group $\orthogonalgroup(d)  =\Omega$ with its Haar measure. The latent vectors are iid 
``uniformly distributed'' orthogonal matrices
$U_1, U_2, \ldots, U_n.$ To generate uniform iid latent vectors on $\dsphere,$ take
$U_1v, U_2v, \ldots, U_nv$ for an arbitrary $v\in \dsphere,$ say $v = e_1.$ Define $\sigma(U):=\zeta(\langle vv^T, U\rangle)$ and observe that 
$$
\zeta(U_iv, U_jv) = 
\zeta(\langle U_iv,U_jv\rangle) = 
\zeta(\langle vv^T, U_i^TU_j\rangle) = 
\sigma(U_i^{-1}U_j).
$$
Clearly, $\RGG(n, \dsphere, \unif, p, \zeta)$ and $\LRAG(n, \orthogonalgroup(d), p,\sigma)$ have the same distribution.

\paragraph{2. Cayley Graphs: Blow-ups and Random Subgraphs.} Suppose that $\Group$ is a finite group and $\sigma$ is an indicator of some $S\subseteq\Group$ which satisfies $a\in S$ if and only if $a^{-1}\in S.$ The Cayley graph  $\Gamma(\Group,S)$ has vertex set $\Group$ and edges between any two distinct vertices $g,h \in V(\Gamma(\Group,S)) = \Group$ for which $gh^{-1}\in S.$ We consider two regimes.

First, when $n = o(\sqrt{|\Group|}).$ With high probability, the latent vectors in $\RRAG(n, \Group, p, \sigma)$ are pairwise distinct by the birthday paradox. This immediately leads to the following observation.

\begin{observation}
    \label{obs:CayleyAndRAG}
    Suppose that $\Group$ is a group of size at least $n$ and $S\subseteq \Group$ is closed under taking inverses.
    Denote by $\Gamma_n(\Group,S)$ the uniform distribution over $n$-vertex induced subgraphs of $\Gamma(\Group,S).$ 
    If $|\Group| = \omega(n^2),$
$$\TV\Big(\RRAG(n, \Group, p, \sigma), \Gamma_n(\Group,S)\Big) = o(1).$$
\end{observation}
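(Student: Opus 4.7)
The statement is essentially a clean application of the birthday paradox, as the preceding sentence suggests. The plan is to couple the two distributions via the event that the $n$ latent vectors sampled in $\RRAG$ are pairwise distinct, and show this event has probability $1 - o(1)$ when $|\Group| = \omega(n^2)$.

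\textbf{Step 1: The coincidence event.} Under $\RRAG(n, \Group, p, \sigma)$, the latent vectors $\bfx_1, \ldots, \bfx_n$ are i.i.d.\ uniform on $\Group$. Let $E$ be the event that they are pairwise distinct. By a union bound over pairs,
\[
\prob(E^c) \le \binom{n}{2} \cdot \frac{1}{|\Group|} = O\!\left(\frac{n^2}{|\Group|}\right) = o(1)
\]
under the assumption $|\Group| = \omega(n^2)$.

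\textbf{Step 2: Conditional distribution agrees with $\Gamma_n(\Group, S)$.} Conditioned on $E$, the tuple $(\bfx_1, \ldots, \bfx_n)$ is uniform over ordered $n$-tuples of distinct elements of $\Group$; equivalently, this is a uniformly random $n$-subset of $\Group$ together with a uniformly random bijection with $[n]$. The graph produced by $\RRAG$ on this conditional event has an edge between $i$ and $j$ iff $\sigma(\bfx_i \bfx_j^{-1}) = 1$, i.e.\ iff $\bfx_i \bfx_j^{-1} \in S$. This is by definition the induced subgraph of $\Gamma(\Group, S)$ on $\{\bfx_1, \ldots, \bfx_n\}$, with labeling inherited from the tuple. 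This is precisely a sample from $\Gamma_n(\Group, S)$ (after the usual identification of a uniform induced subgraph on $n$ labeled vertices with a uniform ordered $n$-tuple of distinct vertices).

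\textbf{Step 3: TV bound.} Since the two distributions agree on the event $E$ (of probability $1-o(1)$ under $\RRAG$), a standard coupling argument gives
\[
\TV\!\Big(\RRAG(n,\Group,p,\sigma),\, \Gamma_n(\Group, S)\Big) \le \prob(E^c) = O\!\left(\frac{n^2}{|\Group|}\right) = o(1),
\]
which is the desired conclusion. There is essentially no obstacle here; the only point requiring slight care is unpacking the convention by which $\Gamma_n(\Group, S)$ is treated as a labeled graph, so that the conditional distribution from Step~2 genuinely matches.
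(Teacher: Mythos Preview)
Your proposal is correct and matches the paper's approach exactly: the paper does not give a formal proof, merely noting before the observation that the latent vectors are pairwise distinct with high probability by the birthday paradox, and the coupling you spell out in Steps~2--3 is precisely what is meant by ``immediately leads to.''
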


Second, when $|\Group| = o(\frac{\sqrt{n}}{\log n}).$ With high probability, each $g\in \Group$ appears $\frac{n}{|\Group|}(1 + o(1))$ times as a latent vector in $\RRAG(n, \Group, p, \sigma).$ It follows that $\RRAG(n, \Group, p, \sigma)$ is approximately an $\frac{n}{|\Group|}$-blow-up of $\Gamma(\Group,S).$

It is interesting to consider whether this relationship to Cayley graphs can be used to understand random geometric graphs. For example, there are several results on the expansion properties of Cayley graphs \cite{Alon94RandomCayleyGraphs,Conlon17HypergraphExpanders}, which could be related to the expansion properties of random geometric graphs \cite{Liu22Expander}.

\paragraph{3. Stochastic Block Model.}
The stochastic block-model is given by an $n$ vertex graph, in which $k$ communities and edges between them are formed as follows (\cite{Bubeck17NetworkSurvey}). First, each vertex is independently assigned a uniform label in $\{1,2,\ldots,k\}.$ Then, between every two vertices, an edge is drawn with probability $p$ if they have the same label and with probability $q<p$ otherwise. Denote this graph distribution by $\sbm(n,k,p,q).$

$\sbm(n,k,p,q)$ can be modeled as a random algebraic graph as follows. Take an arbitrary abelian group $\Group$ which has a subgroup $\mathcal{H}$ of order $|\Group|/k.$ Define $\sigma:\Group\longrightarrow [0,1]$ by 
$\sigma(\bfg):=q + (p-q)\times \indicator[\bfg \in \mathcal{H}].$ Then,
$\RAG(n , \Group, \frac{p + (k-1)q}{k}, \sigma)$ has the same distribution as $\sbm(n,k,p,q).$ This follows directly from the observation that communities in the stochastic block model correspond to subsets of vertices with latent vectors in the same coset with respect to $\mathcal{H}$ in the random algebraic graph.

\subsection{Our Results on Typical Cayley Graphs}
We utilize the connection between random algebraic graphs and Cayley graphs to study the high-probability behaviour of Cayley graphs when the generating set is chosen randomly. The distribution over random generating sets that we consider is the following.

\begin{definition}[The ante-inverse-closed uniform measure]
\label{def:anteuniform}
Let $\Group$ be a group and $0\le p \le 1$ a real number, possibly depending on $\Group.$ Define the density $\anteuniform(\Group, p)$ as follows. Consider the action of $C_2 = \{1,\rho\}$ on $\Group$ defined by $\rho(\bfg) = \bfg^{-1}$ and let $\Orbit_1, \Orbit_2, \ldots, \Orbit_t$ be the associated orbits. Let $X_1, X_2, \ldots, X_t$ be $t$ iid $\Bernoulli(p)$ random variables. Then,  $\anteuniform(\Group, p)$ is the law of $S = \bigcup_{j \; : \; X_j = 1}\Orbit_j.$
\end{definition}

Our main result, extending \cref{thm:informalhypercubegap}, is the following.

\begin{theorem}[Informal, \cref{thm:generalragindistinguishability,thm:entropyargument,thm:lowdegreepolyagainstrandomsubsets}]
\label{thm:informalraggap}
Consider a finite group $\Group.$
\begin{enumerate}[label=(\alph*)]
    \item If $|\Group| = \exp\big(\Omega(n\log \frac{1}{p})\big),$ then $\TV\Big(\Gamma_n(\Group, A),\ergraph\Big) = o(1)$ with probability at least $1 - |\Group|^{-1/7}$ over $A\sim \anteuniform(\Group,p).$
    \item If $|\Group| = \exp(O(nH(p))),$ where $H$ is the binary entropy function, then $\TV\Big(\Gamma_n(\Group, A),\ergraph\Big) = 1 - o(1)$ for any generating set $A.$
    \item If $|\Group| = \exp(\Omega(k\log n)),$ no polynomial test of degree at most $k$ with input the graph edges can distinguish $\ergraph$ and $\Gamma_n(\Group, A)$ with high probability over $A\sim \anteuniform(\Group,p).$
\end{enumerate}
\end{theorem}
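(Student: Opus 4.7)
The plan is to prove parts (a), (b), (c) separately. Throughout, I will use \cref{obs:CayleyAndRAG} to identify $\Gamma_n(\Group,A)$ with the right random algebraic graph $\RRAG(n,\Group,p,\indicator_A)$ at total-variation cost $O(n^2/|\Group|)=o(1)$ in all three regimes, which is negligible given the hypotheses.

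For part (a), I would apply \cref{claim:RaczLiuIndistinguishability} with $q=p$, which reduces the KL-divergence to controlling the moments (over $\bfg$) of the autocorrelation $\gamma_p(\bfg)=\expect_{\bfu}\bigl[(\sigma_A(\bfu)-p)(\sigma_A(\bfu\bfg)-p)\bigr],$ a function only of $\bfg=\bfx\bfy^{-1}$ in the random algebraic setting. For a random $A\sim\anteuniform(\Group,p)$, I would decompose $\sigma_A-p$ into non-abelian Fourier components (Peter--Weyl decomposition associated to the irreducible representations of $\Group$), and bound $\expect_{\bfg}[\gamma_p(\bfg)^{2t}]$ via a Parseval identity combined with independence of the orbit indicators defining $A$, yielding a sum of terms each of order $(p(1-p)/|\Group|)^t$ times a combinatorial factor. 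Plugging these into \eqref{eq:firsteqonindist} and summing over $k\le n$, one aims to conclude $\KL=o(1)$ whenever $\log|\Group|\gtrsim n\log(1/p)$. The main obstacle is the small-$p$ regime, where naive moment bounds lose factors of $1/p$ per level; the fix is to use a Bernstein-type deviation inequality for the Fourier coefficients of the random indicator $\sigma_A$, effectively replacing $|\Group|$ by $|\Group|p$ in the concentration rate and recovering the $\log(1/p)$ factor in the theorem statement.

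For part (b), the argument is combinatorial and entropic. The joint random object $(\bfx_1,\ldots,\bfx_n,A)$ drawn from $\distribution^{\otimes n}\otimes\anteuniform(\Group,p)$ has typical set of log-size at most $n\log|\Group|+(|\Group|/2)H(p)+o(|\Group|)$ by subadditivity of entropy, and the map producing the output graph $\Gamma_n(\Group,A)$ is deterministic, so the image's effective support has log-cardinality bounded by the same quantity. On the other hand, a Shannon--McMillan argument shows that the typical set of $\ergraph$ has log-cardinality $(1-o(1))\binom{n}{2}H(p)$. Whenever $n\log|\Group|\le c\binom{n}{2}H(p)$ for a sufficiently small constant $c$, and $|\Group|H(p)=o(n^2H(p))$, the two typical sets are essentially disjoint in measure, forcing $\TV=1-o(1)$ by an Occam/Fano-type argument. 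The main obstacle is handling all $p\in(0,1)$ uniformly, especially $p$ close to $0$ or $1$, which I would do via a tilted-measure argument to pin down the typical set of $\ergraph$ precisely.

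For part (c), I would adopt the low-degree likelihood ratio framework of \cite{Schramm_2022}: it suffices to show that with probability $1-o(1)$ over $A$, the degree-$k$ low-degree norm $\|L_{\le k}\|^2$ of $\Gamma_n(\Group,A)$ relative to $\ergraph$ is $1+o(1)$. Expanding in the multilinear basis $\chi_S(G)=\prod_{e\in S}(G_e-p)/\sqrt{p(1-p)}$, for each $S$ with $|S|\le k$ one has
$$\expect_{G\sim\Gamma_n(\Group,A)}[\chi_S(G)]=\expect_{\bfx_1,\ldots,\bfx_n}\Bigl[\prod_{\{i,j\}\in S}\bigl(\indicator[\bfx_i\bfx_j^{-1}\in A]-p\bigr)/\sqrt{p(1-p)}\Bigr].$$
A birthday-type union bound shows that with probability $1-O(k^4/|\Group|)$ over the latent vectors, the $|S|$ ratios $\bfx_i\bfx_j^{-1}$ lie in $|S|$ distinct $C_2$-orbits of $\Group$, at which point the product of centered indicators has mean exactly zero over the choice of $A$. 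Hence each squared coefficient is at most $O(k^4/|\Group|^2)\cdot p^{-O(k)}$ uniformly in $S$, and summing over at most $n^{2k}$ subsets gives $\|L_{\le k}\|^2\le 1+n^{2k}k^4p^{-O(k)}/|\Group|^2$, which is $1+o(1)$ whenever $\log|\Group|\gtrsim k\log n$. The main obstacle is upgrading this in-expectation bound to a high-probability statement over $A$, which I would achieve by applying Markov to the fully expanded second-moment sum and exploiting the fact that each individual term concentrates tightly in $A$ due to the orbit independence in \cref{def:anteuniform}.
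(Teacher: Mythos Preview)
Your plan for part (c) is essentially the paper's argument: square the coefficient, view it as a signed subgraph count on two disjoint copies, swap the expectation with $A$, and use a birthday-type collision bound plus Markov. The only discrepancy is bookkeeping (you write $O(k^4/|\Group|^2)$ where the paper gets one power of $|\Group|$ and then splits it via Markov into $|\Group|^{-1/3}$ and $|\Group|^{-2/3}$), but the structure is the same.

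For part (a), your route via the Peter--Weyl decomposition and Bernstein-type concentration of non-abelian Fourier coefficients is \emph{not} what the paper does, and it is not clear it goes through cleanly for general non-abelian $\Group$ (higher-dimensional irreps make ``Fourier coefficients of a random indicator'' matrix-valued, and the analogue of Parseval for $\gamma^t$ is not a simple sum). The paper sidesteps representation theory entirely: it works directly with $\phi_t(A)=\expect_{\bfg}[\gamma_A(\bfg)^t]$, bounds the second moment $\Phi(t)=\expect_A[\phi_t(A)^2]$ by expanding into a product over $4t$ factors $(\sigma_A(\cdot)-p)$, and then swaps the order of expectation with the latent variables $\bfg^{(1)},\bfg^{(2)},\{\bfz_i^{(j)}\}$. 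A purely combinatorial collision argument (identical in spirit to your birthday bound in part (c)) shows that with probability $1-O(|\Group|^{-1/3})$ over these latent variables, some factor is independent of the others under $\anteuniform$, so the inner expectation vanishes. Markov then gives the high-probability bound on $\phi_t(A)$, and plugging into \cref{claim:RaczLiuIndistinguishability} finishes. This is both simpler and more robust than a Fourier approach, and the $\log(1/p)$ factor comes out automatically from summing $\sum_t\binom{k}{t}p^{-t}(1-p)^{-t}|\Group|^{-1/12}$, not from any Bernstein refinement.

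Part (b) has a genuine gap. The statement is for \emph{any} fixed $A$, not for $A\sim\anteuniform$, so there is no entropy of $A$ to account for --- and even if there were, your term $(|\Group|/2)H(p)$ would be of order $\exp(\Theta(nH(p)))\cdot H(p)$, which swamps $n^2H(p)$ and kills the argument. The correct (and simpler) observation is that for fixed $A$ and indicator $\sigma_A$, the graph is a deterministic function of $(\bfx_1,\ldots,\bfx_n)$, so the support of $\Gamma_n(\Group,A)$ has size at most $|\Group|^n$. When $\log|\Group|\le CnH(p)$ for small enough $C$, this support has log-size $\le Cn^2H(p)$, while a Chernoff bound shows $\ergraph$ puts mass $1-o(1)$ on graphs each of individual probability at most $\exp(-c\,n^2H(p))$; comparing the two gives $\TV=1-o(1)$.
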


We note that these results can be equivalently phrased if we replace $\Gamma_n(\Group, A)$ by $\RRAG(n, \Group, p_A, \sigma_A),$ where $\sigma_A$ is the indicator of $A$ and $p_A$ is its expectation.

In particular, this suggests that indistinguishability between $\Gamma_n(\Group, A)$ and $\ergraph$ depends only on the size of $\Group,$ but not on its group structure. This continues the long-standing tradition of certifying that important properties of random Cayley graphs only depend on group size. Prior literature on the topic includes results on random walks \cite{aldous86,dou86,Hermon2021CutoffFA} (see \cite{Hermon2021CutoffFA} for further references), related graph-expansion properties \cite{Alon94RandomCayleyGraphs}, and chromatic numbers \cite{ALON20131232} of typical Cayley graphs.

One major difference between previous results and our current result is the measure over generating sets used. Previous works used is what we call the \textit{post-inverse-closed uniform measure.}

\begin{definition}[The post-inverse-closed uniform measure]
\label{def:postuniform}
Let $\Group$ be a finite group and $k \le |\Group|$ be an integer. Define the density $\postuniform(\Group,k)$ as follows. Draw $k$ uniform elements $Z_1, Z_2, \ldots, Z_k$ from $\Group$ and set $Z = \{Z_1, Z_2, \ldots, Z_k\}.$ Then, 
$\postuniform(\Group,k)$ is the law of $S = Z\cup Z^{-1}.$
\end{definition}

There are several differences between the two measures on generating sets, but the crucial one turns out to be the following. In the \textit{ante-inverse-closed uniform measure}, each element appears in $S$ with probability $p.$ In the \textit{post-inverse-closed uniform measure}, an element $\bfg$ of order two appears with probability $q := 1 - (1 - \frac{1}{|\Group|})^k$ since $\bfg \in Z\cup Z^{-1}$ if and only if $\bfg\in Z.$ On the other hand, an element $\bfh$ of order greater than 2 with probability  $1 - (1 - \frac{2}{|\Group|})^k\approx 2q.$ This suggests that we cannot use the $\postuniform$ measure for groups which simultaneously have a large number of elements of order two and elements of order more than two to prove indistinguishability results such as the ones given in \cref{thm:informalraggap}. Consider the following example.

\begin{example}
\normalfont
Consider the group $\Group = \cycle_3\times \underbrace{\cycle_2\times \cycle_2\times\cdots\times\cycle_2}_d.$ Denote the three different cosets with respect to the $\cycle_3$ subgroup by $H_i = \{i\}\times \underbrace{\cycle_2\times \cycle_2\times\cdots\times\cycle_2}_d$ for $i \in \{0,1,2\}.$ Elements in $H_0$ have order at most $2,$ while elements in $H_1, H_2$ have order $3.$ Now, consider $\postuniform(\Group,k),$ where $k$ is chosen so that $\expect [|S|\times |\Group|^{-1}] \approx \frac{1}{2}.$ Each element from $H_0$ appears in $S$ with probability $q$ and each element in $H_1, H_2$ with probability $\sim 2q,$ such that $q\times |H_0| + 2q \times (|H_1| + |H_2|) = \frac{|\Group|}{2}.$ It follows that $q\approx \frac{3}{10}.$ 

Let $S$ be drawn from $\postuniform(\Group,k)$ and $G \sim \Gamma_n(\Group, S).$ With high probability, the vertices of $G$ can be split into three groups - $G_0, G_1, G_2,$ where $G_i$ contains the vertices coming from coset $H_i.$ The average density within each of $G_0, G_1, G_2$ in $G$ will be approximately $\frac{3}{10},$ while the average density of edges of $G$ which have at most one endpoint in each of $G_0, G_1, G_2$ will be approximately $\frac{6}{10}.$ Clearly, such a partitioning of the vertices $G_0, G_1, G_2$ typically does not exist for \ER graphs. Thus, no statistical indistinguishability results can be obtained using the $\postuniform$ measure over $\Group = \cycle_3\times \underbrace{\cycle_2\times \cycle_2\times\cdots\times\cycle_2}_d.$
\end{example}

Of course, there are other differences between $\anteuniform$ and $\postuniform,$ such as the fact that in the $\postuniform$ measure elements from different orbits do not appear independently (for example, as the maximal number of elements of $S$ is $2k$). These turn out to be less consequential. In \cref{appendix:beyondanteuniform}, we characterize further distributions beyond $\anteuniform$ for which our statistical and computational indistinguishability results hold. Informally, the desired property of a distribution $\mathsf{D}(\Group, p)$ is the following. Let $A\sim \mathsf{D}(\Group, p).$ For nearly all elements $\bfx\in \Group,$ if we condition on any event $E$ that $\polylog(|G|)$ elements besides $\bfx, \bfx^{-1}$ are or are not in $A,$ we still have $\prob[\bfx\in A|E] = p + O(|\Group|^{-c}),$ where $c\in (0,1)$ is constant. 

In particular, this means that our results over $\Group = \hypercube$ hold when we use $\postuniform(\Group, k)$ for an appropriately chosen $k$ so that $(1 - \frac{1}{|\Group|})^k = p + O\big(\frac{1}{|\Group|}\big)$ since all elements of $\hypercube$ have order $2.$ Similarly, we can use the $\postuniform$ density with an appropriate $k$ for all groups without elements of order two, which include the groups of odd cardinality.

\section{Notation and Preliminaries}
\label{sec:preliminaries}
\subsection{Information Theory}
The main subject of the current paper is to understand when two graph distributions - $\ergraph$ and\linebreak $\PLSG$  - are statistically indistinguishable or not. For that reason, we need to define similarity measures between distributions. 

\begin{definition}[For example, \cite{Polianskiy22+}] Let $\mathbf{P}, \mathbf{Q}$ be two probability measures over the measurable space $(\Omega, \mathcal{F}).$ Suppose that $\mathbf{P}$ is absolutely continuous with respect to $\mathbf{Q}$ and has a Radon-Nikodym derivative $\frac{d\prob}{d\mathbf{Q}}.$ For a convex function $f:\mathbb{R}_{\ge 0}\longrightarrow \mathbb{R}$ satisfying $f(1) = 0,$ the $f$-divergence between $\mathbf{P}, \mathbf{Q}$ is given by 
$$
\mathsf{D}_f(\prob\|\mathbf{Q}) = 
\int_\Omega f\bigg(\frac{d\prob}{d\mathbf{Q}}\bigg)d\mathbf{Q}. 
$$    
\end{definition}

The two $f$-divergences of primary interest to the current paper are the following:
\begin{enumerate}
    \item \textit{Total Variation:} $\TV(\mathbf{P}, \mathbf{Q}):=\mathsf{D}_f(\prob\|\mathbf{Q})$ for $f(x) = \frac{1}{2}|x-1|.$ It easy to show that $\TV$ can be equivalently defined by 
    $\TV(\mathbf{P}, \mathbf{Q}) = \sup_{A\in \mathcal{F}}|\mathbf{P}(A) - \mathbf{Q}(A)|.$
    % Note that $\TV(\mathbf{P}, \mathbf{Q}) = \TV(\mathbf{Q}, \mathbf{P}).$
    \item \textit{KL-Divergence:} $\KL(\mathbf{P}\| \mathbf{Q}):=\mathsf{D}_f(\prob\|\mathbf{Q})$ for $f(x) = x\log x$ (where $f(0) = 0$).
\end{enumerate}

% The primary statistical distance measure of interest is $\TV$ as it characterizes the performance of an optimal statistical test between $\mathbf{P}$ and $\mathbf{Q}$ in light of the equivalent definition based on $\sup_{A\in \mathcal{F}}|\mathbf{P}(A) - \mathbf{Q}(A)|.$ 
It is well-known that total variation characterizes the error probability of an optimal statistical test. 
% Nevertheless, it turns out that for technical reasons -- such as tensorisation of $\KL$ divergence -- 
However, it is often easier to prove bounds on KL and translate to total variation via Pinsker's inequality.

\begin{theorem}[For example, \cite{Polianskiy22+}]
    \label{thm:pinsker} Let $\mathbf{P}, \mathbf{Q}$ be two probability measures over the measurable space $(\Omega, \mathcal{F})$ such that $\mathbf{P}$ is absolutely continuous with respect to $\mathbf{Q}.$ Then, 
    $$
    \TV(\mathbf{P}, \mathbf{Q})\le \sqrt{\frac{1}{2}\KL(\mathbf{P}\| \mathbf{Q})}.
    $$
\end{theorem}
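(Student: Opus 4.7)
The plan is to execute the standard two-step proof of Pinsker's inequality: first reduce to the Bernoulli case via the data-processing inequality, then establish the scalar inequality by one-variable calculus.

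\textbf{Step 1 (Reduction to Bernoulli).} Pick an event $A \in \mathcal{F}$ attaining the supremum in the variational definition of total variation (or a sequence approaching it), so that, after possibly swapping $\mathbf{P}$ and $\mathbf{Q}$, $\TV(\mathbf{P},\mathbf{Q}) = \mathbf{P}(A) - \mathbf{Q}(A)$. Write $p = \mathbf{P}(A)$ and $q = \mathbf{Q}(A)$, and let $\mathrm{Ber}(p), \mathrm{Ber}(q)$ denote the push-forwards of $\mathbf{P},\mathbf{Q}$ under the indicator $x \mapsto \indicator[x\in A]$. Since the $f$-divergence associated with $f(x) = x\log x$ satisfies the data-processing inequality (equivalently, by Jensen applied to the conditional densities of $d\mathbf{P}/d\mathbf{Q}$ on $A$ and $A^c$), we obtain
\[
\KL(\mathbf{P}\|\mathbf{Q}) \;\ge\; \KL\!\left(\mathrm{Ber}(p)\|\mathrm{Ber}(q)\right) \;=\; p\log\frac{p}{q} + (1-p)\log\frac{1-p}{1-q}.
\]
Since $\TV(\mathbf{P},\mathbf{Q}) = p-q$, it suffices to prove the scalar (binary Pinsker) inequality
\[
2(p-q)^2 \;\le\; p\log\frac{p}{q} + (1-p)\log\frac{1-p}{1-q}
\qquad \text{for all } p,q \in (0,1).
\]

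\textbf{Step 2 (Binary Pinsker via calculus).} Fix $p \in (0,1)$ and set
\[
g(q) \;:=\; p\log\frac{p}{q} + (1-p)\log\frac{1-p}{1-q} - 2(p-q)^2.
\]
A direct differentiation and combination of the two logarithmic terms yields
\[
g'(q) \;=\; -\frac{p}{q} + \frac{1-p}{1-q} + 4(p-q) \;=\; \frac{q-p}{q(1-q)} - 4(q-p) \;=\; (q-p)\!\left[\frac{1}{q(1-q)} - 4\right].
\]
Because $q(1-q) \le \tfrac{1}{4}$ for all $q \in (0,1)$, the bracketed factor is non-negative, so $g'(q)$ has the same sign as $q-p$. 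Thus $g$ is decreasing on $(0,p]$ and increasing on $[p,1)$, with $g(p) = 0$, whence $g(q) \ge 0$ on $(0,1)$. The boundary cases $q \in \{0,1\}$ are handled by continuity (with the convention $0\log 0 = 0$ and the observation that if $q=0 < p$ or $q=1 > p$ the KL side is $+\infty$). Squaring and combining with Step~1 gives $\TV(\mathbf{P},\mathbf{Q})^2 \le \tfrac{1}{2}\KL(\mathbf{P}\|\mathbf{Q})$, as required.

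\textbf{Main obstacle.} There is no deep difficulty — this is a classical argument. The only point requiring mild care is justifying the data-processing reduction in Step 1 at the full generality of abstract measurable spaces; this is handled either by invoking the general data-processing inequality for $f$-divergences, or, concretely, by applying Jensen's inequality to $f(x) = x\log x$ separately over $A$ and $A^c$ to show that collapsing $\mathbf{P},\mathbf{Q}$ to their two-point marginals can only decrease the KL divergence.
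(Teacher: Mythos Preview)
Your proof is correct and is the standard argument for Pinsker's inequality. However, there is nothing to compare against: the paper does not supply its own proof of this statement. It is recorded as a classical fact with a citation (``For example, \cite{Polianskiy22+}'') and used as a black box throughout, so the paper's ``proof'' is simply a reference to the literature.
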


% In light of this result, in order to show that $\TV(\mathbf{P}, \mathbf{Q}) = o(1),$ it is enough to show that 
% $\KL(\mathbf{P}\| \mathbf{Q}) = o(1).$ All of our indistinguishability results are with respect to $\KL.$

Another classic inequality which we will need is the data-processing inequality. We only state it in the very weak form in which we will need it.

\begin{theorem}[For example, \cite{Polianskiy22+}]
\label{thm:dataprocessing}
For any two distributions $\mathbf{P}, \mathbf{Q},$ any measurable deterministic function $h,$ and any  $f$-divergence $\mathsf{D}_f,$ the inequality $\mathsf{D}_f(\prob\|\mathbf{Q})\ge \mathsf{D}_f(h_*(\prob)\|h_*(\mathbf{Q}))$ holds. Here, $h_*(\prob), h_*(\mathbf{Q})$ are the respective push-forward measures.
\end{theorem}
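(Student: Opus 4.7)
The plan is to reduce the data-processing inequality to Jensen's inequality applied to a conditional expectation. Set $L := d\mathbf{P}/d\mathbf{Q}$ and let $\mathbf{P}' := h_*(\mathbf{P})$, $\mathbf{Q}' := h_*(\mathbf{Q})$ be the push-forward measures on the target space. The first step is to establish the key identity that, viewed as functions on the target space,
\begin{align*}
\frac{d\mathbf{P}'}{d\mathbf{Q}'}(y) \;=\; \mathbf{E}_{X \sim \mathbf{Q}}\!\left[L(X) \mid h(X) = y\right].
\end{align*}
This follows by verifying the defining property of the Radon-Nikodym derivative: for every measurable set $A'$ in the target space,
\begin{align*}
\mathbf{P}'(A') \;=\; \mathbf{P}(h^{-1}(A')) \;=\; \int_{h^{-1}(A')} L \, d\mathbf{Q} \;=\; \int_{A'} \mathbf{E}_{\mathbf{Q}}[L \mid h](y) \, d\mathbf{Q}'(y),
\end{align*}
where the last equality uses the tower property together with the change-of-variables formula for push-forwards.

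With this identity in hand, the second step is to invoke the conditional Jensen inequality: since $f$ is convex and $L \ge 0$, one has $f(\mathbf{E}_{\mathbf{Q}}[L \mid h]) \le \mathbf{E}_{\mathbf{Q}}[f(L) \mid h]$ $\mathbf{Q}$-almost surely. Integrating against $\mathbf{Q}$ and applying the change-of-variables formula for push-forwards together with the tower property yields
\begin{align*}
\mathsf{D}_f(\mathbf{P}' \| \mathbf{Q}') \;=\; \mathbf{E}_{\mathbf{Q}}\!\left[f\!\left(\mathbf{E}_{\mathbf{Q}}[L \mid h]\right)\right] \;\le\; \mathbf{E}_{\mathbf{Q}}[f(L)] \;=\; \mathsf{D}_f(\mathbf{P} \| \mathbf{Q}),
\end{align*}
which is exactly the desired inequality. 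The convexity of $f$ and the normalization $f(1)=0$ play no role beyond ensuring Jensen applies and the $f$-divergence is well-defined.

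The main obstacle is a technical one rather than a conceptual one: ensuring that the conditional expectation $\mathbf{E}_{\mathbf{Q}}[L \mid h]$ exists as a bona fide measurable function on the target space, so that the displayed change-of-variables manipulations are justified. In full generality this requires the existence of a regular version of the conditional expectation, which is standard whenever the underlying space is Polish (or standard Borel). All applications of the inequality in the paper take place on finite graphs or on finite-dimensional product measure spaces, so these regularity conditions trivially hold and the technicalities cause no real difficulty. An alternative, essentially equivalent, route that entirely sidesteps the measurability issue would be to use the variational representation $\mathsf{D}_f(\mathbf{P}\|\mathbf{Q}) = \sup_g \{\mathbf{E}_{\mathbf{P}}[g] - \mathbf{E}_{\mathbf{Q}}[f^\ast(g)]\}$ in terms of the convex conjugate $f^\ast$, and observe that any test function $g'$ on the target space lifts to $g' \circ h$ on the source space, so the supremum on the source space is over a richer class and hence at least as large.
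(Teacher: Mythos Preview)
Your proposal is correct and follows the standard route via conditional Jensen's inequality, with the variational alternative also correctly sketched. Note, however, that the paper does not actually prove this theorem: it is stated as a cited preliminary result from \cite{Polianskiy22+}, so there is no in-paper proof to compare against.
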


\subsection{Boolean Fourier Analysis}
\label{sec:booleanfourier}
One of the main contributions of the current paper is that it introduces tools from Boolean Fourier analysis to the study of random geometric graphs. Here, we make a very brief introduction to the topic. An excellent book on the subject is \cite{ODonellBoolean}, on which our current exposition is largely based.

We note that $\{\pm 1\}^d$ is an abelian group (isomorphic to $\mathbb{Z}_2^d$) and we denote the product of elements $\bfx, \bfy$ in it simply by $\bfx\bfy.$ Note that $\bfx = \bfx^{-1}$ over $\{\pm 1\}^d.$

Boolean Fourier analysis is used to study the behaviour of functions $f:\{\pm 1\}^d\longrightarrow\mathbb{R}.$ An important fact is the following theorem stating that all functions on the hypercube are polynomials.

\begin{theorem} Every function $f:\{\pm 1\}^d\longrightarrow\mathbb{R}$ can be uniquely represented as 
$$
f(\bfx) = \sum_{S\subseteq [d]} \widehat{f}(S)\omega_S(\bfx),
$$
where $\omega_S(\bfx) = \prod_{i \in S}x_i$ and 
$\widehat{f}(S)$ is a real number, which we call the Fourier coefficient on $S.$
\end{theorem}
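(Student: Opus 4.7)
The plan is to establish the theorem by showing that the monomials $\{\omega_S\}_{S \subseteq [d]}$ form an orthonormal basis of the vector space $V$ of all functions $f : \{\pm 1\}^d \to \mathbb{R}$, where we equip $V$ with the inner product
\begin{equation*}
\langle f, g \rangle = \expect_{\bfx \sim \unif(\hypercube)}[f(\bfx)\, g(\bfx)].
\end{equation*}
Once this is done, both existence and uniqueness follow from standard linear algebra.

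First I would compute inner products of monomials. For $S, T \subseteq [d]$,
\begin{equation*}
\langle \omega_S, \omega_T\rangle = \expect\bigg[\prod_{i \in S}x_i \prod_{j\in T}x_j\bigg] = \expect\bigg[\prod_{k \in S\triangle T}x_k\bigg],
\end{equation*}
using that $x_i^2 = 1$ for each coordinate. By independence of the coordinates of $\bfx \sim \unif(\hypercube)$ and the fact that $\expect[x_k] = 0$, this product of expectations vanishes unless $S \triangle T = \varnothing$, i.e., $S = T$, in which case it equals $1$. Thus $\{\omega_S\}_{S\subseteq [d]}$ is an orthonormal family, in particular linearly independent.

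Next I would note that $V$ has dimension $2^d$ (one coordinate for each point of $\hypercube$), and the family $\{\omega_S\}_{S\subseteq [d]}$ has exactly $2^d$ elements. Since they are linearly independent in a $2^d$-dimensional space, they span $V$ and hence form an orthonormal basis. Therefore every $f \in V$ can be written as $f = \sum_{S \subseteq [d]} \widehat{f}(S)\, \omega_S$ with $\widehat{f}(S) = \langle f, \omega_S\rangle = \expect[f(\bfx)\omega_S(\bfx)]$, giving existence.

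Uniqueness follows by a standard orthogonality argument: if $\sum_S a_S \omega_S = \sum_S b_S \omega_S$, then taking the inner product of $\sum_S (a_S - b_S)\omega_S = 0$ with $\omega_T$ and applying the orthonormality calculation yields $a_T = b_T$ for every $T$. The only step that requires any care is the dimension count for $V$, which can alternatively be avoided by observing that the number of monomials $\omega_S$ equals $|\hypercube|$ and directly using orthonormality to conclude that any $f$ agrees with $\sum_S \widehat{f}(S)\omega_S$ pointwise (by checking they have equal inner products with every $\omega_T$ and invoking that $\{\omega_T\}$ separates points of $V$). No real obstacle arises; the content is entirely that the characters of the abelian group $\hypercube \cong \mathbb{Z}_2^d$ form an orthonormal basis of $L^2(\hypercube)$.
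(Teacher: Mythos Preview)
Your proof is correct. The paper does not actually prove this statement; it is presented in the preliminaries as a standard fact from Boolean Fourier analysis (with reference to \cite{ODonellBoolean}), so there is no paper proof to compare against. Your argument via orthonormality of the characters $\{\omega_S\}$ and the dimension count is the standard one.
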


We call the monomials $\omega_S(\bfx)$ \textit{characters} or \textit{Walsh polynomials}. One can easily check that $\omega_\emptyset\equiv 1$ and $\omega_s\omega_T = \omega_{S\Delta T}.$
The Walsh polynomials form an orthogonal basis on the space of real-valued functions over $\{\pm 1\}^d.$ Namely, for two such functions $f,g,$ define 
$
\langle f, g\rangle = 
\expect_{\bfx\sim \unif(\{\pm 1\}^d)}[f(\bfx)g(\bfx)].
$
This is a well-defined inner-product, to which we can associate an $L_2$-norm $\norm{f}^2_2:= \langle f, f\rangle$.
\begin{theorem}[For example, \cite{ODonellBoolean}]
The following identities hold.
\begin{enumerate}
    \item $\langle \omega_s, \omega_T\rangle = \indicator[T = S].$
    \item $\langle f, g\rangle = \sum_{S\subseteq[d]}\widehat{f}(S)\widehat{g}(S).$ In particular,
    \begin{enumerate}
        \item $\widehat{f}(S) = \expect[f(\bfx)\omega_S(\bfx)].$
        \item $\norm{f}_2^2 = \sum_{S\subseteq[d]}\widehat{f}(S)^2.$
        \item $\expect[f] = \widehat{f}(\emptyset), \Var[f] = \sum_{ \emptyset\subsetneq S\subseteq [d]}\widehat{f}(S)^2.$
    \end{enumerate}
    \item The Fourier convolution $(f*g):\{\pm1\}^d\longrightarrow \mathbb{R}$ defined by 
    $(f*g)[\bfx] = \expect[f(\bfx \bfz^{-1})g(\bfz)]$ can be expressed as 
    $$
    (f*g)[\bfx] = \sum_{S\subseteq [d]}\widehat{f}(S)\widehat{g}(S)\omega_S(\bfx).
    $$
\end{enumerate}
\end{theorem}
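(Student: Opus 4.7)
The plan is to derive all the listed identities from the single fundamental fact that $\expect_{\bfx\sim \unif(\hypercube)}[\omega_U(\bfx)] = \indicator[U = \emptyset]$, which itself follows from independence of the coordinates together with $\expect[x_i] = 0$ for each Rademacher $x_i$. Combining this with the multiplicativity identity $\omega_S\omega_T = \omega_{S\Delta T}$ (already noted in the excerpt) instantly gives Item 1: $\langle \omega_S,\omega_T\rangle = \expect[\omega_{S\Delta T}] = \indicator[S\Delta T = \emptyset] = \indicator[S = T]$.

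For Item 2, I would expand both $f$ and $g$ in the Walsh basis as given by the expansion theorem, then use bilinearity of $\langle \cdot,\cdot\rangle$ together with Item 1 to collapse the double sum to $\sum_S \widehat f(S)\widehat g(S)$. The three sub-items are then immediate consequences: (a) take $g = \omega_S$ and use $\widehat{\omega_S}(T) = \indicator[T=S]$; (b) take $g = f$; (c) recognize $\expect[f] = \langle f, \omega_\emptyset\rangle = \widehat f(\emptyset)$ and subtract $\widehat f(\emptyset)^2$ from $\|f\|_2^2$ to isolate $\Var[f]$.

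For Item 3 (the convolution formula), the cleanest route is to compute $\widehat{f*g}(S)$ directly and then invoke the uniqueness of the Fourier expansion. Write
\[
\widehat{f*g}(S) = \expect_\bfx\bigl[\expect_\bfz[f(\bfx\bfz^{-1})g(\bfz)]\,\omega_S(\bfx)\bigr] = \expect_\bfz\bigl[g(\bfz)\,\expect_\bfx[f(\bfx\bfz^{-1})\omega_S(\bfx)]\bigr],
\]
then substitute $\bfy = \bfx\bfz^{-1}$ in the inner expectation. Since right-multiplication by $\bfz$ preserves the Haar measure on $\hypercube$ and $\omega_S$ is a group homomorphism, i.e.\ $\omega_S(\bfy\bfz) = \omega_S(\bfy)\omega_S(\bfz)$, the inner expectation becomes $\omega_S(\bfz)\,\widehat f(S)$. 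Pulling $\widehat f(S)$ out yields $\widehat{f*g}(S) = \widehat f(S)\widehat g(S)$, and then applying the Fourier expansion theorem one more time produces the stated formula.

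The only step that requires a moment of care is the homomorphism property of $\omega_S$ in the convolution calculation (and the measure-preservation of right multiplication, which is trivial on $\hypercube$ but is the feature of the group structure actually being used). Everything else is a mechanical unwinding of definitions once Item 1 is in hand; I would present Item 1 first, derive Item 2 as a corollary of bilinearity, and leave Item 3 as the only computation of any substance.
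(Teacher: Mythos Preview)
Your proposal is correct; the paper does not actually prove this theorem but merely states it as background with a citation to O'Donnell's book, so there is nothing to compare against.
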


Oftentimes, it is useful to study other $L_p$-norms, defined for $p\ge 1$ by 
$\displaystyle
\norm{f}_p: = \expect_{\bfx\sim \unif(\{\pm 1\}^d)}[|f|^p]^{1/p}.
$
By Jensen's inequality, $\norm{f}_p \le \norm{f}_q$ holds whenever $p <q.$ It turns out, however, that a certain reverse inequality also holds. To state it, we need the noise operator. 

\begin{definition} For $\bfx\in \hypercube,$ let $\mathcal{N}_\rho(\bfx)$ be the $\rho$-correlated distribution of $\bfx$ for $\rho \in [0,1].$ That is, $\bfy \sim \mathcal{N}_\rho(\bfx)$ is defined as follows.
For each $i \in [d],$ independently $y_i = x_i$ with probability $\frac{1 + \rho}{2}$ and $y_i = -x_i$ with probability $\frac{1-\rho}{2}.$ Define:
\begin{enumerate}
    \item Noise Operator: The linear operator $T_\rho$ on functions over the hypercube is defined by 
    $T_\rho f[\bfx]: = \expect_{\bfy \sim \mathcal{N}_\rho(\bfx)}[f(\bfy)].$ The Fourier expansion of $T_\rho f$ is given by 
    $$T_\rho f = \sum_{S\subseteq [d]}\rho^{|S|}\widehat{f}(S)\omega_S.$$
    In light of this equality, $T_\rho$ can also be defined for $\rho>1.$
    \item Stability: The Stability of a function $f:\hypercube\longrightarrow \mathbb{R}$ is given by 
    $$
    \stab_\rho[f]:=\langle T_\rho f, f\rangle = 
    \sum_{S\subseteq [d]}\rho^{|S|}\widehat{f}(S)^2 = 
    \norm{T_{\sqrt{\rho}} f}_2^2
    $$
\end{enumerate}
\end{definition}

We are ready to state the main hypercontractivity result which we will use in \cref{sec:indistinguishability}.

\begin{theorem}[\cite{Bonami1970}]
\label{lem:hypercontractivity}
For any $1\le p \le q$ and any $f :\{\pm 1 \}^d\longrightarrow\mathbb{R},$ we have the inequality
$$
\norm{f}_q\le 
\Bgn{T_{\sqrt{\frac{q-1}{p-1}}}f}\Bgn_p.
$$
In particular, when $q\ge p = 2,$ one has
$\displaystyle 
\norm{f}^2_q \le 
\sum_{S \subseteq\emptyset}
(q-1)^{|S|}\widehat{f}(S)^2.
$
\end{theorem}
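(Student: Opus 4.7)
The classical proof proceeds in two steps: first establish a two-point inequality in dimension $d=1$, and then tensorize to the full hypercube using the multiplicative structure of the noise operator $T_\rho = T_\rho^{\otimes d}$ (which follows directly from the Fourier formula since characters factorize, $\omega_S(\bfx) = \prod_{i\in S} x_i$).

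For the two-point inequality, every $f:\{\pm 1\}\longrightarrow\mathbb{R}$ has the form $f(x) = a + bx$, so that $T_\rho f(x) = a + \rho b x$, and the claim reduces to
\begin{equation*}
\Bigl(\tfrac{1}{2}|a+b|^q + \tfrac{1}{2}|a-b|^q\Bigr)^{1/q} \;\le\; \Bigl(\tfrac{1}{2}|a+\rho b|^p + \tfrac{1}{2}|a-\rho b|^p\Bigr)^{1/p},
\end{equation*}
where $\rho = \sqrt{(q-1)/(p-1)} \ge 1$. By homogeneity and sign symmetry I reduce to $a = 1$ and $b \ge 0$. Expanding both sides as power series in $b$ around $b=0$, the left-hand side equals $1 + \tfrac{q-1}{2}b^2 + O(b^4)$ and the right-hand side equals $1 + \tfrac{p-1}{2}\rho^2 b^2 + O(b^4)$, so the choice $\rho^2 = (q-1)/(p-1)$ is precisely what matches the second-order terms. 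The remaining work is to certify that all higher-order corrections respect the inequality; this is the combinatorial heart of Bonami's argument and is handled via a careful monotonicity/convexity analysis (alternatively, via differentiation in $b$ and reduction to a single-variable calculus problem).

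For tensorization, I decompose $f:\hypercube\longrightarrow\mathbb{R}$ by isolating the last coordinate: $f(\bfx) = g(\bfx') + x_d\, h(\bfx')$ with $\bfx' = (x_1,\ldots,x_{d-1})$. Conditional on $\bfx'$, the function $y\longmapsto g(\bfx') + y\, h(\bfx')$ is a function on $\{\pm 1\}$ to which the two-point inequality applies, bounding its $L^q(x_d)$-norm by the $L^p(x_d)$-norm of $g(\bfx') + \rho x_d\, h(\bfx')$. Taking the outer $L^q$-norm in $\bfx'$ and applying Minkowski's integral inequality (valid since $p \le q$) to swap the outer $L^q(\bfx')$ with the inner $L^p(x_d)$, I obtain the inequality with the noise operator acting on the $d$-th coordinate only. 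Iterating this one-coordinate-at-a-time argument through $x_{d-1}, x_{d-2}, \ldots, x_1$, and using $T_\rho = T_\rho^{\otimes d}$, yields the full inequality.

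The specialization to $p = 2$ is then immediate: by Parseval and the Fourier formula for the noise operator,
\begin{equation*}
\Bgn T_{\sqrt{q-1}} f\Bgn_2^2 \;=\; \sum_{S\subseteq [d]} (q-1)^{|S|}\, \widehat{f}(S)^2.
\end{equation*}
The main obstacle is the two-point inequality itself: matching the second-order Taylor coefficient uniquely fixes the Bonami--Beckner constant $\rho = \sqrt{(q-1)/(p-1)}$, but verifying the inequality at all higher orders for every $1 \le p \le q$ is the delicate analytic step where that constant is actually earned. Everything else (tensorization, the $p=2$ specialization) is structural and essentially automatic.
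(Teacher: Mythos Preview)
The paper does not prove this statement: Theorem~\ref{lem:hypercontractivity} is a classical result cited from \cite{Bonami1970} and stated without proof in the preliminaries section, so there is no ``paper's own proof'' to compare against. Your outline is the standard Bonami argument (two-point inequality plus tensorization via Minkowski's integral inequality), and as a sketch it is correct; the one substantive step you defer---the full verification of the two-point inequality beyond second order---is indeed the analytic core, but since the paper treats the whole theorem as a black box, there is nothing further to reconcile.
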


The way in which the noise operator changes the Fourier coefficient of $\omega_S$ depends solely on the size of $S.$ For that, and other, reasons, it turns out that it is useful to introduce the Fourier weights. Namely, for a function $f:\hypercube\longrightarrow\mathbb{R},$ we define its weight on level $i \in \{0,1,2,\ldots, d\}$ by 
$$
\weight{i}{f} = 
\sum_{S\subseteq [d]\; : \; |S| = i}
\widehat{f}(S)^2.
$$
We similarly denote $\weight{\le i}{f} = \sum_{j \le i }\weight{ j}{f}$ and $\weight{\ge i}{f} = \sum_{j \ge i }\weight{j}{f}.$
Trivially, for each $i,$ one has $\weight{i}{f}\le \sum_{j = 0}^d \weight{j}{f} = \norm{f}_2^2.$ In particular, this means that if $|f|\le 1$ holds a.s. (which is the case for connections $\sigma$ in random geometric graphs by definition), $\weight{i}{f}\le 1$ also holds for each $i.$ It turns out that in certain cases, one can derive sharper inequalities. 

\begin{theorem}[\cite{Kahn1988TheIO}]
\label{thm:levelkinequalities}
For any function $f: \{\pm1\}^d\longrightarrow [-1,1]$ such that $\norm{f}_1 = \alpha,$ the inequalities $$\weight{\le k}{f}\le \Big(\frac{2e}{k}\ln(1/\alpha)\Big)^k\alpha^2\quadand
\weight{\ge d-k }{f}\le \Big(\frac{2e}{k}\ln(1/\alpha)\Big)^{k}\alpha^2
$$
hold whenever $k \le 2\ln(1/\alpha).$
\end{theorem}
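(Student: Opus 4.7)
The statement is Kahn's level-$k$ inequality, and the natural route is via hypercontractivity (\cref{lem:hypercontractivity}), which is the tool the paper has just set up. The key is to bound the low-degree truncation of $f$ by pairing duality with hypercontractivity, then optimize over the Hölder exponent.

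\textbf{Step 1: Reduce to the low-degree projection.} Define $g = f^{\le k} := \sum_{|S|\le k}\widehat{f}(S)\omega_S$, so that $\weight{\le k}{f} = \norm{g}_2^2$. Using the Fourier expansion, observe the key identity
\[
\norm{g}_2^2 = \sum_{|S|\le k}\widehat{f}(S)^2 = \langle f, g\rangle = \expect[f\cdot g].
\]

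\textbf{Step 2: Dualize via Hölder.} Fix a conjugate pair $1/p + 1/q = 1$ with $q\ge 2$ to be chosen. Since $|f|\le 1$ a.s., we have $|f|^p \le |f|$ and therefore $\norm{f}_p \le \norm{f}_1^{1/p} = \alpha^{1/p}$. Hölder's inequality yields
\[
\norm{g}_2^2 = \expect[fg] \le \norm{f}_p\norm{g}_q \le \alpha^{1/p}\norm{g}_q.
\]

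\textbf{Step 3: Apply hypercontractivity.} By \cref{lem:hypercontractivity} with $p=2$ and the chosen $q\ge 2$, for \emph{any} function $h$ we have $\norm{h}_q^2 \le \sum_S (q-1)^{|S|}\widehat{h}(S)^2$. Applied to $g$, whose Fourier support lies in levels $\le k$, this gives
\[
\norm{g}_q^2 \le (q-1)^k \sum_{|S|\le k}\widehat{g}(S)^2 = (q-1)^k\norm{g}_2^2.
\]
Plugging back into Step 2 and dividing by $\norm{g}_2$,
\[
\norm{g}_2 \le \alpha^{1/p}(q-1)^{k/2}, \qquad\text{i.e.,}\qquad \norm{g}_2^2 \le \alpha^{2/p}(q-1)^k = \alpha^{2}\cdot\alpha^{-2/q}(q-1)^k.
\]

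\textbf{Step 4: Optimize in $q$.} Choose $q-1 = \tfrac{2\ln(1/\alpha)}{k}$; the hypothesis $k\le 2\ln(1/\alpha)$ is exactly what ensures $q\ge 2$. Then $(q-1)^k = \bigl(\tfrac{2\ln(1/\alpha)}{k}\bigr)^k$, and since $q\ge 2$,
\[
\alpha^{-2/q} = \exp\!\left(\tfrac{2\ln(1/\alpha)}{q}\right)
= \exp\!\left(k\cdot\tfrac{q-1}{q}\right)\le e^{k}.
\]
Combining,
\[
\weight{\le k}{f} = \norm{g}_2^2 \le \alpha^2\cdot e^k \bigl(\tfrac{2\ln(1/\alpha)}{k}\bigr)^k = \alpha^2\Big(\tfrac{2e}{k}\ln(1/\alpha)\Big)^k,
\]
which is the first claimed bound.

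\textbf{Step 5: The high-degree bound by symmetry.} Let $\tilde{f}(\bfx) := \omega_{[d]}(\bfx)\,f(\bfx)$. Using $\omega_S\omega_{[d]} = \omega_{[d]\setminus S}$, one reads off $\widehat{\tilde f}(S) = \widehat{f}([d]\setminus S)$, so $\weight{\ge d-k}{f} = \weight{\le k}{\tilde f}$. Since $|\tilde f| = |f|$ a.s., also $\norm{\tilde f}_1 = \alpha$, and the first inequality applied to $\tilde f$ yields the second.

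\textbf{Expected obstacles.} The argument is essentially mechanical once one has Steps 1--3, and I do not anticipate a serious technical obstacle; the only subtlety is the choice of $q$ in Step 4, which is forced by wanting $(q-1)^k\cdot\alpha^{-2/q}$ near its minimum. The role of the hypothesis $k\le 2\ln(1/\alpha)$ is exactly to keep $q\ge 2$ so that hypercontractivity applies in the $2\to q$ form (i.e., $p=2$). If one wanted a version valid beyond this range, one would have to use hypercontractivity with $1<p<2$, which is where the argument genuinely breaks down and why the hypothesis is stated this way.
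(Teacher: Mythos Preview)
The paper does not prove this statement; it is quoted as a known result from \cite{Kahn1988TheIO} and used as a black box. Your argument is correct and is exactly the standard hypercontractivity proof of the level-$k$ inequality: pair $f^{\le k}$ against $f$, use H\"older together with $\|f\|_p\le\|f\|_1^{1/p}$ (valid since $|f|\le 1$), apply the $2\to q$ hypercontractive bound from \cref{lem:hypercontractivity} to the degree-$\le k$ projection, and optimize in $q$; the parity trick for $\weight{\ge d-k}{f}$ is also standard. So there is nothing to compare against in the paper itself, but your proof is the canonical one and fits seamlessly with the tools the paper has already introduced.
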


% A short proof of this statement can be found in \cite{ODonellBoolean}. 

We end with a discussion on low-degree polynomials. We say that $f:\hypercube\longrightarrow \mathbb{R}$ has degree $k$ if it is a degree $k$ polynomial, that is $\widehat{f}(S) = 0$ whenever $|S|>k.$ It turns out that such polynomials have surprisingly small Fourier coefficients. Namely, Eskenazis and Ivanisvili use a variant of the Bohnenblust-Hille to prove the following result.

\begin{theorem}[\cite{Eskenazis_21}]
\label{thm:lowdegreecoefficients}
For any fixed number $k\in \mathbb{N},$ there exists some constant $C_k$ with the following property. If 
$f:\{\pm 1\}^d\longrightarrow[-1,1]$ is a degree $k$ function, then for any $\ell >0$
$$
\sum_{S\; : \; |S| = \ell}
|\widehat{f}(S)|\le 
C_k 
d^{\frac{\ell-1}{2}}.
$$
\end{theorem}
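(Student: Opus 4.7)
The plan is to derive this bound via the Bohnenblust--Hille inequality for the Boolean cube composed with Hölder's inequality, which is essentially the Eskenazis--Ivanisvili strategy. The starting point is the following version of the Bohnenblust--Hille inequality for homogeneous degree-$\ell$ Walsh polynomials: there is a constant $B_\ell$ such that for every $g = \sum_{|S|=\ell}\widehat{g}(S)\omega_S$,
\[
\Bigl(\sum_{|S|=\ell}|\widehat{g}(S)|^{2\ell/(\ell+1)}\Bigr)^{(\ell+1)/(2\ell)} \le B_\ell\,\|g\|_\infty .
\]
This is the hard analytic input; I would assume it as a black box (the paper is content to cite \cite{Eskenazis_21}, which itself uses a polynomial refinement of BH).

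With BH in hand, the first step is to reduce from $f$ to its homogeneous part $f_{=\ell}:=\sum_{|S|=\ell}\widehat{f}(S)\omega_S$. Since $f$ has degree $k$ and $\|f\|_\infty\le 1$, I want a projection estimate of the form $\|f_{=\ell}\|_\infty\le C'_k$. The cleanest way to get this is to observe that $\rho\mapsto T_\rho f = \sum_{i=0}^k \rho^i f_{=i}$ is a polynomial of degree $k$ in $\rho$ whose value at $|\rho|\le 1$ is a contraction of $f$ in $L^\infty$, so $\|T_\rho f\|_\infty\le 1$ for $\rho\in[-1,1]$. Extracting the coefficient $f_{=\ell}$ by Lagrange interpolation at $k+1$ nodes in $[-1,1]$ expresses $f_{=\ell}$ as a fixed $k$-dependent linear combination of such $T_\rho f$, yielding $\|f_{=\ell}\|_\infty\le C'_k$.

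The second step combines BH applied to $g=f_{=\ell}$ with Hölder's inequality in the form
\[
\sum_{|S|=\ell}|\widehat{f}(S)| \le \Bigl(\sum_{|S|=\ell}|\widehat{f}(S)|^{2\ell/(\ell+1)}\Bigr)^{(\ell+1)/(2\ell)}\cdot\bigl|\{S:|S|=\ell\}\bigr|^{1-(\ell+1)/(2\ell)}.
\]
The right hand side is at most $B_\ell\,\|f_{=\ell}\|_\infty\cdot \binom{d}{\ell}^{(\ell-1)/(2\ell)}\le B_\ell C'_k\cdot d^{(\ell-1)/2}$, which is the claimed bound (taking $C_k = \max_{1\le \ell\le k} B_\ell C'_k$ and noting that all Fourier coefficients on levels $\ell>k$ vanish so the statement is trivial there).

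The main obstacle in this plan is genuinely the Bohnenblust--Hille inequality for the Boolean cube; once it is assumed, the reduction to it via projection and Hölder is routine. A secondary subtlety is the projection step: one must ensure that the degree-$k$ assumption is used to make $\|f_{=\ell}\|_\infty$ controllable by $\|f\|_\infty$, since without a degree cap the level-$\ell$ projection is unbounded in $L^\infty$ (e.g.\ arising from $\mathrm{Maj}_d$).
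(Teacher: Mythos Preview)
The paper does not prove this theorem; it is simply quoted from \cite{Eskenazis_21} as a black box and then used to derive \cref{cor:lowdegreepolyweights}. Your sketch is correct and is exactly the Eskenazis--Ivanisvili route: apply the Bohnenblust--Hille inequality to the level-$\ell$ part (whose $L^\infty$ norm is controlled because $f$ has bounded degree), then Hölder against the counting measure on $\{S:|S|=\ell\}$ with the BH exponent $\frac{2\ell}{\ell+1}$ to get the factor $\binom{d}{\ell}^{(\ell-1)/(2\ell)}\le d^{(\ell-1)/2}$. The only minor difference from the cited paper is cosmetic: Eskenazis and Ivanisvili state a non-homogeneous degree-$k$ BH inequality directly (with constant polynomial in $k$), which already subsumes your projection step, whereas you separate out the projection $\|f_{=\ell}\|_\infty\le C'_k$ via Lagrange interpolation of $\rho\mapsto T_\rho f$; both yield the same bound up to the value of $C_k$.
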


In particular, if $f$ is symmetric ($f(x_1, x_2, \ldots, x_n) = f(x_{\pi(1)}, x_{\pi(2)}, \ldots, x_{\pi(n)})$ holds for each $\bfx$ and permutation $\pi$), the following bound holds.

\begin{corollary}
\label{cor:lowdegreepolyweights}
If
$f:\{\pm 1\}^d\longrightarrow[-1,1]$ is a symmetric constant degree function, then 
$
\weight{\ell}{f} = O(1/d)
$ for any $\ell>0.$
\end{corollary}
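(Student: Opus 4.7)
The plan is to combine symmetry of $f$ with the Bohnenblust--Hille-style bound of Eskenazis--Ivanisvili (\cref{thm:lowdegreecoefficients}) in the most direct way: symmetry collapses all Fourier coefficients on a given level to a single value, and then \cref{thm:lowdegreecoefficients} controls this value through a sum that has $\binom{d}{\ell}$ equal terms.

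First, I would observe that for a symmetric function $f$, i.e.\ one satisfying $f(x_1,\dots,x_d) = f(x_{\pi(1)},\dots,x_{\pi(d)})$ for every permutation $\pi \in S_d$, the Fourier coefficients are permutation-invariant in the sense that $\widehat{f}(S) = \widehat{f}(\pi(S))$ for every $S \subseteq [d]$ and every $\pi$. This is immediate from $\widehat{f}(S) = \expect[f(\bfx)\omega_S(\bfx)]$ and the change of variables $\bfx \mapsto \pi(\bfx)$. Hence on level $\ell$ there is a common value $a_\ell \in \mathbb{R}$ such that $\widehat{f}(S) = a_\ell$ for every $S$ with $|S| = \ell$.

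Next, I would apply \cref{thm:lowdegreecoefficients} to $f$, which has constant degree $k$. For each $\ell \in \{1,\dots,k\}$, that theorem gives
\[
\binom{d}{\ell}\, |a_\ell| \;=\; \sum_{S:\,|S|=\ell} |\widehat{f}(S)| \;\le\; C_k\, d^{(\ell-1)/2}.
\]
Squaring and summing the level-$\ell$ weight $\weight{\ell}{f} = \binom{d}{\ell}\,a_\ell^2$ then yields
\[
\weight{\ell}{f} \;=\; \binom{d}{\ell}\, a_\ell^2 \;\le\; \frac{C_k^2\, d^{\ell-1}}{\binom{d}{\ell}}.
\]
Since $\ell \le k$ is constant, $\binom{d}{\ell} \ge (d/\ell)^\ell = \Omega_k(d^\ell)$, so the right-hand side is $O_k(1/d)$. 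For $\ell > k$ we have $\weight{\ell}{f} = 0$ by the degree assumption, so the bound holds trivially. This establishes $\weight{\ell}{f} = O(1/d)$ for every $\ell > 0$.

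The argument is essentially one application of \cref{thm:lowdegreecoefficients} after symmetry is exploited, so I do not anticipate a serious obstacle. The only point worth double-checking is that the constant absorbed in $O(1/d)$ depends on $k$ (and hence is a genuine constant here since $k$ is fixed), but does not depend on $\ell$ uniformly; since there are only $k$ nonzero levels, taking a maximum over $\ell \in \{1,\ldots,k\}$ preserves the $O(1/d)$ bound.
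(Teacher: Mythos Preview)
Your proposal is correct and follows essentially the same approach as the paper: exploit symmetry to collapse all level-$\ell$ coefficients to a single value $a_\ell$, then apply \cref{thm:lowdegreecoefficients} to bound $\binom{d}{\ell}|a_\ell|$ and conclude $\weight{\ell}{f} = \binom{d}{\ell}a_\ell^2 \le C_k^2 d^{\ell-1}/\binom{d}{\ell} = O(1/d)$. Your write-up is in fact more explicit than the paper's one-line version.
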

\begin{proof}
We simply note that \cref{thm:lowdegreecoefficients} implies that 
\[
\weight{\ell}{f}  = 
\binom{d}{\ell}
\bigg(
\frac{\sum_{S \; : \: |S|=\ell}|\widehat{f}(S)|}{\binom{d}{\ell}}
\bigg)^2 = 
O\B(d^\ell \times \frac{d^{\ell-1}}{d^{2\ell}}\B).
\qedhere\]
\end{proof}

% Many of the statements in this current section can be modified to hold over more general finite abelian groups and tori. For the sake of simplicity and interpretability of our results, we do not pursue this.
% engage in this endeavour. Nevertheless, this might be a fruitful direction for future work on random geometric graphs.

\subsection{Detection and Indistinguishability via Fourier Analysis}
\label{sec:ragviafourier}
Before we jump into the main arguments of the current paper, we make two simple illustrations of Fourier-analytic tools in the study of random algebraic graphs, which will be useful later on. These observations cast new light on previously studied objects in the literature.
\subsubsection{Indistinguishability}

First, we will interpret the function $\gamma$ from \cref{claim:RaczLiuIndistinguishability} as an autocorrelation function. 

\begin{proposition}
\label{prop:fourierviewindistinguishability}
Let $\Group = \hypercube$ and suppose that
$\sigma$ only depends on $\bfx\bfy = \bfx\bfy^{-1}.$ Then, 
$$\gamma(\bfx,\bfy) = (\sigma - p)*(\sigma - p)[\bfx\bfy^{-1}].$$ 
\end{proposition}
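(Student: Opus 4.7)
The statement is essentially a direct identification of $\gamma_p$ with an autocorrelation once one unpacks both definitions. Since $\Group = \hypercube$ is abelian and every element is its own inverse, the hypothesis that $\sigma$ depends only on $\bfx\bfy = \bfx\bfy^{-1}$ lets us write $\sigma(\bfx, \bfy) = \sigma(\bfx\bfy^{-1})$ for a function $\sigma: \hypercube \to [0,1]$ (abusing notation). Starting from \cref{eq:gammadefinition}, we plug in this algebraic form to obtain
\[
\gamma_p(\bfx,\bfy) = \expect_{\bfz \sim \unif(\hypercube)}\bigl[(\sigma(\bfx\bfz^{-1}) - p)(\sigma(\bfz\bfy^{-1}) - p)\bigr].
\]

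The next step is a change of variables to line this up with the convolution formula $(f*g)[\bfu] = \expect_\bfw[f(\bfu\bfw^{-1}) g(\bfw)]$ stated in \cref{sec:booleanfourier}. The plan is to substitute $\bfw := \bfz\bfy^{-1}$, using translation invariance of the uniform measure on $\hypercube$ so that $\bfw$ is also uniform. Then $\bfz\bfy^{-1} = \bfw$ and $\bfx\bfz^{-1} = \bfx(\bfw\bfy)^{-1} = (\bfx\bfy^{-1})\bfw^{-1}$ (using that the group is abelian). Writing $f = g = \sigma - p$, this transforms the expression into
\[
\gamma_p(\bfx,\bfy) = \expect_{\bfw}\bigl[f\bigl((\bfx\bfy^{-1})\bfw^{-1}\bigr) \, g(\bfw)\bigr] = (f*g)[\bfx\bfy^{-1}] = (\sigma - p)*(\sigma - p)[\bfx\bfy^{-1}],
\]
which is the claimed identity.

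There is really no obstacle here: the whole content of the proposition is recognizing that $\gamma_p$ as defined in \cref{claim:RaczLiuIndistinguishability} is the autocorrelation of $\sigma - p$ at the group-theoretic offset $\bfx\bfy^{-1}$, once the connection is compatible with the group structure. The value of the proposition lies not in the calculation itself but in the interpretation it unlocks: combining it with the convolution identity $\widehat{f*g}(S) = \widehat{f}(S)\widehat{g}(S)$, one immediately reads off the Fourier expansion of $\gamma_p$ as $\sum_{S \neq \emptyset} \widehat{\sigma}(S)^2 \omega_S(\bfx\bfy^{-1})$, which is the form we will plug into the moment bounds of \cref{claim:RaczLiuIndistinguishability} in the sequel.
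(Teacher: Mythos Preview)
Your proof is correct. It differs from the paper's in that you argue directly by a change of variables on the Haar measure, whereas the paper expands $\sigma - p$ in the Walsh basis, collapses the double sum over $S,T$ via $\expect[\omega_{S\Delta T}(\bfz)] = \indicator[S=T]$, and then recognizes $\sum_{S\neq\emptyset}\widehat{\sigma}(S)^2\omega_S(\bfx\bfy^{-1})$ as the convolution. Your route is the cleaner proof of the stated identity since it never touches Fourier coefficients; the paper's route has the side benefit of producing the Fourier expansion of $\gamma_p$ in the same computation, which is the form actually fed into the moment bounds later. Since you note this expansion follows from $\widehat{f*g}(S)=\widehat{f}(S)\widehat{g}(S)$ anyway, the two approaches are essentially equivalent in content.
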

\begin{proof}
% Suppose that $\sigma$ only depends on $\bfx\bfy^{-1}$  and 
Let $\sigma(\bfg) = \sum_{S\subseteq [d]}\widehat{\sigma}(S)\omega_S(\bfg)$ for $\bfg\in \hypercube.$ In particular, $\widehat{\sigma}(\emptyset) = p.$ Now, we have 
\begin{align*}
    \gamma(\bfx,\bfy) & = \expect_{\bfz\sim \unif(\hypercube)}
    [(\sigma(\bfx,\bfz)-p)(\sigma(\bfy,\bfz)-p)] = \expect_\bfz\sum_{\emptyset\subsetneq S,T\subseteq [d]}\left[\widehat{\sigma}(S)\widehat{\sigma}(T)\omega_S(\bfx\bfz^{-1})\omega_T(\bfz\bfy^{-1})\right]
    \\
    & =  \sum_{\emptyset\subsetneq S,T\subseteq [d]}\widehat{\sigma}(S)\widehat{\sigma}(T)\omega_S(\bfx)\omega_T(\bfy^{-1})\expect[\omega_{T\delta S}(\bfz)] = 
    \sum_{S\neq \emptyset }\widehat{\sigma}(S)^2\omega_S(\bfx\bfy^{-1})\\
    & = (\sigma - p)*(\sigma - p)[\bfx\bfy^{-1}].\qedhere
\end{align*}    
\end{proof}

The same argument applies verbatim when we replace $\hypercube$ with an arbitrary finite abelian group (and some other groups such as a product of a finite abelian group with a torus, provided $\sigma$ is $L_2$-integrable with respect to $\distribution$). 
% This is the reason why we used the $\bfx\bfy^{-1}$ notation instead of the $\bfx\bfy$ notation, which is more natural over $\hypercube.$ 
Hence our use of $\bfx\bfy^{-1}$.
In light of \cref{eq:firsteqonindist}, we simply need to prove bounds on the moments of the autocorrelation of $\sigma-p$ (i.e., the centered moments of the autocorrelation of $\sigma$).
% when dealing with $\Group = \hypercube.$

\subsubsection{Detection Using Signed Cycles in Temrs of Fourier Coefficients}

Let the indicator of edge $i,j$ be $G_{i,j}.$ Essentially all detection algorithms appearing in the literature on random geometric graphs are derived by counting ``signed'' $k$-cycles in the respective graph for $k \in \{3,4\}$ 
\cite{Liu2021APV,Bubeck14RGG,Brennan21DeFinetti}.
Formally, these works provide the following detection algorithm, which is efficiently computable in time $O(n^k)$ when $k$ is a constant. One sums over the set $\mathcal{C}$ of all simple cycles $i_1, i_2, \ldots, i_k$ the quantity $ \prod_{t = 1}^k (G_{i_t,i_{t+1}} - p),$
where $i_{k+1} := i_1.$ The resulting statistic is
\begin{equation}
\label{eq:kcyclesstats}
\tau_{k,p}(G) := \sum_{(i_1, i_2, \ldots, i_k)\in \mathcal{C}}
    \prod_{t = 1}^k (G_{i_t,i_{t+1}} - p) \,.
    % = 0.
\end{equation}
Since each edge $G_{i,j}$ appears independently with probability $p$ in an \ER graph, clearly \linebreak 
$\displaystyle
    \expect_{G\sim \ergraph}[\tau_k(G)] = 0.
$
Thus, to distinguish with high probability between $\hypercubegraph$ and $\ergraph,$ it is enough to show that 
\begin{equation*}
    |\expect_{H\sim \hypercubegraph}\tau_k(G)| = 
    \omega\b(\Var_{G\sim \ergraph}[\tau_k(G)] + 
    \Var_{H\sim \hypercubegraph}[\tau_k(H)]
    \b)
\end{equation*}
by Chebyshev's inequality. We now demonstrate how to compute the relevant quantities in the language of Boolean Fourier analysis.
\begin{observation}
\label{obs:fouriercycles}
For any set of $k$ indices $i_1, i_2, \ldots, i_k,$ it is the case that 
$$\expect_{G\sim \hypercubegraph}\left[\prod_{t = 1}^k (G_{i_t,i_{t+1}} - p)\right] = \sum_{S\neq \emptyset} \widehat{\sigma}(S)^k.$$
\end{observation}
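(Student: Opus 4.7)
The plan is to condition on the latent vectors $\bfx_{i_1}, \ldots, \bfx_{i_k}$ and then expand each centered edge probability in the Walsh basis. Conditional on the latent vectors, the edges of the graph are independent Bernoulli variables with $\expect[G_{i_t,i_{t+1}} - p \mid \bfx] = \sigma(\bfx_{i_t}\bfx_{i_{t+1}}^{-1}) - p$. So by the tower property and conditional independence,
\begin{equation*}
\expect_{G\sim \hypercubegraph}\Bigl[\prod_{t=1}^k (G_{i_t,i_{t+1}} - p)\Bigr] = \expect_{\bfx_{i_1}, \ldots, \bfx_{i_k}}\Bigl[\prod_{t=1}^k \bigl(\sigma(\bfx_{i_t}\bfx_{i_{t+1}}^{-1}) - p\bigr)\Bigr].
\end{equation*}

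Next I would Fourier-expand $\sigma - p = \sum_{S \neq \emptyset} \widehat{\sigma}(S)\omega_S$ (the empty set is excluded because $\widehat{\sigma}(\emptyset) = p$). Using that $\bfx = \bfx^{-1}$ on $\hypercube$ and $\omega_S(\bfx\bfy) = \omega_S(\bfx)\omega_S(\bfy)$,
\begin{equation*}
\sigma(\bfx_{i_t}\bfx_{i_{t+1}}^{-1}) - p = \sum_{S_t \neq \emptyset} \widehat{\sigma}(S_t)\omega_{S_t}(\bfx_{i_t})\omega_{S_t}(\bfx_{i_{t+1}}).
\end{equation*}
Multiplying out over $t = 1, \ldots, k$ and grouping by vertex, each $\bfx_{i_t}$ in the cycle contributes the character $\omega_{S_{t-1}}(\bfx_{i_t})\omega_{S_t}(\bfx_{i_t}) = \omega_{S_{t-1} \Delta S_t}(\bfx_{i_t})$ (indices mod $k$).

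Since the latent vectors are independent and uniform on $\hypercube$, the orthogonality relation $\expect[\omega_T(\bfx)] = \indicator[T = \emptyset]$ forces $S_{t-1} \Delta S_t = \emptyset$, i.e. $S_{t-1} = S_t$ for every $t$. Hence only diagonal terms $S_1 = S_2 = \cdots = S_k =: S$ survive, each contributing $\widehat{\sigma}(S)^k$, and summing over $S \neq \emptyset$ yields the claim. No step is really an obstacle here—the only thing to be careful about is the identification $\bfx^{-1} = \bfx$ on the hypercube, which is what makes the two-sided character $\omega_S(\bfx_{i_t})\omega_S(\bfx_{i_{t+1}})$ symmetric; on a general abelian group one would instead use $\omega_S(\bfx_{i_{t+1}}^{-1}) = \overline{\omega_S(\bfx_{i_{t+1}})}$ and the same collapse happens.
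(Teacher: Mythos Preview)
Your proof is correct and follows essentially the same approach as the paper: condition on the latent vectors, Fourier-expand each factor $\sigma - p$, and use orthogonality of the Walsh characters to collapse the sum to the diagonal $S_1 = \cdots = S_k$. The only cosmetic difference is that the paper first substitutes $\bfg_\ell = \bfx_{i_\ell}^{-1}\bfx_{i_{\ell+1}}$ (so that $\bfg_1,\ldots,\bfg_{k-1}$ are independent uniform and $\bfg_k$ is determined) before expanding, whereas you expand directly in the $\bfx_{i_t}$ variables and group by vertex; both routes invoke the same orthogonality and arrive at the same place.
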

\begin{proof} We simply compute
\begin{align*}
        &\expect\left[\prod (G_{i_t,i_{t+1}} - p)\right] =  \expect\left[ \expect\left[\prod(G_{i_t,i_{t+1}} - p)| \bfx_{i_1}, \bfx_{i_2}, \ldots, \bfx_{i_t}\right]\right]\\ 
        & = \expect \left[\expect\left[\prod(\sigma(\bfx_{i_t}^{-1}\bfx_{i_{t+1}}) - p)| \bfx_{i_1}, \bfx_{i_2}, \ldots, \bfx_{i_t}\right]\right].
\end{align*}
Now, observe that $\bfx_{i_1}, \bfx_{i_2}, \ldots, \bfx_{i_k}$ are independently uniform over the hypercube. Thus, when we substitute $\bfg_\ell = \bfx_{i_{\ell-1}}^{-1}\bfx_{i_\ell},$ the vectors $\bfg_1, \bfg_2, \ldots, \bfg_{k-1}$ are independent and 
$\bfg_k = \bfg_1^{-1}\bfg_2^{-1}\cdots \bfg_{k-1}^{-1}.$
We conclude that 
\begin{align*}
& \expect \left[\expect\left[\prod(\sigma(\bfx_{i_t}^{-1}\bfx_{i_{t+1}}) - p)| \bfx_{i_1}, \bfx_{i_2}, \ldots, \bfx_{i_t}\right]\right]\\
& = \expect_{(\bfg_1, \bfg_2, \ldots, \bfg_{k-1})\sim_{iid} \unif(\hypercube)}
(\sigma(\bfg_1) - p)
(\sigma(\bfg_2) - p)\cdots
(\sigma(\bfg_{t-1})-p)
(\sigma(\bfg_1^{-1}\bfg_2^{-1}\cdots \bfg_{k-1}^{-1}) - p).
\end{align*}
Expanding the above product in the Fourier basis yields
\begin{align*}
       & \expect \left[\sum_{S_1, S_2, \ldots, S_{t}}
\prod_{i = 1}^t\widehat{\sigma}(S_i)\times 
\prod_{i = 1}^{t-1} \omega_{S_i}(\bfg_i)\times 
\omega_{S_t}(\bfg_1^{-1}\bfg_2^{-1}\cdots \bfg_{t-1}^{-1})\right]\\
& =  \sum_{S_1, S_2, \ldots, S_{t}}
\prod_{i = 1}^t\widehat{\sigma}(S_i)\times 
\prod_{i = 1}^{t-1}\expect\left[ \omega_{S_i}
(\bfg_i)\omega_{S_t}(\bfg_i^{-1})\right].
\end{align*}
Since $\langle \omega_{S_i}, \omega_{S_t}\rangle = \indicator[S_i = S_t],$ the conclusion follows.
\end{proof}

It follows that when $k = O(1),$
\begin{align*}
    \expect_{G \sim \hypercubegraph}[\tau_k(G)] = 
    \Theta\B(
    n^k \sum_{S\neq \emptyset} \widehat{\sigma}(S)^k
    \B).
\end{align*}

In \cref{sec:appendixvariancecomputation}, using very similar reasoning, we show that the variance can also be bounded in terms of the Fourier coefficients of $\sigma.$
\begin{proposition}
\label{prop:variancecomputation}
    $\displaystyle
    \Var_{G\sim \hypercubegraph}[\tau_3(G)] = O\B(
n^3p^3
 + n^3\sum_{S\neq \emptyset}\widehat{\sigma}(S)^3 + n^4\sum_{S\neq \emptyset}\widehat{\sigma}(S)^4
\B).
    $
\end{proposition}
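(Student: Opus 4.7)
The plan is to expand
\begin{equation*}
\Var_{G \sim \hypercubegraph}[\tau_3(G)] = \sum_{(T,T') \in \mathcal{C}^2} \Cov(\bar G_T, \bar G_{T'}), \qquad \bar G_T := \prod_{t=1}^{3}(G_{i_t, i_{t+1}} - p),
\end{equation*}
and case-split on the vertex overlap $|V(T) \cap V(T')|$. When $|V(T) \cap V(T')| \leq 1$ the covariance vanishes. In the one-shared-vertex case with shared vertex $v$, conditioning on $\bfx_v$ and reparametrising the remaining latents as $\bfy_u := \bfx_v^{-1}\bfx_u$ shows that the edges of $T$ depend only on $(\bfy_a)_{a \in V(T)\setminus \{v\}}$ and those of $T'$ only on $(\bfy_{a'})_{a' \in V(T')\setminus \{v\}}$, with both families iid uniform on $\hypercube$ and independent of each other. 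Hence $\bar G_T$ and $\bar G_{T'}$ are conditionally independent given $\bfx_v$, and each conditional expectation equals its unconditional value $\sum_{S\neq\emptyset}\widehat\sigma(S)^3$ by \cref{obs:fouriercycles}. The zero-vertex case is even simpler. So only shared-edge pairs and identical triangles contribute.

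The main case is $|V(T) \cap V(T')| = 2$. Label $T = (v_1, v_2, a)$, $T' = (v_1, v_2, a')$ and set $\bfy := \bfx_{v_1}^{-1}\bfx_{v_2}$. Conditionally on $\bfx_{v_1}, \bfx_{v_2}$, the three blocks $(G_{v_1,v_2} - p)^2$, $(G_{v_2,a} - p)(G_{a, v_1} - p)$, and $(G_{v_2, a'} - p)(G_{a', v_1} - p)$ are independent. The identity $(G_e - p)^2 = (1-2p)(G_e - p) + p(1-p)$ gives $\expect[(G_{v_1,v_2}-p)^2 \mid \bfy] = (1-2p)(\sigma(\bfy) - p) + p(1-p)$, and \cref{prop:fourierviewindistinguishability} applied conditionally identifies each wing with $\gamma_p(\bfy) := ((\sigma - p) * (\sigma - p))(\bfy) = \sum_{S\neq\emptyset}\widehat\sigma(S)^2\omega_S(\bfy)$. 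Integrating over $\bfy$,
\begin{equation*}
\expect[\bar G_T \bar G_{T'}] = (1-2p)\,\expect_{\bfy}\bigl[(\sigma(\bfy)-p)\gamma_p(\bfy)^2\bigr] + p(1-p)\,\expect_{\bfy}[\gamma_p(\bfy)^2].
\end{equation*}
By Parseval, $\expect_\bfy[\gamma_p(\bfy)^2] = \|\gamma_p\|_2^2 = \sum_{S\neq\emptyset}\widehat\sigma(S)^4$, and since $|\sigma - p| \leq 1$ the first term is bounded in absolute value by the same quantity. Combined with $\bigl|\expect[\bar G_T]\expect[\bar G_{T'}]\bigr| = \bigl(\sum_{S\neq\emptyset}\widehat\sigma(S)^3\bigr)^2 \leq \sum_{S\neq\emptyset}\widehat\sigma(S)^4$ (Cauchy--Schwarz using $\sum_{S\neq\emptyset}\widehat\sigma(S)^2 \leq 1$), this yields $|\Cov(\bar G_T, \bar G_{T'})| = O\bigl(\sum_{S\neq\emptyset}\widehat\sigma(S)^4\bigr)$. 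Summation over the $O(n^4)$ such pairs contributes $O\bigl(n^4 \sum_{S\neq\emptyset}\widehat\sigma(S)^4\bigr)$.

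For $T = T'$, expand each $(G_e - p)^2 = (1-2p)(G_e - p) + p(1-p)$ in the product $\bar G_T^2$ and take expectations. Cross-terms in which only one or two $(\sigma(\bfg_t) - p)$ factors survive vanish: since $\bfg_1, \bfg_2$ are independent uniform on $\hypercube$ and $\bfg_3 = \bfg_1^{-1}\bfg_2^{-1}$, any proper nonempty subset of the three factors has zero expectation. The three-factor term gives $(1-2p)^3 \sum_{S\neq\emptyset}\widehat\sigma(S)^3$ by \cref{obs:fouriercycles}. Thus $\expect[\bar G_T^2] = (p(1-p))^3 + (1-2p)^3 \sum_{S\neq\emptyset}\widehat\sigma(S)^3$, so $\Var[\bar G_T] = O\bigl(p^3 + \sum_{S\neq\emptyset}\widehat\sigma(S)^3 + \sum_{S\neq\emptyset}\widehat\sigma(S)^4\bigr)$, and summation over the $O(n^3)$ triangles adds $O\bigl(n^3 p^3 + n^3\sum_{S\neq\emptyset}\widehat\sigma(S)^3\bigr)$ (the residual $n^3 \sum\widehat\sigma(S)^4$ is absorbed by the shared-edge term).

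The main obstacle is the shared-edge integral, whose direct Fourier expansion $\sum_{S_1, S_2 \neq \emptyset, S_1 \ne S_2}\widehat\sigma(S_1)^2 \widehat\sigma(S_2)^2 \widehat\sigma(S_1 \Delta S_2)$ resists naive bounding (for instance, pulling $|\widehat\sigma(S_1 \Delta S_2)|$ out naively loses too much). The decisive idea is to condition on the shared edge first, factor the two ``wings'' into $\gamma_p^2$ via \cref{prop:fourierviewindistinguishability}, and then combine the trivial bound $\|\sigma - p\|_\infty \leq 1$ with Parseval $\|\gamma_p\|_2^2 = \sum_{S\neq\emptyset}\widehat\sigma(S)^4$.
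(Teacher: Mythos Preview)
Your proof is correct and follows essentially the same approach as the paper: both decompose the variance by the vertex overlap of the two triangles, show the overlap-$0$ and overlap-$1$ covariances vanish, expand $(G_e-p)^2=(1-2p)(G_e-p)+p(1-p)$ for the diagonal term, and handle the shared-edge case by conditioning on the common edge and recognising the two ``wings'' as (the square of) the autocorrelation $\gamma_p$, then bounding via $\|\gamma_p\|_2^2=\sum_{S\neq\emptyset}\widehat\sigma(S)^4$. The only cosmetic differences are that the paper phrases the wing identity through \cref{obs:fouriercycles} for the $4$-cycle rather than through \cref{prop:fourierviewindistinguishability}, and that you are slightly more explicit in controlling the subtracted term $(\sum\widehat\sigma^3)^2$ via Cauchy--Schwarz.
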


Combining \cref{obs:fouriercycles,prop:variancecomputation}, we make the following conclusion.\\

\begin{corollary}
\label{cor:generaltriangledetection}
If 
$$
n^6\B(
\sum_{S\neq \emptyset}\widehat{\sigma}(S)^3
\B)^2 = 
\omega\B(
n^3p^3 + 
n^3 \sum_{S\neq \emptyset}\widehat{\sigma}(S)^3
+ 
n^4 \sum_{S\neq \emptyset}\widehat{\sigma}(S)^4
\B),
$$
then $\TV\Big(\hypercubegraph,\ergraph\Big) = 1 - o(1).$ 
Furthermore, the polynomial-time algorithm of counting signed triangles distinguishes $\ergraph$ and
$\hypercubegraph$ with high probability. 
\end{corollary}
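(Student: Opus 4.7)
The plan is to carry out a standard second-moment/Chebyshev argument for the signed-triangle statistic $\tau_{3,p}$ defined in \eqref{eq:kcyclesstats}, combining the mean computation of \cref{obs:fouriercycles} with the variance bound of \cref{prop:variancecomputation}. All the structural Fourier-analytic work has already been done; what remains is to assemble the bounds and verify that the gap hypothesis implies separation via Chebyshev.

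First, I would handle the null. Under $H_0 = \ergraph$, the edge variables $G_{i,j}-p$ are independent, centered, with variance $p(1-p)$. Expanding $\tau_{3,p}(G)^2 = \sum_{(T_1,T_2)} \prod_{e \in T_1}(G_e - p)\prod_{e \in T_2}(G_e - p)$ and taking expectations, only pairs of triangles $(T_1,T_2)$ in which every edge of the multiset $T_1 \cup T_2$ appears with even multiplicity survive. Because each triangle has three distinct edges, this forces $T_1 = T_2$. Thus
\[
\expect_{G \sim \ergraph}[\tau_{3,p}(G)] = 0 \quadand \Var_{G \sim \ergraph}[\tau_{3,p}(G)] = \Theta\bigl(n^3 (p(1-p))^3\bigr) = O(n^3 p^3).
\]

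Next, under $H_1 = \hypercubegraph$, \cref{obs:fouriercycles} applied to each of the $\Theta(n^3)$ ordered triples of distinct vertices gives
\[
\mu := \expect_{H \sim H_1}[\tau_{3,p}(H)] = \Theta\B(n^3 \sum_{S \neq \emptyset}\widehat{\sigma}(S)^3\B),
\]
while \cref{prop:variancecomputation} supplies
\[
\Var_{H \sim H_1}[\tau_{3,p}(H)] = O\B(n^3 p^3 + n^3 \sum_{S \neq \emptyset}\widehat{\sigma}(S)^3 + n^4 \sum_{S \neq \emptyset}\widehat{\sigma}(S)^4\B).
\]
The hypothesis of the corollary is exactly $\mu^2 = \omega\bigl(\Var_{H_0}[\tau_{3,p}] + \Var_{H_1}[\tau_{3,p}]\bigr)$.

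Finally, I would define the test that outputs $H_1$ iff $|\tau_{3,p}(G)| \ge |\mu|/2$. By Chebyshev's inequality, under $H_0$,
\[
\prob_{H_0}\bigl[|\tau_{3,p}(G)| \ge |\mu|/2\bigr] \le \frac{4\Var_{H_0}[\tau_{3,p}]}{\mu^2} = o(1),
\]
and under $H_1$,
\[
\prob_{H_1}\bigl[|\tau_{3,p}(G)| < |\mu|/2\bigr] \le \prob_{H_1}\bigl[|\tau_{3,p}(G) - \mu| \ge |\mu|/2\bigr] \le \frac{4\Var_{H_1}[\tau_{3,p}]}{\mu^2} = o(1).
\]
Since the sum of Type I and Type II errors is $o(1)$, the test distinguishes $H_0$ from $H_1$ with high probability, which by the dual characterization of total variation yields $\TV(\hypercubegraph, \ergraph) = 1 - o(1)$. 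Since $\tau_{3,p}$ is a sum over $O(n^3)$ triples each requiring $O(1)$ arithmetic operations, the test runs in polynomial time.

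There is no genuine obstacle here; the only thing to be careful about is that the hypothesis must dominate both variance terms (hence the separate $n^3p^3$ term from the null variance is harmlessly absorbed into the $n^3 p^3$ term already appearing on the RHS of the assumed inequality).
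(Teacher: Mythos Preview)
Your proposal is correct and follows exactly the approach the paper intends: the paper simply states that the corollary follows by combining \cref{obs:fouriercycles} and \cref{prop:variancecomputation} via the Chebyshev argument sketched just before \cref{obs:fouriercycles}, and you have filled in precisely those details. The null variance computation (only $T_1=T_2$ survives under independence) and the threshold test at $|\mu|/2$ are the standard steps the paper leaves implicit.
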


Thus, we have shown that the entirety of the detection via signed triangles argument has a simple interpretation in the language of Boolean Fourier analysis. One can argue similarly for counting signed $k$-cycles when $k>3,$ but the variance computation becomes harder. We illustrate a very simple instance of this computation for $k= 4$ in \cref{appendix:signedfourcyclesrag}.

\section{The Main Theorem On Indistinguishability Over The Hypercube}
\label{sec:indistinguishability}
In this section, we prove our main technical result on indistinguishability between $\ergraph$ and\linebreak $\hypercubegraph.$ The statement itself does not look appealing due to the many conditions one needs to verify. In subsequent sections, we remove these conditions by exploring specific connections $\sigma.$
\begin{theorem}
\label{thm:maintheoremindistingishability}
Consider a dimension $d\in \mathbb{N},$ connection $\sigma:\{\pm 1 \}^d\longrightarrow [0,1]$ with expectation $p,$ and constant 
$m\in \mathbb{N}.$\footnote{We will usually take $m \le 3$ when applying the theorem, so one can really think of it as a small constant.} There exists a constant $K_m$ depending only on $m,$ but not on $\sigma, d, n,p,$ with the following property.
Suppose that $n \in \mathbb{N}$ is such that $nK_m < d.$
For $1\le i \le d,$ let\linebreak $ B_i = \max \B\{ |\widehat{\sigma}(S)|\binom{d}{i}^{1/2} : 
|S| = i\B\}
.$ Denote also
\begin{equation*}
C_m  = \sum_{i = m+1}^{ \frac{d}{2en}} B_i^2 + \sum_{i = d-\frac{d}{2en}}^{d-m-1} B_i^2\quadand
D   = \sum_{\frac{d}{2en}\le 
j
\le d - \frac{d}{2en}
} B_i^2.
\end{equation*}
If the following conditions additionally hold
\begin{itemize}
    \item $d \ge K_m\times n\times  \B(\frac{C_m}{p(1-p)}\B)^{\frac{2}{m+1}},$
    \item $d\ge K_m \times n \times \B(\frac{B^2_u}{p(1-p)}\B)^{\frac{2}{u}}$ for all $2\le u \le m,$
    \item $d\ge K_m \times n \times \B(\frac{B^2_{d-u}}{p(1-p)}\B)^{\frac{2}{u}}$ for all $2\le u \le m,$
\end{itemize}
then
\begin{align*}
        & \KL( \hypercubegraph\| \ergraph)\\ & \le  K_m\times \frac{n^3}{p^2(1-p)^2}\times 
\left(
\sum_{i = 1}^m
\frac{B_i^4}{d^i} + 
\sum_{i = d-m}^d
\frac{B_i^4}{d^i} 
+ \frac{C^2_m}{{d}^{m+1}} + 
{D^2}\times \exp\left( - \frac{d}{2en}\right)
\right).
\end{align*}
\end{theorem}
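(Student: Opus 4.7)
The plan is to apply \cref{claim:RaczLiuIndistinguishability} with $p=q$ and reduce the task to bounding moments of the autocorrelation $\gamma_p$. By \cref{prop:fourierviewindistinguishability}, on the hypercube
\[
\gamma_p(\bfg) \;=\; \sum_{\emptyset \subsetneq S \subseteq [d]} \widehat{\sigma}(S)^2\, \omega_S(\bfg)
\]
is a mean-zero real-valued function on $\hypercube$. Expanding $(1 + \gamma_p/(p(1-p)))^k$ binomially as in \cref{eq:firsteqonindist} (the $t=1$ term vanishes), using $\log(1+x)\le x$ (with positivity of the arguments guaranteed by Jensen), and collecting $\sum_{k=0}^{n-1}\binom{k}{t}=\binom{n}{t+1}$ yields
\[
\KL(\hypercubegraph \,\|\, \ergraph) \;\le\; \sum_{t=2}^{n-1} \binom{n}{t+1}\cdot\frac{\expect[\gamma_p^t]}{(p(1-p))^t}.
\]
The task thus reduces to bounding $\expect[\gamma_p^t]$ sharply enough that the $t=2$ summand dominates the series under the stated conditions on $d$.

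The central step is to decompose $\gamma_p = L + M_1 + M_2 + M_3 + H$ by Fourier level, collecting degrees $1\le i\le m$, $m+1\le i\le d/(2en)$, $d/(2en) < i\le d-d/(2en)$, $d-d/(2en) < i\le d-m-1$, and $d-m\le i\le d$ respectively; by the $L_t$-triangle inequality it suffices to bound each piece separately. For $L$ and $H$ (polynomials of degree at most $m$, where for $H$ one first multiplies by the character $\omega_{[d]}$, a pointwise isometry mapping degrees $i \mapsto d-i$), hypercontractivity (\cref{lem:hypercontractivity}) gives $\|L^{=i}\|_t \le (t-1)^{i/2}\|L^{=i}\|_2 \le (t-1)^{i/2}B_i^2/\binom{d}{i}^{1/2}$, which after substitution and using $\binom{d}{i}\ge (d/i)^i$ produces the level-wise contributions $B_i^4/d^i$ in the $t=2$ term. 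For $M_1$, the bound $\binom{d}{i} \gtrsim d^{m+1}$ yields $\|M_1\|_2^2 \le (\max_i B_i^2)\cdot C_m/d^{m+1} \le C_m^2/d^{m+1}$, with higher moments controlled by hypercontractivity since $in/d \le 1/(2e)$ throughout the range. For $M_2$, the binomial $\binom{d}{i} \ge \exp(d/(2en))$ throughout, so $\|M_2\|_2^2 \le D^2\exp(-d/(2en))$; higher moments come from the trivial $\|M_2\|_\infty \le D$ (since $|\omega_S|\equiv 1$ and $\sum_{|S|=i}\widehat{\sigma}(S)^2 \le B_i^2$) combined with the sharp elementary symmetric polynomial moment bound \cref{thm:symmetricpolybounds}, which supplies the factor $\exp(-d/(2et))$ uniformly in $t\le n$.

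Assembling the four families of estimates in the KL series and invoking the three stated conditions on $d$---each precisely calibrated to ensure that the higher-$t$ contributions for the corresponding range are at most a geometric fraction of the $t=2$ contribution---yields the claimed bound with $K_m$ a constant depending only on $m$. The main obstacle is the analysis of the middle ranges $M_1$ and $M_2$. Pure hypercontractivity is insufficient because the prefactor $(t-1)^{i/2}$ blows up for large $i$, while crude $L_\infty$-bounds lose all the cancellation available in the Fourier sums. The sharp moment estimate \cref{thm:symmetricpolybounds} bridges this gap by producing decay $\exp(-d/(2et))$, which, when paired with the uniform coefficient bound $|\widehat{\sigma}(S)|\le B_i/\binom{d}{i}^{1/2}$, supplies the exponential improvement needed throughout the middle range. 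Tracking all constants carefully so that the final assumption reads only $d \ge K_m\cdot n\cdot(\text{data-dependent factor})^{2/u}$, rather than something worse, is the most delicate part of the argument.
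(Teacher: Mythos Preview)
Your high-level strategy matches the paper's: apply \cref{claim:RaczLiuIndistinguishability}, reduce to moments of $\gamma_p$, split by Fourier-level ranges, and use hypercontractivity for the extreme ranges and \cref{thm:symmetricpolybounds} for the middle. The organizational difference is that the paper first \emph{symmetrizes} $\gamma_p$ into $f=\sum_i (B_i^2/\binom{d}{i})e_i$ (\cref{prop:symmetrizing}) and then uses a \emph{convexity argument} on a product of simplices (\cref{prop:reductionbyconvexity}, \cref{cor:convexityonsimplex}) to collapse each of the $2m+3$ level-intervals to a \emph{single} elementary symmetric polynomial carrying the full weight of that interval. You skip both steps and work directly with $\gamma_p=L+M_1+M_2+M_3+H$. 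For $L$, $H$, and $M_2$ this is fine.

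The gap is in your treatment of $M_1$ (and symmetrically $M_3$). If you apply hypercontractivity to $M_1$ as a whole, the degree is $d/(2en)$, so $\|M_1\|_t\le (t-1)^{d/(4en)}\|M_1\|_2$; but your $L_2$-bound $\|M_1\|_2\lesssim C_m/d^{(m+1)/2}$ comes from the \emph{bottom} level $m{+}1$, and this mismatch kills geometric decay in $t$: at $t\sim n$ the factor $n^{d/(4en)}$ is not beaten by $d^{(m+1)/2}$ under only $d\ge K_m n$. If instead you read ``$in/d\le 1/(2e)$'' as the level-wise uniform bound $(t-1)^{i/2}/\binom{d}{i}^{1/2}\le (1/(2e))^{i/2}$ and sum, you obtain $\|M_1\|_t\le C_m\cdot\mathrm{const}_m$ independently of $t$, and then $\sum_t\binom{k}{t}(C_m\cdot\mathrm{const}_m/(p(1-p)))^t$ is $O(k^2\psi^2)$ only if $C_m/(p(1-p))=o(1/n)$, which is \emph{not} among the hypotheses. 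What is actually needed is that $\sup_{m+1\le i\le d/(2en)} (t-1)^{i/2}/\binom{d}{i}^{1/2}$ is attained at an \emph{endpoint} of the interval---this is exactly what the paper's convexity reduction delivers, after which its Case~3 handles the two endpoints $i=m{+}1$ and $i=d/(2en)$ separately, dispatched respectively by the hypothesis $d\ge K_m n(C_m/(p(1-p)))^{2/(m+1)}$ and by $d\ge K_m n$. Your outline does not contain this step, and without it the series over $t$ is not controlled.
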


Before we proceed with the proof, we make several remarks. First, when one applies the above theorem, typically the four conditions are reduced to just the much simpler $d\ge K_m\times n.$ This is the case, for example, when $\sigma$ is symmetric. Indeed, note that $\sum_{i = 1}^dB_i^2 = \Var[\sigma]\le p(1-p)$ in that case, so the three inequalities are actually implied by $d\ge K_m\times n.$ This also holds for ``typical'' $\{0,1\}$-valued connections as we  
will see in \cref{sec:typicaldense}.

Next, we explain the expression bounding the $\KL$ divergence. In it, we have separated the Fourier levels 
into essentially $2m+3$ intervals:
\begin{itemize}
    \item $I_u = \{u\}$ for $u \in [m],$
    \item $I_{m+u} = \{d-u\}$ for $u \in [m],$
    \item $I_{2m+1} = [m+1, \frac{d}{2en}]\cup 
    [d - \frac{d}{2en}, d-m-1]
    ,$
    \item $I_{2m+2} = (\frac{d}{2en}, d-\frac{d}{2en}),$
    \item $I_{2m+3} = \{d\}.$
\end{itemize}
There are two reasons for this. First, as we will see in \cref{sec:symmetricconnectionsapplications,sec:nolowdegreeandhighdegreeterms}, levels very close to 0 and $d,$ i.e indexed by $O(1)$ or $d - O(1)$ play a fundamentally different role than the rest of the levels. This explains why $I_1, I_2, \ldots I_{2m}, I_{2m+3}$ are handled separately and why $I_{2m+1}$ and $I_{2m+2}$ are further separated (note that if $m$ is a constant, levels $m, m+1,\ldots, 2m$ in $I_{2m+1}$ also have indices of order $O(1)$). There is a further reason why $I_{2m+2}$ is held separately from the rest. On the respective levels, significantly stronger hypercontractivity bounds hold (see \cref{thm:symmetricpolybounds}).

While the above theorem is widely applicable, as we will see in \cref{sec:applications}, it has an unfortunate strong limitation - it requires that $d = \Omega_m(n)$ even when the other three inequalities are satisfied for much smaller values of $d$ (for an example, see \cref{sec:nolowdegreeandhighdegreeterms}). In \cref{sec:improvingautocorrelation}, we show that this is not a consequence of the complex techniques for bounding moments of $\gamma$ that we use. In fact, any argument based on \cref{claim:RaczLiuIndistinguishability} applied to  $\{0,1\}$-valued connections over the hypercube requires $d = \Omega(n).$
In particular, this means that one needs other tools to approach \cref{conj:sphericalhardthresholds} in its analogue version on the hypercube when $p  = o(n^{-2/3})$ and our conjecture \cref{conj:nolowandhighdegreeterms}. In \cref{thm:entropyargument}, we illustrate another, independent of proof techniques, reason why $d = \Omega(n)$ is necessary for $\{0,1\}$-valued connections with expectation of order $\Theta(1).$

Finally, we end with a brief overview of the proof, which is split into several sections.
\begin{enumerate}
    \item First, in \cref{sec:symmetrizing}, we ``symmetrize'' $\sigma$ by increasing the absolute values of its Fourier coefficients, so that all coefficients on the same level are equal. This might lead to a function which does not take values in $[0,1].$ As we will see, this does not affect our argument.
    \item Then, in \cref{sec:convexitysplitting},  using a convexity argument, we bound the moments of the symmetrized function by the moments on several (scaled) elementary symmetric polynomials which depend on the intervals $I_1, I_2, \ldots, I_{2m+3}$ described above.
    \item Next, in \cref{sec:elemntarysymmetric}, we bound separately the moments of elementary symmetric polynomials using hypercontractivity tools. We prove new, to the best of our knowledge, bounds on the moments of elementary symmetric polynomials in certain regimes.
    \item Finally, we put all of this together to prove
    \cref{thm:maintheoremindistingishability} in \cref{sec:puttingitalltogether}.
\end{enumerate}
\subsection{Symmetrizing The Autocorrelation Function}
\label{sec:symmetrizing}
In light of \cref{eq:firsteqonindist} and \cref{claim:RaczLiuIndistinguishability}, we need to bound the moments of $\gamma.$ Our first observation is that we can, as far as bounding $k$-th moments is concerned, replace $\gamma$ by a symmetrized version. 

\begin{proposition} 
\label{prop:symmetrizing}
Suppose that $\gamma, B_1, B_2, \ldots, B_d$ are as in \cref{thm:maintheoremindistingishability} and consider the function 
$$
f(\bfg):=\sum_{\emptyset \subseteq S\subsetneq [d]} \frac{B_{|S|}^2}{\binom{d}{|S|}}\omega_S(\bfg).
$$
Then, for any $k \in \mathbb{N},$ it holds that 
$\displaystyle
0 \le \expect_\bfg[\gamma(\bfg) ^k]\le \expect_\bfg[f(\bfg)^k].$
\end{proposition}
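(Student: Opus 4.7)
The plan is to reduce the comparison to a termwise inequality between the Fourier expansions of the $k$-th powers, without attempting any pointwise comparison between $\gamma$ and $f$. By \cref{prop:fourierviewindistinguishability},
\[
\gamma(\bfx,\bfy) \;=\; \sum_{\emptyset \subsetneq S \subseteq [d]} \widehat{\sigma}(S)^2\,\omega_S(\bfx\bfy^{-1}),
\]
so, viewing $\gamma$ as a function of $\bfg := \bfx\bfy^{-1} \in \hypercube$, every Fourier coefficient of $\gamma$ is nonnegative. The symmetrized function $f$ is designed so that its Fourier coefficient on any $S$ with $|S| = i$ equals $B_i^2 / \binom{d}{i} = \max_{|T|=i}\widehat{\sigma}(T)^2$, which dominates $\widehat{\sigma}(S)^2$ and is itself nonnegative.

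Next, I would expand the $k$-th moment using the character product rule $\omega_S \omega_T = \omega_{S\Delta T}$ and apply the orthogonality relation $\expect_{\bfg \sim \unif(\hypercube)}[\omega_U(\bfg)] = \indicator[U=\emptyset]$. This yields
\[
\expect_{\bfg}\bigl[\gamma(\bfg)^k\bigr] \;=\; \sum_{\substack{(S_1,\ldots,S_k)\\ S_1\Delta\cdots\Delta S_k = \emptyset}} \prod_{i=1}^k \widehat{\sigma}(S_i)^2,
\]
and the analogous identity for $\expect[f^k]$ with each $\widehat{\sigma}(S_i)^2$ replaced by $B_{|S_i|}^2 / \binom{d}{|S_i|}$. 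Since every summand on the right is a product of nonnegative reals, $\expect[\gamma^k] \geq 0$ is immediate. Since $B_{|S|}^2/\binom{d}{|S|} \geq \widehat{\sigma}(S)^2 \geq 0$ for every nonempty $S$, the products compare termwise, and summing over tuples $(S_1,\ldots,S_k)$ with $S_1 \Delta \cdots \Delta S_k = \emptyset$ gives $\expect[\gamma^k] \leq \expect[f^k]$.

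There is no real obstacle, but one conceptual point is worth flagging: one should resist trying to prove a pointwise bound $f(\bfg) \geq \gamma(\bfg)$ or to verify that $f$ takes values in $[0,1]$. Neither is true in general, and neither is required. The entire argument lives at the level of Fourier coefficients, and exploits the two structural features that $\gamma$ automatically has nonnegative Fourier spectrum (because it is an autocorrelation), and that replacing each $\widehat{\sigma}(S)^2$ by the maximum on its level preserves the sign of every term in the expansion of the $k$-th moment. This is why the symmetrization step is ``free'' and why the remainder of \cref{sec:indistinguishability} can work with the much more tractable level-constant function $f$.
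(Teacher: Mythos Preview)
Your proposal is correct and essentially identical to the paper's proof: both expand $\expect[\gamma^k]$ via the Fourier representation from \cref{prop:fourierviewindistinguishability}, observe that only tuples with $S_1\Delta\cdots\Delta S_k=\emptyset$ survive (the paper calls these ``even covers''), and compare termwise using $0\le\widehat{\sigma}(S)^2\le B_{|S|}^2/\binom{d}{|S|}$. Your remark that no pointwise bound on $f$ is needed is exactly the point, and the paper makes the same observation immediately after the proof.
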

\begin{proof} Using \cref{prop:fourierviewindistinguishability}, we know that 
\begin{align*}
\expect_\bfg[\gamma(\bfg) ^k] & = 
 \expect \left(
 \sum_{\emptyset\subsetneq S\subset [d]}\widehat{\sigma}^2(S)\omega_S(\bfg)
\right)^k\\
& = \sum_{S_1, S_2, \ldots, S_k}
\widehat{\sigma}^2(S_1)
\widehat{\sigma}^2(S_2)
\cdots
\widehat{\sigma}^2(S_k)
\expect[\omega_{S_1}(\bfg)
\omega_{S_2}(\bfg)
\cdots 
\omega_{S_k}(\bfg)].
\end{align*}
Now, observe that $\expect[\omega_{S_1}(\bfg)
\omega_{S_2}(\bfg)
\cdots 
\omega_{S_k}(\bfg)]$ takes only values $0$ and 1, which immediately implies non-negativity of $\expect[\gamma^k].$ Let $\mathcal{ECV}$ be the set of $k$-tuples $(S_1, S_2, \ldots, S_k)$ on which it is 1 (this is the set of even covers). Using that $|\widehat{\sigma}^2(S)|\le B_{|S|}^2 \binom{d}{|S|}^{-1}$ holds for all $S\neq \emptyset$ by the definition of $B_i,$ we conclude that
\begin{equation}
\label{eq:endofsymmetrizing}
    \begin{split}
\expect_\bfg[\gamma(\bfg) ^k] 
& = \sum_{(S_1, S_2, \ldots, S_k)\in \mathcal{ECV}}
\widehat{\sigma}^2(S_1)
\widehat{\sigma}^2(S_2)
\cdots
\widehat{\sigma}^2(S_k)\\
& \le 
\sum_{(S_1, S_2, \ldots, S_k)\in \mathcal{ECV}}
\frac{B_{|S_1|}^2}{\binom{d}{|S_1|}}
\frac{B_{|S_2|}^2}{\binom{d}{|S_2|}}
\cdots
\frac{B_{|S_k|}^2}{\binom{d}{|S_k|}} = 
\expect_\bfg[f(\bfg) ^k]. 
    \end{split}
\end{equation}
The last equality holds for the same reason as the first one in \cref{eq:endofsymmetrizing}.
\end{proof}

Since $f$ is symmetric, we can express it in a more convenient way as a weighted sum of the elementary symmetric polynomials $e_i.$ That is,  
\begin{equation}
\label{eq:asasumofelemnraysymmetric}
    f(\bfg) = 
    \sum_{i = 1}^d \frac{B_i^2}{\binom{d}{i}}e_i(\bfg).
\end{equation}

Note that if $\sigma$ is symmetric, we have not incurred any loss at this step since $\gamma = f.$ Moving forward, we will actually bound the $L_k$-norms of $f,$ i.e. use the fact that $\expect[f^k]\le \expect[|f|^k] = \norm{f}_k^k.$ There is no loss in doing so when $k$ is even. One can also show that there is no loss (beyond a constant factor in the KL bound in \cref{thm:maintheoremindistingishability}) for $k$ odd as well due to the positivity of $\expect[f^k].$ Considering the expression in \cref{eq:firsteqonindist}, we have 
\begin{equation}
\label{eq:symmetrizednormbbound}
    \begin{split}
     \expect_{(x,y)\sim \mathcal{D}\times\mathcal{D}}\left[\left(1 + \frac{\gamma(\bfx,\bfy)}{p(1-p)}\right)^k\right] = 
    \sum_{t = 0}^k \binom{k}{t}\frac{\expect_{\bfg\sim \unif(\hypercube)}[\gamma(\bfg)^k]}{p^t(1-p)^t} & = 1 + \sum_{t \ge 2}\binom{k}{t}\frac{\expect[\gamma^k]}{p^t(1-p)^t}\\
    & \le  1 + \sum_{t \ge 2}\binom{k}{t}\frac{\expect[f^k]}{p^t(1-p)^t}\\
    & \le 1 + \sum_{t \ge 2}\binom{k}{t}\frac{\norm{f}_k^k}{p^t(1-p)^t},
    \end{split}
\end{equation}
where we used that $\expect[\gamma] = \widehat{\gamma}(\emptyset) = 0$ by \cref{prop:fourierviewindistinguishability}.

\subsection{Splitting into Elementary Symmetric Polynomials}
\label{sec:convexitysplitting}
Our next step is to use a convexity argument to show that when bounding \cref{eq:symmetrizednormbbound}, we simply need to consider the case when all the mass is distributed  on constantly many levels.
The key idea is to view \cref{eq:symmetrizednormbbound} as a function of the coefficients on different levels. To do so, define 
\[
    h(W^1, W^2, \ldots, W^d) := 
    1 + \sum_{t \ge 2}\binom{k}{t}p^{-t}(1-p)^{-t}
    \norm{\sum_{i = 1}^d \frac{W^i}{\binom{d}{i}} e_i(\bfg)}^k_k.
\]
Clearly, $(B^2_1, B^2_2, \ldots, B^2_d) = (W^1, W^2, \ldots, W^d)$ corresponds to $f$ and $h(B_1^2, B_2^2, \ldots, B_d^2)$ is the expression in the last line of \cref{eq:symmetrizednormbbound}. The high-level idea is that $h$ is convex and, thus, its maximum on a convex polytope is attained at a vertex of a polytope. We will construct polytopes in such a way that vertices correspond to specific vectors of constant sparsity.

\begin{proposition}
\label{prop:reductionbyconvexity}
The function $h$ is convex. 
\end{proposition}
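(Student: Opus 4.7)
The plan is to recognize that $h$ is, up to an additive constant and a positive scalar multiple, a norm raised to a power and composed with a linear map of $W$. All the pieces are standard in convex analysis, and the only care needed is to track the right monotonicity when composing $x \mapsto x^k$ with an inner function taking non-negative values.

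First I would factor out the $W$-independent constants. The coefficients $\binom{k}{t} p^{-t}(1-p)^{-t}$ do not depend on $W$, so
\[
h(W^1,\ldots,W^d) \;=\; 1 \;+\; C \cdot \Bgn L(W) \Bgn_k^{\,k},
\qquad
C \;:=\; \sum_{t\ge 2} \binom{k}{t} p^{-t}(1-p)^{-t} \;>\;0,
\]
where $L(W) \in L_k(\hypercube,\unif)$ is the function $L(W)(\bfg) := \sum_{i=1}^d \frac{W^i}{\binom{d}{i}}\, e_i(\bfg)$. Since positive scaling and adding constants preserve convexity, it suffices to prove that the map $W \mapsto \norm{L(W)}_k^{\,k}$ is convex.

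I would then argue convexity by assembling three standard facts. (i) The map $W \mapsto L(W)$ is linear from $\mathbb{R}^d$ into $L_k(\hypercube,\unif)$, since it is a finite linear combination of the fixed functions $e_i/\binom{d}{i}$ with coefficients $W^i$. (ii) The functional $F \mapsto \norm{F}_k$ is a norm on $L_k(\hypercube,\unif)$, hence convex and non-negative. Precomposing a convex function with a linear map yields a convex function, so $W \mapsto \norm{L(W)}_k$ is convex and non-negative. (iii) For $k \ge 1$, the scalar function $\phi:[0,\infty)\to[0,\infty)$, $\phi(x) = x^k$, is convex and non-decreasing. Composing a convex non-decreasing function with a convex function whose range lies in its domain preserves convexity, so $W \mapsto \phi(\norm{L(W)}_k) = \norm{L(W)}_k^{\,k}$ is convex.

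Putting the pieces together yields the convexity of $h$. There is no real obstacle here beyond bookkeeping; the one subtle point is that the ``convex $\circ$ convex'' composition lemma requires the outer function to be non-decreasing, which is why it matters that the inner quantity $\norm{L(W)}_k$ is non-negative. This is automatic because $\norm{\cdot}_k$ is a norm, so the composition with $x\mapsto x^k$ (for $k \ge 2$, as in the sum defining $h$) is justified and the proof is complete.
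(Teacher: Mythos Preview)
Your approach is essentially the paper's: show $W\mapsto\|L(W)\|$ is convex (norm precomposed with a linear map), compose with the convex nondecreasing $x\mapsto x^{t}$ on $[0,\infty)$, then take a nonnegative combination. The one slip is your factoring $h=1+C\,\|L(W)\|_k^{\,k}$: that relies on the paper's typo in the displayed definition of $h$, where the norm should be $\|\cdot\|_t^{\,t}$ varying with the summation index $t$ (as the paper's own proof, Corollary~\ref{cor:convexityonsimplex}, and the binomial expansion in \eqref{eq:firsteqonindist} all make clear). With the intended definition you cannot collapse the sum into a single constant $C$, but your steps (i)--(iii) apply verbatim to each term $W\mapsto\|L(W)\|_t^{\,t}$ for $t\ge 2$, and then $h$ is $1$ plus a positive linear combination of these convex functions---exactly the paper's argument.
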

\begin{proof}
We simply show that $h$ is a composition of convex functions. First, for any $t,$
$$\displaystyle({W}^i)_{i \in [d]}\longrightarrow
\bigg\|
\sum_{i =1}^d
\frac{{W}^{i}}{\binom{d}{i}}e_i(g)\bigg\|_t
$$ is convex. Indeed, for any $\alpha\in [0,1]$ and vectors $({W}_1^i)_{i \in [d]}$ and 
$({W}_2^i)_{i \in [d]},$
we have 
\begin{align*}
     & \bggn{
\sum_{i \in [d]}
\frac{\alpha{W}_1^{i} + (1-\alpha){W}_2^{i}}{\binom{d}{i}}e_i
}\bggn_t\\
&\le   \bggn{
\alpha\sum_{i \in [d]}
\frac{{W}_1^{i}}{\binom{d}{i}}e_i
}\bggn_t + 
\bggn{
(1-\alpha)\sum_{i \in I[d]}
\frac{{W}_2^{i}}{\binom{d}{i}}e_i
}\bggn_t\\
& =  \alpha\bggn{\sum_{i \in [d]}
\frac{{W}_1^{i}}{\binom{d}{i}}e_i
}\bggn_t + 
(1-\alpha)\bggn{
\sum_{i \in [d]}
\frac{{W}_2^{i}}{\binom{d}{i}}e_i
}\bggn_t,
\end{align*}
by convexity of norms. Since $x\longrightarrow x^t$ for $t \ge 2$ is increasing and convex on $[0,+\infty),$ it follows that $\displaystyle({W}^i)_{i \in I}\longrightarrow 
\bggn{
\sum_{i \in [d]}
\frac{{W}^{i}}{\binom{d}{i}}e_i(g) 
}\bggn^t_t
$ is also convex. Thus, $h$ is convex as a linear combination with positive coefficients of convex functions.
\end{proof}

Now, we will define suitable polytopes on which to apply the convexity argument as follows. For any partition size $\Xi\in [d],$ partition $\mathbf{I} = (I_\xi)_{\xi =1}^{\Xi}$ of $[d]$ into $\Xi$ disjoint sets, and non-negative vector
$\mathbf{w} = (w_\xi)_{\xi = 1}^\Xi,$
define the following product of simplices  $\Delta_{\mathbf{I}, \mathbf{w}}.$ It is given by 
$0 \le {W}^i$ for all $i \in [d]$ and $\sum_{i \in I_\xi}
{W}^i\le w_\xi$ for all $\xi \in [\Xi].$ Using convexity of $h$ on $\Delta_{\mathbf{I}, \mathbf{w}},$ we reach the following conclusion.

\begin{corollary}
\label{cor:convexityonsimplex}
For any $\Delta_{\mathbf{I}, \mathbf{w}}$ defined as just above, 
\[
\sup_{({W}^{i})_{i = 1}^d\in \Delta_{\mathbf{I}, \mathbf{w}}}
h\left(({W}^{i})_{i = 1}^d\right) 
\le \sup_{i_\xi \in I_\xi \;\forall \xi \in [\Xi]}
1 + \sum_{t = 2}^k   \binom{k}{t}\frac{\Xi^t}{p^t(1-p)^t}
\bigg(
\sum_{\xi \in [\Xi]}
\bggn{
\frac{w_{\xi}}{\binom{d}{i_\xi}}e_{i_\xi}
}\bggn_t^t  
\bigg).
\]
\end{corollary}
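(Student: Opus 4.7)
The plan is to combine the convexity of $h$ established in Proposition~\ref{prop:reductionbyconvexity} with Bauer's maximum principle, which says that a convex function on a convex compact set attains its supremum at an extreme point, and then simplify the resulting expression by elementary norm inequalities.

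First, I would identify the extreme points of $\Delta_{\mathbf{I},\mathbf{w}}$. By construction, $\Delta_{\mathbf{I},\mathbf{w}}$ is a product of the simplices $\Delta_\xi := \{(W^i)_{i \in I_\xi} : W^i \ge 0,\ \sum_{i \in I_\xi} W^i \le w_\xi\}$, one for each $\xi \in [\Xi]$, so its extreme points are exactly the products of extreme points of the $\Delta_\xi$. The extreme points of $\Delta_\xi$ are the zero vector together with the singletons $w_\xi \mathbf{1}_{\{j\}}$ for $j \in I_\xi$. Hence every vertex of $\Delta_{\mathbf{I},\mathbf{w}}$ has the form $W^{i_\xi} = w_\xi$ and $W^i = 0$ for $i \in I_\xi \setminus \{i_\xi\}$, for some choice $(i_\xi)_{\xi \in [\Xi]} \in \prod_\xi I_\xi$ (the ``all-zero in block $\xi$'' extreme point can be absorbed by allowing $w_\xi = 0$ or by choosing any representative $i_\xi$ with the corresponding term vanishing).

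Substituting such a vertex into $h$ gives
\[
1 + \sum_{t \ge 2} \binom{k}{t}\frac{1}{p^t(1-p)^t}\,\bggn{\sum_{\xi \in [\Xi]} \frac{w_\xi}{\binom{d}{i_\xi}}\, e_{i_\xi}}\bggn_t^t.
\]
The triangle inequality for $\|\cdot\|_t$ bounds the inner $L_t$-norm by $\sum_{\xi} \bggn{\tfrac{w_\xi}{\binom{d}{i_\xi}}\, e_{i_\xi}}\bggn_t$, and then the power-mean inequality $(\sum_{\xi=1}^\Xi a_\xi)^t \le \Xi^{t-1} \sum_\xi a_\xi^t \le \Xi^t \sum_\xi a_\xi^t$ produces the advertised factor of $\Xi^t$. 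Taking the supremum over $(i_\xi)_{\xi \in [\Xi]} \in \prod_\xi I_\xi$ then completes the argument.

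There is no real obstacle: the whole derivation reduces to the elementary fact that extreme points of a product of simplices are ``block-sparse'' (one nonzero coordinate per block) combined with two textbook inequalities. The only minor point requiring care is matching the stated constant $\Xi^t$ exactly (in fact a slightly stronger bound with $\Xi^{t-1}$ could be obtained at no extra cost, but $\Xi^t$ is sufficient and cleaner for the downstream applications in the sections that follow).
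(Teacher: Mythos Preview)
Your proposal is correct and follows essentially the same route as the paper: convexity of $h$ forces the supremum to occur at a vertex of the product-of-simplices polytope, vertices are block-sparse with at most one nonzero coordinate $w_\xi$ per block, and then the triangle inequality together with $(\sum_\xi a_\xi)^t \le \Xi^t \sum_\xi a_\xi^t$ finishes. You are in fact slightly more careful than the paper in noting that the all-zero vector in a block is also an extreme point of $\Delta_\xi$ and explaining why it can be absorbed into the bound; the paper's vertex description tacitly ignores this case.
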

\begin{proof}
Since $h$ is convex by the previous theorem, 
its supremum over the convex polytope $\Delta_{\mathbf{I}, \mathbf{w}}$ is attained at a vertex.  Vertices correspond to vectors 
$({V}^i)_{i = 1}^d$ of the following type. For each $\xi \in [\Xi],$ there exists a unique index $i_\xi \in I_\xi$ such that $V^{i_\xi}\neq 0$ and, furthermore, $V^{i_\xi} = w_{\xi}.$ Thus, 
$$\sup_{({W}^{i})_{i = 1}^d\in \Delta_{\mathbf{I}, \mathbf{w}}}
h\left(({W}^{i})_{i = 1}^d\right) 
\le \sup_{i_\xi \in I_\xi \;\forall \xi \in [\Xi]}
1 + \sum_{t = 2}^k   \binom{k}{t}p^{-t}(1-p)^{-t}
\bggn{
\sum_{\xi \in [\Xi]}
\frac{w_{\xi}}{\binom{d}{i_\xi}}e_{i_\xi}
}\bggn_t^t. 
$$
Using triangle inequality and the simple inequality 
$(\sum_{\xi \in [\Xi]}u_\xi)^t\le \Xi^t\sum_{\xi \in [\Xi]}u^t_\xi$ for non-negative reals $u_\xi,$
we complete the proof
\[
\bggn{
\sum_{\xi \in [\Xi]}
\frac{w_{\xi}}{\binom{d}{i_\xi}}e_{i_\xi}
}\bggn^t_t\le 
\bigg(\sum_{\xi \in [\Xi]}\bggn{
\frac{w_{\xi}}{\binom{d}{i_\xi}}e_{i_\xi}
}\bggn_t
\bigg)^t\le 
\Xi^t
\sum_{\xi \in [\Xi]}\bggn{
\frac{w_{\xi}}{\binom{d}{i_\xi}}e_{i_\xi}
}\bggn^t_t.
\qedhere\]
\end{proof}

Given the statement of \cref{thm:maintheoremindistingishability}, it should be no surprise that we will apply the intervals\linebreak $I_1, I_2, \ldots, I_{2m+3}$ defined right after \cref{thm:maintheoremindistingishability}. In doing so, however, we will need to bound the norms of elementary symmetric polynomials which appear in \cref{cor:convexityonsimplex}. This is our next step.
\subsection{Moments of Elementary Symmetric Polynomials}
\label{sec:elemntarysymmetric}
When dealing with norms of low-degree polynomials, one usually applies \cref{lem:hypercontractivity} for $q = 2$ due to the simplicity of the formula for second moments. If we directly apply this inequality in our setup, however, we obtain
$$\displaystyle\norm{e_i(g)}_t\le 
\norm{(t-1)^{i/2}e_i(g)}_2 = 
(t-1)^{i/2}
\binom{d}{i}^{1/2}.
$$ 
This inequality becomes completely useless for our purposes when we consider large values of $i.$ Specifically, suppose that 
$i = d/2.$ Then, this bound gives $(t-1)^{d/4}2^{\Theta(d)},$ which is  too large if $t = \omega(1),$ since 
$$
\norm{e_i(\bfg)}_t \le 
\norm{e_i(\bfg)}_{\infty}\le 
\binom{d}{i}\le 2^d.
$$
Even though the $L_\infty$ bound is already much better, it is still insufficient for our purposes. Instead, we obtain the following better bounds in \cref{thm:symmetricpolybounds}. To the best of our knowledge, the second of them is novel. 

\begin{theorem}
\label{thm:symmetricpolybounds} Suppose that $d$ and $t$ are integers such that $d>2te$ and $t>1.$ Take any $s\in [0,d]$ and define $\sm(s) = \min (s,d-s).$ Then, we have the following bounds: 
\begin{itemize}
    \item For any $s,$ 
    $$\norm{e_s}_t\le \binom{d}{s}^{1/2}(t-1)^{\frac{\sm(s)}{2}}.$$
    \item If, furthermore, $\sm(s)\ge \frac{d}{2te},$ then
    $$\norm{e_s}_t\le \binom{d}{s}\exp\left( - \frac{d}{4te}\right).$$
    Finally, this second bound is tight up to constants in the exponent.
\end{itemize}
\end{theorem}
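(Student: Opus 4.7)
This follows from Bonami's hypercontractivity (\cref{lem:hypercontractivity}) combined with a symmetry of the hypercube. Observe the pointwise identity $e_s(\bfx) = \omega_{[d]}(\bfx)\,e_{d-s}(\bfx)$ on $\hypercube$, obtained by writing $\prod_{i\in S}x_i = \omega_{[d]}(\bfx)\prod_{i\notin S}x_i$ and summing over $|S|=s$. Since $\omega_{[d]}(\bfx) \in \{\pm 1\}$, this gives $|e_s| = |e_{d-s}|$ pointwise and hence equal $L_t$ norms. Without loss of generality take $s = \sm(s) \le d/2$; hypercontractivity applied to the degree-$s$ polynomial $e_s$, with $\sum_{|S|=s}\widehat{e_s}(S)^2 = \binom{d}{s}$, then yields $\norm{e_s}_t^2 \le (t-1)^s \binom{d}{s}$, the claimed bound with $\sm(s)$.

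\textbf{Second bound.} In the regime $\sm(s) \ge d/(2et)$, the hypercontractive factor $(t-1)^{\sm(s)/2}$ can exceed $\binom{d}{s}^{1/2}$, so a sharper analysis is required. By the $e_s \leftrightarrow e_{d-s}$ symmetry, assume $s \in [d/(2et), d/2]$. The plan exploits the Krawtchouk representation: $e_s(\bfx) = K_s^{(d)}(u)$, where $u$ is the number of $-1$ entries of $\bfx$ and $K_s^{(d)}(u) = [z^s](1+z)^{d-u}(1-z)^u$. For even $t$,
\[
\norm{e_s}_t^t = 2^{-d}\sum_{u=0}^d \binom{d}{u}\,K_s^{(d)}(u)^t.
\]
I would bound this by splitting the $u$-sum at a threshold $|u - d/2| \le \delta d$ versus its complement. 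In the tail, $\binom{d}{u}/2^d \le 2\exp(-2\delta^2 d)$ by Hoeffding, combined with the trivial $|K_s^{(d)}| \le \binom{d}{s}$, controls the contribution. In the bulk, use the factorization $K_s^{(d)}(d/2+v) = [z^s](1-z^2)^{d/2}\bigl((1-z)/(1+z)\bigr)^v$ and Cauchy's integral formula on the contour $|z| = r$ with $r = \sqrt{s/(d-s)}$ (the saddle of the principal factor $(1-z^2)^{d/2}$) to obtain a pointwise estimate of the form $|K_s^{(d)}(d/2+v)| \lesssim \binom{d}{s}^{1/2}\exp(O(r|v|))$. Integrating the $t$-th power against the Gaussian-approximated Binomial density contributes a cumulant of order $r^2 t^2/d$, and choosing $\delta$ to balance the two regimes yields $\norm{e_s}_t \le \binom{d}{s}\exp(-d/(4et))$ after taking $t$-th roots.

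\textbf{Tightness and main obstacle.} The matching lower bound restricts $\norm{e_s}_t^t$ to points $\bfx^\ast$ of Hamming weight $\lfloor d/2\rfloor$, which have total mass $\Theta(1/\sqrt d)$. The Krawtchouk identity at $u = d/2$ gives $|e_s(\bfx^\ast)| = \binom{\lfloor d/2\rfloor}{\lfloor s/2\rfloor}$ for $s$ even, and Stirling yields $\binom{d/2}{s/2}/\binom{d}{s} \asymp \exp(-dH(s/d)/2)$ in nats. For $s \asymp d/(et)$, $H(s/d) \asymp \log(et)/(et)$, producing an exponent of order $d/(et)$ up to a logarithmic constant, which matches the claim. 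The main technical difficulty is the bulk Krawtchouk estimate: the radius $r = \sqrt{s/(d-s)}$ is the natural saddle of $(1-z^2)^{d/2}$, but one must carefully control the auxiliary factor $((1-z)/(1+z))^v$ so that its growth in $|v|$ is absorbed by the Gaussian damping from $\binom{d}{d/2+v}/2^d$. This is essentially a classical saddle-point analysis for Krawtchouk polynomials, and it is the step that pins down the constant $4e$ in the exponent.
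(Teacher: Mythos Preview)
Your first bound is identical to the paper's argument.

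For the second bound, your Krawtchouk/saddle-point route is genuinely different from the paper's, and the key step is not actually done: you defer the bulk estimate $|K_s^{(d)}(d/2+v)| \lesssim \binom{d}{s}^{1/2}\exp(O(r|v|))$ to ``classical saddle-point analysis'' and then assert that balancing against the Gaussian tail produces the constant $1/(4e)$. That balancing is precisely the content of the theorem, so this is a gap rather than a proof. By contrast, the paper's argument is entirely elementary and avoids any asymptotic analysis: it writes $e_s = \binom{s}{\psi}^{-1}\sum_{|T|=\psi}\omega_T\sum_{|A|=s-\psi,\,A\cap T=\emptyset}\omega_A$, applies the triangle inequality over $T$, and then uses hypercontractivity on each inner sum, which now has degree $s-\psi$. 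Choosing $\psi$ so that $s-\psi = d/(2et)$ makes the hypercontractive factor $(t-1)^{(s-\psi)/2}$ exactly cancel against the binomial factors, and a short combinatorial identity finishes the bound with the explicit constant. The degree-reduction trick is what you are missing; it replaces your entire analytic program with two lines.

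Your tightness argument is both harder than necessary and does not establish the claim. Restricting to Hamming weight $d/2$ gives $|e_s| = \binom{d/2}{s/2}$ (for $s$ even), and your Stirling computation shows $\binom{d/2}{s/2}/\binom{d}{s} \asymp \exp(-\tfrac{d}{2}H(s/d)\ln 2)$. At $s \asymp d/(et)$ this is $\exp(-\Theta(d\log t/t))$, already off by a $\log t$ factor; but at $s = d/2$ it is $\exp(-\Theta(d))$, which misses the target $\exp(-\Theta(d/t))$ completely when $t$ is large. The paper instead uses the single point $\bfx = \mathbf{1}$, where $e_s(\mathbf{1}) = \binom{d}{s}$ with probability $2^{-d}$, giving $\norm{e_s}_t \ge \binom{d}{s}\,2^{-d/t}$ uniformly in $s$. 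This one-line lower bound matches the upper bound up to the constant in the exponent for the entire range $\sm(s)\ge d/(2et)$.
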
 
\begin{proof}
First, we show that we can without loss of generality assume $s \le d/2$ and, hence, $\sm(s) = s.$ Indeed, note that for any function $q$ on the hypercube and any character $\omega_T,$ clearly
$\norm{q\omega_T}_k = \expect[|q\omega_T|^k]^{1/k} = 
\expect[|q|^k]^{1/k} = \norm{q}_k.
$ Thus, 
$$
\norm{e_s}_t = 
\Bgn{\omega_{[d]}\sum_{S\; : \; |S| = s}\omega_{S}}\Bgn_t = 
\Bgn{\omega_{[d]}\sum_{S\; : \; |S| = s}\omega_{S}}\Bgn_t = 
\Bgn{\sum_{S\; : \; |S| = s}\omega_{\bar{S}}}\Bgn_t = 
\norm{e_{d-s}}_t.
$$
With this, we are ready to prove the first bound, which simply uses the hypercontractivity inequality \cref{lem:hypercontractivity}. Indeed, when $s<d/2,$
$$
\norm{e_s}_t\le \sqrt{t-1}^s\norm{e_s}_2 = 
(t-1)^{\frac{\sm(s)}{2}}\binom{d}{s}^{1/2}.
$$
We now proceed to the much more interesting second bound. Suppose that $s\ge \frac{d}{2et}$ and chose $\psi$ such that $s - \psi = \frac{d}{2et}$ (for notational simplicity, we assume $\frac{d}{2et}$ is an integer). The problem of directly applying \cref{lem:hypercontractivity} is that the degree $s$ of the polynomial $e_s$ is too high. We reduce the degree as follows.
\begin{align*}
\norm{e_s}_t  & =\Bgn{
    \sum_{S \; : \; |S| = t}\omega_S
    }\Bgn_t\\
& =  
\bggn{
\sum_{T \; : \; |T| = \psi}\sum_{A\; : \; |A| = s-\psi, A\cap T = \emptyset}\omega_{A\cup T}\binom{s}{\psi}^{-1}
}\bggn_t\\
&\le 
\sum_{T \; : \; |T| = \psi}\bggn{
\sum_{A\; : \; |A| = s-\psi, A\cap T = \emptyset}
\omega_{A}
\omega_{T}
\binom{s}{\psi}^{-1}
}\bggn_t\\
& = 
\sum_{T \; : \; |T| = \psi}\bggn{
\sum_{A\; : \; |A| = s-\psi, A\cap T = \emptyset}
\omega_{A}
\binom{s}{\psi}^{-1}
}\bggn_t.
\end{align*}
We used the following facts.
Each $S$ can be represented in $\binom{s}{\psi}$ ways as $A\cup T,$ where $A$ and $T$ are disjoint and $|T| = \psi.$ Second, as $A$ and $T$ are disjoint, 
$\omega_{A\cup T} = \omega_A\omega_T.$ Finally, we used that for any polynomial $q,$ $\norm{\omega_Tq}_t = \norm{q}_t.$ Now, as the polynomial in the last equation is of degree $s-\psi = \frac{d}{2et},$ which is potentially much smaller than $s,$ we can apply \cref{lem:hypercontractivity} followed by Paresval's identity to obtain
\begin{align*}
& \sum_{T \; : \; |T| = \psi}\bggn{
\sum_{A\; : \; |A| = s-\psi, A\cap T = \emptyset}
\omega_{A}
\binom{s}{\psi}^{-1}
}\bggn_t \\
&\le  
\sum_{T \; : \; |T| = \psi}
(t-1)^{\frac{s-\psi}{2}}
\bggn{
\sum_{A\; : \; |A| = s-\psi, A\cap T = \emptyset}
\omega_{A}
\binom{s}{\psi}^{-1}
}\bggn_2\\
& = 
\sum_{T \; : \; |T| = \psi}
(t-1)^{\frac{s-\psi}{2}}
\sqrt{
\sum_{A\; : \; |A| = s-\psi, A\cap T = \emptyset}
\binom{s}{\psi}^{-2}
}\\
& = 
\sum_{T \; : \; |T| = \psi}
(t-1)^{\frac{s-\psi}{2}}
\sqrt{
\binom{d-\psi}{s-\psi}
\binom{s}{\psi}^{-2}
}_2\\
& =
\binom{d}{\psi}
(t-1)^{\frac{s-\psi}{2}}
\sqrt{
\binom{d-\psi}{s-\psi}
\binom{s}{\psi}^{-2}
}_2\\
& =
\binom{d}{s}
\sqrt{
(t-1)^{s-\psi}
\binom{d}{\psi}^2
\binom{d-\psi}{s-\psi}
\binom{s}{\psi}^{-2}
\binom{d}{s}^{-2}
}.
\end{align*}
We now use the simple combinatorial identity that 
$\displaystyle \binom{d}{\psi}
\binom{d-\psi}{s-\psi} = 
\binom{s}{\psi}
\binom{d}{s}.$
One can either prove it by expanding the factorials or note that both sides of the equation count pairs of disjoint subsets $(T,A)$ of $[d]$ such that $|T| =\psi, |A| = s-\psi.$ In other words, we have

\begin{align*}
&\binom{d}{s}
\sqrt{
(t-1)^{s-\psi}
\binom{d}{\psi}^2
\binom{d-\psi}{s-\psi}
\binom{s}{\psi}^{-2}
\binom{d}{s}^{-2}
}\\
& =  
\binom{d}{s}
\sqrt{
(t-1)^{s-\psi}
\binom{d-\psi}{s-\psi}^{-1}
}\le 
\binom{d}{s}
\sqrt{
(t-1)^{s-\psi}
\left(\frac{d-\psi}{s-\psi}\right)^{-(s-\psi)}
}\\ 
& =
\binom{d}{s}
\left(\frac{(t-1)(s-\psi)}{d-\psi}\right)^{\frac{s-\psi}{2}}\le 
\binom{d}{s}
\left(\frac{2t(s-\psi)}{d}\right)^{\frac{s-\psi}{2}} =
\binom{d}{s}\exp\left( -\frac{d}{4et} \right),
    \end{align*}
as desired. Above, we used the bounds $\displaystyle\binom{d-\psi}{s-\psi}\ge \left(\frac{d-\psi}{s-\psi}\right)^{(s-\psi)}$ and 
$d- \psi \ge d-s\ge d/2,$ and the choice of $\psi$ given by $\displaystyle s - \psi = \frac{d}{2et}.$ With this, the proof of the bounds is complete.

All that remains to show is the tightness of the second bound. Note, however, that 
$\prob[e_s(\bfg) = \binom{d}{s}]= \frac{1}{2^d}$
as this event happens when $\bfg_i = 1$ holds for all $i.$
Therefore,
\[
\norm{e_s}_t \ge 
\left( \frac{1}{2^d}\binom{d}{s}^t\right)^{1/t} = 
\binom{d}{s}
2^{d/t} = 
\binom{d}{s}\exp\left(- O\left(\frac{d}{t}\right)\right).
\qedhere\]
\end{proof}

We make the following remarks about generalizing the inequality above. They are of purely theoretical interest as we do not need the respective inequalities in the paper.

\begin{remark}
The exact same bounds and proof also work if we consider weighted versions with coefficients in $[-1,1].$ That is, polynomials of the form 
$$
q(\bfg) = \sum_{S\; : \; |S| = s}\varepsilon_S\omega_S(\bfg),
$$
where $\varepsilon_S\in [-1,1]$ holds for each $S.$ This suggests that as long as $q$ has a small spectral infinity norm (i.e., uniform small bound on the absolute values of its Fourier coefficients), one can improve the classical theorem \cref{lem:hypercontractivity}. We are curious whether this phenomenon extends beyond polynomials which have all of their weight on a single level.\end{remark}

\begin{remark}
    When $\frac{d}{2et}<s < \frac{d}{2},$ the same inequality with the exact same proof also holds when we evaluate $e_s$ at other random vectors such as standard Gaussian $\mathcal{N}(0, I_d)$ and the uniform distribution on the sphere since analogous inequalities to \cref{lem:hypercontractivity} hold (see \cite[Proposition 5.48, Remark 5.50]{AliceBobBanach}).
\end{remark}
\subsection{Putting It All Together}
\label{sec:puttingitalltogether}
Finally, we combine all of what has been discussed so far to prove our main technical result.

\begin{proof}[Proof of \cref{thm:maintheoremindistingishability}]
First, we note that without loss of generality, we can make the following assumptions:
\begin{enumerate}
    \item $B^4_1 = O_m(\frac{dp^2(1-p)^2}{n^3}),$
    \item $B^4_{d-1} = O_m(\frac{dp^2(1-p)^2}{n^3}),$
    \item $B^4_d = O_m(\frac{p^2(1-p)^2}{n^3}),$
    \item $D^2 = O_m\left(\exp(\frac{d}{2en})\times 
    \frac{p^2(1-p)^2}{n^3}
    \right).$
\end{enumerate}
Indeed, if one of those assumptions is violated, the bound on the KL-divergence that we are trying to prove is greater than 2 (for appropriate hidden constants depending only on $m$). The resulting bound on total variation via Pinsker's inequality is more than 1 and is, thus, trivial.

We now proceed to apply \cref{cor:convexityonsimplex} with the intervals $I_1, I_2, \ldots, I_{2m+3}$ and the following weights.
\begin{itemize}
    \item $I_u = \{u\}$ and $w_u = B_u^2$ for $u \in [m],$
    \item $I_{m+u} = \{d-u\}$ and 
    $w_{m+u} = B_{d-u}^2$ for $u \in [m],$
    \item $I_{2m+1} = [m+1, \frac{d}{2en}]\cup 
    [d - \frac{d}{2en}, d-m-1]
    $ and $w_{2m+1} = C_m,$
    \item $I_{2m+2} = [\frac{d}{2en}, d-\frac{d}{2en}]$ and $w_{2m+2} = D,$
    \item $I_{2m+3} = \{d\}$ and $w_{2m+3} = B_d^2.$
\end{itemize}
The defined weights vector $\mathbf{w}$ exactly bounds the weight of $f$ on each respective level. Thus, the supremum of $h$ over $\Delta_{\mathbf{I}, \mathbf{w}}$ is at least as large as the expression in \cref{eq:symmetrizednormbbound} which gives an upper bound on the total variation between the two random graph models of interest. 
Explicitly,  using \cref{eq:firsteqonindist}
and \cref{cor:convexityonsimplex}, we find 

\begin{align*}
    \label{eq:firsteqinmainindist}
     &1 + \sum_{t = 2}^k   \binom{k}{t}p^{-t}(1-p)^{-t}\expect[\gamma^t]\\
    &\le 1 + \sum_{t = 2}^k   \binom{k}{t}p^{-t}(1-p)^{-t}\norm{f}_t^t\\
    &\le   1 + 
    \sup_{i_u\in I_{u}\; \forall u \in [2m+3]}
    \sum_{u \in [2m+3]}
    \sum_{t = 2}^k
    \binom{k}{t}p^{-t}(1-p)^{-t}(2m+3)^t\bggn{\frac{w_u}{\binom{d}{u}} e_{i_u}}\bggn_t^t.
    \end{align*}

We now consider different cases based on $u$ in increasing order of difficulty.

\paragraph{Case 1)} When $u = 2m+3.$ Then, $i_u = d,$ so $\norm{e_{i_u}}_t = \norm{e_d}_t = 1.$ We have 
\begin{align*}
        &\sum_{t = 2}^k
    \binom{k}{t}p^{-t}(1-p)^{-t}(2m+3)^t\norm{w_u e_{i_u}}_t^t\\
    & =
     \sum_{t = 2}^k
    \binom{k}{t}p^{-t}(1-p)^{-t}(2m+3)^tB_d^{2t}\\
    & = (1+\psi)^k - 1 - k\psi,
    \end{align*}
where $\displaystyle \psi = \frac{2m+3}{p(1-p)}B^2_d = o\left(\frac{1}{n}\right) = o\left(\frac{1}{k}\right)$ by the assumption on $B_d$ at the beginning of the proof. It follows that 
$$
(1+\psi)^k - 1 - k\psi\le 
e^{1+\psi k} - 
1 - 
\psi k 
= 
O(\psi^2k^2).
$$
Thus, for $u = 2m+3,$ 
\begin{equation}
\label{eq:highestterm}
    \begin{split}
    &\sum_{t = 2}^k
    \binom{k}{t}p^{-t}(1-p)^{-t}(2m+3)^t\norm{w_u e_{i_u}}_t^t =
    O_m\left(
    k^2\frac{B_d^4}{p^2(1-p)^2}
    \right).
    \end{split}
\end{equation}

\paragraph{Case 2)} When $u = 2m+2.$
We bound 
$ \sum_{t = 2}^k   \binom{k}{t}p^{-t}(1-p)^{-t}
    (2m+3)^t 
    \norm{\frac{w_{2m+2}}{\binom{d}{i_{2m+2}}}e_{i_{2m+2}}}^t_t.
$
Note that for $i_u\in I_{2m+2},$ we have $\sm(i)\ge \frac{d}{2ne}.$ Thus, 
$\norm{e_{i_u}}_t\le \norm{e_{i_u}}_n \le 
\binom{d}{i_u}\exp(-\frac{d}{4en})
$ by \cref{thm:symmetricpolybounds}. It follows that
\begin{align*}
    &\sum_{t = 2}^k   \binom{k}{t}p^{-t}(1-p)^{-t}
    (2m+3)^t
\frac{D^t}{\binom{d}{i_u}^t}\norm{e_{i_u}
}_t^t\\
& \le 
\sum_{t = 2}^k \binom{k}{t}p^{-t}(1-p)^{-t}
(2m+3)^t
\frac{D^t}{\binom{d}{i_u}^t}\binom{d}{i_u}^t
\exp\left( - \frac{dt}{4en}\right)\\
& =
\left(1 + \frac{D(2m+3)}{p(1-p)}\exp\left(-\frac{d}{4en}\right)\right)^k  - 
1 - k \frac{D(2m+3)}{p(1-p)}\exp\left(-\frac{d}{4en}\right).
\end{align*}
Note, however, that $$\psi: =  \frac{D(2m+3)}{p(1-p)}\exp\left(-\frac{d}{4en}\right) = o\left(\frac{1}{n}\right)
$$
by the assumption on $D$ at the beginning of the proof. It follows that 
$$(1 + \psi)^k - 1 - \psi k = O(\psi^2k^2).$$ Thus, for $u = 2m+2,$
\begin{align*}
    &\sum_{t = 2}^k   \binom{k}{t}p^{-t}(1-p)^{-t}
    (2m+3)^t 
    \bggn{\frac{w_{2m+2}}{\binom{d}{i_u}}e_{i_u}}\bggn^t_t=
O_m\left(
\frac{D^2}{p^2(1-p)^2}\exp\Big(-\frac{d}{2en}\Big)
\right)
    \end{align*}
for a large enough hidden constant depending only on $m.$

\paragraph{Case 3)} When $u = 2m+1.$ Again, 
we assume without loss of generality that $i_u \le d/2$ and we want to bound 
$$ \sum_{t = 2}^k
    \binom{k}{t}p^{-t}(1-p)^{-t}(2m+3)^t\bggn{\frac{w_{2m+1}}{\binom{d}{i_{2m+1}}} e_{i_u}}\bggn_t^t.$$ Using \cref{thm:symmetricpolybounds}, we know that $\displaystyle\norm{e_{i_{2m+1}}}_t\le \binom{d}{i_{2m+1}}^{1/2}(t-1)^{\frac{i_{2m+1}}{2}},$ so
\begin{align*}
&\sum_{t = 2}^k   \binom{k}{t}p^{-t}(1-p)^{-t}(2m+3)^t
\frac{C_m^t}{\binom{d}{i_{2m+1}}^t}\norm{e_{i_{2m+1}}
}_t^t\\
& \le 
\sum_{t = 2}^k   \binom{k}{t}p^{-t}(1-p)^{-t}(2m+3)^t
\frac{C_m^t}{\binom{d}{i_{2m+1}}^t}\binom{d}{i_{2m+1}}^{t/2}
(t-1)^{i_{2m+1}t/2}.\\
    \end{align*}
This time we show exponential decay in the summands. More concretely, we prove the following inequality
\[
\binom{k}{t}
\left(
\frac{(2m+3)C_m}{p(1-p)\binom{d}{i_{2m+1}}^{1/2}}\right)^t
(t-1)^{i_{2m+1}t/2}\ge 
\frac{e}{2}
\binom{k}{t+1}
\left(
\frac{(2m+3)C_m}{p(1-p)\binom{d}{i_{2m+1}}^{1/2}}\right)^{t+1}
t^{i_{2m+1}(t+1)/2}\,.
\]
This reduces to showing that 
\begin{align*}
        \binom{d}{i_{2m+1}}^{1/2}\ge 
        \frac{(2m+3)e}{2}\frac{k-t}{t+1}
        \frac{C_m}{p(1-p)}
        t^{i_{2m+1}/2}\left(\frac{t}{t-1}\right)^{ti_{2m+1}/2}.
    \end{align*}
Using that $\left(\frac{t}{t-1}\right)^{t}\le 4,\binom{d}{i_{2m+1}}\ge \left(\frac{d}{i_{2m+1}}\right)^{i_{2m+1}},
$ and $n \ge k-t \ge k,$
we simply need to prove that 
\begin{equation}
    \label{eq:powersofdandn}
    \begin{split}
        \left(\frac{d}{i_{2m+1}}\right)^{i_{2m+1}/2}\ge
        \frac{(2m+3)e}{2} 
        n \frac{C_m}{p(1-p)} t^{i_{2m+1}/2-1}4^{i_{2m+1}/2}.
    \end{split}
\end{equation}
Since $t \le k \le n,$ the right-hand side is maximized when $t = n,$ so we simply need to show that
\begin{align*}
     \left(\frac{d}{i_{2m+1}}\right)^{i_{2m+1}/2}&\ge 
        \frac{(2m+3)e}{2} 
        n^{i_{2m+1}/2}4^{i_{2m+1}/2}\frac{C_m}{p(1-p)}\Longleftrightarrow\\
     d &\ge  i_{2m+1}\times n\times 4 \times \left(\frac{(2m+3)eC_m}{2p(1-p)}\right)^\frac{2}{i_{2m+1}}.     
\end{align*}
Now, the right-hand side has form $KxM^{\frac{1}{x}}:=Z(x) $ for $x = i_{2m+1}.$ The derivative of \linebreak $\log Z(x)$ is 
$\frac{1}{x}(1-\frac{1}{x}\log M),$ which has at most one positive root, at $x = \log M.$ Thus, the maximum of $Z(x)$ on $[m+1, \frac{d}{2en}]$ is at either $m+1$ or $\frac{d}{2en}.$ Substitution at these values, we obtain the following expressions for the right-hand side. First, when $i_{2m+1} = m+1,$
$$
O_m\left(n \times \left(\frac{C_m}{p(1-p)}\right)^{\frac{2}{m+1}}\right) \le d, 
$$
where the inequality holds from the assumptions in the theorem for large enough constant $K_m.$
Second, when $i_{2m+1} = \frac{d}{2en},$
\begin{align*}
    &\frac{2}{e}d \times \left(\frac{(2m+3)eC_m}{2p(1-p)}\right)^\frac{4en}{d}\\
    & = \frac{2}{e}d \times \left(\frac{(2m+3)e}{2}\right)^{\frac{4en}{d}}\times 
    \left(\left(\frac{C}{p(1-p)}\right)^{\frac{2}{m+1}}\right)^{\frac{en(m+1)}{d}}\\
    &\le 
    \frac{2}{e}d \times \left(\frac{(2m+3)e}{2}\right)^{\frac{4en}{d}}\times 
    \left(\frac{1}{K_m} \times \frac{d}{n}\right)^{\frac{en(m+1)}{d}}\\
    &\le 
    \frac{2}{e}d \times \left(\frac{(2m+3)e}{2}\right)^{\frac{4en}{d}}\times 
    K_m^{ - \frac{en(m+1)}{d}}
    e^{e(m+1)}\,,
    \end{align*}
where we used $\left(\frac{d}{n}\right)^{\frac{n}{d}}\le \exp(\left(\left(\frac{d}{n}\right) - 1\right)\frac{n}{d}) \le e.$  Now, for a large enough value of $K_m,$ when $d> K_m n,$ the above expression is at most $d.$
Since the exponential decay holds,
\begin{align*}
&\sum_{t = 2}^k   \binom{k}{t}p^{-t}(1-p)^{-t}
\frac{C_m^t}{\binom{d}{i_{2m+1}}^t}\binom{d}{i_{2m+1}}^{t/2}
(t-1)^{i_{2m+1}t/2}\\
& = 
O\left(
k^2 \frac{C_m^2}{p^2(1-p^2)\binom{d}{i_{2m+1}}}
\right)\\
& = 
O_m\left(
k^2 \frac{C_m^2}{p^2(1-p^2)d^{m+1}}
\right).\end{align*}

\paragraph{Case 4)} When $u \le 2m.$ The cases $u \in [2m]\backslash \{1,m+1\}$ are handled in the exact same way as the previous Case 3), except that we use $w_u = B_u^2$ when $u \in [m]$ and 
$w_u = B^2_{d-u+m}$ when $u \in [m+1, 2m]$ instead of $w_u = C_m.$ The only subtlety occurs when $i_u \in \{1,d-1\}.$ Then, the analogue of
\cref{eq:powersofdandn} becomes 
$$
d^{1/2}\ge 
O_m\left(n \frac{w_u}{p(1-p)}t^{-1/2}\right).
$$
This time, however, the right-hand side is maximized for $t = 2.$ Thus, we need 
$$
d = \Omega_m\left(n^2 \frac{w_u^2}{p^2(1-p)^2}\right),
$$
which is satisfied by the assumption made in the beginning of the proof.

Combining all of these cases, we obtain that for any fixed $k,$
\begin{align*}
   &1 + \sum_{t = 2}^k   \binom{k}{t}p^{-t}(1-p)^{-t}\expect[\gamma^t]\\ 
   & =  
   1 + O_m\Bigg(
   k^2
   \sum_{i = 1}^m
   \frac{B_i^4}{p^2(1-p)^2 d^i} + 
   k^2
   \sum_{i = 0}^m
   \frac{B_{d-i}^4}{p^2(1-p)^2 d^i}
   % +\\
   % & 
   + 
   k^2 \frac{C_m^2}{p^2(1-p)^2 d^{m+1}} + 
   \frac{D^2}{p^2(1-p)^2}\exp\bigg( - \frac{d}{en}\bigg)
   \Bigg).
   \end{align*}
Thus, the logarithm of this expression is at most 
$$
K_m\Bigg(
   k^2
   \sum_{i = 1}^m
   \frac{B_i^4}{p^2(1-p)^2 d^i} + 
   k^2
   \sum_{i = 0}^m
   \frac{B_{d-i}^4}{p^2(1-p)^2 d^i} + 
   k^2 \frac{C_m^2}{p^2(1-p)^2 d^{m+1}} + k^2
   \frac{D^2}{p^2(1-p)^2}\exp\bigg( - \frac{d}{en}\bigg)
   \Bigg)
$$
for an appropriately chosen $K_m.$ Summing over $k\in [n-1],$ we arrive at the desired conclusion.
\end{proof}

\section{Applications of the Main Theorem}
\label{sec:applications}
As already discussed, part of the motivation behind the current work is to extend the ongoing effort towards understanding random geometric graphs to settings in which the connection is not necessarily monotone or symmetric. We remove the two assumptions separately. 
Initially, we retain symmetry and explore the implications of \cref{thm:maintheoremindistingishability} in existing monotone and novel non-monotone setups. Then, we focus on two specific classes of non-monotone connections which are obtained via simple modifications of symmetric connections. These two classes will be of interest later in \cref{sec:detection} as they lead to the failure of the commonly used signed-triangle statistic, and in \cref{sec:improvingautocorrelation} as they rigorously demonstrate an inefficiency in \cref{claim:RaczLiuIndistinguishability}.
Finally, we go truly beyond symmetry by studying ``typical'' dense indicator connections.
\subsection{Symmetric Connections}
\label{sec:symmetricconnectionsapplications}
Consider \cref{thm:maintheoremindistingishability} for symmetric functions. Then, $B_i^2 = \weight{i}{\sigma}\le \Var[\sigma]\le p(1-p).$ Similarly, the inequalities $C_m \le p(1-p), D\le p(1-p)$ hold. In particular, this means that as long as $d = \Omega_m(n),$ all other conditions are automatically satisfied. Now, if $d = \Omega(n\log n)$ for a large enough hidden constant, the exponential term disappears. If we set $m = 2,$ the term $\frac{n^3}{p^2(1-p)^2}\frac{C_m^2}{d^{m+1}}\le \frac{n^3}{d^3} = o_n(1)$ also disappears. We have the following corollary.

\begin{corollary}
\label{cor:5relevantlevelsforsuperlinearithmic}
There exists an absolute constant $K$ with the following property. Suppose that $d >Kn\log n.$ Let $\sigma:\hypercube\longrightarrow [0,1]$ be a symmetric connection. Then,
\begin{align*}
        & \KL( \hypercubegraph\| \ergraph)\\ 
        &\quad=   O\left(
        \frac{n^3\weight{d}{\sigma}^2}{p^2(1-p)^2} + 
        \frac{n^3(\weight{1}{\sigma}^2 + \weight{d-1}{\sigma}^2)}{p^2(1-p)^2d} +
        \frac{n^3(\weight{2}{\sigma}^2 + \weight{d-2}{\sigma}^2)}{p^2(1-p)^2d^2}\right) +
        o(1).
\end{align*}
\end{corollary}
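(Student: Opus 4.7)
The plan is to derive this corollary as a direct specialization of \cref{thm:maintheoremindistingishability} with the parameter choice $m = 2$. The first task is to simplify the quantities $B_i, C_m, D$ for symmetric $\sigma$. Since $\sigma$ is invariant under coordinate permutations, all Fourier coefficients on a given level share the same absolute value, so $|\widehat{\sigma}(S)|^2 = \weight{|S|}{\sigma}/\binom{d}{|S|}$ for every $S$, which yields $B_i^2 = \weight{i}{\sigma}$. Parseval then gives $\sum_{i \ge 1} B_i^2 = \Var[\sigma] \le p(1-p)$, so trivially each $B_i^2$, as well as the partial sums $C_2$ and $D$, is at most $p(1-p)$.

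With these bounds in hand, each of the three structural hypotheses of \cref{thm:maintheoremindistingishability} (the inequalities involving $C_m$, $B_u^2$, and $B_{d-u}^2$) reduces to the single requirement $d \ge K_2 \cdot n$, which is implied by $d > K n \log n$ for any large enough absolute constant $K$. So we are free to substitute $m = 2$ directly into the conclusion of the main theorem.

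It remains to check that the stated bound is what comes out of this substitution. The two inner sums in the conclusion, taken over levels $\{1,2\}$ and the mirrored levels $\{d-2, d-1, d\}$ (with the denominator pattern $d^{d-i}$ visible in Case~4 of the main theorem's proof, which produces $B_d^4$ with no denominator, $B_{d-1}^4/d$, and $B_{d-2}^4/d^2$), produce exactly the five weight terms $\weight{d}{\sigma}^2$, $\weight{1}{\sigma}^2/d$, $\weight{d-1}{\sigma}^2/d$, $\weight{2}{\sigma}^2/d^2$, $\weight{d-2}{\sigma}^2/d^2$ that appear in the corollary. The two remainder terms get absorbed into the additive $o(1)$: the ``mid-range'' term $C_2^2/d^3 \le (p(1-p))^2/d^3$, after multiplication by the prefactor $n^3/(p(1-p))^2$, contributes at most $n^3/d^3 = o(1)$ since $d = \omega(n)$; and the ``bulk'' term $D^2 \exp(-d/(2en))$ contributes at most $n^3 \exp(-d/(2en))$, which is $o(1)$ as soon as $K$ is chosen large enough (say $K \ge 6e$) so that $\exp(-d/(2en)) \le n^{-3}$.

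Since every step above is arithmetic unpacking of \cref{thm:maintheoremindistingishability} under the symmetry assumption, there is no genuine obstacle; the only mildly delicate point is tracking which of the $2m+3 = 7$ intervals of the master theorem contribute main terms and which are swallowed by $o(1)$. The answer, in the symmetric case, is that the ``bulk'' interval $I_{2m+2}$ is killed by exponential decay once $d \ge Kn\log n$, the ``middle'' interval $I_{2m+1}$ is killed polynomially by the $d^{-(m+1)}$ factor, and only the $O(1)$-many extremal levels near $0$ and $d$ survive.
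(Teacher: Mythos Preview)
Your proposal is correct and follows essentially the same approach as the paper: specialize \cref{thm:maintheoremindistingishability} to $m=2$, use symmetry to get $B_i^2 = \weight{i}{\sigma} \le \Var[\sigma] \le p(1-p)$ so that all side conditions collapse to $d \ge K_2 n$, and then observe that the $C_2$-term contributes $O(n^3/d^3)=o(1)$ while the $D$-term is killed exponentially once $d \ge K n\log n$. Your remark that the high-degree denominators should read $d^{d-i}$ rather than $d^i$ (as one sees from Cases~1 and~4 of the proof) is a correct reading of the intended bound.
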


For symmetric connections in the regime $d = \Omega(n\log n),$ only Fourier weights on levels $1,2,d-1,d-2,d-1,d$ matter. In \cref{sec:nolowdegreeandhighdegreeterms} we conjecture how to generalize this phenomenon.

Now, we proceed to concrete applications of \cref{cor:5relevantlevelsforsuperlinearithmic}. In them, we simply need to bound the weights of $\sigma$ on levels $1,2,d-1,d-2,d.$ All levels besides $d$ are easy since they have a normalizing term polynomial in $d$ in the denominator. However, more care is needed to bound $\weight{d}{\sigma}.$  As we will see in \cref{sec:fluctuations}, this is not just a consequence of our proof techniques, but an important phenomenon related to the ``fluctuations'' of $\sigma.$ In the forthcoming sections, we use the following simple bound, the proof of which is deferred to \cref{appendix:corwithparity}. 

\begin{lemma}
    \label{lem:lastfourier}
    Suppose that $f:\hypercube\longrightarrow \mathbb{R}$ is symmetric.
    Denote by $f_i$ the value of $f$ when exactly $i$ of its arguments are equal to $1$ and the rest are equal to $-1.$ Then,
\[
|\widehat{f}([d])| \le
\frac{1}{2}
        \expect_{j \sim Bin(d-1, \frac{1}{2})} |f_{j+1} - f_j|  = 
        O\left(\frac{\sum_{j = 1}^d|f_{j+1} - f_j|}{\sqrt{d}}\right).
\]
\end{lemma}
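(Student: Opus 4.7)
The plan is to expand $\widehat{f}([d]) = \expect_{\bfx \sim \unif(\hypercube)}[f(\bfx)\omega_{[d]}(\bfx)]$ directly using symmetry, then perform a discrete integration by parts (Abel summation) to express the result as a sum involving only the consecutive differences $f_{j+1}-f_j$. Since $f$ is symmetric and $\omega_{[d]}(\bfx) = \prod_i x_i$ equals $(-1)^{d-i}$ on the Hamming slice with exactly $i$ coordinates equal to $+1$, one immediately gets
\[
2^d \widehat{f}([d]) = \sum_{i=0}^d \binom{d}{i}(-1)^{d-i} f_i.
\]

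Next I would rewrite $f_i = f_0 + \sum_{j=0}^{i-1}(f_{j+1}-f_j)$ and swap the order of summation to obtain
\[
2^d \widehat{f}([d]) = \sum_{j=0}^{d-1}(f_{j+1}-f_j)\sum_{i=j+1}^d \binom{d}{i}(-1)^{d-i},
\]
where the $f_0$ term vanishes because $\sum_{i=0}^d \binom{d}{i}(-1)^{d-i}=0$. The inner sum is a partial alternating binomial sum, which by the classical identity $\sum_{k=0}^m \binom{d}{k}(-1)^k = (-1)^m \binom{d-1}{m}$ (substitute $k=d-i$) evaluates to $(-1)^{d-j-1}\binom{d-1}{j}$. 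Substituting back and taking absolute values yields
\[
|\widehat{f}([d])| \le \frac{1}{2^d}\sum_{j=0}^{d-1}\binom{d-1}{j}|f_{j+1}-f_j| = \frac{1}{2}\, \expect_{j \sim \Bernoulli(d-1,1/2)\text{-Binomial}} |f_{j+1}-f_j|,
\]
since $\binom{d-1}{j}/2^{d-1}$ is precisely the PMF of $\mathrm{Bin}(d-1,1/2)$.

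For the second inequality I would simply upper bound each binomial coefficient $\binom{d-1}{j}/2^{d-1}$ by the central one $\binom{d-1}{\lfloor(d-1)/2\rfloor}/2^{d-1} = O(1/\sqrt{d})$ via Stirling, giving
\[
|\widehat{f}([d])| \le \frac{1}{2}\cdot O\!\left(\frac{1}{\sqrt{d}}\right)\sum_{j=0}^{d-1}|f_{j+1}-f_j| = O\!\left(\frac{\sum_{j=1}^d|f_{j+1}-f_j|}{\sqrt{d}}\right),
\]
after reindexing. No step is genuinely difficult; the only subtle one is recognizing that the right bookkeeping is Abel summation applied to $f_i = f_0 + \sum_{j<i}(f_{j+1}-f_j)$ combined with the alternating partial-sum identity, which is what cleanly produces the binomial distribution over differences. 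Everything else is routine manipulation of binomial coefficients.
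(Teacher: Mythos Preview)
Your proof is correct and follows essentially the same route as the paper's: expand $\widehat{f}([d])$ over Hamming slices, apply Abel summation to replace $f_i$ by consecutive differences, evaluate the partial alternating binomial sum as $(-1)^{d-j-1}\binom{d-1}{j}$, and finish by bounding each binomial PMF term by $O(1/\sqrt{d})$. Your indexing (writing $f_i = f_0 + \sum_{j<i}(f_{j+1}-f_j)$ directly) is in fact slightly cleaner than the paper's $a_j$ notation, but the argument is otherwise identical.
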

We remark that the quantity $\sum_{j = 1}^d|f_{j+1} - f_j|$ is a direct analogue of the \textit{total variation} appearing in real analysis literature \cite[p.117]{SteinSakarchi}, which measures the ``fluctuations'' of a function. As the name ``total variation'' is already taken by a more central object in our paper, we will call this new quantity \textit{fluctuation} instead. We denote it by $\fluctuation(f):= \sum_{j = 1}^d|f_{j+1} - f_j|.$ 
We believe that the interpretation of $|\widehat{\sigma}([d])|$ as a measure of fluctuations may be useful in translating our results to the Gaussian and spherical setups when non-monotone connections are considered. We discuss this in more depth in \cref{sec:fluctuations}.

\subsubsection{Lipschitz Connections}
As the main technical tool for our indistinguishability results is \cref{claim:RaczLiuIndistinguishability}, we first consider Lipschitz connections to which Racz and Liu originally applied the claim \cite{Liu2021APV}. We show that their result \cref{thm:liuraczlipschitzconnections}
can be retrieved using our framework even if one drops the monotonicity assumption. The first step is to bound 
 $|\widehat{f}([d])|.$

\begin{proposition}
\label{prop:lipschitzcorrwithparity}
If $f:\hypercube\longrightarrow \mathbb{R}$ is a symmetric $L$-Lipschitz function with respect to the Hamming distance, meaning that 
$$|f(x_1, x_2, \ldots, x_{i-1}, 1, x_{i+1}, \ldots, x_n) - 
f(x_1, x_2, \ldots, x_{i-1}, -1, x_{i+1}, \ldots, x_n)|\le L
$$ holds for all $i$ and $x_1, x_2, \ldots, x_{i_1}, x_i, \ldots, x_n,$
then
$|\widehat{f}([d])| \le \frac{L}{2}.$
\end{proposition}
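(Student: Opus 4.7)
The plan is to invoke \cref{lem:lastfourier} directly and then exploit the Lipschitz hypothesis pointwise rather than through any more elaborate estimate. \cref{lem:lastfourier} already provides
\[
|\widehat{f}([d])| \le \frac{1}{2}\expect_{j \sim \mathrm{Bin}(d-1,1/2)} |f_{j+1} - f_j|,
\]
so it suffices to bound the integrand $|f_{j+1} - f_j|$ uniformly in $j$.

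To do so, I would fix any $j \in \{0, 1, \ldots, d-1\}$ and pick two representative inputs realizing $f_j$ and $f_{j+1}$. Concretely, let $\bfx^{(j)} \in \hypercube$ have exactly $j$ coordinates equal to $+1$ and the remaining $d-j$ equal to $-1$, and let $\bfx^{(j+1)}$ be obtained from $\bfx^{(j)}$ by flipping one specific $-1$ coordinate to $+1$. By symmetry of $f$, we have $f(\bfx^{(j)}) = f_j$ and $f(\bfx^{(j+1)}) = f_{j+1}$, and $\bfx^{(j)}$, $\bfx^{(j+1)}$ differ in a single coordinate. The Lipschitz hypothesis applied to this pair gives $|f_{j+1} - f_j| \le L$.

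Substituting this uniform bound back into the expectation yields
\[
|\widehat{f}([d])| \le \frac{1}{2}\expect_{j \sim \mathrm{Bin}(d-1,1/2)} L = \frac{L}{2},
\]
which is the claim. There is really no obstacle here: the content is entirely front-loaded into \cref{lem:lastfourier}, and the role of the Lipschitz assumption is just to control the pointwise discrete derivative of the induced univariate profile $j \mapsto f_j$. In particular, the Binomial averaging in \cref{lem:lastfourier} is not needed for this application; it would matter only if one had a bound on $|f_{j+1}-f_j|$ that degraded near the edges $j \approx 0$ or $j \approx d$, which will be relevant for the fluctuation-based applications that follow.
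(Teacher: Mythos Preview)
Your proposal is correct and follows essentially the same approach as the paper: invoke \cref{lem:lastfourier}, observe that the Lipschitz hypothesis gives $|f_{j+1}-f_j|\le L$ for every $j$ by comparing two representative inputs differing in one coordinate, and conclude $|\widehat{f}([d])|\le L/2$.
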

\begin{proof} The proof is immediate given \cref{lem:lastfourier}. Note that 
$$
|f_{j + 1} - f_j| = 
|f(\underbrace{1,1,\ldots, 1}_{j+1},\underbrace{-1, \ldots, -1}_{d-j+1}) - 
f(\underbrace{1,1,\ldots, 1}_{j},\underbrace{-1, \ldots, -1}_{d-j})|\le L
$$
holds for all $j.$
\end{proof}

Using this statement, we obtain the following result.

\begin{corollary}
\label{cor:generallipschitzindist}
Suppose that $\sigma$ is a symmetric $\frac{1}{r\sqrt{d}}$-Lipschitz connection. If
$d = \omega_p(\frac{n^3}{r^4})$ and $d = \Omega(n\log n),$
$$
\KL( \hypercubegraph\|\ergraph) = o(
1
).
$$
\end{corollary}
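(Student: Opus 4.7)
The plan is to apply \cref{cor:5relevantlevelsforsuperlinearithmic}, which reduces the task to bounding the five Fourier weights $\weight{1}{\sigma}, \weight{2}{\sigma}, \weight{d-2}{\sigma}, \weight{d-1}{\sigma}, \weight{d}{\sigma}$. By symmetry of $\sigma$, every Fourier coefficient on a given level has the same magnitude, so it suffices to bound $|\widehat{\sigma}(S)|$ for one representative $S$ at each level and multiply by $\binom{d}{|S|}$. The goal is to show that for each of the five relevant levels, $|\widehat{\sigma}(S)| = O(L)$ where $L = 1/(r\sqrt{d})$, which then yields $\weight{i}{\sigma} = O(\binom{d}{i}/(r^2 d))$ and hence the desired $o(1)$ bound on each KL-contribution.

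For the top level, $|\widehat\sigma([d])| \le L/2$ is exactly \cref{prop:lipschitzcorrwithparity}. For level $1$, the standard discrete-derivative identity $\widehat\sigma(\{i\}) = \tfrac{1}{2}\expect[(\sigma(\bfx) - \sigma(\bfx^{(i)}))x_i]$ combined with the Lipschitz hypothesis gives $|\widehat\sigma(\{i\})| \le L/2$. For level $2$, iterating this identity in both coordinates $i$ and $j$ yields
\[
\widehat\sigma(\{i,j\}) = \tfrac{1}{4}\expect\!\big[(\sigma(\bfx) - \sigma(\bfx^{(i)}) - \sigma(\bfx^{(j)}) + \sigma(\bfx^{(i,j)}))x_ix_j\big],
\]
and since $\sigma$ is symmetric with $\sigma(\bfx) = g(\sum x_i)$, the bracketed quantity evaluates to a second-order discrete difference of $g$ (such as $g(t+2) - 2g(t) + g(t-2)$), which Lipschitzness of $g$ bounds by $2L$. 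Hence $|\widehat\sigma(\{i,j\})| \le L/2$ as well.

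The main obstacle is bounding the high-level weights $\weight{d-1}{\sigma}$ and $\weight{d-2}{\sigma}$: the discrete-derivative trick does not directly apply because large characters are very far from constant, and naive use of the variance bound $\weight{k}{\sigma} \le p(1-p)$ is too lossy when $r$ is large. The key idea is the ``complement'' identity $\chi_S(\bfx) = \chi_{[d]}(\bfx)\chi_{\bar S}(\bfx)$, which gives $\widehat\sigma(S) = \expect[\sigma(\bfx)\chi_{[d]}(\bfx)\chi_{\bar S}(\bfx)]$. Conditioning on the coordinates indexed by $\bar S$ (one or two coordinates for $|S| = d-1$ or $d-2$), the inner expectation becomes $\widehat{\sigma^{\epsilon}}([d]\setminus \bar S)$, where $\sigma^\epsilon$ is the restriction of $\sigma$ with these coordinates fixed to $\epsilon \in \{\pm1\}^{|\bar S|}$. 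Each such restriction is a symmetric Lipschitz function on a smaller hypercube, so \cref{lem:lastfourier} applies to each and yields $|\widehat{\sigma^\epsilon}([d]\setminus \bar S)| \le L/2$. Averaging over the $2^{|\bar S|}$ sign patterns preserves the bound, giving $|\widehat\sigma(S)| \le L/2$ for $|S| \in \{d-1, d-2\}$.

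Collecting these bounds gives $\weight{i}{\sigma} \le \binom{d}{i} L^2/4$ for $i \in \{1,2,d-2,d-1,d\}$ and hence
\[
\frac{\weight{d}{\sigma}^2}{p^2(1-p)^2} + \sum_{i\in\{1,d-1\}} \frac{\weight{i}{\sigma}^2}{p^2(1-p)^2 d} + \sum_{i\in\{2,d-2\}} \frac{\weight{i}{\sigma}^2}{p^2(1-p)^2 d^2} = O_p\!\left(\frac{1}{r^4 d^2} + \frac{1}{r^4 d}\right).
\]
Multiplying by $n^3$ and using the hypothesis $d = \omega_p(n^3/r^4)$ together with $d = \Omega(n\log n)$ (the latter controlling the $\Omega(1/(r^4 d^2))$ term since $dr^4 = \omega_p(n^3)$ forces $d^2 r^4 = \omega_p(n^4\log n)$), the right-hand side of \cref{cor:5relevantlevelsforsuperlinearithmic} becomes $o(1)$, yielding the claim.
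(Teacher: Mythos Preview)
Your argument has a genuine gap at levels $2$ and $d-2$. The per-coefficient bound $|\widehat{\sigma}(\{i,j\})|\le L/2$ is correct, but when you multiply by $\binom{d}{2}$ you get
\[
\weight{2}{\sigma}\ \le\ \binom{d}{2}\frac{L^2}{4}\ =\ \Theta\!\left(\frac{d}{r^2}\right),
\]
not $O(1/r^2)$. Plugging this into the level-$2$ term of \cref{cor:5relevantlevelsforsuperlinearithmic} gives
\[
\frac{n^3\,\weight{2}{\sigma}^2}{p^2(1-p)^2\,d^2}\ =\ O_p\!\left(\frac{n^3}{r^4}\right),
\]
and the hypothesis $d=\omega_p(n^3/r^4)$ says nothing about $n^3/r^4$ being small (take $r=1$, $d=n^4$). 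The identical problem occurs at level $d-2$. So your displayed final estimate $O_p\bigl(\frac{1}{r^4 d^2}+\frac{1}{r^4 d}\bigr)$ is simply wrong for those two terms, and the proof does not close.

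The paper avoids this by never passing through individual coefficients at levels $1,2,d-2,d-1$ at all. It uses the bounded-difference (Efron--Stein) inequality: a $\frac{1}{r\sqrt{d}}$-Lipschitz function on the hypercube has $\Var[\sigma]\le \frac{1}{r^2}$, hence $\weight{i}{\sigma}\le \frac{1}{r^2}$ for every $i$. Only level $d$ needs the refined bound $\weight{d}{\sigma}\le \frac{1}{4r^2d}$ from \cref{prop:lipschitzcorrwithparity}. With these, the KL bound becomes $O\bigl(\frac{n^3}{r^4 d}+\frac{n^3}{r^4 d^2}\bigr)=o(1)$. This also renders your complement-identity manoeuvre for levels $d-1$ and $d-2$ unnecessary: the variance bound already handles those levels uniformly. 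The fix to your write-up is one line---replace the level-$2$ and level-$(d-2)$ estimates by $\weight{i}{\sigma}\le\Var[\sigma]\le 1/r^2$---after which the rest of your outline goes through.
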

\begin{proof}
We will apply \cref{cor:5relevantlevelsforsuperlinearithmic}.
By the well-known bounded difference inequality, \cite[Claim 2.4]{RvH} we know that since $\sigma$ is $\frac{1}{r\sqrt{d}}$-Lipschitz, $\Var[\sigma] \le \frac{1}{r^2},$ so each $\weight{i}{\sigma}$ is bounded by $\frac{1}{r^2}.$ On the other hand, $\weight{d}{\sigma}$ is bounded by $\frac{1}{r^2d}$ by \cref{prop:lipschitzcorrwithparity}. It follows that the expression in \cref{cor:5relevantlevelsforsuperlinearithmic} is bounded by 
\[
O\left(\frac{n^3}{dr^4} + 
\frac{n^3}{d^2r^4} + 
\frac{n^3}{d^2r^4}
\right) = o(1).
\qedhere\]
\end{proof}

The flexibility of our approach allows for a stronger result when the map $\sigma$ is even.
\begin{corollary}
\label{cor:evenlipschitzindist}
Suppose that $\sigma$ is a symmetric even $\frac{1}{r\sqrt{d}}$-Lipschitz connection in the sense that
$\sigma(\bfx) = \sigma(-\bfx)$ also holds. If $d$ is even, $d = \Omega(n\log n),$ and 
$d = \omega_p(\frac{n^{3/2}}{r^2}),$ then
$$
\KL\Big( \hypercubegraph\|\ergraph\Big) = o(
1
).
$$
\end{corollary}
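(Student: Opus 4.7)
The plan is to exploit the evenness of $\sigma$ to kill all odd Fourier levels, apply \cref{cor:5relevantlevelsforsuperlinearithmic}, and then bound the few surviving even-level weights using the Lipschitz hypothesis together with \cref{prop:lipschitzcorrwithparity}.

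First, I observe that for any $\bfx \in \hypercube$, $\sigma(-\bfx) = \sum_S \widehat{\sigma}(S)(-1)^{|S|}\omega_S(\bfx)$. The hypothesis $\sigma(\bfx) = \sigma(-\bfx)$ combined with the uniqueness of the Fourier decomposition forces $\widehat{\sigma}(S) = 0$ whenever $|S|$ is odd. In particular, $\weight{1}{\sigma} = 0$, and since $d$ is even (so $d-1$ is odd), also $\weight{d-1}{\sigma} = 0$.

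Since $d = \Omega(n\log n)$, the hypothesis of \cref{cor:5relevantlevelsforsuperlinearithmic} is satisfied, and after discarding the vanishing odd-level terms the bound reduces to
\begin{equation*}
\KL\bigl(\hypercubegraph\,\|\,\ergraph\bigr) \le O\!\left(\frac{n^3\weight{d}{\sigma}^2}{p^2(1-p)^2} + \frac{n^3\bigl(\weight{2}{\sigma}^2 + \weight{d-2}{\sigma}^2\bigr)}{p^2(1-p)^2 d^2}\right) + o(1).
\end{equation*}
It remains to bound the three surviving Fourier weights. For levels $2$ and $d-2$, the bounded differences inequality \cite[Claim 2.4]{RvH} applied to the $\tfrac{1}{r\sqrt{d}}$-Lipschitz function $\sigma$ yields $\Var[\sigma] \le \tfrac{1}{r^2}$, so trivially $\weight{2}{\sigma}, \weight{d-2}{\sigma} \le \tfrac{1}{r^2}$. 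For the top level, \cref{prop:lipschitzcorrwithparity} (with Lipschitz constant $L = \tfrac{1}{r\sqrt{d}}$) gives $|\widehat{\sigma}([d])| \le \tfrac{1}{2r\sqrt{d}}$, hence the crucial improved estimate $\weight{d}{\sigma} \le \tfrac{1}{4r^2 d}$.

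Substituting these bounds yields
\begin{equation*}
\KL\bigl(\hypercubegraph\,\|\,\ergraph\bigr) \le O\!\left(\frac{n^3}{p^2(1-p)^2 r^4 d^2}\right) + o(1),
\end{equation*}
which is $o(1)$ exactly when $d^2 = \omega_p(n^3/r^4)$, i.e.\ $d = \omega_p(n^{3/2}/r^2)$, as required. The only nontrivial ingredient is the sharper $1/(r^2 d)$ bound on $\weight{d}{\sigma}$: had we merely used $\weight{d}{\sigma} \le \Var[\sigma] \le 1/r^2$, the leading term $n^3\weight{d}{\sigma}^2/(p(1-p))^2$ would demand $r \gg n^{3/4}$ irrespective of $d$, missing the intended $n^{3/2}/r^2$ threshold. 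Thus the main ``obstacle'' is really just recognizing that \cref{prop:lipschitzcorrwithparity} (rather than the crude variance bound) is what must be invoked on the top level—everything else is a direct plug-in.
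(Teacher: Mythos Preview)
Your proof is correct and follows essentially the same approach as the paper: kill the odd levels via evenness, invoke \cref{cor:5relevantlevelsforsuperlinearithmic}, bound $\weight{2}{\sigma},\weight{d-2}{\sigma}$ by $\Var[\sigma]\le 1/r^2$ via bounded differences, and bound $\weight{d}{\sigma}$ by $O(1/(r^2d))$ via \cref{prop:lipschitzcorrwithparity}. Your closing remark on why the sharper top-level bound is essential is a nice addition but not in the paper's terse proof.
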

\begin{proof}
The proof is similar to the one in the previous proposition. However, we note that when $\sigma$ is even, all Fourier coefficients on odd levels vanish. In particular, this holds for levels $1$ and $d-1.$ Thus, the expression \cref{cor:5relevantlevelsforsuperlinearithmic} is bounded by 
\[
O\left( 
\frac{n^3}{d^2r^4} + 
\frac{n^3}{d^2r^4}
\right) = o(1).
\qedhere\]
\end{proof}

\begin{remark}
    \normalfont
    The stronger results for even connections observed in this section is a general phenomenon, which will reappear in the forthcoming results. It is part of our more general conjecture in \cref{sec:nolowdegreeandhighdegreeterms}. We note that the requirement that $d$ is even is simply needed to simplify exposition. Otherwise, one needs to additionally bound $\weight{d-1}{\sigma}$ which is a standard technical task (see \cref{sec:furtherfouriercomp}).
\end{remark}

\subsubsection{Hard Threshold Connections}
\label{sec:hardthresholdindistinguishbility}
We now move to considering the classical setup of hard thresholds. Our tools demonstrate a striking new phenomenon if we make just a subtle twist -- by introducing a ``double threshold'' -- to the original model.

We start with the basic threshold model.
Let $\tau_p$ be such that $\prob_{\bfx\sim \unif(\hypercube)}[\sum_{i}x_i\ge \tau_p] = p.$ Let $\threshold_p$ be the map defined by $\indicator\left[\sum_{i= 1}^n x_i \ge \tau_p\right].$ Again, we want to understand the Fourier coefficient on the last level. Using that there exists a unique $j$ such that $f_{j+1}\neq f_j$ for $f = \threshold_p$ in \cref{lem:lastfourier}, in \cref{appendix:corwithparity} we obtain the following bound.

\begin{proposition}
\label{prop:simplethresholdcorwithparity}
$\widehat{\threshold_p}([d]) = O\Big(\frac{p\sqrt{\log\frac{1}{p}}}{\sqrt{d}}\Big).$
\end{proposition}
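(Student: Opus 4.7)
The plan is to apply \cref{lem:lastfourier} to $f=\threshold_p$ and reduce the bound on $\widehat{\threshold_p}([d])$ to a pointwise binomial probability estimate. Writing $\threshold_p$ in the symmetric coordinates, $f_j = \indicator[2j-d\ge \tau_p]$, so there is a unique $j^\star := \lceil (d+\tau_p)/2\rceil$ at which $f_j$ jumps from $0$ to $1$. Consequently $|f_{j+1}-f_j| = \indicator[j = j^\star-1]$, and the lemma gives
\[
|\widehat{\threshold_p}([d])| \le \tfrac{1}{2}\prob[\mathrm{Bin}(d-1,\tfrac12)=j^\star-1].
\]
Using the elementary identity $\binom{d-1}{j^\star-1}=\binom{d}{j^\star}-\binom{d-1}{j^\star}\le \binom{d}{j^\star}$, this is bounded by $\prob[\mathrm{Bin}(d,\tfrac12) = j^\star]$. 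It remains to show that this point probability is $O(p\sqrt{\log(1/p)}/\sqrt{d})$.

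To do so, I would compare the pointwise probability to the tail probability $p$ via the standard Gaussian approximation to the binomial (one may invoke the local and classical central limit theorems, or Stirling directly). Setting $t := (2j^\star-d)/\sqrt{d}$, we have $\prob[\mathrm{Bin}(d,\tfrac12)\ge j^\star] = p$, which by the Chernoff/Hoeffding bound forces $t = O(\sqrt{\log(1/p)})$. On the other hand, a standard local central limit theorem estimate yields
\[
\prob\!\bigl[\mathrm{Bin}(d,\tfrac12) = j^\star\bigr] = O\!\Bigl(\tfrac{1}{\sqrt{d}} e^{-t^2/2}\Bigr),
\]
while the Mills-ratio refinement of the Gaussian tail for $p$ gives
\[
p \;=\; \prob\!\bigl[\mathrm{Bin}(d,\tfrac12)\ge j^\star\bigr] \;=\; \Theta\!\Bigl(\tfrac{1}{t}\,e^{-t^2/2}\Bigr)\quad (t\ge 1).
\]
Dividing these two and using $t = O(\sqrt{\log(1/p)})$ produces the desired bound $\prob[\mathrm{Bin}(d,\tfrac12) = j^\star] = O(tp/\sqrt{d}) = O(p\sqrt{\log(1/p)}/\sqrt{d})$.

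Two small cases should be handled separately: when $p = \Omega(1)$ the bound is trivial since the local CLT already gives $O(1/\sqrt{d})$, and when $t<1$ one can simply use $\prob[\mathrm{Bin}(d,\tfrac12)=j^\star] = O(1/\sqrt{d})$ directly. The main obstacle is getting the $\sqrt{\log(1/p)}$ factor right; the naive bound $\prob[\mathrm{Bin}(d,\tfrac12)=j^\star]\le p$ gives only $O(p)$, which is off by a factor of $\sqrt{d}/\sqrt{\log(1/p)}$. The cure is the Mills-ratio refinement showing that in the upper tail of the binomial, the tail is not much larger than the first term — specifically, it is larger by a factor of order $\sqrt{d}/t \sim \sqrt{d/\log(1/p)}$ — and this is precisely what provides the extra $\sqrt{\log(1/p)}/\sqrt{d}$ savings over the trivial bound.
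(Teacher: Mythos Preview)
Your proposal is correct and follows essentially the same route as the paper: both reduce via \cref{lem:lastfourier} to a single binomial point mass at the threshold, bound $\tau_p=O(\sqrt{d\log(1/p)})$ by subgaussianity, and then compare the point mass to the tail probability $p$. The only difference is in execution of that last comparison---the paper does it by an elementary argument that $d/\tau_p$ consecutive binomial coefficients near the threshold are within a constant factor of each other (so the tail is at least $d/\tau_p$ times the first term), whereas you phrase the same inequality as a local CLT plus Mills-ratio bound; the paper's version is slightly more self-contained since it avoids any appeal to Stirling or Gaussian approximation.
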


\begin{corollary}
\label{cor:classicrggindistinguishability}
 Suppose that $d = \Omega(n\log n),d = \omega(n^3p^2\log^2\frac{1}{p}),$ and $p = d^{O(1)}.$ Then,
$$
\KL\Big(\RAG(n,\hypercube,p,\threshold_{p})\|\ergraph\Big) = o(
1
).
$$
\end{corollary}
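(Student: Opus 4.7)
The plan is to apply \cref{cor:5relevantlevelsforsuperlinearithmic} directly, since $\threshold_p$ is symmetric and $d = \Omega(n\log n)$ by hypothesis. This reduces the task to bounding the Fourier weights $\weight{i}{\threshold_p}$ only on the five relevant levels $\{1,2,d-2,d-1,d\}$ and checking that each of the three error terms is $o(1)$ under the assumption $d = \omega(n^3 p^2 \log^2(1/p))$ and $\log(1/p) = O(\log d)$.

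Levels $1$ and $2$ (and symmetrically $d-1$ and $d-2$) are handled by the Kahn--Kalai--Linial level-$k$ inequalities recorded in \cref{thm:levelkinequalities}: since $\threshold_p:\hypercube\to\{0,1\}$ has $\lVert\threshold_p\rVert_1 = p$, the bounds
\[
\weight{\le k}{\threshold_p},\ \weight{\ge d-k}{\threshold_p}\ \le\ \left(\tfrac{2e}{k}\ln(1/p)\right)^k p^2
\]
give $\weight{1}{\threshold_p},\weight{d-1}{\threshold_p} = O(p^2\log(1/p))$ and $\weight{2}{\threshold_p},\weight{d-2}{\threshold_p} = O(p^2\log^2(1/p))$. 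For the top level $i = d$, I use the fluctuation bound \cref{prop:simplethresholdcorwithparity} directly, which yields
\[
\weight{d}{\threshold_p}\ =\ \widehat{\threshold_p}([d])^2\ =\ O\!\left(\frac{p^2\log(1/p)}{d}\right).
\]

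Plugging these into the three summands of \cref{cor:5relevantlevelsforsuperlinearithmic}, the $p^2(1-p)^2$ in the denominator cancels a $p^2$ from each squared weight, leaving (up to constants)
\[
\frac{n^3\,p^2\log^2(1/p)}{d^2}\ +\ \frac{n^3\,p^2\log^2(1/p)}{d}\ +\ \frac{n^3\,p^2\log^4(1/p)}{d^2}.
\]
The middle term is exactly $o(1)$ under the hypothesis $d = \omega(n^3 p^2 \log^2(1/p))$. The first term is a factor $1/d$ smaller than the middle term and hence also $o(1)$. The third term differs from the first by a factor $\log^2(1/p)$, which under $p = d^{-O(1)}$ is $O(\log^2 d) = o(d)$, so this term is likewise $o(1)$.

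I do not expect any substantive obstacle here: once the main theorem and its simplified corollary for symmetric connections are in hand, the entire content of the statement is a routine bookkeeping exercise on the Fourier tails of a halfspace indicator. The one mildly delicate ingredient is the sharp bound on $\widehat{\threshold_p}([d])$ via \cref{prop:simplethresholdcorwithparity}, which exploits the fact that only a single transition $f_{j+1}\neq f_j$ contributes to the fluctuation sum; without that improvement, the trivial bound $|\widehat{\threshold_p}([d])|\le p$ would force $d = \omega(n^3)$ and lose the factor $p^2$ that makes the corollary match the state of the art for the sphere.
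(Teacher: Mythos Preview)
Your proposal is correct and essentially identical to the paper's own proof: both apply \cref{cor:5relevantlevelsforsuperlinearithmic}, bound levels $1,2,d-1,d-2$ via the level-$k$ inequalities of \cref{thm:levelkinequalities}, bound level $d$ via \cref{prop:simplethresholdcorwithparity}, and arrive at the same three error terms. Your commentary on why the sharp bound on $\widehat{\threshold_p}([d])$ is needed is also accurate.
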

\begin{proof}
From \cref{thm:levelkinequalities}, we know that $\weight{1}{\sigma}  = O(p^2\log \frac{1}{p}), 
\weight{d-1}{\sigma}  = O(p^2\log \frac{1}{p}), $
$\weight{2}{\sigma}  = O(p^2\log^2 \frac{1}{p}),$ and $ 
\weight{d-2}{\sigma}  = O(p^2\log^2 \frac{1}{p}). 
$ From \cref{prop:simplethresholdcorwithparity}, 
$\weight{d}{\sigma} = 
O(d^{-1}p^2\log\frac{1}{p}).$ Thus, the expression in \cref{cor:5relevantlevelsforsuperlinearithmic} becomes
$$
O\left(
\frac{n^3 p^2\log^2\frac{1}{p}}{d} + 
\frac{n^3 p^2\log^4\frac{1}{p}}{d^2} + 
\frac{n^3 p^2\log^2\frac{1}{p}}{d^2}
\right) = o(1),
$$
where we used the fact that $p$ is polynomially bounded in $d.$
\end{proof}

Remarkably, this matches the state of the art result for 
$p = \omega (\frac{1}{n})$
of \cite{Liu2022STOC}. As before, if we consider an even version of the problem, we obtain a stronger result.
Let $\delta_p$ be such an integer that
$\prob[|\sum^n_{i = 1}x_i|\ge \delta_p] = p.$ Consider the``double threshold'' connection given by the indicator $\doublethreshold_p(\bfx) = \indicator\left[ |\sum^n_{i = 1}x_i|\ge \delta_p\right].$ In \cref{appendix:corwithparity}, we prove the following inequality.
\begin{proposition}
\label{prop:doublethresholdcorwithparity}
$\widehat{\doublethreshold}_p([d]) = O\Big(\frac{p\sqrt{\log\frac{1}{p}}}{\sqrt{d}}\Big).$
\end{proposition}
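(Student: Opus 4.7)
The plan is to apply \cref{lem:lastfourier} directly to $f = \doublethreshold_p$, reducing the problem to estimating a single binomial point mass, which we will then relate to $p$ via Gaussian tail asymptotics. Writing $f_j$ for the value of $\doublethreshold_p$ on an input with exactly $j$ coordinates equal to $+1$, we have $f_j = \indicator[|2j-d|\ge \delta_p]$. As $j$ increases from $0$ to $d$, the sum $2j-d$ steps by $2$, so $f_j$ changes value at exactly two indices, symmetric about $d/2$: one near $j_- = (d-\delta_p)/2$ and one near $j_+ = (d+\delta_p)/2$. Hence, by \cref{lem:lastfourier},
\[
|\widehat{\doublethreshold}_p([d])| \;\le\; \tfrac{1}{2}\,\expect_{j\sim \mathrm{Bin}(d-1,1/2)}|f_{j+1}-f_j| \;=\; O\!\left(\binom{d-1}{j_-}\Big/2^{d-1}\right),
\]
using symmetry of $\mathrm{Bin}(d-1,1/2)$ to absorb the two transitions into a single binomial coefficient up to a constant.

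The next step is to estimate this central binomial probability. By a standard local central limit theorem (or Stirling's formula applied to $\binom{d-1}{j_-}$), for $\delta_p = O(\sqrt{d\log(1/p)})$ we have
\[
\binom{d-1}{j_-}\Big/2^{d-1} \;=\; \Theta\!\left(\frac{1}{\sqrt{d}}\exp\!\left(-\frac{\delta_p^2}{2d}\right)\right) \;=\; \Theta\!\left(\frac{1}{\sqrt{d}}\,\phi\!\left(\tfrac{\delta_p}{\sqrt{d}}\right)\right),
\]
where $\phi$ is the standard Gaussian density.

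Finally we must relate $\phi(\delta_p/\sqrt{d})$ to $p$. By the definition of $\delta_p$ and the CLT,
$p = \prob[|\sum_i x_i|\ge \delta_p] \approx 2(1-\Phi(\delta_p/\sqrt{d}))$,
so by the Mills ratio estimate $1-\Phi(t) = \phi(t)/t \cdot (1+o(1))$ as $t\to\infty$,
\[
\phi\!\left(\tfrac{\delta_p}{\sqrt{d}}\right) \;=\; \Theta\!\left(p\cdot \tfrac{\delta_p}{\sqrt{d}}\right) \;=\; \Theta\!\left(p\sqrt{\log(1/p)}\right),
\]
where we used the standard consequence $\delta_p/\sqrt{d} = \Theta(\sqrt{\log(1/p)})$ of Gaussian tail inversion. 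Combining the two displays yields $|\widehat{\doublethreshold}_p([d])| = O(p\sqrt{\log(1/p)}/\sqrt{d})$, as required.

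The main technical point is ensuring the CLT/local-CLT approximations are valid in the regime $\delta_p \lesssim \sqrt{d\log(1/p)}$ as $p$ becomes small in $d$. Since the hypercube applications of this bound (e.g., \cref{cor:doublethresholdconnectionsindist}) assume $p$ is at most polynomially small in $d$, one has $\delta_p = O(\sqrt{d\log d})$, well within the range where a moderate-deviations local CLT on $\mathrm{Bin}(d,1/2)$ gives the Gaussian approximation up to constants; this is the only quantitative estimate that requires care, and it is entirely parallel to the proof of the odd analogue \cref{prop:simplethresholdcorwithparity}.
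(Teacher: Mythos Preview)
Your proof is correct. The approach differs from the paper's in one notable way: the paper observes that $\delta_p = \tau_{p/2}$ and hence the two transition points of $\doublethreshold_p$ sit exactly at $t$ and $d-t$ for $t = (\tau_{p/2}-1+d)/2$, so the required binomial point-mass bound is \emph{literally} the one already established in the proof of \cref{prop:simplethresholdcorwithparity} applied with $p/2$ in place of $p$. That reduction is one line, and the underlying bound on $\tfrac{1}{2^d}\binom{d}{t}$ there is purely combinatorial (subgaussian tail to bound $\tau_p$, then a direct ratio estimate $\binom{d}{t+\psi}\ge C\binom{d}{t}$ for $\psi = d/\tau_p$ to compare the point mass with the tail $p$). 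Your argument instead re-derives this estimate from scratch via local CLT and the Mills ratio, which is equally valid but leans on heavier analytic approximations (and, as you note, requires a moderate-deviations justification when $p$ is polynomially small in $d$). The paper's route is shorter and more elementary; yours is more self-contained and perhaps more transparent about \emph{why} the $\sqrt{\log(1/p)}$ factor appears.
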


\begin{corollary}
\label{cor:doublethresholdconnectionsindist}
Suppose that $d$ is even, $d = \Omega(n\log n),d = \omega(n^{3/2}p\log^2\frac{1}{p}),$ and $p = d^{O(1)}.$  Then,
$$
\KL\Big(\RAG(n,\hypercube,p,\doublethreshold_{p})\|\ergraph\Big) = o(1).
$$
\end{corollary}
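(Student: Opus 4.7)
\textbf{Proof plan for \cref{cor:doublethresholdconnectionsindist}.} The plan is to mimic the argument used for \cref{cor:classicrggindistinguishability}, but exploit the evenness of $\doublethreshold_p$ to eliminate the dominant Fourier weights on levels $1$ and $d-1$. Since $\doublethreshold_p(\bfx) = \indicator[|\sum_i x_i|\ge \delta_p]$ satisfies $\doublethreshold_p(-\bfx) = \doublethreshold_p(\bfx)$, the standard change of variables $\bfx\mapsto -\bfx$ gives $\widehat{\doublethreshold_p}(S) = (-1)^{|S|}\widehat{\doublethreshold_p}(S)$, so every Fourier coefficient on an odd-sized set vanishes. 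In particular $\weight{1}{\doublethreshold_p}=0$, and because $d$ is even, $d-1$ is odd, so $\weight{d-1}{\doublethreshold_p}=0$ as well.

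Next I would plug into \cref{cor:5relevantlevelsforsuperlinearithmic}, which is applicable since $d = \Omega(n\log n)$. Only three terms survive: those involving $\weight{d}{\doublethreshold_p}$, $\weight{2}{\doublethreshold_p}$, and $\weight{d-2}{\doublethreshold_p}$. For the two low/high levels, I apply the Kahn--Kalai--Linial level-$k$ inequality (\cref{thm:levelkinequalities}) with $k=2$ and $\alpha = \|\doublethreshold_p\|_1 = p$, which yields
\[
\weight{2}{\doublethreshold_p},\ \weight{d-2}{\doublethreshold_p} \;=\; O\bigl(p^2\log^2(1/p)\bigr).
\]
For the top level, I use \cref{prop:doublethresholdcorwithparity} directly: $\widehat{\doublethreshold_p}([d])^2 = O(p^2\log(1/p)/d)$.

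Substituting these bounds into \cref{cor:5relevantlevelsforsuperlinearithmic} gives
\[
\KL\bigl(\RAG(n,\hypercube,p,\doublethreshold_p)\|\ergraph\bigr) \;=\; O\!\left(\frac{n^3 p^2 \log^2(1/p)}{d^2(1-p)^2} + \frac{n^3 p^2\log^4(1/p)}{d^2(1-p)^2}\right) + o(1),
\]
which is $o(1)$ precisely under the hypothesis $d = \omega(n^{3/2}p\log^2(1/p))$ (using $p = d^{O(1)}$ to control $(1-p)^{-2}$ when $p$ is bounded away from $1$, and trivially otherwise).

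The main subtlety — and the only nonroutine piece — is the sharp bound on the top-level coefficient $\widehat{\doublethreshold_p}([d])$. A naive $|\widehat{\doublethreshold_p}([d])|\le \expect|\doublethreshold_p| = p$ would only yield $\weight{d}{\doublethreshold_p}\le p^2$, losing the crucial $1/d$ factor and forcing the stronger requirement $d = \omega(n^{3/2}p)$ to become $d = \omega(n^3 p^2)$, erasing the advantage of evenness. Fortunately, \cref{lem:lastfourier} reduces $|\widehat{\doublethreshold_p}([d])|$ to a binomial expectation of $|f_{j+1}-f_j|$, and because $\doublethreshold_p$ has at most two ``jumps'' (at $\pm\delta_p$), that expectation is bounded by $O(\binom{d-1}{(d-1)/2\pm \delta_p/2}/2^{d-1})$, which by standard binomial tail estimates is $O(p\sqrt{\log(1/p)}/\sqrt{d})$ — exactly the content of \cref{prop:doublethresholdcorwithparity}. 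Once this bound is in hand, the remainder is purely algebraic.
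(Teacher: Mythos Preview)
Your proposal is correct and follows essentially the same approach as the paper: exploit evenness to kill the level-$1$ and level-$(d-1)$ weights, apply \cref{cor:5relevantlevelsforsuperlinearithmic}, bound levels $2$ and $d-2$ via the level-$k$ inequalities, and bound level $d$ via \cref{prop:doublethresholdcorwithparity}. The paper's proof is in fact even terser than yours and omits the discussion of why the sharp bound on $\widehat{\doublethreshold_p}([d])$ is needed.
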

\begin{proof}
As in the proof of \cref{cor:evenlipschitzindist}, we simply note that when $d$ is even, the weights on levels 1 and $d-1$ vanish as $\doublethreshold_p$ is an even map. Thus, we are left with 
\[
O\left(
\frac{n^3 p^2\log^4\frac{1}{p}}{d^2} + 
\frac{n^3 p^2\log^2\frac{1}{p}}{d^2}
\right) = o(1).
\qedhere\]
\end{proof}

Note that the complement of $\RAG(n,\hypercube,p,\doublethreshold_{p})$ is the random algebraic graph defined with respect to the connection $\sigma(\bfx): = \indicator[\sum^n_{i = 1}x_i\in (-\delta_p, \delta_p)].$ This random graph is very similar to the one used in \cite{Khot14}, where the authors consider the interval indicator $\indicator\left[\langle \bfx_i,\bfx_j\rangle\in [\tau_1, \tau_2]\right],$ albeit with latent space the unit sphere. The utility of interval indicators in \cite{Khot14} motivates us to ask:
What if $\sigma$ is the indicator of a union of more than one interval? 
\subsubsection{Non-Monotone Connections, Interval Unions, and Fluctuations}
\label{sec:fluctuations}
To illustrate what we mean by fluctuations, consider first the simple connection for $\kappa\in [0,1/2],$ given by
\begin{equation}
    \label{eq:pureparity}
    \pi_\kappa(\bfx):=
\frac{1}{2} + \kappa \omega_{[d]}(\bfx).
\end{equation}
When viewed as a function of the number of ones in $\bfx,$ this function ``fluctuates'' a lot for large values of $\kappa.$ Note that $\pi_\kappa$ only takes values ${1}/{2}\pm \kappa$ and alternates between those when a single coordinate is changed. In the extreme case $\kappa = {1}/{2},$ $\pi_\kappa$ is the indicator of all $\bfx$ with an even number of coordinates equal to $-1.$ Viewed as a function of $\sum_{i = 1}^d x_i,$ this is 
a union of $d/2 + O(1)$ intervals. A simple application of \cref{claim:RaczLiuIndistinguishability} gives the following proposition. 

\begin{corollary}
Whenever ${n\kappa^{4/3}} = o(1),$ 
$\displaystyle
\KL\Big(\RAG(n, \hypercube, 1/2, \pi_\kappa), \ergraphhalf\Big) = o(1).$
\end{corollary}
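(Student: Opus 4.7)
The plan is to directly compute the autocorrelation of $\pi_\kappa$ via \cref{prop:fourierviewindistinguishability} and then apply \cref{claim:RaczLiuIndistinguishability}. The connection $\pi_\kappa$ has an extremely sparse Fourier expansion: $\widehat{\pi_\kappa}(\emptyset) = 1/2$ and $\widehat{\pi_\kappa}([d]) = \kappa$, with all other Fourier coefficients equal to zero. Hence, by \cref{prop:fourierviewindistinguishability},
\[
\gamma(\bfg) \;=\; \sum_{S \neq \emptyset} \widehat{\pi_\kappa}(S)^2 \,\omega_S(\bfg) \;=\; \kappa^2 \,\omega_{[d]}(\bfg),
\]
which takes the values $\pm \kappa^2$ each with probability $1/2$ under $\bfg \sim \unif(\hypercube)$.

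Next, I plug this into \cref{claim:RaczLiuIndistinguishability} with $p = 1/2$, so that $p(1-p) = 1/4$ and $\gamma/(p(1-p)) = 4\kappa^2\omega_{[d]}(\bfg)$. The sharp cancellation of odd powers yields
\[
\expect_{\bfg}\!\left[\left(1 + \tfrac{\gamma(\bfg)}{p(1-p)}\right)^k\right] \;=\; \tfrac{1}{2}(1+4\kappa^2)^k + \tfrac{1}{2}(1-4\kappa^2)^k \;=\; \sum_{j \text{ even}} \binom{k}{j}(4\kappa^2)^j \;\le\; \cosh(4k\kappa^2),
\]
where the last inequality uses $\binom{k}{2j} \le k^{2j}/(2j)!$. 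Under the hypothesis $n\kappa^{4/3} = o(1)$, we have $n\kappa^2 \le n\kappa^{4/3} = o(1)$ (since $\kappa \le 1/2$), so $4k\kappa^2 = o(1)$ uniformly over $k \le n-1$. In this regime $\log \cosh(4k\kappa^2) \le 8k^2\kappa^4(1+o(1))$, and summing yields
\[
\KL\big(\RAG(n,\hypercube,1/2,\pi_\kappa)\,\|\,\ergraphhalf\big) \;\le\; \sum_{k=0}^{n-1} O(k^2\kappa^4) \;=\; O(n^3\kappa^4) \;=\; o(1),
\]
since $n^3\kappa^4 = (n\kappa^{4/3})^3 = o(1)$. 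Pinsker's inequality (\cref{thm:pinsker}) then converts this to $\TV = o(1)$.

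There is essentially no obstacle in this proof: the extreme sparsity of the Fourier expansion of $\pi_\kappa$ makes $\gamma$ a single Walsh character scaled by $\kappa^2$, so the moments of $\gamma$ collapse to an elementary binomial identity and no hypercontractivity or elementary-symmetric-polynomial bound from \cref{sec:elemntarysymmetric} is needed. The example is meant to illustrate the significance of the ``last-level'' Fourier weight $\weight{d}{\sigma}$ — here this single coefficient $\kappa$ encodes the entire ``fluctuation'' of $\pi_\kappa$, and the indistinguishability threshold $n = o(\kappa^{-4/3})$ is driven precisely by the $\weight{d}{\sigma}^2 n^3 / p^2(1-p)^2$ term appearing in \cref{cor:5relevantlevelsforsuperlinearithmic}.
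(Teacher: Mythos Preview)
Your proof is correct and follows essentially the same approach as the paper: compute $\gamma = \kappa^2\omega_{[d]}$ directly from the sparse Fourier expansion of $\pi_\kappa$, plug into \cref{claim:RaczLiuIndistinguishability}, and bound the resulting expression by $O(n^3\kappa^4)$. Your use of $\tfrac{1}{2}(1+4\kappa^2)^k + \tfrac{1}{2}(1-4\kappa^2)^k \le \cosh(4k\kappa^2)$ together with $\log\cosh(x)\le x^2/2$ is a slightly cleaner packaging of the same estimate the paper obtains via $(1+x)^k - kx = O(k^2x^2)$; both yield the identical final bound. (The closing remark about Pinsker is unnecessary since the statement is already about $\KL$, but this is harmless.)
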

\begin{proof}
Let $\gamma_\kappa$ be the autocorrelation of $\pi_\kappa - \frac{1}{2}.$ Clearly, 
$\gamma_\kappa = \kappa^2\omega_{[d]}(\bfx),$ so $\expect[\gamma_\kappa^t] = 0$ when $t$ is odd and 
$\expect[\gamma_\kappa^t] = \kappa^{2t}$ when $t$ is even. Since $n\kappa^2 = o(1)$ holds as ${n\kappa^{4/3}} = o(1),$ we obtain
\begin{align*}
     &\sum_{k = 0}^{n-1}
    \log \expect \left[\left(1 + \frac{\gamma_\kappa(\bfx)}{p(1-p)}\right)^k\right]\\
    & \le 
    \sum_{k = 0}^{n-1}
    \log \left( 1 + 
    \sum_{t = 2}^{k}
    \binom{k}{t}\frac{\kappa^{2t}}{4^t}
    \right)\\
    & = \sum_{k = 0}^{n-1}
    \log \left( \left(1 + \frac{\kappa^2}{4}\right)^k -
    k \frac{\kappa^2}{4}\right)\\
    & = \sum_{k = 0}^{n-1} \log \left(1 + O\left(k^2 \frac{\kappa^4}{16}\right)\right)
    =  O\left(
    \sum_{k = 0}^{n-1}k^2\times \kappa^4
    \right) = O(n^3\kappa^4),
\end{align*}
from which the claim follows.
\end{proof}

We will explore tightness of these bounds later on. The main takeaway from this example for now is the following - when the connection ``fluctuates'' more, it becomes more easily distinguishable. Next, we show that this phenomenon is more general. Recall the definition of fluctuations $\fluctuation$ from \cref{lem:lastfourier}. We begin  with two simple examples of computing the quantity $\fluctuation.$

\begin{example}
\label{ex:motoneanalytictv}
\normalfont
If $\sigma:\{\pm 1\}^d\longrightarrow [0,1]$ is monotone, then $\fluctuation(\sigma)\le 1.$ Indeed, this holds as all differences $f_{i+1} - f_i$ have the same sign and, so, $\sum_{i = 0}^{d-1} |f_{i+1} - f_i| = 
|\sum_{i = 0}^{d-1} f_{i+1} - f_i| = |f_d - f_0|\le 1.$
\end{example}

\begin{example}
\label{ex:intervalunions}
\normalfont
Suppose that $I = [a_1, b_1]\cup [a_2, b_2]\cup\cdots\cup [a_s, b_s],$ where these are $s$ disjoint intervals with integer endpoints ($b_i +1<a_{i+1}$). Then, if 
$\sigma(\bfx) = \indicator[\sum_{i=1}^d x_i\in I],$ it follows that 
$\fluctuation(f)\le 2s$ as $|f_{b_{i}+1} - f_{b_i}| = 1, |f_{a_{i}} - f_{a_i-1}| = 1$ and all other marginal differences equal $0.$
\end{example}

We already saw in \cref{lem:lastfourier} that $|\widehat{f}([d])| = O\left(\frac{\fluctuation(\sigma)}{\sqrt{d}}\right).$
Using the same proof techniques developed thus far, we make the following conclusion.

\begin{corollary}
\label{cor:fluctuationindist}
Suppose that $\sigma$ is a symmetric connection. Whenever $d = \Omega(n\log n)$ and \linebreak
$\displaystyle 
d= \omega\left(\max\left(n^3p^2\log^{2}\frac{1}{p}, n^{3/2}\fluctuation(\sigma)^2p^{-1}\right)\right)
$ are satisfied,
$$\displaystyle \KL\Big( \hypercubegraph\|\ergraph\Big) = o(
1
).$$
If $\sigma$ and $d$ are even, the weaker condition 
$\displaystyle 
d= \omega\left(\max\Big(n^{3/2}p\log^2\frac{1}{p}, n^{3/2}\fluctuation(\sigma)^2p^{-1}\Big)\right),
$
is sufficient to imply
$\displaystyle \KL\Big( \hypercubegraph\|\ergraph\Big) = o(
1
).$
\end{corollary}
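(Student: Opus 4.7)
The plan is to apply \cref{cor:5relevantlevelsforsuperlinearithmic}, whose hypothesis $d = \Omega(n\log n)$ holds by assumption, thereby reducing the task to bounding the Fourier weights of $\sigma$ on levels $1, 2, d-2, d-1$, and $d$.

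For the low and near-top levels I would invoke the level-$k$ inequalities of \cref{thm:levelkinequalities}. Since $\|\sigma\|_1 = p$, these give $\weight{1}{\sigma}, \weight{d-1}{\sigma} = O(p^2 \log(1/p))$ and $\weight{2}{\sigma}, \weight{d-2}{\sigma} = O(p^2 \log^2(1/p))$, exactly as in the proof of \cref{cor:classicrggindistinguishability}. The genuinely new ingredient is the top-level coefficient: \cref{lem:lastfourier} directly yields $|\widehat{\sigma}([d])| = O(\fluctuation(\sigma)/\sqrt{d})$, hence $\weight{d}{\sigma} = O(\fluctuation(\sigma)^2/d)$.

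Substituting these bounds into the three summands of \cref{cor:5relevantlevelsforsuperlinearithmic} produces the following. The level-1 and level-$(d-1)$ contributions are both $O(n^3 p^2 \log^2(1/p)/d)$, which is $o(1)$ under the hypothesis $d = \omega(n^3 p^2 \log^2(1/p))$. The level-$d$ contribution is $O(n^3 \fluctuation(\sigma)^4/(p^2 d^2))$, which is $o(1)$ precisely when $d = \omega(n^{3/2} \fluctuation(\sigma)^2/p)$. The level-2 and level-$(d-2)$ contributions, $O(n^3 p^2 \log^4(1/p)/d^2)$, are subsumed by the level-1 constraint, modulo a mild polynomial bound on $p$ in $d$ of the kind used in \cref{cor:classicrggindistinguishability,cor:doublethresholdconnectionsindist}. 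For the second statement, evenness of $\sigma$ forces $\widehat{\sigma}(S) = 0$ whenever $|S|$ is odd --- in particular on levels $1$ and, since $d$ is even, $d-1$ --- so only the level-2, $d-2$, and top-level terms remain; the level-2 bound then relaxes the requirement on $d$ to $d = \omega(n^{3/2} p \log^2(1/p))$, as claimed.

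I would not expect any real obstacle: each of the three ingredients (\cref{cor:5relevantlevelsforsuperlinearithmic}, \cref{thm:levelkinequalities}, \cref{lem:lastfourier}) is already established, and the argument is essentially bookkeeping the five relevant levels against the two hypotheses on $d$. The only conceptual point worth emphasizing is that the $\fluctuation(\sigma)$-dependence arises \emph{solely} from the top Fourier level, which is why fluctuation is the right real-analytic surrogate for the parity-$[d]$ coefficient and why monotone connections (where $\fluctuation(\sigma) \le 1$ by \cref{ex:motoneanalytictv}) and $s$-interval unions ($\fluctuation(\sigma) \le 2s$ by \cref{ex:intervalunions}) fall out as immediate corollaries.
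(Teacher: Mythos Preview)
Your proposal is correct and follows essentially the same route the paper takes (implicitly, via ``the same proof techniques developed thus far''): plug the level-$k$ bounds from \cref{thm:levelkinequalities} for levels $1,2,d-2,d-1$ and the fluctuation bound from \cref{lem:lastfourier} for level $d$ into \cref{cor:5relevantlevelsforsuperlinearithmic}, then observe that odd levels vanish in the even case. Your remark that a mild polynomial bound on $p$ in $d$ is implicitly needed (as in \cref{cor:classicrggindistinguishability,cor:doublethresholdconnectionsindist}) to absorb the level-$2$ term is accurate and worth noting explicitly.
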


\begin{example}
\normalfont
A particularly interesting case is when $\sigma$ is the indicator of $s$ disjoint intervals as in \cref{ex:intervalunions}. We obtain indistinguishability rates by replacing $\fluctuation(\sigma)$ with $2s$ above. Focusing on the case when $p = \Omega(1)$ (so that the dependence on the number of intervals is clearer), we
need 
\[
    d = \omega(n^3 + n^{3/2}s^2)
\]
for indistinguishability for an arbitrary union of intervals and 
\[
    d = \omega(n^{3/2}s^{2})
\]
for indistinguishability for a union of intervals symmetric with respect to $0.$

We note that both of these inequalities make sense only when $s = o(\sqrt{d}).$ The collapse of this behaviour at $s \approx \sqrt{d}$
is not a coincidence as discussed in \cref{rmk:morethansqrtdintervals}.
\end{example}
\subsubsection{Conjecture for Connections Without Low Degree and High Degree Terms}
\label{sec:nolowdegreeandhighdegreeterms}
We now present the following conjecture, which generalizes the observation that random algebraic graphs corresponding to connections $\sigma$ satisfying $\sigma(\bfx) = \sigma(-\bfx)$ are indistinguishable from $\ergraph$ for smaller dimensions $d.$
Let $k\ge 0$ be a constant.  
Suppose that the symmetric connection $\sigma:\hypercube\longrightarrow[0,1]$ satisfies $\weight{i}{\sigma}=0$ whenever $i < k$ and $\weight{i}{\sigma} = 0$ whenever $i > d-k.$ Consider the bound on the $\KL$ divergence between $\hypercubegraph$ and $\ergraph$ in \cref{thm:maintheoremindistingishability} for $m = k+1.$
Under the additional assumption $d = \Omega(n\log n),$ it becomes of order 
$$
\frac{n^3B^2_m}{p^2(1-p)^2d^m} + 
\frac{n^3 C_m}{p^2(1-p)^2d^{m+1}}\le 
\frac{n^3\Var[\sigma]}{p^2(1-p)^2d^m}+
\frac{n^3 \Var[\sigma]}{p^2(1-p)^2d^{m+1}} = 
O\left(\frac{n^3}{d^{m}}\right).
$$
In particular, this suggests that whenever $d = \omega(n^{3/m}),$ $\KL\Big(\hypercubegraph\|\ergraph\Big) = o(1).$ Unfortunately, in our methods, there is the strong limitation $d = \Omega(n\log n).$ 

\begin{conjecture}
\label{conj:nolowandhighdegreeterms}
Suppose that $m\in \mathbb{N}$ is a constant and $\sigma$ is a symmetric connection such that 
the weights on levels $\{1,2,\ldots,m-1\}\cup\{d-m+1, \ldots, d\}$ are all equal to 0. If $d = \tilde{\Omega}(n^{3/m}),$ then 
$$
\TV\Big(\hypercubegraph, \ergraph\Big) = o(1).
$$
\end{conjecture}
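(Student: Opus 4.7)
The plan begins by observing that \cref{thm:maintheoremindistingishability} applied with this $m$ already implies the conjecture whenever $d = \Omega(n\log n)$. Under the Fourier-support hypothesis the $B_i$ vanish outside $[m, d-m]$, so only the $C_m^2/d^{m+1}$ and $D^2\exp(-d/(2en))$ terms in the KL bound survive; using the trivial $C_m, D \le \Var[\sigma] \le p(1-p)$ these collapse to $O(n^3/d^m) = o(1)$ whenever $d = \omega(n^{3/m})$. Thus the real substance of the conjecture lies in the sub-linear regime $n^{3/m} \ll d \ll n\log n$, which is nonempty only for $m \ge 4$. The central difficulty is to bypass the $d = \Omega(n)$ barrier intrinsic to the KL-convexity step underlying \cref{claim:RaczLiuIndistinguishability}, and flagged as lossy in \cref{sec:improvingautocorrelation}.

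I propose to replace the KL-based estimate with a direct chi-squared computation, which avoids KL-convexity entirely. Under $\ergraph$ the rescaled edge products $\chi_F(G) := \prod_{e \in F}(G_e - p)/\sqrt{p(1-p)}$, indexed by $F \subseteq \binom{[n]}{2}$, form an orthonormal basis of $L^2(\ergraph)$, so by Parseval
\[
\chi^2\big(\hypercubegraph \,\|\, \ergraph\big) = \sum_{\emptyset \neq F \subseteq \binom{[n]}{2}} \big(\expect_{\hypercubegraph}[\chi_F]\big)^2.
\]
Generalizing \cref{obs:fouriercycles} from cycles to arbitrary $F$, each $\expect_{\hypercubegraph}[\chi_F]$ equals $(p(1-p))^{-|F|/2}$ times a sum over Fourier labelings $(S_e)_{e \in F}$ by non-empty subsets of $[d]$ satisfying the ``even cover'' constraint $\bigoplus_{e \ni v} S_e = \emptyset$ at every vertex $v$ of $F$. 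Pinsker together with $\KL \leq \log(1+\chi^2)$ then converts $\chi^2 = o(1)$ into the desired $\TV = o(1)$. As a sanity check, applied only to triangles ($|F| = 3$) the Fourier-support hypothesis forces each $|S_e| \in [m, d-m]$, so the triangle contribution is of order $n^3 (p(1-p))^{-3}\bigl(\sum_i \weight{i}{\sigma}^{3/2}\binom{d}{i}^{-1/2}\bigr)^2 = O(n^3/d^m)$, matching the target; the analogous computation for $k$-cycles yields $O(n^k/d^{m(k-2)})$, which is weaker than the triangle bound for all $k \ge 4$ and so non-binding.

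The main obstacle is treating all $F$ simultaneously. Edge sets $F$ with a degree-one vertex contribute zero to the even-cover sum (since $\widehat\sigma(\emptyset)$ has been removed), so one may restrict to $F$ in which every vertex has degree $\ge 2$; disjoint unions of cycles reduce to the cycle estimate above, but $F$ with higher-degree vertices introduces richer even-cover configurations that must be counted carefully. A promising route is to group such $F$ by their multigraph ``shape'' and apply a vertex-cover-type argument to the resulting sum, analogous to the variance computation in \cref{prop:variancecomputation}; here the codimension-$m$ even-cover condition at each high-degree vertex should provide enough extra suppression (roughly $d^{-m}$ per independent vertex constraint) to beat the $n^{v}$ counting factor from the vertex set of $F$, uniformly in $|F|$, provided $d = \omega(n^{3/m})$. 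A useful warm-up is to verify the estimate on the extremal example underlying \cref{prop:nolowandhighdegreeterms}, where the signed-triangle statistic saturates the conjectured threshold, before attempting the fully general combinatorial estimate.
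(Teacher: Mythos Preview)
This statement is a \emph{conjecture} in the paper, not a theorem; the paper proves it only for $m\le 3$ (where $n^{3/m}\ge n$, so Theorem~\ref{thm:maintheoremindistingishability} applies) and explicitly leaves $m\ge 4$ open. Your first paragraph correctly reproduces what the paper establishes and correctly identifies the sub-linear regime $d\ll n$ as the crux.

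Your $\chi^2$ expansion and the even-cover interpretation of $\expect_P[\chi_F]$ are both correct, and the cycle estimates are fine. The gap is that the $\chi^2$ approach \emph{cannot} bypass the $d=\tilde\Omega(n)$ barrier: the $\chi^2$ divergence is genuinely large whenever $d=o(n)$ and $\Var[\sigma]$ is not minuscule, regardless of the Fourier-support hypothesis. Concretely, take $F=K_{2,k}$. The even-cover constraint forces $S_{ai}=S_{bi}=:S_i$ and $\triangle_i S_i=\emptyset$, so
\[
\expect_P[\chi_{K_{2,k}}]=(p(1-p))^{-k}\sum_{\triangle_i S_i=\emptyset}\prod_i\widehat\sigma(S_i)^2=(p(1-p))^{-k}\expect_\bfg[\gamma(\bfg)^k].
\]
Since $\gamma(\mathbf{1})=\Var[\sigma]$, for even $k$ one has $\expect[\gamma^k]\ge 2^{-d}\Var[\sigma]^k$, hence $(\expect_P[\chi_{K_{2,k}}])^2\ge 2^{-2d}c^{2k}$ where $c=\Var[\sigma]/(p(1-p))$. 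There are $\binom{n}{2}\binom{n-2}{k}\sim n^{k+2}/(2\cdot k!)$ such $F$'s, so their total contribution to $\chi^2$ is at least of order $n^2 2^{-2d}\sum_k (c^2 n)^k/k!\gtrsim n^2 2^{-2d}e^{c^2 n}$, which blows up unless $d=\Omega(c^2 n)$. For the paper's own construction in Proposition~\ref{prop:nolowandhighdegreeterms} (and Appendix~C) one has $c=\tilde\Theta(1)$, forcing $d=\tilde\Omega(n)$---exactly the barrier you were trying to avoid. The heuristic ``$d^{-m}$ per vertex constraint'' is not what the even-cover gives you: it is a single $\mathbb F_2^d$-linear relation per vertex, and the resulting moment $\expect[\gamma^k]$ is controlled by $\|\gamma\|_\infty=\Var[\sigma]$ for large $k$, not by level-$m$ decay.

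The upshot is that if the conjecture is true for $m\ge 4$, then $\TV$ (and presumably $\KL$) is $o(1)$ while $\chi^2$ is exponentially large, so any second-moment route is doomed; this is consistent with, but stronger than, the obstruction the paper records in \S\ref{sec:improvingautocorrelation} for Claim~\ref{claim:RaczLiuIndistinguishability}. A proof will need a genuinely different tool---e.g.\ a direct coupling, a conditional second moment, or a sharper handling of the chain-rule decomposition that does not pass through $\bfX_k$.
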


Unfortunately, our methods only imply this conjecture for $m= \{1,2,3\}$ as can be seen from \cref{thm:maintheoremindistingishability} as we also need $d = \Omega(n\log n).$  In \cref{sec:detectionnolowandhighdegreeterms}, we give further evidence for this conjecture by proving a matching lower bound.
Of course, one should also ask whether there exist non-trivial symmetric functions satisfying the property that their Fourier weights on levels $\{1,2,\ldots,m-1\}\cup\{d-m+1, \ldots, d\}$ are all equal to 0. In \cref{appendix:functionswithoutlowandhighdegreeterms},
 we give a positive answer to this question.

\subsubsection{Low-Degree Polynomials}
\label{sec:lowdegreepolyindist}
We end with a brief discussion of low-degree polynomials, which are a class of particular interest in the field of Boolean Fourier analysis (see, for example, \cite{Eskenazis_21,ODonellBoolean}). The main goal of our discussion is to complement \cref{conj:nolowandhighdegreeterms}. One could hastily conclude from the conjecture that it is only the ``low degree'' terms of $\sigma$ that make a certain random algebraic graph $\hypercubegraph$ distinct from an \ER graph for large values of $d.$ This conclusion, however, turns out to be incorrect - symmetric connections which are low-degree polynomials, i.e. everything is determined by the low degrees, also become indistinguishable from $\ergraph$ for smaller dimensions $d.$ Thus, a more precise intuition is that the interaction between low-degree and high-degree is what makes certain random algebraic graphs $\hypercubegraph$ distinct from \ER graphs for large values of $d.$

\begin{corollary}
\label{thm:lowdegreeindist}
Suppose that $k\in \mathbb{N}$ is a fixed constant and $\sigma:\hypercube\longrightarrow [0,1]$ is a symmetric connection of degree $k.$ If $d =\Omega(n)$ and $d = \omega_k\left(\min(n^3p^2\log^2 \frac{1}{p}, np^{-2/3}) \right),$ then
$$
\KL\Big(\hypercubegraph\| \ergraph\Big) = o(1).
$$
\end{corollary}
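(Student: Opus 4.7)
The plan is to invoke \cref{thm:maintheoremindistingishability} with the choice $m=k$ and exploit the degree-$k$ hypothesis to annihilate most of the terms in the resulting bound. Since $\sigma$ is a polynomial of degree $k$, every Fourier coefficient $\widehat{\sigma}(S)$ with $|S|>k$ is zero. Thus $B_i=0$ for all $k<i\le d$, which forces $C_k=D=0$ and $B_{d-u}=0$ for $0\le u\le k$ (provided $d>2k$, which is implied by $d=\Omega_k(n)$). The four hypotheses of the main theorem all collapse to the single requirement $d\ge K_k n$, which we are given. Moreover, because $\sigma$ is symmetric, $\widehat{\sigma}(S)$ depends only on $|S|$, so $B_i^2=\binom{d}{i}\widehat{\sigma}(S)^2=\weight{i}{\sigma}$. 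Consequently \cref{thm:maintheoremindistingishability} yields
\[
\KL\big(\hypercubegraph\,\|\,\ergraph\big)\;\lesssim_k\;\frac{n^3}{p^2(1-p)^2}\sum_{i=1}^{k}\frac{\weight{i}{\sigma}^2}{d^i}.
\]

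The remaining task is to bound each Fourier weight $\weight{i}{\sigma}$ in two complementary ways, and then choose whichever is stronger for the given parameter regime. The two bounds are: (a) the level-$i$ inequality \cref{thm:levelkinequalities}, which gives $\weight{i}{\sigma}\lesssim_i p^2\log^i(1/p)$ (applicable whenever $i\le 2\log(1/p)$, which holds for all $i\le k$ under the mild assumption that $p$ is not too close to $1$); and (b) the Bohnenblust--Hille type bound of \cref{cor:lowdegreepolyweights}, which gives $\weight{i}{\sigma}\lesssim_k 1/d$ for every $i>0$ because $\sigma$ has constant degree.

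Substituting (a) into the KL bound, the summand at level $i$ is $O_k\!\big(n^3 p^2\log^{2i}(1/p)/d^i\big)$. The $i=1$ summand is the bottleneck and is $o(1)$ precisely when $d=\omega(n^3 p^2\log^2(1/p))$; for $i\ge 2$ the summand decays faster and the same condition suffices. Substituting (b) into the KL bound, the summand at level $i$ is $O_k\!\big(n^3/(p^2 d^{i+2})\big)$, whose $i=1$ incarnation $n^3/(p^2 d^3)$ is $o(1)$ precisely when $d=\omega(n p^{-2/3})$; again the $i\ge 2$ summands are strictly smaller under this condition. Combining, the KL divergence is $o(1)$ whenever $d=\Omega(n)$ and $d=\omega_k\!\big(\min(n^3 p^2\log^2(1/p),\,n p^{-2/3})\big)$, which is exactly the claimed conclusion.

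\textbf{Expected obstacle.} The proof is essentially a two-step bookkeeping exercise once one has the right viewpoint: the degree-$k$ hypothesis wipes out the troublesome high-level terms (the $C_m$, $D$, and $B_{d-u}$ contributions that ordinarily force $d=\Omega(n\log n)$ or worse), leaving only finitely many low levels to handle via (a) and (b). The one mild subtlety is verifying that in each of the two regimes of the minimum, the dominant contribution truly comes from $i=1$, which I expect to be a short case check rather than a genuine obstacle. No new probabilistic input is needed beyond the already-cited \cref{thm:maintheoremindistingishability}, \cref{thm:levelkinequalities}, and \cref{cor:lowdegreepolyweights}.
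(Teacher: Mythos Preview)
Your proposal is correct and takes essentially the same approach as the paper: apply the main theorem with $m=k$ so that the degree hypothesis kills $C_k$, $D$, and all high-level $B_{d-u}$ terms, leaving only $\sum_{i=1}^{k}\weight{i}{\sigma}^2/d^i$, and then bound each $\weight{i}{\sigma}$ two ways via \cref{thm:levelkinequalities} and \cref{cor:lowdegreepolyweights}. If anything, your write-up is cleaner than the paper's, which somewhat confusingly invokes \cref{cor:5relevantlevelsforsuperlinearithmic} ``for $m=k$'' rather than the main theorem directly.
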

\begin{proof}
We apply \cref{cor:5relevantlevelsforsuperlinearithmic} for $m = k$ and obtain 
$$
\KL\Big(\hypercubegraph\| \ergraph\Big)  = O\left( \frac{n^3\weight{1}{\sigma}}{p^2(1-p)^2d} + 
\frac{n^3\weight{2}{\sigma}}{p^2(1-p)^2d^2}
\right).
$$
Now, we use \cref{cor:lowdegreepolyweights,thm:levelkinequalities} to bound the right-hand side in two ways by, respectively:
\[
 O\left(\frac{n^3}{p^2(1-p)^2d^3}\right)\quad\text{and}\quad O\left(\frac{n^3p^2\log^2\frac{1}{p}}{d}\right).
\]
The conclusion follows by taking the stronger bound of the two.
\end{proof}
 
\subsection{Transformations of Symmetric Connections}
\label{sec:transformationsofsymmetric}
One reason why \cref{thm:maintheoremindistingishability} is flexible is that it does not explicitly require $\sigma$ to be symmetric, but only requires a uniform bound on the \textit{absolute values} of Fourier coefficients on each level. This allows us, in particular, to apply it to modifications of symmetric connections. In what follows, we present two ways to modify a symmetric function such that the claims in \cref{cor:generallipschitzindist,cor:evenlipschitzindist,cor:classicrggindistinguishability,cor:doublethresholdconnectionsindist,cor:fluctuationindist} still hold. The two  modifications are both interpretable and will demonstrate further important phenomena in \cref{sec:doom}.

It is far from clear whether previous techniques used to prove indistinguishability for random geometric graphs can handle non-monotone, non-symmetric connections. In particular, approaches based on analysing Wishart matrices (\cite{Bubeck14RGG,Brennan19PhaseTransition}) seem to be particularly inadequate in the non-symmetric case as the inner-product structure inherently requires symmetry. The methods of Racz and Liu in \cite{Liu2021APV} on the other hand, exploit very strongly the representation of connections via CDFs which inherently requires monotonicity. 
While our approach captures these more general settings, we will see in \cref{sec:doom} an example demonstrating that it is unfortunately not always optimal.

\subsubsection{Coefficient Contractions of Real Polynomials}
\label{sec:coefficientcontractions}
Suppose that $f:\hypercube\longrightarrow \mathbb{R}$ is given. We know that it can be represented as a polynomial 
$$
f(x_1, x_2, \ldots, x_d) = \sum_{S\subseteq [d]}\widehat{f}(S)\prod_{i \in S}x_i.
$$
We can extend $f$ to the polynomial $\tilde{f}:\mathbb{R}^n \longrightarrow \mathbb{R},$ also given by 
$$
\tilde{f}(x_1, x_2, \ldots, x_d) = \sum_{S\subseteq [d]}\widehat{f}(S)\prod_{i \in S}x_i.
$$
Now, for any vector $\alpha = (\alpha_1, \alpha_2, \ldots, \alpha_d)\in [-1,1]^d,$ we can ``contract'' the coefficients of $\tilde{f}$ by considering the polynomial $$\tilde{f}_\alpha(x_1, x_2, \ldots, x_d):=\tilde{f}(x_1\alpha_1, x_2\alpha_2,\ldots, x_d\alpha_d) = 
\sum_{S\subseteq [d]}\widehat{f}(S)
\prod_{i \in S}\alpha_i\prod_{i \in S}x_i.
$$
Subsequently, this induces the polynomial 
$f_{\alpha}:\hypercube\longrightarrow \mathbb{R}$ given by 
$${f}_\alpha(x_1, x_2, \ldots, x_d) = 
\sum_{S\subseteq [d]}\widehat{f}(S)
\prod_{i \in S}\alpha_i\prod_{i \in S}x_i.
$$
This is a ``coefficient contraction'' since $|\widehat{f_\alpha}(S)|\le |\widehat{f}(S)|$ clearly holds.
Note, furthermore, that when $\bfg\in \hypercube,$ clearly $f_\bfg(\bfx) = f(\bfg\bfx).$ In particular, this means that if $f$ takes values in $[0,1],$ so does $f_\bfg.$ This turns out to be the case more generally.

\begin{observation}
For any function $f:\hypercube\longrightarrow [0,1]$ and vector $\alpha \in [-1,1]^d,$ $f_\alpha$ takes values in $[0,1].$
\end{observation}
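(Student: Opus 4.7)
My plan is to give a probabilistic interpretation of the coefficient contraction. The key idea is that each $\alpha_i \in [-1,1]$ can be written as the expectation of a $\{\pm 1\}$-valued random variable, which lets us realize $f_\alpha(x)$ as an expectation of values of $f$ on the hypercube.

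\textbf{Step 1 (Randomization).} For each $i \in [d]$, introduce an independent random variable $Y_i$ with
\[
\prob[Y_i = +1] = \frac{1+\alpha_i}{2}, \qquad \prob[Y_i = -1] = \frac{1-\alpha_i}{2},
\]
so that $\expect[Y_i] = \alpha_i$. By independence, for any $S \subseteq [d]$,
\[
\expect\!\left[\prod_{i \in S} Y_i\right] = \prod_{i \in S} \alpha_i.
\]

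\textbf{Step 2 (Rewriting $f_\alpha$ as an expectation).} Fix $x \in \hypercube$. Since $\tilde f$ is multilinear and agrees with $f$ on $\hypercube$, and since $x_i Y_i \in \{\pm 1\}$, the point $(x_1 Y_1, \ldots, x_d Y_d)$ always lies in $\hypercube$. Expanding $\tilde f$ in the monomial basis and taking expectations with Step 1 gives
\[
\expect_Y\!\left[\tilde f(x_1 Y_1, \ldots, x_d Y_d)\right] = \sum_{S \subseteq [d]} \widehat{f}(S)\,\prod_{i \in S} x_i \,\expect\!\left[\prod_{i \in S} Y_i\right] = \sum_{S \subseteq [d]} \widehat{f}(S)\,\prod_{i \in S} \alpha_i \prod_{i \in S} x_i = f_\alpha(x).
\]

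\textbf{Step 3 (Conclusion).} For every realization of $Y$, the vector $(x_1 Y_1, \ldots, x_d Y_d)$ is an element of $\hypercube$, so $\tilde f(x_1 Y_1, \ldots, x_d Y_d) = f(x_1 Y_1, \ldots, x_d Y_d) \in [0,1]$ by hypothesis on $f$. Thus $f_\alpha(x)$ is the expectation of a $[0,1]$-valued random variable, and therefore lies in $[0,1]$, as required.

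There is essentially no obstacle: the only subtlety is recognizing that the extension $\tilde f$ to $\mathbb{R}^d$ is used only at points of the form $xY \in \hypercube$, so we never need a bound on $\tilde f$ off the cube; the $[-1,1]$ hypothesis on each $\alpha_i$ is used exactly to ensure the $Y_i$ are well-defined probability distributions.
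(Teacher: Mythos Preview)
Your proof is correct and takes a genuinely different route from the paper. The paper argues geometrically: since the multilinear extension $\tilde f$ attains its extrema on a box at the vertices, one has $\sup_{\bfx\in\hypercube} f_\alpha(\bfx)=\sup_{y\in[-1,1]^d}\tilde f_\alpha(y)=\sup_{z\in\prod_i[-|\alpha_i|,|\alpha_i|]}\tilde f(z)\le \sup_{z\in[-1,1]^d}\tilde f(z)=\sup_{\bfx\in\hypercube}f(\bfx)\le 1$, and similarly for the infimum. Your argument instead gives a probabilistic representation $f_\alpha(\bfx)=\expect_Y f(\bfx Y)$ with $Y_i\in\{\pm1\}$ independent and $\expect Y_i=\alpha_i$, so $f_\alpha$ is visibly a convex combination of values of $f$ on the cube. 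This is exactly the coordinate-wise noise operator (the paper's $T_\rho$ in the special case $\alpha_i\equiv\rho$), so your proof makes the connection to that framework explicit and is arguably more conceptual; the paper's version, on the other hand, would extend unchanged to any multilinear $\tilde f$ bounded on $[-1,1]^d$ without needing a Fourier expansion or a product structure on the randomness.
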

\begin{proof}
Note that the corresponding real polynomials $\tilde{f}, \tilde{f}_\alpha$ are linear in each variable. Furthermore, the sets $A = \prod_{i = 1}^d \left[-|\alpha_i|, |\alpha_i|\right], B =  [-1,1]^d$ are both convex and $A\subseteq B.$ Thus,
$$
\sup_{\bfx \in \hypercube} f_\alpha(\bfx) = 
\sup_{\bfy \in B}
\tilde{f}_\alpha(\bfy) = 
\sup_{\bfz \in A}\tilde{f}(\bfz)\le 
\sup_{\bfz \in B}\tilde{f}(\bfz) = 
\sup_{\bfx\in \hypercube} f(\bfx)\le 1.
$$
In the same way, we conclude that $f_\alpha\ge 0$ holds.
\end{proof}

In particular, this means that for any $\alpha \in [-1,1]^d$ and connection $\sigma$ on $\hypercube$, $\sigma_\alpha$ is a well-defined connection. Therefore, all of  \cref{cor:generallipschitzindist,cor:evenlipschitzindist,cor:classicrggindistinguishability,cor:doublethresholdconnectionsindist,cor:fluctuationindist} hold if we replace the respective connections in them with arbitrary coefficient contractions of those connections. As we will see in \cref{sec:doom}, particularly interesting is the case when $\alpha \in \hypercube$ since this corresponds to simply negating certain variables in $\sigma.$ In particular, suppose that $\alpha = (\underbrace{1,1,\ldots, 1}_\ell, \underbrace{-1,-1,\ldots, -1}_{d-\ell})$ for some $1\le \ell\le d.$ Then, 
the connection $\sigma(\bfx,\bfy)$ becomes a function of 
$$
\sum_{i \le \ell}x_i y_i - \sum_{i >\ell}x_i y_i. 
$$
The corresponding two-form is not PSD, so it is unlikely that approaches based on Gram-Schmidt and analysing Wishart matrices could yield meaningful results in this setting.

\subsubsection{Repulsion-Attraction Twists}
Recall that one motivation for studying non-monotone, non-symmetric connections was that in certain real world settings - such as friendship formation - similarity in different characteristics might have different effect on the probability of forming an edge.

To model this scenario, consider the simplest case in which  $d = 2d_1$ and each latent vector has two components of length $d_1,$ that is $\bfx = [\bfy,\bfz],$ where $\bfy, \bfz\in \{\pm 1\}^{d_1}$ and $[\bfy, \bfz]$ is their concatenation. Similarity in the $\bfy$ part will ``attract'', making edge formation more likely,  and similarity in $\bfz$ will ``repulse'', making edge 
formation less likely. Namely, consider two symmetric connections $a:\{\pm 1\}^{d_1}\longrightarrow [0,1]$ and $r:\{\pm 1\}^{d_1}\longrightarrow [0,1]$ with, say, means $1/2.$ We call the following function their \textit{repulsion-attraction} twist 
$$
\rho_{a,r}([\bfy,\bfz]) = \frac{1 + a(\bfy) - r(\bfz)}{2}.
$$
One can check that $\rho_{a,r}$ is, again, a connection with mean $1/2$ and satisfies $\Var[\rho_{a,r}] = \frac{1}{4}(\Var[a] + \Var[r]).$ Furthermore,
$$
(\rho_{a,r} - {1}/{2})*
(\rho_{a,r} - {1}/{2})= 
(a - 1/2)*(a-1/2) + 
(r-1/2)*(r-1/2).
$$
Thus, if we denote $\gamma_\rho = 
\left(\rho_{a,r} - \frac{1}{2}\right)*
\left(\rho_{a,r} - \frac{1}{2}\right),
\gamma_a = (a - 1/2)*(a-1/2), \gamma_r = (r-1/2)*(r-1/2),$ we have $\gamma_\rho  = \gamma_a + \gamma_r.$ Using $\norm{\gamma_a + \gamma_r}_p\le \norm{\gamma_a}_p + \norm{\gamma_r}_p$ for each $p,$ we can repeat the argument in \cref{thm:maintheoremindistingishability} losing a simple factor of 2 for $\gamma_\rho.$ In particular, this means that when $d = \Omega(n\log n),$ we have that for any two symmetric connections $a,r,$
\begin{align*}
         &\KL\Big( \RAG(n, \hypercube, p, \rho_{a,r})\| \ergraph\Big)\\ 
         & =  O \left( 
        {n^3\weight{d}{a}^2}+
        \frac{n^3(\weight{1}{a}^2 + \weight{d-1}{a}^2)}{d} +
        \frac{n^3(\weight{2}{a}^2 + \weight{d-2}{a}^2)}{d^2}\right)\\
        &+ O \left({n^3\weight{d}{r}^2} + 
        \frac{n^3(\weight{1}{r}^2 + \weight{d-1}{r}^2)}{d} +
        \frac{n^3(\weight{2}{r}^2 + \weight{d-2}{r}^2)}{d^2}\right)+
        o(1).
    \end{align*}
as in \cref{cor:5relevantlevelsforsuperlinearithmic}. One can now combine with \cref{cor:generallipschitzindist,cor:evenlipschitzindist,cor:classicrggindistinguishability,cor:doublethresholdconnectionsindist,cor:fluctuationindist}. As we will see in \cref{sec:doom}, particularly interesting from a theoretical point of view is the case when $a = r.$

One could also consider linear combinations of more than two connections. We do not pursue this here.

\subsection{Typical Indicator Connections}
\label{sec:typicaldense}
Finally, we turn to a setting which goes truly beyond symmetry. Namely, we tackle the question of determining when a ``typical'' random algebraic graph with an indicator connection is indistinguishable from $\ergraph.$ More formally, we consider
$\RAG(n, \hypercube, \sigma_A,p_A)$ for which $\sigma_A(\bfg) = \indicator[\bfg \in A],$ where $A$ is sampled from $\anteuniform(\hypercube,p).$ Recall that $\anteuniform(\hypercube,p)$ is the distribution over subsets of $\hypercube$ in which every element is included independently with probability $p$ as all elements over the hypercube have order at most 2 (see \cref{def:anteuniform}).
\begin{theorem} 
\label{thm:typicaldense}
Suppose that $d = \Omega(n\log n).$ With probability at least $1 - 2^{-d}$ over $A\sim \anteuniform(\hypercube, p),$
$$
\TV\Big(\RAG(n, \hypercube, \sigma_A,p_A)\|\ergraph\Big) = o_n(1).
$$
\end{theorem}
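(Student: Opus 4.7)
The plan is to deduce the theorem from \cref{thm:maintheoremindistingishability} (applied with $m=1$) after controlling the Fourier spectrum of the random connection $\sigma_A$. Although $\sigma_A$ is highly non-symmetric, \cref{thm:maintheoremindistingishability} only depends on uniform magnitude bounds for the individual coefficients $\widehat{\sigma_A}(S)$. It therefore suffices to show that with probability at least $1 - 2^{-d}$ over $A$, every non-trivial Fourier coefficient of $\sigma_A$ is at most $O(\sqrt{d/2^d})$; on this event every quantity $B_i, C_m, D$ appearing in \cref{thm:maintheoremindistingishability} will be suitably small.

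Because every element of $\hypercube$ has order at most $2$, under $\anteuniform(\hypercube,p)$ each $\bfx \in \hypercube$ is included in $A$ independently with probability $p$. For $S \neq \emptyset$ we may therefore write
\[
\widehat{\sigma_A}(S) = 2^{-d}\sum_{\bfx \in \hypercube} Z_\bfx\, \omega_S(\bfx), \qquad Z_\bfx \sim_{iid} \Bernoulli(p),
\]
a centered sum of independent variables taking values in $\{0,\pm 2^{-d}\}$. Hoeffding's inequality yields $\prob(|\widehat{\sigma_A}(S)| > t) \le 2\exp(-2 t^2 2^d)$; choosing $t = C\sqrt{d/2^d}$ for a sufficiently large absolute constant $C$ and union-bounding over the $2^d-1$ non-empty $S \subseteq [d]$ gives $\max_{S\neq \emptyset}|\widehat{\sigma_A}(S)| \le C\sqrt{d/2^d}$ with probability at least $1 - 2^{-d}$. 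A Chernoff bound additionally gives $p_A = p(1+o(1))$ on the same event, so one may freely work with either $p$ or $p_A$.

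Condition on this event. Then $B_i^2 \le C^2 d\binom{d}{i}/2^d$ for all $i$, so: (a) $B_u^2$ and $B_{d-u}^2$ are exponentially small in $d$ for every constant $u$ (since $\binom{d}{u}$ is polynomial in $d$), trivially verifying the hypotheses of \cref{thm:maintheoremindistingishability}; (b) by the standard entropy bound $\sum_{i \le d/(2en)}\binom{d}{i} \le 2^{d H(1/(2en))}$, we get $C_1 = O(d \cdot 2^{-\Omega(d)})$ since $H(1/(2en)) < 1/2$ for $n$ large; (c) $D \le \sum_i B_i^2 \le C^2 d$ using $\sum_i \binom{d}{i} = 2^d$. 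In the KL estimate of \cref{thm:maintheoremindistingishability}, the ``corner'' terms $B_1^4/d$, $B_{d-1}^4/d^{d-1}$, $B_d^4/d^d$, and $C_1^2/d^2$ are thus all super-polynomially small, and the only delicate contribution is $D^2 \exp(-d/(2en)) \le C^4 d^2 \exp(-d/(2en))$. When $d \ge K_0 n \log n$ this is $O(d^2 n^{-K_0/(2e)})$, which, multiplied by the $n^3/(p^2(1-p)^2)$ prefactor, is $o(1)$ provided the constant $K_0$ is chosen large enough. Plugging into \cref{thm:maintheoremindistingishability} yields $\KL\bigl(\RAG(n,\hypercube,\sigma_A,p_A)\,\|\,\ergraph\bigr) = o(1)$ on the good event, and Pinsker's inequality (\cref{thm:pinsker}) concludes the proof. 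The main obstacle is purely calibration: ensuring the hidden constant in $d = \Omega(n\log n)$ is large enough for the $\exp(-d/(2en))$ decay to swamp the $n^3/(p^2(1-p)^2)$ prefactor, which is precisely what forces the $\log n$ factor in the required dimension.
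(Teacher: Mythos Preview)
Your proposal is correct and follows essentially the same route as the paper: bound all non-trivial Fourier coefficients of $\sigma_A$ by $O(\sqrt{d/2^d})$ via concentration (the paper cites this as \cref{claim:fouriercoefftypicalindicators}), plug into \cref{thm:maintheoremindistingishability} with $m=1$, and check that the $B_1, B_{d-1}, B_d, C_1$ contributions are exponentially negligible while the $D^2\exp(-d/(2en))$ term is killed by $d=\Omega(n\log n)$. The only point where the paper is more explicit is the bridge from $\mathsf{G}(n,p_A)$ to $\mathsf{G}(n,p)$: since \cref{thm:maintheoremindistingishability} compares $\RAG$ against $\mathsf{G}(n,p_A)$ (the $p$ in that theorem must equal $\expect[\sigma]$), one still needs $\TV(\mathsf{G}(n,p_A),\mathsf{G}(n,p))=o(1)$, which the paper handles by a short coupling argument; your ``one may freely work with either $p$ or $p_A$'' is correct in spirit but could be spelled out.
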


As we will see in \cref{sec:polydetection}, the dependence of $d$ on $n$ is tight up to the logarithmic factor when $p = \Omega(1).$
We use the following well-known bound on the Fourier coefficients of random functions. 

\begin{claim}[\cite{ODonellBoolean}]
\label{claim:fouriercoefftypicalindicators} 
Suppose that $B$ is sampled from $\hypercube$ by including each element independently with probability $p$ (i.e., $B \sim \anteuniform(\Group,p)$). Let $\sigma_B$ be the indicator of $B.$ Then, 
with probability at least $1 - {2^{-d}},$ the inequalities 
$|\widehat{\sigma_B}(\emptyset) - p|\le 2^{1-d/2}\sqrt{d}$ and 
$|\widehat{\sigma_B}(S)|\le 2^{1-d/2}\sqrt{d}$ for all $S\neq \emptyset$ hold simultaneously.
\end{claim}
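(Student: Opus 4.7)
The plan is to write each Fourier coefficient as a sum of independent bounded random variables and then apply Hoeffding's inequality together with a union bound over the $2^d$ subsets $S \subseteq [d]$. Since $\hypercube$ has every non-identity element of order two, each $C_2$-orbit in \cref{def:anteuniform} is a singleton, so $B \sim \anteuniform(\hypercube, p)$ is simply the product measure in which each $\bfx \in \hypercube$ is included in $B$ independently with probability $p$. Writing $Y_\bfx = \indicator[\bfx \in B]$, we have iid $\Bernoulli(p)$ variables indexed by $\hypercube$, and
\[
\widehat{\sigma_B}(S) \;=\; \expect_\bfx[\sigma_B(\bfx)\omega_S(\bfx)] \;=\; \frac{1}{2^d}\sum_{\bfx \in \hypercube} Y_\bfx \,\omega_S(\bfx).
\]
Taking expectations and using $\expect[\omega_S] = \indicator[S = \emptyset]$, this has mean $p$ when $S = \emptyset$ and mean $0$ otherwise, which is exactly the quantity we need to concentrate.

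Fix $S \subseteq [d]$. Each summand $\tfrac{1}{2^d}Y_\bfx \omega_S(\bfx)$ takes values in $[-1/2^d, 1/2^d]$, so by Hoeffding's inequality applied to $2^d$ independent bounded summands of total range-squared $\sum_\bfx (2/2^d)^2 = 4/2^d$,
\[
\prob\Big[\big|\widehat{\sigma_B}(S) - \expect[\widehat{\sigma_B}(S)]\big| > t\Big] \;\le\; 2\exp\!\B(-\tfrac{2t^2}{4/2^d}\B) \;=\; 2\exp\!\B(-t^2 \cdot 2^{d-1}\B).
\]
Substituting the target deviation $t = 2^{1-d/2}\sqrt{d}$ gives $t^2 \cdot 2^{d-1} = (4d/2^d)\cdot 2^{d-1} = 2d$, hence the probability that any one specified Fourier coefficient deviates by more than $2^{1-d/2}\sqrt d$ from its mean is at most $2e^{-2d}$.

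Finally, a union bound over the $2^d$ choices of $S$ (including $S = \emptyset$ to handle the deviation of $\widehat{\sigma_B}(\emptyset)$ from $p$) yields a failure probability of at most $2^d \cdot 2e^{-2d} = 2^{d+1}e^{-2d}$, which is comfortably below $2^{-d}$ for all sufficiently large $d$ since $2\ln 2 < 2$ (and the claim is vacuous for the finitely many small $d$ where the probability bound exceeds $1$). There is essentially no obstacle here; the only thing worth double-checking is that Hoeffding is being applied with the correct range constants for the Bernoulli-weighted characters. One could sharpen the exponent by using a Bernstein-type bound that exploits $\Var(Y_\bfx) = p(1-p)$, but since the stated claim only requires the weak deviation $O(2^{-d/2}\sqrt d)$ and a failure probability of $2^{-d}$, Hoeffding plus a naive union bound already suffices.
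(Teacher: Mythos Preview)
Your proof is correct and is the standard argument for this fact: express each Fourier coefficient as an average of $2^d$ independent bounded variables, apply Hoeffding, and union-bound over all $2^d$ subsets. The paper does not give its own proof of this claim (it is stated as a known fact with a citation to O'Donnell's book), so there is nothing further to compare.
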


Now, we simply combine 
\cref{claim:fouriercoefftypicalindicators} and \cref{thm:maintheoremindistingishability}.

\begin{proof}[Proof of \cref{thm:typicaldense}]
We will prove the claim for all sets $A\subseteq\hypercube$ satisfying that 
$|\widehat{\sigma_A}(\emptyset) - p|\le \frac{2\sqrt{d}}{2^{d/2}}$ and 
$|\widehat{\sigma_A}(S)|\le \frac{2\sqrt{d}}{2^{d/2}}$ holds for all $S\neq\emptyset.$ In light of \cref{claim:fouriercoefftypicalindicators}, this is enough.
Let $A$ be such a set and let $p_A = \expect[\sigma_A] = \widehat{\sigma_A}(\emptyset).
$
Consider \cref{thm:maintheoremindistingishability}. It follows that $B_i \le \Delta\binom{d}{i}^{1/2}$ for each $i,$ where $\Delta = \frac{2\sqrt{d}}{2^{d/2}}$ for brevity of notation. Take $m = 1.$ Then, 
\begin{align*}
    C_m & = \sum_{2\le i \le \frac{d}{2en}}\Delta^2\binom{d}{i} + 
    \sum_{d-\frac{d}{2en}\le i \le d-2}\Delta^2\binom{d}{i}\\
    & = O\left(
\Delta^2\binom{d}{\frac{d}{2en}}
    \right)\\
    & = 
    O\left(
\frac{4d}{2^d} (2e^2n)^\frac{d}{2en}
    \right)\\
    & =  
    O\left(
    \frac{2^{O(d\log n/n)}}{2^d}
    \right) = O\left(\frac{1}{2^{d/2}}\right).
\end{align*}
Similarly,
\[
    D = \sum_{\frac{d}{2en}<i < d - \frac{d}{2en}}\Delta^2\binom{d}{i}\le \Delta^22^d  = 4d.
\]
Since $d = \Omega(n\log n),$ the conditions of \cref{thm:maintheoremindistingishability} are satisfied. Thus, 
\begin{align}
\label{eq:KLtopaergraph}
&\KL\Big(\RAG(n, \hypercube, 1/2, \sigma_A)\|\mathsf{G}(n,p_A)\Big)\\
& =  O\left( \frac{n^3}{p_A^2}\times \left(
\frac{B_1^4}{d} + 
\frac{B_{d-1}^4}{d} + 
B_d^4  + 
\frac{O(\frac{1}{2^d})}{d^2} + 
{4d}\exp\left( - \frac{d}{2en}\right)
\right)\right)\\
& =  O\left(
\frac{n^3d^3}{p_A^22^{2d}} + 
\frac{n^3}{p_A^2d^22^d} + 
n^3d\times \exp\left( - \frac{d}{2en}\right)
\right).
    \end{align}
The last expression is of order $o_n(1)$
when $d = \Omega(n\log n)$ for a large enough hidden constant. Finally, note that 
$$
\TV\Big(\ergraph, \mathsf{G}(n,p_A)\Big) = o(1)
$$
whenever $p = \Omega({1}/{n^2}), $ and $|p_A - p|\le 2^{1-d/2}\sqrt{d}.$ Indeed, this follows from the following observation. Suppose that $p_A<p$ without loss of generality. Then, $\mathsf{G}(n,p_A)$ can be sampled by first sampling $\ergraph$ and then retaining each edge with probability $\frac{p_A}{p} \ge 1-2^{1-d/2}\sqrt{d}p^{-1} = 1 - o(n^{-3}).$ Thus, with high probability  $(1 - o(n^{-3}))^{\binom{n}{2}} = 1 - o(n^{-1}),$ all edges are retained and the two distributions are a distance $o(1)$ apart in $\TV.$ We use triangle inequality and combine with \cref{eq:KLtopaergraph}.
\end{proof}

\begin{remark}
\label{rmk:constrcutionwithgeularfunctions}
\normalfont
The above proof provides a constructive way of finding connections $\sigma$ for which indistinguishability occurs when $d = \tilde{\Omega}(n).$ Namely, a sufficient condition is that all of the Fourier coefficients of $\sigma_A$ except for $\widehat{\sigma_A}(\emptyset)$ are bounded by $\frac{2\sqrt{d}}{2^{d/2}}$ and $|\widehat{\sigma_A}(\emptyset) - p|\le  \frac{2\sqrt{d}}{2^{d/2}}$ In fact, one could check that the same argument holds even under the looser condition that all coefficients are bounded by $\frac{d^C}{2^d}$ for any fixed constant $C.$ Such functions with Fourier coefficients bounded by $\epsilon$ on levels above $0$ are known as \textit{$\epsilon$-regular} \cite{ODonellBoolean}. Perhaps the simplest examples for $\frac{2\sqrt{d}}{2^{d/2}}$-regular indicators with mean 1/2 are given by the inner product mod 2 function and the complete quadratic function
\cite[Example 6.4]{ODonellBoolean}. These provide explicit examples of functions for which the phase transition to \ER occurs at $d = \tilde{\Theta}(n).$
\end{remark}

\section{Indistinguishability of Typical Random Algebraic Graphs}
We now strengthen \cref{thm:typicaldense} by (1) extending it to all groups of appropriate size and (2) 
replacing the $\log n$ factor by a potentially smaller $\log \frac{1}{p}$ factor. The proof is probabilistic and, thus, does not provide means for explicitly constructing functions 
for which the phase transition occurs at $d = \tilde{\Omega}(n).$

\label{sec:ragindistinguishability}
\begin{theorem}
\label{thm:generalragindistinguishability}
An integer $n$ is given and a real number $p \in [0,1].$ Let $\Group$ be any finite group of size at least $\exp(Cn \log \frac{1}{p})$ for some universal constant $C.$ Then, with probability at least $1 - |\Group|^{-1/7}$ over $A\sim \anteuniform(\Group, p),$ we have 
$$
\KL\Big(\RRAG(n, \Group, \sigma_A, p_A)\| \ergraph\Big) = o(1), 
$$
where $p_A = \expect_{\bfg\sim \unif(\Group)}[\sigma_A(\bfg)].$ The exact same holds for $\LRAG$ instead of $\RRAG.$
\end{theorem}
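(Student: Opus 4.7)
The plan is to apply \cref{claim:RaczLiuIndistinguishability} with $q = p_A$ (the expectation of $\sigma_A$) and $p = p$, which bounds
\[ \KL\Big(\RRAG(n, \Group, \sigma_A, p_A) \,\Big\|\, \ergraph\Big) \leq \sum_{k=0}^{n-1} \log \expect_{\bfg \sim \unif(\Group)}\B[\B(1 + \frac{\gamma_p(\bfg)}{p(1-p)}\B)^k\B]. \]
By the same change-of-variable as in \cref{prop:fourierviewindistinguishability}, setting $\bfu = \bfx\bfz^{-1}$ yields $\gamma_p(\bfg) = \frac{1}{|\Group|}\sum_{\bfu}(X_\bfu - p)(X_{\bfu^{-1}\bfg} - p)$, where $X_\bfu = \indicator[\bfu \in A]$. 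Expanding binomially and exchanging expectations reduces the task to bounding $\expect_A\expect_\bfg[\gamma_p(\bfg)^t]$ for each $t \leq n-1$ and then converting the expected-$\KL$ bound into a high-probability statement by Markov.

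The main step is a combinatorial moment bound. For each $t$-tuple $(\bfu_1,\ldots,\bfu_t)$, the quantity $\expect_A\prod_i (X_{\bfu_i} - p)(X_{\bfu_i^{-1}\bfg} - p)$ is nonzero only when every orbit of the inverse action on $\Group$ appearing in the multiset $\{\bfu_i,\bfu_i^{-1}\bfg\}_{i\le t}$ occurs at least twice: under $\anteuniform(\Group,p)$ one has $X_\bfu = X_{\bfu^{-1}}$ and the variables $X_\bfu$ are independent across distinct orbits. Using the elementary estimate $|\expect[(X-p)^d]| \leq p(1-p)$ for $d \geq 2$ and a Wick-style count over matchings of the $2t$ slots, the generic contribution (from perfect matchings giving $t$ distinct orbit-equality constraints, and $(t-1)!! \cdot 2^t \cdot |\Group|^{t/2}$ feasible tuples) is
\[ \expect_A[\gamma_p(\bfg)^t] \lesssim \B(\frac{Ct\,(p(1-p))^2}{|\Group|}\B)^{t/2} \qquad \text{for generic } \bfg. \]
A separate ``diagonal'' term arises from exceptional $\bfg$, most importantly $\bfg = e$: there $\bfu^{-1}\bfg = \bfu^{-1}$ pairs with $\bfu$ automatically, so $\gamma_p(e) = p(1-p) + O(\sqrt{p(1-p)/|\Group|})$ deterministically, contributing $\lesssim (p(1-p))^t / |\Group|$ to $\expect_\bfg[\gamma_p^t]$ after the $1/|\Group|$ average-weight on $\bfg = e$ (similarly for the few $\bfg$ for which $\bfu^2 = \bfg$ has many solutions).

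Combining and dividing by $(p(1-p))^t$ gives $\expect_A\expect_\bfg[\gamma_p^t]/(p(1-p))^t \lesssim (Ct/|\Group|)^{t/2} + 1/|\Group|$. Summing against $\binom{k}{t}$, the $(Ct/|\Group|)^{t/2}$ piece is dominated by $t=2$ (yielding $Ck^2/|\Group|$), while the $1/|\Group|$ piece sums geometrically to $(2^k-1)/|\Group|$, so
\[ \expect_A\expect_\bfg\B[\B(1 + \frac{\gamma_p(\bfg)}{p(1-p)}\B)^k\B] - 1 \;\lesssim\; \frac{k^2 + 2^k}{|\Group|}. \]
Taking $\log(1+x) \leq x$ and summing over $k \leq n-1$ yields $\expect_A[\KL] \lesssim (n^3 + 2^n)/|\Group|$. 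Under the hypothesis $\log|\Group| \geq Cn\log(1/p)$ (which is stronger than the $2^n \ll |\Group|$ needed here), $\expect_A[\KL] \leq |\Group|^{-\Omega(1)}$, and Markov's inequality with threshold $\delta = \expect_A[\KL]^{6/7}$ produces $\prob_A[\KL \geq \delta] \leq \expect_A[\KL]^{1/7} \leq |\Group|^{-1/7}$ with $\delta = o(1)$, as required.

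The main obstacle will be executing the combinatorial pairing argument in step~2 with enough care to verify (a) the leading $(Ct)^{t/2}$ combinatorial factor in the generic contribution, tracking matchings whose induced graph on $[t]$ determines how many free choices remain for the $\bfu_i$, and (b) that configurations in which some orbit appears three or more times, or configurations involving the $O(\sqrt{|\Group|})$ group elements $\bfg$ with many solutions to $\bfu^2 = \bfg$, contribute only lower-order terms. A minor technicality is the $t = 1$ term $\expect_\bfg[\gamma_p] = (p_A - p)^2$, for which $\expect_A[(p_A-p)^2] \lesssim p(1-p)/|\Group|$ contributes at most $O(n^2/|\Group|)$ to $\expect_A[\KL]$ and is absorbed into the dominant bound.
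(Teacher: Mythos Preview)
Your approach is a valid alternative to the paper's, but the route is genuinely different, and the paper's execution is considerably simpler.

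\textbf{What the paper does.} The paper also starts from \cref{claim:RaczLiuIndistinguishability}, but instead of bounding $\expect_A[\KL]$ directly it controls the \emph{second moment} $\Phi(t)=\expect_A\big[\phi_t(A)^2\big]$ of $\phi_t(A)=\expect_\bfg[\gamma_A(\bfg)^t]$. Squaring introduces two independent copies $\bfg^{(1)},\bfg^{(2)},\{\bfz_i^{(1)}\},\{\bfz_i^{(2)}\}$, and after swapping $\expect_A$ inside, the inner expectation vanishes unless some element among the $4t$ slots $\bfg^{(\ell)}(\bfz_i^{(\ell)})^{-1},\bfz_i^{(\ell)}$ coincides with another (or its inverse). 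The whole argument then reduces to a single \emph{collision-probability} bound: $\Phi(t)\le 1-\prob[B]=O(|\Group|^{-1/3})$, proved by a short case analysis using the set $M=\{\bfh:\,|\{\bfx:\bfx^2=\bfh\}|\ge|\Group|^{1/3}\}$. Markov on $\phi_t^2$ then gives $|\phi_t(A)|\le|\Group|^{-1/12}$ for all $t$ simultaneously (union bound over $t\le n$), and plugging this uniform bound into \cref{eq:firsteqonindist} yields the result once $|\Group|\ge n^{24}2^{12n}(p(1-p))^{-12n}$. No Wick-type enumeration is needed.

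\textbf{Comparison.} Your route --- bound $\expect_A\expect_\bfg[\gamma_p^t]$ termwise via a matching/pairing count, assemble into $\expect_A[\sum_k Y_k]$, then Markov on $\KL$ --- is conceptually clean and, if the Wick estimate $\expect_A\expect_\bfg[\gamma_p^t]/(p(1-p))^t\lesssim (Ct/|\Group|)^{t/2}+|\Group|^{-1}$ holds, actually yields a sharper dependence: the $p$-factors cancel, so you would only need $|\Group|\gg 2^{Cn}$ rather than $\exp(Cn\log(1/p))$. The price is that the matching combinatorics you flag as the ``main obstacle'' is real work: you must organize perfect matchings of the $2t$ slots by the cycle structure of the induced multigraph on $[t]$, show the all-double-edges case (giving $|\Group|^{t/2}$ free tuples) dominates, handle odd $t$ (where no such perfect pairing of indices exists and the leading term has one $3$-cycle), bound blocks of size $\ge 3$ (whose $|\expect[(X-p)^d]|\le p(1-p)$ contributes only $(p(1-p))^b$ with $b<t$, compensated by fewer configurations), and control the exceptional $\bfg$'s in the set $M$. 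None of this is insurmountable, but the paper sidesteps all of it: by passing to the second moment it reduces everything to ``how often do $4t$ nearly-independent uniform elements collide'', which is a two-paragraph union bound. A minor point: your final Markov step gives failure probability $\lesssim(2^n/|\Group|)^{1/7}$ rather than $|\Group|^{-1/7}$ on the nose, so the exponent $1/7$ would need a small adjustment of the constant $C$.
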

\begin{proof} In light of \cref{eq:firsteqonindist}, we simply need to show that for typical sets $A,$ the function $$\gamma_A(\bfg) = \expect_{\bfz\sim \unif(\Group)}[(\sigma_A(\bfg\bfz^{-1})-p)(\sigma_A(\bfz)-p)]$$ has sufficiently small moments. We bound the moments via the second moment method over $A.$ Namely, for $t \ge 1,$ define the following functions:
\[
\phi_t(A)  = \expect_{\bfg\sim \unif(\Group)} [\gamma_A(\bfg)^t]\; \text{ and }\;
\Phi(t)  = \expect_{A\sim \anteuniform(\Group,p)}
[\phi_t^2(A)].
\]
We show that for typical $A,$ $\phi_t(A)$ is small by bounding the second moment $\Phi(t)$ and using Chebyshev's inequality. We begin by expressing $\phi_t(A)$ and $\Phi(t).$ First, 
\begin{align*}
\phi_t(A) & =  \expect_{\bfg\sim \unif(\Group)} [\gamma_A(\bfg)^t] 
= \expect_{\bfg\sim \unif(\Group)} \Big[
\expect_\bfz[(\sigma_A(\bfg\bfz^{-1})-p)(\sigma_A(\bfz)-p)]^t
\Big]
\\
& =  \expect_{\bfg, \{\bfz_i\}_{i =1}^t\sim_{iid}\unif(\Group)}
\Bigg[
\prod_{i = 1}^t 
(\sigma_A(\bfg\bfz_i^{-1})-p)(\sigma_A(\bfz_i)-p)
\Bigg].
\end{align*}
Therefore, 
\begin{align*}
\Phi(t) & = \expect_{A\sim \anteuniform(\Group,p)}
 [\phi_t^2(A)]\\
 & = 
\expect_{A\sim \anteuniform(\Group,p)}
\Bigg[
\Bigg(
\expect_{\bfg, \{\bfz_i\}_{i =1}^t\sim_{iid}\unif(\Group)}
\bigg[
\prod_{i = 1}^t 
(\sigma_A(\bfg\bfz_i^{-1})-p)(\sigma_A(\bfz_i)-p)
\bigg]
\Bigg)^2
\Bigg]
\\
& = 
\expect_{A\sim \anteuniform(\Group,p)}
\Bigg[
\expect_{\bfg^{(1)}, \bfg^{(2)}, \{\bfz^{(1)}_i\}_{i =1}^t,\{\bfz^{(2)}_i\}_{i =1}^t\sim_{iid}\unif(\Group)}
\bigg[
\prod_{i = 1}^t 
(\sigma_A(\bfg^{(1)}(\bfz^{(1)}_i)^{-1})-p)
\\
&\qquad \qquad\qquad\qquad \times  
\prod_{i = 1}^t 
(\sigma_A(\bfz^{(1)}_i)-p)
\prod_{i = 1}^t 
(\sigma_A(\bfg^{(2)}(\bfz^{(2)}_i)^{-1})-p)
\prod_{i = 1}^t 
(\sigma_A(\bfz^{(2)}_i)-p)
\bigg]
\Bigg].
\end{align*}
The main idea now is that we can change the order of expectation between $A$ and \linebreak $\{\bfg^{(1)}, \bfg^{(2)}, \{\bfz^{(1)}_i\}_{i =1}^t,\{\bfz^{(2)}_i\}_{i =1}^t\}.$ Since $\expect_{A\sim \anteuniform(\Group,p)}\sigma_A(\bfh) = p$ for each $\bfh\in \Group$ by the definition of the density $\anteuniform(\Group,p),$ each of the terms of the type $(\sigma_A(\cdot)-p)$
has zero expectation over $A.$ The lower bound we have on the size of $\Group$ will ensure that  for ``typical'' choices of $\{\bfg^{(1)}, \bfg^{(2)}, \{\bfz^{(1)}_i\}_{i =1}^t,\{\bfz^{(2)}_i\}_{i =1}^t\},$ at least one of the terms of the type $(\sigma_A(\cdot)-p)$ is independent of the others, which will lead to a (nearly) zero expectation. Namely, rewrite the above expression as 
\begin{equation}
\label{eq:ragindisteq49}
    \begin{split}
\expect_{\bfg^{(1)}, \bfg^{(2)}, \{\bfz^{(1)}_i\}_{i =1}^t,\{\bfz^{(2)}_i\}_{i =1}^t\sim_{iid}\unif(\Group)}
&\Bigg[
\expect_{A\sim \anteuniform(\Group,p)}\bigg[
\prod_{i = 1}^t 
(\sigma_A(\bfg^{(1)}(\bfz^{(1)}_i)^{-1})-p)
\prod_{i = 1}^t 
(\sigma_A(\bfz^{(1)}_i)-p)
\\
&\qquad\qquad\times  
\prod_{i = 1}^t 
(\sigma_A(\bfg^{(2)}(\bfz^{(2)}_i)^{-1})-p)
\prod_{i = 1}^t 
(\sigma_A(\bfz^{(2)}_i)-p)
\bigg]
\Bigg].
\end{split}
\end{equation}
Now, observe that if there exists some $\xi \in \{\bfg^{(1)}, \bfg^{(2)}, \{\bfz^{(1)}_i\}_{i =1}^t,\{\bfz^{(2)}_i\}_{i =1}^t\}$ such that $\xi \neq \zeta, \xi \neq \zeta^{-1}$ for any other $\zeta \in \{\bfg^{(1)}, \bfg^{(2)}, \{\bfz^{(1)}_i\}_{i =1}^t,\{\bfz^{(2)}_i\}_{i =1}^t\},$ the above expression is indeed equal to 0 since this means that $\sigma_A(\xi)$ is independent of all other terms $\sigma_A(\zeta).$ Call $B = B\bigg(\{\bfg^{(1)}, \bfg^{(2)}, \{\bfz^{(1)}_i\}_{i =1}^t,\{\bfz^{(2)}_i\}_{i =1}^t\}\bigg)$ the event that all $\xi \in \{\bfg^{(1)}, \bfg^{(2)}, \{\bfz^{(1)}_i\}_{i =1}^t,\{\bfz^{(2)}_i\}_{i =1}^t\}$ satisfy this property. As discussed, over $B$ we have 
$$
\expect_{A\sim \anteuniform(\Group,p)}\Bigg[
\prod_{i = 1}^t 
(\sigma_A(\bfg^{(1)}(\bfz^{(1)}_i)^{-1})-p)
\prod_{i = 1}^t 
(\sigma_A(\bfz^{(1)}_i)-p)
\prod_{i = 1}^t 
(\sigma_A(\bfg^{(2)}(\bfz^{(2)}_i)^{-1})-p)
\prod_{i = 1}^t 
(\sigma_A(\bfz^{(2)}_i)-p)
\Bigg] = 0,
$$
and outside of $B,$ we have 
\begin{align*}
    \expect_{A\sim \anteuniform(\Group,p)}\Bigg[
\prod_{i = 1}^t 
(\sigma_A(\bfg^{(1)}(\bfz^{(1)}_i)^{-1})-p)
\prod_{i = 1}^t 
(\sigma_A(\bfz^{(1)}_i)-p)
\prod_{i = 1}^t 
(\sigma_A(\bfg^{(2)}(\bfz^{(2)}_i)^{-1})-p)
\prod_{i = 1}^t 
(\sigma_A(\bfz^{(2)}_i)-p)
\Bigg]\\
\le 
\norm{
\prod_{i = 1}^t 
(\sigma_A(\bfg^{(1)}(\bfz^{(1)}_i)^{-1})-p)
\prod_{i = 1}^t 
(\sigma_A(\bfz^{(1)}_i)-p)
\prod_{i = 1}^t 
(\sigma_A(\bfg^{(2)}(\bfz^{(2)}_i)^{-1})-p)
\prod_{i = 1}^t 
(\sigma_A(\bfz^{(2)}_i)-p)}_{\infty}
\le 1.
    \end{align*}
It follows that $\Phi(t)\le 1-\prob[B].$
Next, we will show that $\prob[B] = 1- O\left({|\Group|^{-1/3}}\right).$

Denote by $M \subseteq \Group$ the set of group elements $\bfh$ for which the equation $\bfx^2 = \bfh$ has at least $|\Group|^{1/3}$ solutions $\bfx$ over $\Group.$ By Markov's inequality, $|M|\le |\Group|^{2/3}.$ We now draw the elements $\{\bfg^{(1)}, \bfg^{(2)}, \{\bfz^{(1)}_i\}_{i =1}^t,\{\bfz^{(2)}_i\}_{i =1}^t\}$ sequentially in that order and track the probability with which one of the following conditions fails (we terminate if a condition fails):
\begin{enumerate}
    \item $\bfg^{(1)} \not \in M\cup\{1\}$ and $\bfg^{(2)} \not \in M\cup \{1\}.$
    \item $\bfz^{(1)}_i\not \in \{\bfz^{(1)}_j, (\bfz^{(1)}_j)^{-1}\; : \; j < i \}\cup
    \{(\bfg^{(1)}(\bfz^{(1)}_j)^{-1}, (\bfg^{(1)}(\bfz^{(1)}_j)^{-1})^{-1}\; : \; j < i \}.
    $
    \item $\bfg^{(1)}(\bfz^{(1)}_i)^{-1}\not \in \{\bfz^{(1)}_j, (\bfz^{(1)}_j)^{-1}\; : \; j \le i \}\cup
    \{(\bfg^{(1)}(\bfz^{(1)}_j)^{-1}, (\bfg^{(1)}(\bfz^{(1)}_j)^{-1})^{-1}\; : \; j < i \}.
    $
    \item $\bfz^{(2)}_i\not \in \{\bfz^{(2)}_j, (\bfz^{(2)}_j)^{-1}\; : \; j < i \}\cup
    \{\bfg^{(2)}(\bfz^{(2)}_j)^{-1}, (\bfg^{(2)}(\bfz^{(2)}_j)^{-1})^{-1}\; : \; j < i \},
    $ and\\
    $
    \bfz^{(2)}_i\not \in
    \{\bfz^{(1)}_j,(\bfz^{(1)}_j)^{-1},\bfg^{(1)}(\bfz^{(1)}_j)^{-1}, (\bfg^{(1)}(\bfz^{(1)}_j)^{-1})^{-1}\; : \; j \le n \}.
    $
    \item $\bfg^{(2)}(\bfz^{(2)}_i)^{-1}\not \in \{\bfz^{(2)}_j, (\bfz^{(2)}_j)^{-1}\; : \; j < i \}\cup
    \{\bfg^{(2)}(\bfz^{(2)}_j)^{-1}, (\bfg^{(2)}(\bfz^{(2)}_j)^{-1})^{-1}\; : \; j < i \},
    $ and\\
    $
    \bfz^{(2)}_i\not \in
    \{\bfz^{(1)}_j,(\bfz^{(1)}_j)^{-1},\bfg^{(1)}(\bfz^{(1)}_j)^{-1}, (\bfg^{(1)}(\bfz^{(1)}_j)^{-1})^{-1}\; : \; j \le n \}.
    $
\end{enumerate}
Note that conditions 2-5 exactly capture that event $B$ occurs, so conditions 1-5 hold together with probability at most $\prob[B].$ Condition 1 is added to help with the proof. We analyze each step separately.

\begin{enumerate}
\item $\prob[\bfg^{(1)}\in M\cup \{1\}] = \frac{|M|\cup \{1\}}{|\Group|} = O(|\Group|^{-1/3})$ and the same for $\bfg^{(2)}.$ Thus, the drawing procedure fails at this stage with probability at most $O(|\Group|^{-1/3}).$
\item Since the elements $\bfz^{(1)}_i\not \in \{\bfz^{(1)}_j, (\bfz^{(1)}_j)^{-1}\; : \; j < i \}\cup
    \{(\bfg^{(1)}(\bfz^{(1)}_j)^{-1}, (\bfg^{(1)}(\bfz^{(1)}_j)^{-1})^{-1}\; : \; j < i \}
    $ are independent of $\bfz^{(1)}_{i}$ and there are at most $4i\le 4t \le 4n$ of them, we fail with probability $O({n}/{|\Group|})$ at this step.
\item The calculation for $\bfg^{(1)}(\bfz^{(1)}_i)^{-1}$ is almost the same, except that we have to separately take care of the cases $\bfg^{(1)}(\bfz^{(1)}_i)^{-1} = (\bfz^{(1)}_i)^{-1}$ and $\bfg^{(1)}(\bfz^{(1)}_i)^{-1} = \bfz^{(1)}_i.$ The first event can only happen if $\bfg^{(1)}  =1,$ in which case the drawing procedure has already terminated at Step 1. The second event is equivalent to $\bfg^{(1)} = (\bfz^{(1)}_i)^2.$ If the drawing procedure has not terminated at Step 1, this occurs with probability at most $|\Group|^{-2/3}$ since $\bfg^{(1)}\not \in M.$ Thus, the total probability of failure is 
$O ({n}/{|\Group|} + {|\Group|^{-2/3}}).$
\item As in Step 2, the calculation for $\bfz^{(2)}_i$ gives $O({n}/{|\Group|}).$
\item As in Step 5, the calculation for $\bfg^{(1)}(\bfz^{(1)}_i)^{-1}$ gives $O ({n}/{|\Group|} + {|\Group|^{-2/3}}).$
\end{enumerate}
Summing over all $j \in [t], \ell \in \{1,2\},$ via union bound, we obtain a total probability of failure at most 
$$
O\bigg(\frac{1}{|\Group|^{1/3}}\bigg) + O\bigg(\frac{t}{|\Group|^{2/3}}\bigg) + 
O\bigg(\frac{tn}{|\Group|}\bigg) = 
O\bigg(\frac{1}{|\Group|^{1/3}}\bigg)
$$
as $t\le n \le \log |\Group|$ by the definition of $t$ and assumption on $|\Group|.$

Going back, we conclude that 
$1- \prob[B]\le {C}{|\Group|^{-1/3}}$ for some absolute constant $C,$ so
$
\Phi(t) \le {C}{|\Group|^{-1/3}}
$ for some absolute constant $C.$

Now, we can apply the moment method over $A$ and conclude that 
$$
\prob_A[|\phi_t(A)|>|\Group|^{-1/12}]\le
\frac{\expect_A [\phi_t(A)^2]}{|\Group|^{-1/6}} = 
\frac{\Phi(t)}{|\Group|^{-1/6}}\le 
C|\Group|^{-1/6}.
$$

Therefore, with high probability $1 - C|\Group|^{-1/6}$ over $A\sim \anteuniform(\Group, p),$ 
$\phi_t(A)\le |\Group|^{-1/12}.$ Taking a union bound over $t \in \{1,2, \ldots, n-1\},$ we reach the conclusion that with high probability $1 - Cn|\Group|^{-1/6}> 1 - |\Group|^{-1/7}$ over $A\sim \anteuniform(\Group, p),$ 
$\phi_t(A)\le |\Group|^{-1/12}$ holds for all $t \in [n-1].$ 

Now, suppose that $A$ is one such set which satisfies that 
$\phi_t(A)\le |\Group|^{-1/12}$ holds for all $t \in [n-1].$ Using \cref{eq:firsteqonindist}, we obtain
\begin{align*}
    \sum_{k = 0}^{n-1}
    \log 
    \sum_{t = 0}^k \binom{k}{t}\expect_{\bfg\sim \unif(\Group)}\left[
    \frac{\gamma(\bfg)^t}{p^t(1-p)^t}\right]
     \le  
     \sum_{k = 0}^{n-1}
    \log \Bigg(1 +
    \sum_{t = 1}^k \binom{k}{t}
    \frac{|\Group|^{-1/12}}{p^t(1-p)^t}\Bigg).
    \end{align*}
Now, suppose that $|\Group|>n^{24} 2^{12n}p^{-12n}(1-p)^{-12n},$ which is of order $ \exp(Cn \log \frac{1}{p})$ for some absolute constant $C$ when $p \le \frac{1}{2}.$ Then, the above expression is bounded by 
\begin{align*}
    &\sum_{k = 0}^{n-1}
    \log \Bigg(1 +
    \frac{1}{n^22^n}\sum_{t = 1}^k \binom{k}{t}
    \frac{p^{n}(1-p)^n}{p^t(1-p)^t}\Bigg)\\
    &\le 
    \sum_{k = 0}^{n-1}
    \log \Bigg(1 +
    \frac{1}{n^22^n}\sum_{t = 1}^k \binom{k}{t}\Bigg)\\
    &\le 
    \sum_{k = 0}^{n-1}
    \log \Bigg(1 +
    \frac{1}{n^22^n}2^k\Bigg)\le 
    \sum_{k = 0}^{n-1}
    \log \bigg(1 +
    \frac{1}{n^2}\bigg)\le 
    \sum_{k = 0}^{n-1}\frac{1}{n^2} = \frac{1}{n} = o(1).\qedhere
    \end{align*}
\end{proof}
\section{Detection via Signed Triangle Count over the Hypercube}
\label{sec:detection}
Here, we explore the power of counting signed triangles. We will first provide counterparts to the arguments on symmetric connection in \cref{sec:symmetricconnectionsapplications}. As we will see in \cref{sec:doom}, there are fundamental difficulties in applying the signed triangle statistic to non-symmetric connections. \cref{sec:polydetection} shows an even deeper limitation that goes beyond counting any small subgraphs.
\subsection{Detection for Symmetric Connections}
We first simplify \cref{cor:generaltriangledetection}.

\begin{corollary}
\label{cor:superlineartrianglestats}
Suppose that $d = \omega(n), p = \Omega({1}/{n}),$ and $\sigma$ is a symmetric connection. If
$$
n^6 \Big(
\sum_{S \; : \; 
|S|\in \{1,2,d-1,d-1,d\}
}
\widehat{\sigma}(S)^3
\Big)^2 = 
\omega\Big(n^3 p^3 + 
n^4\sum_{|S|\in \{1,d-1,d\}}
\widehat{\sigma}(S)^4
\Big),
$$
then
the signed triangle statistic distinguishes between 
$\ergraph$ and 
$\hypercubegraph$
with high probability.
\end{corollary}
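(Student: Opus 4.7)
The plan is to reduce to \cref{cor:generaltriangledetection}: verify that its hypothesis is implied by the new one together with $d=\omega(n)$ and $p=\Omega(1/n)$. The simplification of the right-hand side amounts to showing that, for symmetric $\sigma$, Fourier levels outside $\{1,2,d-2,d-1,d\}$ (for $\sum\widehat\sigma(S)^3$) and outside $\{1,d-1,d\}$ (for $\sum\widehat\sigma(S)^4$) contribute only negligible amounts. Throughout, by symmetry, let $a_i$ denote the common value of $\widehat{\sigma}(S)$ for $|S|=i$, so that $\sum_{S\neq\emptyset}\widehat\sigma(S)^k=\sum_{i=1}^{d}\binom{d}{i}a_i^k$.

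The key input is the weight bound $\binom{d}{i}a_i^2=\weight{i}{\sigma}\le\Var[\sigma]\le p(1-p)$, which gives $|a_i|\le\sqrt{p(1-p)/\binom{d}{i}}$ and hence
\[
\Big|\binom{d}{i}a_i^3\Big|\le\frac{(p(1-p))^{3/2}}{\binom{d}{i}^{1/2}},\qquad \binom{d}{i}a_i^4\le\frac{(p(1-p))^2}{\binom{d}{i}}.
\]
On the ``middle'' ranges $\{3,\ldots,d-3\}$ and $\{2,\ldots,d-2\}$, the binomial coefficients are minimized at the endpoints and grow geometrically toward the center, so summing yields
\[
\Big|\sum_{i=3}^{d-3}\binom{d}{i}a_i^3\Big|=O\!\Big(\tfrac{(p(1-p))^{3/2}}{d^{3/2}}\Big),\qquad \sum_{i=2}^{d-2}\binom{d}{i}a_i^4=O\!\Big(\tfrac{(p(1-p))^2}{d^2}\Big).
\]

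Write $L:=\sum_{|S|\in\{1,2,d-2,d-1,d\}}\widehat\sigma(S)^3$, $M:=\sum_{|S|\in\{1,d-1,d\}}\widehat\sigma(S)^4$, and $S_k:=\sum_{S\ne\emptyset}\widehat\sigma(S)^k$. The hypothesis is $n^6L^2=\omega(n^3p^3+n^4M)$, so in particular $|L|=\omega(p^{3/2}/n^{3/2})$. Since $d=\omega(n)$, the middle cubic contribution $O(p^{3/2}/d^{3/2})$ is $o(p^{3/2}/n^{3/2})=o(|L|)$, so $|S_3|\ge|L|/2$ and $n^6S_3^2\ge n^6L^2/4$. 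It then suffices to dominate the three variance terms in \cref{cor:generaltriangledetection}: the $n^3p^3$ term is the hypothesis; the $n^4S_4$ term splits as $n^4M+O(n^4p^2/d^2)$, the first part absorbed by the hypothesis and the second being $o(n^3p^3)$ because $n/(d^2p)=O(n^2/d^2)=o(1)$ under $d=\omega(n)$ and $p=\Omega(1/n)$; and $n^3|S_3|=O(n^3|L|)$ is dominated by $n^6L^2$ provided $|L|\gg n^{-3}$, which follows from $|L|=\omega(p^{3/2}/n^{3/2})$ together with $p=\Omega(1/n)$.

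The main delicacy is that $S_3$ is a signed sum, so the middle contribution could in principle cancel against $L$. The argument resolves this quantitatively: the weight bound combined with the geometric decay of $\binom{d}{i}^{-1/2}$ away from $i\in\{3,d-3\}$ forces the middle cubic sum to have magnitude $O(p^{3/2}/d^{3/2})$, which is strictly smaller than the hypothesis-enforced $|L|=\omega(p^{3/2}/n^{3/2})$ precisely because $d=\omega(n)$. Everything else is routine once this comparison is in place.
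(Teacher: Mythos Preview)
Your proposal is correct and follows essentially the same route as the paper's proof: both use the weight bound $\binom{d}{i}a_i^2\le p(1-p)$ to show the middle-level cubic and quartic contributions are $O(p^{3/2}/d^{3/2})$ and $O(p^2/d^2)$ respectively, compare these against the hypothesis-forced lower bound $|L|=\omega(p^{3/2}/n^{3/2})$ via $d=\omega(n)$, and then verify the three variance terms of \cref{cor:generaltriangledetection} are dominated using $p=\Omega(1/n)$. The notation differs ($L,M,S_k$ versus the paper's $A,B$), but the estimates and the logic are the same.
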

\begin{proof}
We will essentially show that the above inequality, together with  $d = \omega(n), p = \Omega({1}/{n}),$ imply the conditions of 
\cref{cor:generaltriangledetection}.
Denote $$A = \displaystyle \sum_{S \; : \; 
|S|\in \{1,2,d-1,d-1,d\}
}
\widehat{\sigma}(S)^3.$$ As $ \sum_{|S|\in \{1,d-1,d\}}
\widehat{\sigma}(S)^4>0,$ the conditions above imply that 
$n^6A^2 = \omega(n^3p^3),$ so $A = \omega(n^{-3/2}p^{3/2}).$ Note however, that 
$$
B:=\sum_{S \; : \; 
|S|\not \in \{1,2,d-1,d-1,d\}}
\widehat{\sigma}(S)^3\le 
\sum_{S \; : \; 
|S|\not \in \{1,2,d-1,d-1,d\}}
|\widehat{\sigma}(S)|^3  = 
\sum_{3\le j\le d-3 }
\binom{d}{j}\bigg(\frac{\weight{j}{\sigma}}{\binom{d}{j}}\bigg)^{3/2},
$$
where for the last equality we used the symmetry of $\sigma.$ However, as $\weight{j}{\sigma}\le \Var[\sigma]\le p-p^2<p,$ the last sum is bounded by 
$$
\sum_{3\le j\le d-3 }
\binom{d}{j}^{-1/2}p^{3/2} = 
O\left(d^{-3/2}p^{3/2}\right) = 
o(n^{-3/2}p^{3/2}) = 
o(A).
$$
In particular, this means that $A + B = \Omega(A), 
$ and $n^3|A+B| = \omega(n^{3/2}p^{3/2}) = \omega(1).$ Thus, 
\begin{align*}
        & n^6 
\Big(
\sum_{S \neq \emptyset
}
\widehat{\sigma}(S)^3
\Big)^2
= 
\Omega
 \Big(n^6
\sum_{S \; : \; 
|S|\in \{1,2,d-1,d-1,d\}
}
\widehat{\sigma}(S)^3
\Big)^2\\
& =  
\omega\Big(n^3 p^3 + 
n^4\sum_{|S|\in \{1,d-1,d\}}
\widehat{\sigma}(S)^4
\Big)
 + 
\omega\Big(
n^3
\sum_{S \neq \emptyset
}
\widehat{\sigma}(S)^3
\Big)\\ 
& =  
\omega\Big(n^3 p^3 + 
n^3
\sum_{S \neq \emptyset
}
\widehat{\sigma}(S)^3 + 
n^4\sum_{|S|\in \{1,d-1,d\}} 
\widehat{\sigma}(S)^4
\Big),
    \end{align*}
where we used the assumptions of the corollary and the fact that $n^6(A+B)^2 = \omega (|n^3(A+B)|)$ which follows from 
$|n^3(A+B)| = \omega(1).$ To finish the proof, we simply need to add the missing terms in the sum of fourth powers. Using the same approach with which we bounded $B,$ we have 
$$
n^4\sum_{S \; : \; 
|S|\not \in \{1,2,d-1,d-1,d\}}
\widehat{\sigma}(S)^4 = 
n^4\sum_{2\le j\le d-2}
\binom{d}{j}
p^2\binom{d}{j}^{-2} = 
O(n^4p^2d^{-2}) = 
o(n^2p^2) = 
o(n^3p^3),
$$
where in the last line we used the fact that $pn = \Omega(1).$ With this, the conclusion follows.
\end{proof}

Note that this proposition gives an exact qualitative counterpart to \cref{cor:5relevantlevelsforsuperlinearithmic} since it also depends only on levels $1,2,d-2,d-1,d.$

We now transition to proving detection results for specific functions, which serve as lower bounds for our indistinguishability results. We don't give any lower-bound results for \cref{cor:generallipschitzindist} and \cref{cor:classicrggindistinguishability} as these already appear in literature (even though for the slightly different model over Gauss space). Namely, in
\cite{Liu2021APV}, Liu and Racz prove that when 
there $\sigma$ is monotone $\frac{1}{r\sqrt{d}}$-Lipschitz, 
the signed triangle statistic distinguishes the underlying geometry whenever $d = o({n^3}/{r^6}).$ In \cite{Liu2022STOC}, the authors show that the signed triangle statistic distinguishes the connection $\threshold_p$ when $d = \tilde{o}(n^3p^3).$

\subsubsection{Hard Threshold Connections}
Here, we prove the following bound corresponding to \cref{cor:doublethresholdconnectionsindist}.
\begin{corollary}
\label{cor:doublethresholddetection}
When $d$ is even and $d = \omega(n), p = \Omega(\frac{1}{n})$ and $d = o(n^{3/2}p^{3/2}),$
$$
\TV(\ergraph, \RAG(n,\hypercube, p, \doublethreshold_p)) = 1- o(1).
$$
\end{corollary}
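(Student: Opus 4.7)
The plan is to invoke \cref{cor:superlineartrianglestats}, which reduces detection to bounding $\widehat{\doublethreshold}_p$ on the five Fourier levels $\{1,2,d-1,d-2,d\}$. The first observation is that $\doublethreshold_p$ is even, so all odd-level coefficients vanish; since $d$ is even by hypothesis, levels $1$ and $d-1$ are killed and only levels $2$, $d-2$, and $d$ remain on either side of the inequality in \cref{cor:superlineartrianglestats}. Writing $\doublethreshold_p(\bfx) = \sigma_+(\bfx) + \sigma_+(-\bfx)$ with $\sigma_+(\bfx) = \indicator[\sum_i x_i \ge \delta_p]$, we have $\widehat{\doublethreshold}_p(S) = 2\widehat{\sigma_+}(S)$ for $|S|$ even.

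The heart of the argument is the level-$2$ estimate $\widehat{\doublethreshold}_p(\{i,j\}) = \Theta(p\log(1/p)/d)$, with a definite positive sign. I would derive it by conditioning on $x_i x_j$, which expresses $\widehat{\sigma_+}(\{i,j\})$ as a quarter of the symmetric second difference $g(\delta_p - 2) - 2g(\delta_p) + g(\delta_p+2)$, where $g(t) = \prob[\sum_{k=3}^{d}x_k \ge t]$. Taylor expansion with step $2$ gives $4 g''(\delta_p)$, and combining the standard Gaussian tail estimates $\varphi(\delta_p/\sqrt{d}) = \Theta(p\sqrt{\log(1/p)})$ with $\delta_p = \Theta(\sqrt{d\log(1/p)})$ yields $g''(\delta_p) = \Theta(p\log(1/p)/d)$; positivity follows from convexity of $g$ in the tail. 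For level $d$, \cref{prop:doublethresholdcorwithparity} already gives $|\widehat{\doublethreshold}_p([d])| = O(p\sqrt{\log(1/p)/d})$. For level $d-2$, I would use the identity $\omega_{[d]\setminus\{i,j\}} = \omega_{[d]}\,x_i x_j$ together with the same conditioning; this reduces the coefficient to a second symmetric difference of the alternating-binomial partial sum that appears in the proof of \cref{lem:lastfourier}, and a Stirling/Taylor calculation produces an extra cancellation factor of order $\delta_p^2/d^2 = \Theta(\log(1/p)/d)$, yielding $|\widehat{\doublethreshold}_p([d]\setminus\{i,j\})| = O(p\log^{3/2}(1/p)/d^{3/2})$. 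I expect this last estimate to be the main technical obstacle, since the naive Parseval bound $|\widehat{\doublethreshold}_p(S)| \le O(\sqrt{p}/d)$ for $|S|=d-2$ is by itself too weak to rule out cancellation against the level-$2$ signal when $p$ is small.

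By permutation symmetry of $\doublethreshold_p$, each level contributes without internal sign cancellation to $\sum_{S\ne\emptyset}\widehat{\doublethreshold}_p(S)^3$, with magnitudes $\Theta(p^3\log^3(1/p)/d)$ from level $2$, $O(p^3\log^{9/2}(1/p)/d^{5/2})$ from level $d-2$, and $O(p^3\log^{3/2}(1/p)/d^{3/2})$ from level $d$. Level $2$ therefore dominates and the full cube-sum is $\Theta(p^3\log^3(1/p)/d)$. Plugging into \cref{cor:superlineartrianglestats}, the signal-squared side becomes $\Omega(n^6 p^6 \log^6(1/p)/d^2)$, while the right-hand side is $O(n^3p^3 + n^4 p^4 \log^2(1/p)/d^2)$. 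The first comparison reduces to $d^2 = o(n^3 p^3 \log^6(1/p))$, immediate from $d = o(n^{3/2}p^{3/2})$; the second reduces to $n^2 p^2 \log^4(1/p) = \omega(1)$, which follows from $p = \Omega(1/n)$ together with $\log(1/p)\to\infty$ (or $np\to\infty$ when $p=\Theta(1)$). Chebyshev's inequality then produces the distinguisher exactly as in the derivation of \cref{cor:generaltriangledetection}.
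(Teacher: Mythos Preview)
Your proposal is correct and follows the same skeleton as the paper: kill odd levels by evenness, estimate $\widehat{\doublethreshold_p}$ on levels $2$, $d-2$, $d$ (packaged in the paper as \cref{prop:doublethresholdcorwithparity} and \cref{prop:fourierofdoublethresh}), and plug into \cref{cor:superlineartrianglestats}. The only difference is the technique for the level-$2$ and level-$(d-2)$ estimates: the paper also writes $\widehat{\doublethreshold_p}(S)=2\widehat{\threshold_{p/2}}(S)$ and passes to the discrete derivative $J=\indicator[\sum_i y_i=\tau_{p/2}-1]$, but then reads off $\widehat{J}([s])$ by comparing coefficients in the noise-operator generating function $T_\rho J(\mathbf{1})=\sum_s \rho^s\binom{d-1}{s}\widehat{J}([s])$ rather than your second-difference/Gaussian-tail computation; both routes yield the same bounds up to polylog factors, and your explicit tracking of $\log(1/p)$ handles the boundary $p=\Theta(1/n)$ a bit more transparently than the paper's write-up.
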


It should be no surprise that we need to calculate the Fourier coefficients of $\doublethreshold_p$ to prove this statement. From \cref{prop:doublethresholdcorwithparity} we know that 
$\widehat{\doublethreshold_p}([d]) = \tilde{O}(\frac{p}{d^{1/2}}).$
In \cref{sec:doublethresholdsfouriercoefficients}, we prove the following further bounds.

\begin{proposition}
\label{prop:fourierofdoublethresh}
When $p = d^{O(1)},$ the Fourier coefficients of $\doublethreshold_p$ satisfy
\begin{enumerate}
    \item $\widehat{\doublethreshold_p}([2]) = \Omega(\frac{p}{d}).$
    \item $\widehat{\doublethreshold_p}([d-2]) = \tilde{O}(\frac{p}{d^{3/2}}).$
\end{enumerate}
\end{proposition}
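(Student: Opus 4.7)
The plan is to reduce both Fourier coefficients to a single finite difference of binomial coefficients, then estimate that difference via the local central limit theorem combined with a Mill's-ratio calibration of $\delta_p.$ As a preliminary step, I decompose $\doublethreshold_p = \threshold_p^+ + \threshold_p^-,$ where $\threshold_p^+(\bfx) = \indicator[\sum_i x_i \ge \delta_p]$ and $\threshold_p^-(\bfx) = \threshold_p^+(-\bfx).$ The identity $\widehat{\threshold_p^-}(S) = (-1)^{|S|}\widehat{\threshold_p^+}(S)$ then yields $\widehat{\doublethreshold_p}(S) = 2\widehat{\threshold_p^+}(S)$ whenever $|S|$ is even, so it suffices to control $\widehat{\threshold_p^+}([2])$ and $\widehat{\threshold_p^+}([d-2]).$

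For part $(1),$ I would compute $\widehat{\doublethreshold_p}([2]) = \expect[\doublethreshold_p(\bfx) x_1 x_2]$ by conditioning on $(x_1, x_2).$ Writing $S_{d-2} = \sum_{i=3}^d x_i,$ symmetry of $S_{d-2}$ together with the parity of $\delta_p$ (which we may take even, since $d$ is even) collapse the expression to
\[
\widehat{\doublethreshold_p}([2]) \;=\; \tfrac{1}{2}\bigl(\prob[S_{d-2} = \delta_p - 2] - \prob[S_{d-2} = \delta_p]\bigr).
\]
Both point probabilities are $\sqrt{2/(\pi d)}\,e^{-k^2/(2d)}(1+o(1))$ by the local CLT, and expanding $(\delta_p-2)^2 - \delta_p^2 = -4\delta_p + 4$ gives a difference of order $\sqrt{2/(\pi d)}\,e^{-\delta_p^2/(2d)} \cdot 2\delta_p/d.$ Mill's ratio applied to the defining relation $p = \prob[|\sum_{i=1}^d x_i| \ge \delta_p]$ furnishes the calibration $e^{-\delta_p^2/(2d)} \asymp p\delta_p/\sqrt{d},$ and multiplying these yields $\widehat{\doublethreshold_p}([2]) \asymp p\delta_p^2/d^2.$ Since $p = d^{O(1)}$ forces $\delta_p^2 = \Theta(d\log(1/p)) = \Omega(d),$ the desired $\Omega(p/d)$ bound follows.

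For part $(2),$ I would apply the generating-function identity $\widehat{f}([k]) = \frac{(-1)^k}{2^d}\sum_j f_j\,[x^j](1-x)^k(1+x)^{d-k}$ valid for every symmetric $f,$ with $f = \threshold_p^+$ and $k = d-2.$ Setting $b := (d+\delta_p)/2,$ this becomes $\widehat{\threshold_p^+}([d-2]) \cdot 2^d = \sum_{j \ge b}(-1)^j\bigl(\binom{d-2}{j} - 2\binom{d-2}{j-1} + \binom{d-2}{j-2}\bigr).$ Three applications of the classical identity $\sum_{j=b}^n (-1)^j\binom{n}{j} = (-1)^b\binom{n-1}{b-1}$ (with the lower index shifted by $0, 1, 2$ for the three binomial sequences) collapse the alternating tail sum to
\[
\widehat{\threshold_p^+}([d-2]) \;=\; \frac{(-1)^b}{2^d}\Bigl(\binom{d-3}{b-1} - 2\binom{d-3}{b-2} + \binom{d-3}{b-3}\Bigr).
\]
With $r := 2\delta_p/d,$ the consecutive ratios $\binom{d-3}{b-1}/\binom{d-3}{b-2} = (d-b-1)/(b-1) = 1 - r + O(1/d)$ (and similarly for the next ratio) together with the elementary identity $(1-r)^2 - 2(1-r) + 1 = r^2$ show that the bracketed second difference equals $\binom{d-3}{b-3}\cdot r^2(1+o(1)).$ Feeding in the same local CLT / Mill's-ratio estimate $\binom{d-3}{b-3}/2^{d-3} \asymp p\delta_p/d$ as above gives $|\widehat{\doublethreshold_p}([d-2])| = O(p\delta_p^3/d^3) = \tilde{O}(p/d^{3/2})$ using $\delta_p = \tilde{O}(\sqrt{d}).$

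The main technical obstacle I foresee is discharging the local CLT and Mill's-ratio estimates with uniform error control across the whole range $p = d^{O(1)}$: one must track the integer/parity rounding of $\delta_p$ and the Edgeworth-type correction terms when $\delta_p$ grows as fast as $\Theta(\sqrt{d\log d}).$ The combinatorial reductions above, which express each Fourier coefficient as a single second difference of binomial coefficients, are exact, so all of the analytic content of the proof is localized in these standard (if tedious) tail estimates.
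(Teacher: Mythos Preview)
Your proposal is correct and arrives at the same binomial second-difference estimates as the paper, but via a different mechanism. The paper first peels off one variable to reduce $\widehat{\threshold_{p/2}}([2k])$ to $\tfrac{1}{2}\widehat{J}([2k-1])$, where $J(\bfy) = \indicator\bigl[\sum_{i\le d-1} y_i = \tau_{p/2}-1\bigr]$ is the discrete derivative, and then applies the noise operator $T_\rho$ at the all-ones vector to obtain the generating-function identity
\[
\sum_{s=0}^{d-1} \rho^s\binom{d-1}{s}\widehat{J}([s]) \;=\; \binom{d-1}{t}\Bigl(\tfrac{1+\rho}{2}\Bigr)^{t}\Bigl(\tfrac{1-\rho}{2}\Bigr)^{d-1-t},
\]
and extracts both levels by matching coefficients in $\rho$. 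Your route is more hands-on: for level $2$ you condition on the two distinguished coordinates and land immediately on a first difference of point masses, while for level $d-2$ your coefficient extraction from $(1-x)^{d-2}(1+x)^2$ combined with the alternating-sum identity produces the same second difference the paper reaches via its $\rho$-expansion. Both arguments bottom out in identical analytic input---the binomial point-mass estimates and the Mill's-ratio calibration of $\delta_p$---and your flagged uniformity concern is precisely what the paper's Appendix~B handles. The noise-operator device buys the paper a single formula for every level at once; your approach is more elementary and avoids any operator machinery.
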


These are enough to prove the corollary. 
\begin{proof}[Proof of 
\cref{cor:doublethresholddetection} 
]
We simply apply \cref{cor:superlineartrianglestats}. As all Fourier coefficients on levels, 1, $d-1,$ equal zero, we focus on levels $2,d-2, d.$ 
\begin{align*}
        \sum_{S \; : \; 
|S|\in \{1,2,d-2,d-1,d\}
}
\widehat{\sigma}(S)^3 & = 
\Omega\left(\frac{p^3}{d^{3}}\binom{d}{2}\right) +
\tilde{O}\left( 
\binom{d}{2} \frac{p^3}{d^{9/2}} + 
\frac{p^3}{d^{3/2}}
\right) = 
\Omega\left(\frac{p^3}{d}\right),\\
\sum_{S \; : \; 
|S|\in \{1,d-1,d\}
}
\widehat{\sigma}(S)^4 & = 
\tilde{O}\left( 
\frac{p^4}{d^2}
\right).
    \end{align*}
Therefore, by \cref{cor:superlineartrianglestats}, we can detect geometry when 
$$
n^6 \frac{p^6}{d^2} = \omega(n^3p^3) + 
\tilde{\omega}\left(
\frac{p^4}{d^2}\right).
$$
One can easily check that this is equivalent to $d = o(n^{3/2}p^{3/2}).$
\end{proof}
\subsubsection{Non-Monotone Connections, Interval Unions, and Fluctuations}
\label{sec:nonmonotonedetection}
We now turn to the more general case of non-monotone connections and interval unions. We only consider the case when $p = \Theta(1)$ so that we illustrate better the dependence on the correlation with parity, manifested in the number of intervals and analytic total variation.
We first handle the simple case of $\pi_\kappa.$ 

\begin{corollary}
\label{cor:pureparitycor}
If $n\kappa^2 = \omega(1),$ the signed triangle statistic distinguishes 
$\ergraph$ and \linebreak $\RAG(n, \hypercube, 1/2,\pi_\kappa)$ with high probability.
\end{corollary}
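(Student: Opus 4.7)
The plan is to follow the signed-triangle framework laid out in \cref{sec:ragviafourier} and verify the conditions of \cref{cor:generaltriangledetection} (or equivalently apply Chebyshev directly to $\tau_{3,1/2}$) for the connection $\pi_\kappa$.

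First I would compute the Fourier spectrum of $\pi_\kappa - 1/2$, which is trivially supported on the single set $S = [d]$ with $\widehat{\pi_\kappa}([d]) = \kappa$. By \cref{obs:fouriercycles}, this immediately gives $\expect_{G\sim \RAG(n,\hypercube,1/2,\pi_\kappa)}\bigl[\prod_{t=1}^3 (G_{i_t,i_{t+1}}-1/2)\bigr] = \kappa^3$ for every triangle $(i_1,i_2,i_3)$, so summing over the $\Theta(n^3)$ simple triangles yields $\expect_{\RAG}[\tau_{3,1/2}(G)] = \Theta(n^3\kappa^3)$, while $\expect_{\ergraphhalf}[\tau_{3,1/2}] = 0$.

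Next I would bound the variances under both models. Under $\ergraphhalf$ the edges are independent so $\Var[\tau_{3,1/2}] = O(n^3)$ by the standard computation (this is the $\kappa=0$ specialization). Under $\RAG(n,\hypercube,1/2,\pi_\kappa)$, \cref{prop:variancecomputation} gives
\[
\Var[\tau_{3,1/2}] = O\!\B(n^3 + n^3\textstyle\sum_{S\neq\emptyset}\widehat{\pi_\kappa}(S)^3 + n^4\sum_{S\neq\emptyset}\widehat{\pi_\kappa}(S)^4\B) = O(n^3 + n^3\kappa^3 + n^4\kappa^4).
\]
Chebyshev's inequality then distinguishes the two distributions with high probability provided
\[
n^6\kappa^6 \;=\; \omega\!\B(n^3 + n^3\kappa^3 + n^4\kappa^4\B).
\]
The three sub-conditions are respectively $n\kappa^2 = \omega(1)$, $n\kappa = \omega(1)$, and $n^2\kappa^2 = \omega(1)$. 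Since $\kappa \in [0,1/2]$ is bounded, the assumption $n\kappa^2 = \omega(1)$ forces $n\kappa = \omega(1/\kappa) = \omega(1)$ and hence all three inequalities hold.

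There is no real obstacle here: the entire argument reduces to a clean Fourier computation plus the already-proved variance bound. The only thing to be slightly careful about is that the variance bound in \cref{prop:variancecomputation} is stated under $\hypercubegraph$; applying it with $\kappa=0$ (pure noise, i.e.\ $\sigma \equiv 1/2$) recovers the variance under $\ergraphhalf$, which is why a single invocation suffices for both sides of the Chebyshev bound.
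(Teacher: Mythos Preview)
Your proposal is correct and follows essentially the same route as the paper: invoke \cref{cor:generaltriangledetection} (which packages \cref{obs:fouriercycles} and \cref{prop:variancecomputation}), plug in the single nonzero Fourier coefficient $\widehat{\pi_\kappa}([d])=\kappa$, and verify that $n^6\kappa^6=\omega(n^3+n^3\kappa^3+n^4\kappa^4)$ reduces to $n\kappa^2=\omega(1)$. The paper's proof is just the one-line version of what you wrote.
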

\begin{proof}
By \cref{cor:generaltriangledetection}, the signed triangle statistic distinguishes the two graph models with high probability whenever  
$$
n^6\kappa^6 = \omega(n^3 + n^3\kappa^3 + n^4\kappa^4).
$$
Trivially, this is equivalent to $n\kappa^2 = \omega(1).$
\end{proof}

\begin{remark}
\normalfont
\label{rmk:cubeandgaussiandifference}
    In particular, note that when $\kappa = \Theta(1),$ one can distinguish with high probability $\ergraphhalf$ and $\RAG(n, \hypercube, 1/2, \pi_\kappa)$  for \textit{any} $d.$ This phenomenon is clearly specific to the hypercube. In the Gaussian setting, whenever $d = \Tilde{\omega}(n^3),$ $\TV\Big(\ergraph, \RGG(n, \mathbb{R}^d, \mathcal{N}(0, I_d), p, \sigma)\Big) = o(1)$ for any connection $\sigma(\bfx,\bfy)$ depending only on $\langle \bfx,\bfy\rangle$ simply because of the fact that Wishart and GOE matrices are indistinguishable in this regime due to \cref{thm:wisharttogoe}.
    One way to understand this difference between Gaussian space and the hypercube is that the latter also posses an arithmetic structure, so connections such as $\pi_\kappa$ which heavily depend on the arithmetic might lead to a very different behaviour. In the particular case of $\pi_{1/2},$ the resulting graph $\RAG(n, \hypercube, 1/2, \pi_{1/2})$ is bipartite: vertices $i,j$ in this graph are adjacent if and only if the number of coordinates equaling $-1$ has the same parity in $\bfx_i$ and $\bfx_j.$
\end{remark}

\begin{remark}
\label{rmk:qualitativeconverseofmain}
    \normalfont
    Note that $\pi_{1/2}$ has a single large Fourier coefficient on non-zero levels, but this already makes $\RAG(n, \hypercube, 1/2, \pi_{1/2})$ detectable for \textit{any} value of $n.$ This is the qualitative converse of \cref{thm:maintheoremindistingishability}.
\end{remark}

\begin{remark}
    \label{rmk:morethansqrtdintervals}
    \normalfont
    Finally, take any $s = \omega(\sqrt{d})$ and let $h(\bfx):=\indicator\left[\sum_{i = 1}^d x_i \in [-s,s]\right].$ Then, the connection $\sigma(\bfx):=h(\bfx)\times \pi_{1/2}(\bfx)$ is an indicator of a union of $O(s)$ intervals.  Nevertheless, $\hypercubegraph$ and $\ergraphhalf$ are distinguishable with probability $1 - o(1).$ Indeed, $\prob_{\bfg\sim\unif(\hypercube)}[\sigma(\bfg) = \pi_{1/2}(\bfg)] = 1- o(1),$ which implies that $\TV\Big(\RAG(m,\hypercube, 1/2, \sigma), \RAG(m, \hypercube, 1/2, \pi_{1/2})\Big) = o(1)$ for all small enough $m.$ Thus, one can distinguish $\hypercubegraph$ and $\ergraphhalf$ by focusing on the induced subgraphs on vertices $\{1,2,\ldots, m\}$ for a small enough $m =\omega(1).$
 \end{remark}

We now proceed to the more interesting interval unions.
 Note that in this case, the indistinguishability rates we prove are 
$d = \omega(n^3 + n^{3/2}s^2)$ for a general union of intervals and $d = \omega(n^{3/2}s^2)$ for a union of intervals symmetric around 0.
The counterparts to these rates that we present are based on the following interval unions. For simplicity, assume that $d$ is even and $d = 2d_1.$ We begin with the general case. 

\begin{corollary}
\label{cor:geeneralintervalunionsdetection}
Suppose that $s = \tilde{\Omega}(d^{1/3})$ and $s = o(\sqrt{d}).$ Then, there exists a connection $\zeta$ which is the indicator of a union of $s$ intervals and
$\expect[\zeta] = \Omega(1).$ Furthermore,
$\zeta$ is detected by the signed triangle statistic whenever $d = o(ns^2).$ 
\end{corollary}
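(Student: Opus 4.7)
The plan is to exhibit an explicit connection $\zeta$ and invoke \cref{cor:superlineartrianglestats}. I would set $\zeta(\bfx) = \indicator\bigl[|\bfx|_+ \in \bigcup_{i=1}^s I_i\bigr]$, where $|\bfx|_+$ denotes the number of $+1$ coordinates and the $s$ intervals $I_i = [a_i, a_i + W - 1]$ share a common odd width $W = \Theta(\sqrt{d}/s)$, are spaced with an even gap $G = \Theta(\sqrt{d}/s)$, and sit symmetrically about $d/2$ entirely inside the bulk $[d/2 - C\sqrt{d}, d/2 + C\sqrt{d}]$ for some constant $C$. The parities of $W$, $G$, and $a_1$ are chosen so that every $a_i$ shares a common parity and the union $\bigcup_i I_i$ is symmetric around $d/2$, which will be crucial below. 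The local central limit theorem gives that each $I_i$ captures binomial mass $\Theta(1/s)$, hence $\expect[\zeta] = \Theta(1)$, as required, and by construction $\zeta$ is the indicator of exactly $s$ intervals.

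Next I would compute the Fourier coefficients that enter \cref{cor:superlineartrianglestats}. Conditioning on one coordinate gives the telescoping identity
\[
\widehat{\zeta}([d]) = \frac{(-1)^{d-1}}{2^d}\sum_{a = 0}^{d-1}(-1)^a\binom{d-1}{a}\bigl(g(a+1)-g(a)\bigr),
\]
where $g(j)=\indicator[j\in \bigcup_i I_i]$; the differences $g(a+1)-g(a)=\pm 1$ are nonzero exactly at the $2s$ interval endpoints, and the parity choice aligns all signs so that the $2s$ contributions add constructively. Since each endpoint lies in the bulk where $\binom{d-1}{a}/2^d = \Theta(1/\sqrt{d})$, this yields $|\widehat{\zeta}([d])| = \Omega(s/\sqrt{d})$. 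The evenness $\zeta(-\bfx)=\zeta(\bfx)$ kills the odd levels $1$ and $d-1$. For levels $2$ and $d-2$, the Parseval bound $\binom{d}{2}|\widehat{\zeta}([2])|^2 \le \Var[\zeta] = O(1)$ gives $\binom{d}{2}|\widehat{\zeta}([2])|^3 = O(1/d)$, and the hypothesis $s = \tilde{\Omega}(d^{1/3})$ forces the $[d]$-contribution $\Omega(s^3/d^{3/2})$ to dominate this by a polynomial factor.

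Substituting these into \cref{cor:superlineartrianglestats}, the triangle sum has magnitude $\Omega(s^3/d^{3/2})$, so the detection condition reduces to $n^6 s^6/d^3 = \omega(n^3 + n^4 s^4/d^2)$. The first inequality becomes $d = o(ns^2)$ and the second $d = o(n^2 s^2)$, so the binding condition is $d = o(ns^2)$, as desired. The main obstacle is the parity bookkeeping needed for constructive addition in $\widehat{\zeta}([d])$: one must simultaneously choose $W$, $G$, and the $a_i$ so that (i) all $2s$ boundary contributions carry the same sign, (ii) $\bigcup_i I_i$ remains symmetric around $d/2$ (killing the odd levels and therefore the contributions from $S = [1]$ and $S = [d-1]$, which otherwise would also require control), and (iii) the intervals stay disjoint and fit within the bulk so each boundary indeed contributes $\Theta(1/\sqrt{d})$. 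The upper bound $s = o(\sqrt{d})$ is used precisely so that $W, G \gtrsim 1$ is feasible; once this is granted, the construction and the constructive-addition argument are routine but unavoidably ad hoc.
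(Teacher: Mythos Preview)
Your proposal is correct and follows the same overall strategy as the paper: construct $\zeta$ so that the $2s$ interval endpoints sit in the bulk at positions of a common parity, making their contributions to $\widehat{\zeta}([d])$ add constructively to give $\Theta(s/\sqrt{d})$; bound the level-$2$ and level-$(d-2)$ coefficients by the trivial Parseval estimate $|\widehat{\zeta}([\ell])|\le\binom{d}{\ell}^{-1/2}$; and then invoke \cref{cor:superlineartrianglestats}.

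The paper's construction is different and a bit simpler: instead of $s$ equal-width intervals placed symmetrically about $d/2$, it takes the non-symmetric union $\{d_1,\,d_1+2,\,\ldots,\,d_1+2(s-2)\}\cup[d_1+\sqrt{d},\,d]$, i.e.\ $s-1$ singletons at even offsets (so their $2(s-1)$ endpoint contributions to $\widehat{\zeta}([d])$ add with a common sign) together with one large interval to guarantee $\expect[\zeta]=\Omega(1)$. Because this $\zeta$ is not even, levels $1$ and $d-1$ do not vanish, but the same Parseval bound $|\widehat{\zeta}([1])|\le d^{-1/2}$ suffices; the hypothesis $s=\tilde{\Omega}(d^{1/3})$ is exactly what is needed for the level-$d$ term $\Theta(s^3/d^{3/2})$ to dominate the level-$1$ term $O(d^{-1/2})$ in the signed-triangle sum. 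Your symmetric construction sidesteps this comparison by killing the odd levels outright, at the cost of the extra parity bookkeeping you note (and the implicit assumption that $d$ is even, which the paper also makes). Both routes land on the same inequality $n^6s^6/d^3=\omega(n^3+n^4s^4/d^2)$ and hence the same threshold $d=o(ns^2)$.
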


We construct this function $\zeta$ as follows.

\begin{proposition}
\label{prop:exofnonsymmetricinerval}
Let $s = o(\sqrt{d})$
and consider the union of $s$ intervals given by 
$$I_s = 
\left\{
d_1, d_1  +2, d_1 + 4,  \ldots,  d_1 + 2(s-2)
\right\}\cup [d_1 + \sqrt{d}, d].$$
Let the connection $\zeta_s$ be defined as 
$\displaystyle 
\zeta_s(x) = \indicator[\sum_{i = 1}^d x_i \in I_s].
$ Then, the following facts hold:
$\expect[\zeta_s] = \Omega(1),$ $\displaystyle |\widehat{\zeta_s}([\ell])| \le {\binom{d}{l}}^{-1/2}$ for $\ell\in \{1,2,d-1,d-2\},$ and $\widehat{\zeta_s}([d]) = \Theta(\frac{s}{\sqrt{d}}).$
\end{proposition}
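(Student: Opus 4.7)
The plan is to verify the three asserted facts separately using standard tools from Boolean Fourier analysis.

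First, for $\expect[\zeta_s] = \Omega(1)$, I would lower-bound
\[
\expect[\zeta_s] = \prob\B[\textstyle\sum_i x_i \in I_s\B] \ge \prob\B[\textstyle\sum_i x_i \ge d_1 + \sqrt{d}\B],
\]
and apply a standard central limit / Berry–Esseen estimate to $\sum_i x_i/\sqrt{d}$, noting that for the intended scaling of $d_1$ (placing the lower endpoint within $O(\sqrt{d})$ of the median of $\sum_i x_i$) the right-hand side is bounded below by a positive constant.

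Second, for the bounds $|\widehat{\zeta_s}([\ell])| \le \binom{d}{\ell}^{-1/2}$ at $\ell \in \{1,2,d-1,d-2\}$, I would apply the level-$k$ inequalities of \cref{thm:levelkinequalities} to the symmetric connection $\zeta_s$, taking $\alpha = \norm{\zeta_s}_1 = \expect[\zeta_s] = \Omega(1)$. This yields $\weight{\le 2}{\zeta_s} = O(1)$ and $\weight{\ge d-2}{\zeta_s} = O(1)$. Symmetry of $\zeta_s$ in coordinates forces all Fourier coefficients on a given level to have the same magnitude, so $\weight{\ell}{\zeta_s} = \binom{d}{\ell}\,\widehat{\zeta_s}([\ell])^2$, and the desired bound follows immediately.

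Third, for $\widehat{\zeta_s}([d]) = \Theta(s/\sqrt{d})$, I would expand the parity coefficient explicitly. Conditioning on $k = |\{i : x_i = 1\}|$ gives
\[
\widehat{\zeta_s}([d]) = \expect[\zeta_s(\bfx)\,\omega_{[d]}(\bfx)] = 2^{-d}\sum_{k \in K}\binom{d}{k}(-1)^{d-k},
\]
where $K = \{k : 2k - d \in I_s\}$. Split the sum into the $s-1$ terms coming from the singletons in $I_s$ and the tail-alternating sum coming from $[d_1 + \sqrt{d}, d]$. The singleton terms each contribute a binomial coefficient of size $\Theta(2^d/\sqrt{d})$ by Stirling / the local CLT (the relevant $k$-values lie within $O(\sqrt{d})$ of $d/2$) and, under the intended spacing convention, they carry matching signs, yielding a total contribution of $\Theta(s/\sqrt{d})$. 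The interval contribution telescopes as an alternating binomial sum down to $O(1/\sqrt{d})$ and is dominated by the singleton contribution. A matching upper bound of the same order also follows from \cref{lem:lastfourier} once one checks that $\fluctuation(\zeta_s) = \Theta(s)$.

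The principal technical obstacle is the sign-tracking in the singleton contribution: one must verify that the spacing between the $s-1$ singletons in $I_s$ isolates them within the attainable support of $\sum_i x_i$ \emph{and} places the corresponding $k$-values on the same parity, so that the alternating factors $(-1)^{d-k}$ agree across the singleton terms and their contributions accumulate rather than cancel. All other steps are routine asymptotic estimates on binomial coefficients.
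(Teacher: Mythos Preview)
Your proposal is essentially correct and follows the same three-step structure as the paper's proof, but your second step takes an unnecessary and technically problematic detour.

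For the bound on levels $\ell \in \{1,2,d-1,d-2\}$, the paper simply uses Parseval: since $\zeta_s$ takes values in $[0,1]$, one has $1 \ge \expect[\zeta_s^2] = \sum_S \widehat{\zeta_s}(S)^2 \ge \weight{\ell}{\zeta_s} = \binom{d}{\ell}\widehat{\zeta_s}([\ell])^2$, which immediately gives the stated bound with constant exactly $1$. Your appeal to \cref{thm:levelkinequalities} is not only overkill but technically inapplicable: the hypothesis there is $k \le 2\ln(1/\alpha)$, and since you have already shown $\alpha = \expect[\zeta_s] = \Omega(1)$, this condition can fail for $k=2$ (and is meaningless as $\alpha \to 1$). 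Even when it applies, it would only give $\weight{\ell}{\zeta_s} = O(1)$ with an unspecified constant, whereas the statement asks for the exact inequality $|\widehat{\zeta_s}([\ell])| \le \binom{d}{\ell}^{-1/2}$. Replace this step with the one-line Parseval argument.

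For the first and third steps your plan matches the paper. The paper handles $\widehat{\zeta_s}([d])$ via the formula in (the proof of) \cref{lem:lastfourier}, which expresses the parity coefficient as an alternating sum over the jump locations of $\zeta_s$; this is exactly your ``split into singleton terms and a tail term'' computation, and your observation that the spacing by $2$ forces the singleton contributions to carry a common sign is precisely the point. The paper does not spell out the upper bound separately, but your remark that $\fluctuation(\zeta_s) = \Theta(s)$ combined with \cref{lem:lastfourier} does the job.
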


 We defer the proof to \cref{sec:intervalunions}. We proceed to providing lower bounds for connections which are indicators of interval unions.

\begin{proof}[Proof of \cref{cor:geeneralintervalunionsdetection}]
We use \cref{cor:superlineartrianglestats} for the interval union indicator 
$\zeta_s.$
The left-hand side becomes 
$$
n^6\Bigg(
\tilde{\Theta}\bigg(\frac{s^3}{d^{3/2}}\bigg)+ 
d O\bigg(\binom{d}{1}^{-3/2}\bigg) + 
\binom{d}{2}O\bigg(\binom{d}{2}^{-3/2}\bigg)\Bigg) = 
n^6\tilde{\Theta}\Bigg(\frac{s^3}{d^{3/2}}\Bigg)
$$
as $s = \tilde{\Omega}(d^{1/3}).$ Similarly, in the right-hand side is 
$$
\Theta(n^3) +
n^4\tilde{\Theta}\bigg(\frac{s^4}{d^2}\bigg)+
n^4  dO\bigg(\binom{d}{1}^{-2}\bigg) + 
n^4 s^2 O\bigg(\binom{d}{2}^{-2}\bigg) = 
O\Bigg(n^3 + n^4\frac{s^4}{d^2}\Bigg).
$$
Thus, 
we need the following inequality to hold for detection:
$$
n^6\frac{s^6}{d^3} = \tilde{\omega}\Bigg(n^3 + n^4\frac{s^4}{d^2}\Bigg).
$$
This is equivalent to $d = \tilde{o}(ns^2).$
\end{proof}

Our construction for the symmetric case is similar.

\begin{proposition}
\label{prop:exofsymmetricinerval}
Let $s = o(\sqrt{d})$
and consider the union of $2s-1$ intervals given by 
$$I^{sym}_s = 
[-d, -d_1 - \sqrt{d}]\cup
\left\{ d_1 - 2(s-2), \ldots,
d_1, \ldots,  d_1 + 2(s-2)
\right\}\cup [d_1 + \sqrt{d}, d].$$
Define the connection 
$\zeta^{sym}_s(x) = \indicator\Big[\sum_{i = 1}^d x_i \in I^{sym}_s\Big].
$ Then, the following facts hold about $\zeta^{sym}_s$:
$\expect[\zeta^{sym}_s] = \Omega(1),$ $ |\widehat{\zeta^{sym}_s}([\ell])| \le {1}/{\sqrt{\binom{d}{l}}}$ for $\ell\in \{1,2,d-1,d-2\},$ and $\widehat{\zeta^{sym}_s}([d]) = \Theta({s}/{\sqrt{d}}).$
\end{proposition}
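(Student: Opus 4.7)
The plan is to mirror the proof of Proposition~\ref{prop:exofnonsymmetricinerval}, with the extra symmetry of $I^{sym}_s$ about $0$ collapsing the odd-level Fourier coefficients for free. First I would verify $\expect[\zeta^{sym}_s]=\Omega(1)$ by anti-concentration: each of the tail intervals $[-d,\,-d_1-\sqrt{d}]$ and $[d_1+\sqrt{d},\,d]$ carries $\Omega(1)$ binomial mass for $\sum_i x_i$ (a standard CLT/binomial tail estimate), and the middle teeth only add to this.

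Next, because $I^{sym}_s=-I^{sym}_s$, the connection satisfies $\zeta^{sym}_s(-\bfx)=\zeta^{sym}_s(\bfx)$, and combining with $\omega_S(-\bfx)=(-1)^{|S|}\omega_S(\bfx)$ in $\widehat{\zeta^{sym}_s}(S)=\expect[\zeta^{sym}_s\cdot\omega_S]$ forces each coefficient to equal $(-1)^{|S|}$ times itself. Hence every odd-level coefficient vanishes; since $d$ is even, in particular $\widehat{\zeta^{sym}_s}([1])=\widehat{\zeta^{sym}_s}([d-1])=0$, which is exactly what is claimed at those two levels. For $\ell\in\{2,d-2\}$, I would use that $\zeta^{sym}_s$ depends only on $\sum_i x_i$, so by permutation symmetry all $\binom{d}{\ell}$ coefficients on level $\ell$ coincide in magnitude; Parseval then gives
\[
\binom{d}{\ell}\,\widehat{\zeta^{sym}_s}([\ell])^2=\weight{\ell}{\zeta^{sym}_s}\le \norm{\zeta^{sym}_s}_2^2=\expect[\zeta^{sym}_s]\le 1,
\]
yielding the required $|\widehat{\zeta^{sym}_s}([\ell])|\le\binom{d}{\ell}^{-1/2}$.

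The substance lies in $\widehat{\zeta^{sym}_s}([d])=\Theta(s/\sqrt{d})$. The upper bound is immediate from Lemma~\ref{lem:lastfourier}: the fluctuation $\fluctuation(\zeta^{sym}_s)$ is at most twice the number of endpoints of $I^{sym}_s$, which is $O(s)$, giving $|\widehat{\zeta^{sym}_s}([d])|=O(s/\sqrt{d})$. For the matching lower bound I would expand
\[
\widehat{\zeta^{sym}_s}([d])=2^{-d}\sum_{j=0}^{d}(-1)^{d-j}\binom{d}{j}\,f_j,\qquad f_j:=\indicator[2j-d\in I^{sym}_s],
\]
and exploit the deliberate design of the middle block of $I^{sym}_s$: across the $\Theta(s)$ associated values of $j$ the sign $(-1)^{d-j}$ is constant and, since $s=o(\sqrt{d})$, all of these $j$ lie within $O(s)$ of the binomial mode, where $\binom{d}{j}/2^d=\Theta(1/\sqrt{d})$ is essentially constant. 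The middle therefore contributes $\Theta(s)\cdot\Theta(1/\sqrt{d})=\Theta(s/\sqrt{d})$ in absolute value, while each tail block produces a partial alternating binomial sum that telescopes to the order of its largest term, $O\big(\binom{d}{(d+d_1+\sqrt{d})/2}/2^d\big)$; this is exponentially smaller than $s/\sqrt{d}$ and cannot cancel the main term.

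The main obstacle is this lower bound, since Lemma~\ref{lem:lastfourier} supplies only an inequality in one direction. One must track the signs $(-1)^{d-j}$ across the teeth carefully to ensure they add rather than cancel, and then separately confirm that the alternating tail sum is negligible compared to $s/\sqrt{d}$. Once this sign/telescoping analysis is done, the remainder of the proposition follows from the symmetry and Parseval arguments above, exactly as in the non-symmetric analogue Proposition~\ref{prop:exofnonsymmetricinerval}.
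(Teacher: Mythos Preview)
Your approach is the paper's: anti-concentration for the expectation, the Parseval bound $\binom{d}{\ell}\widehat{\zeta^{sym}_s}([\ell])^2\le\|\zeta^{sym}_s\|_2^2\le 1$ for levels $1,2,d-1,d-2$ (your evenness observation for the odd levels is a harmless bonus), and an explicit expansion of $\widehat{\zeta^{sym}_s}([d])$. The paper does not give a separate argument for this proposition---it is implicit that the proof is identical to that of \cref{prop:exofnonsymmetricinerval} in \cref{sec:intervalunions}.

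There is one slip you should fix in the level-$d$ lower bound. The two tail blocks are \emph{not} exponentially negligible. Under the only reading consistent with both your own assertion that the teeth sit near the binomial mode and with $\expect[\zeta^{sym}_s]=\Omega(1)$, the tails begin only $\Theta(\sqrt d)$ away from the mode in Hamming weight. The alternating sum over a tail then telescopes via $\sum_{i\ge j}(-1)^{d-i}\binom{d}{i}=(-1)^{d-j}\binom{d-1}{j-1}$ to a single boundary term of order $\Theta(2^d/\sqrt d)$, so each tail contributes $\Theta(1/\sqrt d)$---only a factor $\Theta(s)$ below the main term, not exponentially below. (Your expression $\binom{d}{(d+d_1+\sqrt d)/2}/2^d$ is exponentially small only if $d_1\asymp d$, which would simultaneously push the teeth far from the mode and destroy the $\Omega(1)$ expectation; you are effectively using two incompatible values of $d_1$.) The paper handles this honestly in the analogous \cref{prop:exofnonsymmetricinerval} computation, writing the total as $\Omega\!\big(\tfrac{s-1}{\sqrt d}\pm\tfrac{1}{\sqrt d}\big)=\Omega(s/\sqrt d)$ and absorbing the $\Theta(1/\sqrt d)$ tail into the dominant teeth contribution; you should do likewise.
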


The same proof yields the following.

\begin{corollary}
Suppose that $s = \Omega(d^{1/6})$ and 
$s = o(\sqrt{d}).$
Then, there exists a connection $\zeta$ which is the indicator of a union of $2s-1$ intervals, symmetric around 0, and $\expect[\zeta] = \Omega(1).$ Furthermore $\zeta$
is detected by the signed triangle statistic whenever $d = o(ns^2).$ 
\end{corollary}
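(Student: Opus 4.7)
The plan is to mirror the proof of Corollary~\ref{cor:geeneralintervalunionsdetection} using the symmetric-around-zero connection $\zeta^{sym}_s$ from Proposition~\ref{prop:exofsymmetricinerval}, and to show that its defining symmetry forces certain Fourier coefficients to vanish, which lowers the detection threshold in $d$ exactly to $\tilde{o}(ns^2)$ (losing the $n^3$ bottleneck that appeared in the general case but was absent in the even case in Corollary~\ref{cor:doublethresholdconnectionsindist}). First, I would invoke Proposition~\ref{prop:exofsymmetricinerval} (to be proved separately, in analogy with Proposition~\ref{prop:exofnonsymmetricinerval}) to record that $\expect[\zeta^{sym}_s] = \Omega(1)$, that $|\widehat{\zeta^{sym}_s}([\ell])| \le \binom{d}{\ell}^{-1/2}$ for $\ell \in \{1,2,d-1,d-2\}$, and that $\widehat{\zeta^{sym}_s}([d]) = \Theta(s/\sqrt{d})$.

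Next I would apply Corollary~\ref{cor:superlineartrianglestats} with $\sigma = \zeta^{sym}_s$ and $p = \Theta(1)$. The key structural observation is that $\zeta^{sym}_s$ is an even function of $\bfx$ in the sense that $\zeta^{sym}_s(\bfx) = \zeta^{sym}_s(-\bfx)$, because $I^{sym}_s$ is symmetric around $0$. Assuming $d$ is even, all Fourier coefficients on odd levels vanish; in particular the levels $1$ and $d-1$ contribute $0$ to both sides of the detection inequality. Consequently the left-hand side reduces to
\begin{equation*}
n^6\Bigg(\binom{d}{2}\cdot O\bigl(\binom{d}{2}^{-3/2}\bigr) + \binom{d}{d-2}\cdot O\bigl(\binom{d}{d-2}^{-3/2}\bigr) + \tilde{\Theta}\Bigl(\frac{s^3}{d^{3/2}}\Bigr)\Bigg)^2 \;=\; n^6\,\tilde{\Theta}\Bigl(\frac{s^6}{d^3}\Bigr),
\end{equation*}
where the last equality uses $s = \Omega(d^{1/6})$ to guarantee that the $\ell = d$ contribution $s^3/d^{3/2}$ dominates the $O(1/d)$ contributions from levels $2$ and $d-2$. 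Similarly the right-hand side becomes $O\bigl(n^3 + n^4 s^4/d^2\bigr)$ since only the level-$d$ term survives among $\ell \in \{1, d-1, d\}$.

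It then remains to verify that $n^6 s^6 / d^3 = \tilde{\omega}(n^3 + n^4 s^4/d^2)$. The first inequality is equivalent to $d = \tilde{o}(n s^2)$, and the second to $d = \tilde{o}(n^2 s^2)$, which is implied by the first. Thus the signed-triangle statistic succeeds whenever $d = \tilde{o}(ns^2)$, as claimed. The only genuinely novel work outside this routine bookkeeping is Proposition~\ref{prop:exofsymmetricinerval} itself, which I would expect to be the main obstacle: one needs sharp bounds on the low-level and top-level Fourier coefficients of $\zeta^{sym}_s$, where the role of the two ``long'' intervals $[-d, -d_1-\sqrt{d}]$ and $[d_1+\sqrt{d}, d]$ is to cancel the contributions at levels $1$ and $2$ (and, by evenness, $d-1$) that would otherwise come from the isolated lattice points near $\pm d_1$, while the $\Theta(s)$ isolated points drive the fluctuation (and hence $\widehat{\zeta^{sym}_s}([d])$) to $\Theta(s/\sqrt{d})$ via Lemma~\ref{lem:lastfourier}. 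The calculations are analogous to those for $\zeta_s$, but the added symmetry requirement forces a more careful choice of where the long intervals start.
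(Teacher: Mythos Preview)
Your proposal is correct and follows exactly the approach the paper intends: the paper simply states ``The same proof yields the following,'' and you have correctly spelled out that proof, using Proposition~\ref{prop:exofsymmetricinerval} in place of Proposition~\ref{prop:exofnonsymmetricinerval} and observing that evenness of $\zeta^{sym}_s$ kills the levels $1$ and $d-1$, which is precisely why the hypothesis relaxes from $s = \tilde{\Omega}(d^{1/3})$ to $s = \Omega(d^{1/6})$.
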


\subsubsection{Connections Without Low Degree and High Degree Terms}
\label{sec:detectionnolowandhighdegreeterms}
Finally, we turn to proving a detection rate that matches the conjecture we stated in 
\cref{conj:nolowandhighdegreeterms}.

\begin{example} 
\label{example:detectnolowandhighdegrees} Suppose that $m$ is fixed.
There exists a symmetric connection $\sigma:\{\pm 1\}^d\longrightarrow [0,1]$ such that 
$\weight{k}{\sigma_1} = 0$ for $k \in\{1,2,\ldots, m-1\}\cup\{d-m, \ldots, d\}$ and, furthermore, if $d = \tilde{\omega}_p(n^{3/m}),$ the graphs  $G(n, p)$ and 
$\RAG(n,\hypercube, p,\sigma)$ are distinguishable with high probability via the signed-triangle statistic. 
\end{example}
 
To prove this result, we will use the following construction. Again, we focus on the case $p = \frac{1}{2}$ to illustrate better the dependence on 
$m.$
\begin{proposition}
\label{prop:nolowandhighdegreeterms}
For any constant $m \in \mathbb{N},$ there exists a symmetric connection $\sigma:\{\pm 1\}^d \longrightarrow [0,1]$ such that
\begin{enumerate}
    \item $\expect[\sigma_1] = \frac{1}{2},$
    \item $\weight{m}{\sigma_1} = \Omega (\frac{1}{\log^m d}),$
    \item $\weight{k}{\sigma_1} = 0$ for $k \in \{1,2,\ldots, m-1\}\cup\{d-m,d-m+1, \ldots, d\}.$
\end{enumerate}
\end{proposition}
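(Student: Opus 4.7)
The plan is to start from the level-$m$ elementary symmetric polynomial $e_m(\bfx)=\sum_{|S|=m}\omega_S(\bfx)$, whose Fourier mass sits entirely on level $m$ (since $\widehat{e_m}(S)=\indicator[|S|=m]$), and then to (i) truncate it at an appropriate scale to produce a bounded function, and (ii) clean up the (tiny) residual Fourier mass on the forbidden levels. Concretely, choose $L:=C_m\binom{d}{m}^{1/2}(\log d)^{m/2}$ for a sufficiently large constant $C_m$, set $F:=\{0,1,\ldots,m-1\}\cup\{d-m,\ldots,d\}$, and define
\[
\sigma_0(\bfx):=e_m(\bfx)\cdot\indicator\bigl[|e_m(\bfx)|\le L\bigr],\qquad \sigma_1:=\sigma_0-\sum_{k\in F}\widehat{\sigma_0}(|S|=k)\cdot e_k,\qquad \sigma(\bfx):=\tfrac12+\frac{\sigma_1(\bfx)}{4L}.
\]
Here I use that for a symmetric function the projection onto level $k$ is the scalar multiple $\widehat{\sigma_0}(|S|=k)\cdot e_k$, so $\sigma_1$ has zero Fourier weight on every $k\in F$ while its level-$m$ coefficient equals that of $\sigma_0$. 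Since $e_m$ is symmetric, $\sigma_0$, $\sigma_1$ and $\sigma$ are symmetric too.

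The key input is the moment bound $\|e_m\|_t\le\binom{d}{m}^{1/2}(t-1)^{m/2}$ from \cref{thm:symmetricpolybounds}. Applying it with $t=\log d$ and Markov yields $\Pr[|e_m|>L]\le C_m^{-\log d}\le d^{-A}$ for any desired constant $A$ by taking $C_m$ large. Because $\sigma_0=e_m-e_m\indicator[|e_m|>L]$, H\"older gives for every $S$
\[
\bigl|\widehat{\sigma_0}(S)-\widehat{e_m}(S)\bigr|\le\expect\bigl[|e_m|\indicator[|e_m|>L]\bigr]\le\|e_m\|_2\,\Pr[|e_m|>L]^{1/2}\le d^{-A/2}\binom{d}{m}^{1/2}=:\delta.
\]
Hence $\widehat{\sigma_0}(|S|=m)=1-o(1)$, while $|\widehat{\sigma_0}(|S|=k)|\le\delta$ for $k\ne m$. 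The ``clean-up'' term $\sum_{k\in F}\widehat{\sigma_0}(|S|=k)e_k$ has sup-norm at most $\sum_{k\in F}\delta\|e_k\|_\infty\le(2m+1)\delta\binom{d}{m}$, which for $A$ large (e.g.\ $A=4m+2$) is $o(L)$; therefore $\|\sigma_1\|_\infty\le 2L$.

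Combining these facts, $\sigma$ takes values in $[0,1]$, is symmetric, has mean $\tfrac12$ (since $0\in F$ forces $\widehat{\sigma_1}(\emptyset)=0$), vanishes on every level in $F$ by construction, and its per-coefficient level-$m$ value is $\widehat\sigma(|S|=m)=\widehat{\sigma_0}(|S|=m)/(4L)\ge 1/(8L)$, so
\[
\weight{m}{\sigma}=\binom{d}{m}\widehat\sigma(|S|=m)^2\ge\frac{\binom{d}{m}}{64L^2}=\frac{1}{64C_m^2(\log d)^m}=\Omega\!\left(\frac{1}{\log^m d}\right).
\]
The main obstacle is the balance in the choice of $L$: we want $L$ as small as possible so that the level-$m$ weight $\sim\binom{d}{m}/L^2$ is as large as possible, yet large enough that $\Pr[|e_m|>L]$ is polynomially smaller than $\binom{d}{m}^{-1}$, so that the clean-up step---which subtracts up to $2m+1$ polynomials $e_k$ each of sup-norm $\le\binom{d}{m}$---does not inflate $\|\sigma_1\|_\infty$ past $O(L)$. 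The sharp moment estimate of \cref{thm:symmetricpolybounds} invoked at $t=\Theta(\log d)$ is precisely what makes this trade-off work at the scale $L=\Theta(\binom{d}{m}^{1/2}(\log d)^{m/2})$.
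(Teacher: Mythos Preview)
Your proof is correct and follows the same truncate--then--clean-up template as the paper, but with a genuinely different (and arguably cleaner) starting point. The paper begins with the majority function $\maj$, applies $R_m$ to remove the forbidden levels, scales by a small constant $A\asymp (\log d)^{-m/2}$ so that the result lies in $[-1,1]$ with high probability, truncates, and then applies $R_m$ once more to kill the residual mass the truncation introduced. You instead start from the elementary symmetric polynomial $e_m$, whose entire Fourier mass already sits on level $m$, truncate at scale $L\asymp\binom{d}{m}^{1/2}(\log d)^{m/2}$, and clean up once. Both arguments hinge on the same hypercontractivity estimate $\|e_s\|_t\le\binom{d}{s}^{1/2}(t-1)^{s/2}$ applied at $t=\Theta(\log d)$ to make the truncation event polynomially rare, and both land at $\weight{m}{\sigma}=\Omega(\log^{-m}d)$. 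What your choice buys is uniformity in the parity of $m$: because $\maj$ has zero weight on even levels, the paper has to switch to a different base function when $m$ is even, whereas $e_m$ handles all $m$ at once. What the paper's choice buys is that $\maj$ is already $[-1,1]$-valued and has well-documented level weights, so the analysis is more ``off the shelf''---but at the cost of an extra $R_m$ step and a case split.
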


We construct this connection in \cref{appendix:functionswithoutlowandhighdegreeterms} and now we continue to the proof of the detection statement.

\begin{proof}[Proof of \cref{example:detectnolowandhighdegrees}]
We simply apply \cref{cor:generaltriangledetection}. To do so, we need to compute the sum of third and fourth powers of Fourier coefficients. Let the sign on the $m$-th level of $\sigma$ be $\epsilon.$

\begin{align*}
        & \sum_{S\neq \emptyset}\widehat{\sigma}(S)^3 = 
        \sum_{S : |S| = m}
        \widehat{\sigma_1}(S)^3 + 
        \sum_{S : m<|S| <d-m} \widehat{\sigma_1}(S)^3\\
        & =  
        \epsilon \binom{d}{m}\left(\frac{\weight{m}{\sigma_1}}{\binom{d}{m}}
        \right)^{3/2}  + 
        O\left(
        \sum_{m<j<d-m}
        \binom{d}{j}
        \left(\frac{p^2}{\binom{d}{j}}\right)^{3/2}
        \right)\\
        & = 
        \epsilon \times \Omega(d^{-m/2}\log^{-3m/2}d) +
        O(d^{-(m+1)/2}) = 
        \Omega(d^{-m/2}\log^{-3m/2}d).
    \end{align*}
In the exact same way, we conclude that 
$$
\sum_{S\neq \emptyset}\widehat{\sigma}(S)^3= 
 O(d^{-m/2})\text{ and }
\sum_{S\neq \emptyset}\widehat{\sigma}(S)^4= 
 O(d^{-m}).
$$
Thus, to detect the geometry, we simply need the following inequality to be satisfied.
$$
n^6 d^{-m}\log^{-3m}d  = 
\omega(n^3 + n^3 d^{-m/2} + 
n^4d^{-m}
).
$$
One can easily check that this is satisfied when 
$d = o(n^{3/m}\log^3d)$ or , equivalently, 
$d = \tilde{o}(n^{3/m}).$
\end{proof}

\subsubsection{Low Degree Polynomials}
\label{sec:lowdegreedetection}
Our lower bounds for low-degree connections leave a large gap to \cref{thm:lowdegreeindist}.

\begin{theorem} There exists a degree 1 symmetric connection $\sigma$ with expectation $p$ such that the signed triangle statistic distinguishes $\ergraph$ and $\hypercubegraph$ with probability $1 - o(1)$ when $d = o((np)^{3/4}).$
\end{theorem}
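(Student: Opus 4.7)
The plan is to take the simplest possible degree-1 symmetric connection, namely
\[
\sigma(\bfx) \;=\; p + b \sum_{i=1}^d x_i,
\]
where $b>0$ is chosen as large as possible subject to $\sigma$ being $[0,1]$-valued. Since $|\sum_i x_i| \le d$, this forces $|b| \le \min(p,1-p)/d$; I would set $b = c \min(p,1-p)/d$ for a small absolute constant $c$ (say $c=1/2$). Without loss of generality I assume $p \le 1/2$; the case $p > 1/2$ follows by passing to the complement graph and connection $1-\sigma$.

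The Fourier expansion of $\sigma$ is trivial: $\widehat\sigma(\emptyset)=p$, $\widehat\sigma(\{i\})=b$ for each $i\in[d]$, and $\widehat\sigma(S)=0$ for $|S|\ge 2$. In particular, for every $k\ge 1$,
\[
\sum_{S\neq\emptyset} \widehat\sigma(S)^k \;=\; d\, b^k.
\]
Next, I would plug these Fourier data into \cref{cor:generaltriangledetection}. The sufficient condition for the signed-triangle statistic to distinguish $\ergraph$ from $\hypercubegraph$ becomes
\[
n^6\, d^2\, b^6 \;=\; \omega\!\left(n^3 p^3 \;+\; n^3\, d\, b^3 \;+\; n^4\, d\, b^4\right).
\]
Substituting $b = \Theta(p/d)$ this simplifies to
\[
\frac{n^6 p^6}{d^4} \;=\; \omega\!\left(n^3 p^3 \;+\; \frac{n^3 p^3}{d^2} \;+\; \frac{n^4 p^4}{d^3}\right).
\]

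The rest is bookkeeping. The first term $n^3 p^3$ on the right gives the binding constraint: $n^6 p^6/d^4 = \omega(n^3 p^3)$ iff $d^4 = o(n^3 p^3)$, i.e.\ $d = o((np)^{3/4})$, which is exactly the hypothesis. The middle term $n^3 p^3/d^2$ is strictly dominated by $n^3 p^3$. For the last term, one needs $n^6 p^6/d^4 = \omega(n^4 p^4/d^3)$, equivalently $d = o(n^2 p^2)$; since $(np)^{3/4}\le (np)^2$ as soon as $np \ge 1$, this is implied by $d = o((np)^{3/4})$ in the regime $np = \Omega(1)$, which is the standard (and implicit) regime of interest in this paper.

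I do not foresee any real obstacle: the argument is a one-line construction, a trivial Fourier computation, and an application of \cref{cor:generaltriangledetection}. The only mild subtlety is checking that the $n^4 p^4/d^3$ term on the right-hand side does not impose a constraint stronger than $d = o((np)^{3/4})$, which is handled by the mild lower bound $np = \Omega(1)$. It is worth noting that this lower bound leaves a large gap from \cref{thm:lowdegreeindist}, as remarked at the end of \cref{sec:lowdegreepolyindist}, and closing it likely requires moving beyond signed triangles (e.g.\ signed $k$-cycles for larger $k$, whose Fourier analysis is sketched in \cref{obs:fouriercycles}).
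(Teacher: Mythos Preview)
Your proposal is correct and follows essentially the same approach as the paper: the paper's one-line proof also takes $\sigma(\bfg)=p+\sum_i \frac{p}{d} g_i$ and invokes the signed-triangle criterion. A minor difference is that the paper cites \cref{cor:superlineartrianglestats} (whose hypotheses $d=\omega(n)$, $p=\Omega(1/n)$ are not literally met here but are vacuous for a degree-1 $\sigma$), whereas you go straight to the underlying \cref{cor:generaltriangledetection}, which is arguably cleaner; your handling of the $n^4p^4/d^3$ term via $np=\Omega(1)$ is also a detail the paper leaves implicit.
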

\begin{proof}Consider the connection $\sigma(\bfg) = p + \sum_{i = 1}^d \frac{p}{d}g_i$ and apply \cref{cor:superlineartrianglestats}.
\end{proof}
\subsection{The Doom Of Signed Triangles: Fourier-Paired Connections}
\label{sec:doom}
\label{sec:fourierpaired}
Thus far, we have used the signed triangle statistic as a useful tool in detecting geometry. However, this technique is not foolproof - it might be the case that the signed triangle statistic does not yield any detection rates. We must note that \cite{Brennan21DeFinetti} explores a similar idea as the authors introduce a bipartite mask and, thus, their graphs do not include any (signed) triangles. Here, we illustrate a different, perhaps more intrinsic way, of making the signed triangle statistic fail. Indeed suppose that 
$ \sum_{S\neq \emptyset}\widehat{\sigma}(S)^3 = 0.
$
Then, in expectation, the signed triangle statistic applied to $\ergraph$ and $\hypercubegraph$ gives the same count: zero.

Of course, one should ask: are there any interesting functions $\sigma$ which satisfy  $\widehat{\sigma}(S)^3= 0$? We show that the answer is yes. We analyze two concrete families of connections, both of which satisfy the following broader definition.

\begin{definition}
We call a connection $\sigma:\{\pm 1 \}^d\longrightarrow [0,1]$ Fourier-paired if the following condition holds. There exists a bijection $\beta$ on the nonempty subsets of $[d]$ such that $\widehat{\sigma}(S) = - 
\widehat{\sigma}(\beta(S))
$ for each $S\neq \emptyset.$
\end{definition}

Trivially, any Fourier-paired connection $\sigma$ satisfies $\sum_{S\neq \emptyset} \widehat{\sigma}(S)^k = 0$ for any odd integer $k.$ The converse is also true -- if 
$\sum_{S\neq \emptyset} \widehat{\sigma}(S)^k = 0$ holds for any odd $k,$ the connection is Fourier-paired. So, how can we construct Fourier-paired connections? It turns out that in \cref{sec:transformationsofsymmetric}.
we already have!

First, note that for any connection $\sigma$ (not even necessarily, symmetric), its repulsion-attraction twist $\rho_{\sigma,\sigma}$ is Fourier-paired! Another example can be derived using the approach of negating certain variables. Consider first the majority mapping over $d = 2d_1$ variables defined as follows. $\maj(\bfg) = 1$ if $\displaystyle \sum_{i = 1}^dg_i >0,$ $\maj(\bfg) = 1/2$ if $\displaystyle \sum g_i =0,$ and $\maj(\bfg) = 0$ if $\displaystyle \sum g_i <0.$ Note that $\RAG(n,\hypercube, 1/2,\maj)$ is just the analogue of the usual dense random geometric graph with hard thresholds considered in \cite{Bubeck14RGG}.\footnote{Of course, there is the small difference at latent vectors $\bfx_i,\bfx_j$ for which $\langle\bfx_i,\bfx_j\rangle = 0$ since the edge between them is not formed deterministically. But we will ignore this.} One can easily show using \cref{cor:superlineartrianglestats} that $\ergraph$ and $\RAG(n,\hypercube, 1/2,\maj)$ are distinguishable when $d = o(n^3)$ and this is tight in light of \cref{cor:classicrggindistinguishability}. 

Now, consider the small modification of $\maj,$ which we call the half-negated majority and denote by $\hnmaj.$ Let $\mathbf{h} = (\underbrace{1,1,\ldots,1}_{d_1}\underbrace{-1,-1,\ldots, -1}_{d_1})$ and $\hnmaj(\bfg):=\maj(\mathbf{h}\bfg),$ which is just the coefficient contraction with respect to $\mathbf{h}.$ It turns out that it is relatively easy to show that $\hnmaj$ is Fourier-paired.

\begin{proposition} The mapping $\hnmaj$ is Fourier-paired.
\end{proposition}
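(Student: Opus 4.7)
The plan is to exploit two symmetries. Since $\hnmaj(\bfx) = \maj(\mathbf{h}\bfx)$ and the Fourier basis interacts multiplicatively with such coordinate-wise shifts, we have
\[
\widehat{\hnmaj}(S) \;=\; \widehat{\maj}(S)\,\omega_S(\mathbf{h}) \;=\; \widehat{\maj}(S)\,(-1)^{|S\cap R|},
\]
where $R = \{d_1+1,\ldots,d\}$. Coordinate symmetry of $\maj$ makes $\widehat{\maj}(S)$ depend only on $|S|$, and the identity $(\maj - 1/2)(-\bfg) = -(\maj - 1/2)(\bfg)$, verified by checking the three cases $\sum_i g_i >0,\,=0,\,<0$, forces $\widehat{\maj}(S) = -(-1)^{|S|}\widehat{\maj}(S)$ on the Fourier side. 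So $\widehat{\maj}(S) = 0$ whenever $|S|$ is a nonzero even integer, and therefore $\widehat{\hnmaj}(S) = 0$ on such $S$ as well.

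Given this, I would define the bijection $\beta$ on nonempty subsets of $[d]$ piecewise. For $S$ with $|S|$ even and nonempty, I set $\beta(S) = S$; then $\widehat{\hnmaj}(S) = \widehat{\hnmaj}(\beta(S)) = 0$, so the pairing identity holds trivially. For $S$ with $|S|$ odd, there must exist some $i \in [d_1]$ with $|S \cap \{i, d_1+i\}| = 1$ (otherwise every pair $\{j, d_1+j\}$ would contribute $0$ or $2$ to $|S|$, making $|S|$ even). Let $i(S)$ be the smallest such index, and set $\beta(S) = S \triangle \{i(S), d_1+i(S)\}$.

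The remaining checks are short. First, $|\beta(S)| = |S| \ge 1$. For every $j < i(S)$, the pair $\{j, d_1+j\}$ is either fully inside or fully outside of $\beta(S)$, just as it was for $S$, while $\beta(S) \cap \{i(S), d_1+i(S)\}$ still has exactly one element. Hence $i(\beta(S)) = i(S)$ and $\beta(\beta(S)) = S$, so $\beta$ is an involution on nonempty subsets of $[d]$ and in particular a bijection. Second, $|\beta(S) \cap R|$ differs from $|S \cap R|$ by exactly one, so $\omega_{\beta(S)}(\mathbf{h}) = -\omega_S(\mathbf{h})$, yielding $\widehat{\hnmaj}(\beta(S)) = -\widehat{\hnmaj}(S)$. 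The only genuine design issue, and arguably the ``hard part'' of writing this up cleanly, is picking a canonical rule for which pair $\{i, d_1+i\}$ to swap so that applying $\beta$ twice really returns to $S$; taking the \emph{smallest} admissible index handles this because the swap at $i(S)$ leaves the pattern of $S$ on smaller pair indices untouched, so $i(\cdot)$ is preserved.
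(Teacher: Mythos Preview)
Your proof is correct and follows the same overall strategy as the paper: first eliminate the nonzero even levels using the oddness of $\maj - 1/2$, then exhibit a sign-flipping bijection on the odd levels. The main difference is in the bijection itself. The paper uses the functional identity $\hnmaj(\bfg_1,\bfg_2) = \hnmaj(-\bfg_2,-\bfg_1)$ to globally swap the two halves of $S$, whereas you work directly with the formula $\widehat{\hnmaj}(S) = (-1)^{|S\cap R|}\widehat{\maj}(S)$ and toggle a single pair $\{i(S), d_1 + i(S)\}$. Your local swap is arguably more elementary---it only needs that $\widehat{\maj}$ depends on $|S|$ alone and that flipping one coordinate in $R$ changes the sign---and your explicit choice of the smallest admissible index neatly handles the involution property, which the paper leaves implicit. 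Both constructions buy the same conclusion with comparable effort.
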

\begin{proof} First, we claim that all coefficients on even levels greater than 1 of $\hnmaj$ are equal to 0. Indeed, consider $f(\bfg): = 2\hnmaj(\bfg) - 1.$ Clearly , $f$ is odd in the sense that $f(\bfg) = - f(-\bfg)$ holds for all $\bfg.$ Thus, all coefficients of $f$ on even levels equal $0,$ so this also holds for $\hnmaj$ except for level $0.$

Now, for each $S\subseteq [d]$ of odd size, define $\mathsf{s}(S) =  S\cap [1,d_1]$ and $\mathsf{b}(S) = S\cap [d_1 + 1, 2d].$ We claim that the bijection $\beta$ sending $S$ to a set $T$ such that $\mathsf{s}(S) = \mathsf{b}(T)$ and $\mathsf{s}(T) = \mathsf{b}(S)$ proves that $\hnmaj$ is Fourier paired. Indeed, this follows from the following fact. For any two $\bfg_1, \bfg_2\in \{\pm1\}^{d_1},$ we have\footnote{By $\bfg_1,\bfg_2,$ below we mean their concatenation.} 
$\hnmaj(\bfg_1,\bfg_2)  =\maj(\bfg_1,-\bfg_2) = \maj(-\bfg_2,\bfg_1) = \hnmaj(-\bfg_2,-\bfg_1).$ Combining these two statements, we obtain:
$$
\widehat{\hnmaj}(S)\omega_S(\bfg_1,\bfg_2) = 
\widehat{\hnmaj}(T)\omega_T(-\bfg_2,-\bfg_1) = 
- \widehat{\hnmaj}(T)\omega_T(\bfg_2,\bfg_1),
$$
from which the claim follows as $|T| = |S|$ and $|S|$ is odd.
\end{proof}

Now, for both $\rho_{\maj,\maj}$ and $\hnmaj,$ our \cref{thm:maintheoremindistingishability} implies that the corresponding random algebraic graphs are indistinguishable from $\ergraphhalf$ when $d = \omega(n^3).$ However, we cannot apply our usual statistic for counting triangles to detect those graphs as the number of signed triangles matches the corresponding quantity in the \ER model. Clearly, counting signed 4-cycles, one can hope to obtain some lower bound as their number $\sum_{S\neq }\widehat{\sigma}(S)^4$ (see \cref{obs:fouriercycles}) is positive unless $\sigma$ is constant. Indeed, one can easily verify the following facts:
\begin{align*}
        \expect_{G\sim \ergraphhalf}[\tau_4(G)] & = 0,\\
         \Var_{G\sim \ergraphhalf}[\tau_4(G)] & = \Theta(n^4),\\
        \expect_{H \sim \RAG(n, \hypercube, 1/2, \hnmaj)}[\tau_4(H)] & = \Theta\Big(\frac{n^4}{d}\Big),\\
         \expect_{K \sim \RAG(n, \hypercube, 1/2, \rho_{\maj,\maj})}[\tau_4(K)] & = \Theta\Big(\frac{n^4}{d}\Big).
    \end{align*}
These suggest that when $d = o(n^2),$ the two random algebraic graph models have a total variation of $1 - o(1)$ with $\ergraphhalf.$ Of course, to make this conclusion rigorous, we also need to compute the quantities $\Var_{G\sim \RAG(n, \hypercube,1/2, \cdot)}[\tau_4(G)].$ We choose not to take this approach in the current paper as it is computationally cumbersome. Instead, we demonstrate that this in the case for the analogue of $\hnmaj$ with Gaussian latent vectors using \cref{thm:bipartitemaskwishart} from \cite{Brennan21DeFinetti}.

Suppose that $d = 2d_1,\Omega = \mathbb{R}^{d},$ $\distribution = \mathcal{N}(0,I_d)$ and $\sigma(\bfx, \bfy): = \indicator\left[\sum_{i \le d_1}x_iy_i - \sum_{j>d_1}x_jy_j\ge 0\right].$ We want to understand when  the following inequality holds.
\begin{equation}
\label{eq:halfnegatedgaussians}
\TV\Big(\RGG(n,\mathbb{R}^{d},\mathcal{N}(0,I_d), 1/2,  \sigma), \ergraphhalf\Big) = o(1).
\end{equation}
The edges of $\RGG(n,\mathbb{R}^{d},\mathcal{N}(0,I_d), 1/2,  \sigma)$ are formed as follows. Each of the latent vectors $\bfx_1, \bfx_2,\ldots, \bfx_n$ can be represented as a concatenation of two iid standard Gaussian vectors in $\mathbb{R}^{d_1},$ that is
$\bfx_i = (\bfx^1_i,\bfx^2_i).$ With respect to those, one forms the matrix $W\in \mathbb{R}^{n\times n},$ where $W_{i,j} = \langle \bfx^1_i,\bfx^1_j\rangle - 
\langle \bfx^2_i, \bfx^2_j\rangle.
$ Finally, one forms the adjacency matrix $\bfA$ of $\RGG(n,\mathbb{R}^{d},\mathcal{N}(0,I_d), 1/2,  \sigma)$  by taking $\bfA_{i,j}= sign(W_{i,j}).$ As in \cite{Bubeck14RGG}, it is enough to show that 
$W$ is close in total variation to a symmetric matrix with iid entries to conclude that \cref{eq:halfnegatedgaussians} holds. This is our next result. Recall the definition of $M(n)$ from \cref{sec:previouswork}. 

\begin{theorem}
\label{thm:differenceofwisharts}
Define the law of the difference of two Wishart matrices as follows. $\wishart(n,d,-d)$ is the law of $XX^T - YY^T,$ where $X,Y\in \mathbb{R}^{n\times d}$ are iid matrices with independent standard Gaussian entries. If 
$d = {\omega}(n^2),$ then
$$
\TV\Big(\wishart(n,d,-d), \sqrt{d}M(n)\Big) = o_n(1).
$$
\end{theorem}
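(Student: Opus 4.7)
The plan is to reduce the claim to the already-established bipartite masked Wishart convergence (\cref{thm:bipartitemaskwishart}) via a simple change of variables. The key identity is that if $U := (X+Y)/\sqrt{2}$ and $V := (X-Y)/\sqrt{2}$, then $U$ and $V$ are independent $n \times d$ matrices with iid standard Gaussian entries, and
\begin{equation*}
XX^T - YY^T = UV^T + VU^T.
\end{equation*}
Stacking, set $Z = \bigl(\begin{smallmatrix}U\\ V\end{smallmatrix}\bigr) \in \mathbb{R}^{2n\times d}$ so that $ZZ^T \sim \wishart(2n, d)$ is a $2n \times 2n$ Wishart matrix whose two off-diagonal $n \times n$ blocks are exactly $UV^T$ and $VU^T = (UV^T)^T$.

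Next, apply \cref{thm:bipartitemaskwishart} with dimension $2n$ in place of $n$: since $d = \omega(n^2) = \omega((2n)^2)$, the theorem gives
\begin{equation*}
\TV\Big(\mathcal{M}\odot \wishart(2n,d),\; \mathcal{M}\odot(\sqrt{d}M(2n) + I_{2n})\Big) = o(1),
\end{equation*}
where $\mathcal{M}$ is the $2n \times 2n$ bipartite mask with $n \times n$ blocks. Now define the deterministic map $\Phi$ that takes a symmetric $2n\times 2n$ matrix and returns the sum of its upper-right $n\times n$ block and its lower-left $n\times n$ block. By the data processing inequality (\cref{thm:dataprocessing}),
\begin{equation*}
\TV\Big(\Phi(\mathcal{M}\odot \wishart(2n,d)),\; \Phi(\mathcal{M}\odot(\sqrt{d}M(2n) + I_{2n}))\Big) = o(1).
\end{equation*}

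It remains to identify both sides of this inequality. On the Wishart side, $\Phi(\mathcal{M}\odot ZZ^T) = UV^T + VU^T = XX^T - YY^T$, which is distributed as $\wishart(n, d, -d)$. On the GOE side, the mask annihilates the diagonal blocks and thus the $I_{2n}$ contribution; the upper-right $n \times n$ block of $M(2n)$ is strictly above the main diagonal, so its entries are iid $\mathcal{N}(0,1)$ — call this matrix $G$. Then $\Phi$ returns $\sqrt{d}(G + G^T)$. A direct computation shows that $G + G^T$ has independent entries on and above the diagonal, $\mathcal{N}(0,4)$ on the diagonal and $\mathcal{N}(0,2)$ off-diagonal, so $G + G^T \stackrel{d}{=} \sqrt{2}\,M(n)$. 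Thus $\Phi(\mathcal{M}\odot \sqrt{d}M(2n)) \stackrel{d}{=} \sqrt{2d}\,M(n)$, which is the appropriately scaled GOE target.

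The proof is essentially a structural reduction, so there is no real obstacle beyond invoking the masked Wishart theorem correctly and verifying the distributional identification of $G + G^T$ with a scaled GOE. The conceptual point worth highlighting is that the polynomial improvement from $d = \omega(n^3)$ (for a single Wishart) to $d = \omega(n^2)$ (for a difference) comes ``for free'' from the bipartite mask theorem: subtracting two Wisharts is exactly what is needed to see only the off-diagonal Wishart blocks, where the faster convergence holds. The only minor bookkeeping is the $\sqrt{2}$ normalization factor between $G + G^T$ and $M(n)$.
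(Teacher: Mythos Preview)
Your proof is correct and follows essentially the same approach as the paper: both rotate to $(X\pm Y)/\sqrt{2}$, stack to form a $2n\times d$ Gaussian matrix, apply the bipartite-mask Wishart theorem (\cref{thm:bipartitemaskwishart}), and then push forward via $W\mapsto W+W^T$ using data processing. Your tracking of the $\sqrt{2}$ factor is correct---the right limit is $\sqrt{2d}\,M(n)$ when $X,Y\in\mathbb{R}^{n\times d}$; the paper's statement reads $\sqrt{d}\,M(n)$ only because the surrounding section uses the convention $d=2d_1$ with $X,Y\in\mathbb{R}^{n\times d_1}$.
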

\begin{proof} Since $X,Y$ are independent Gaussian matrices, so are $X - Y, X+Y.$ Thus $$V = \frac{1}{\sqrt{2}}\begin{pmatrix} X-Y\\
X+Y
\end{pmatrix}\sim \mathcal{N}(0, I_{d_1})^{\otimes 2n}.
$$ Now, consider the bipartite mask on $VV^T$ from \cref{thm:bipartitemaskwishart}. Specifically, focus on the lower-left triangle. From $VV^T,$ we have 
$\frac{1}{2}(X+Y)(X-Y)^T$ and from 
$\sqrt{d_1}M(2n) + I_{2n},$ we have $\sqrt{d_1} Z,$ where $Z\sim \mathcal{N}(0,1)^{n\times n}$ since it is the lower-left corner of a GOE matrix. From \cref{thm:bipartitemaskwishart}, when $d = {\omega}(n^2),$ we conclude
$$
\TV\Big(\frac{1}{2}(X+Y)(X-Y), \sqrt{d_1}Z\Big) = o(1).
$$
By data-processing inequality (\cref{thm:dataprocessing}) for the function $f(V) = (V+V^T),$ we conclude that 
$$
\TV\Big(f\left(\frac{1}{2}(X+Y)(X-Y)\right), f\left(\sqrt{d_1}Z\right)\Big) = o(1).
$$
Now, $f(\frac{1}{2}(X+Y)(X-Y)) = XX^T - YY^T\sim \wishart(n,d,-d)
$ and 
$f(\sqrt{d_1}Z)  =\sqrt{d_1}(Z + Z^T)\sim \sqrt{d}M(n).$ The conclusion follows.
\end{proof}

Conversely, \cite[Theorem 3.2]{Brennan21DeFinetti} directly shows that counting signed 4-cycles in the adjacency matrix $\bfA$ distinguishes $W$ from a GOE matrix and, thus, $\RGG(n,\mathbb{R}^{d},\mathcal{N}(0,I_d), 1/2,  \sigma)$ from $\ergraphhalf,$ so the phase transition indeed occurs at $d  = \Theta(n^2).$

We use this result in \cref{sec:improvingautocorrelation} to formally demonstrate the inefficiency of $\KL$-convexity used to derive \cref{claim:RaczLiuIndistinguishability} in \cite{Liu2021APV}.

\subsubsection{Beyond Triangles}
Thus far in this section, we showed that the triangle statistic can sometimes be useless. Certain connections -- for example, the Fourier-paired ones -- result in latent space graphs which cannot be distinguished from $\ergraph$ using signed triangle counts. One can prove that something more is true. For any odd $k$ and $\mathcal{H},$ an induced subgraph of the $k$-cycle, the expected number of  copies of $\mathcal{H}$ in $\ergraph$ and $\hypercube$ is the same whenever $\sigma$ is Fourier-paired.

At a first glance,
the key word here turns out to be \textit{odd}. Note that unless $\sigma$ is constant, the expected number of signed $k$-cycles in $\hypercubegraph$ for $k$ even, $\sum_{S\neq \emptyset} \widehat{\sigma}(S)^k,$ is positive and, thus, strictly greater than the expected number for $\ergraph.$ Therefore, for any fixed $\sigma$ and $d,$ if we take $n$ to be large enough, we can distinguish $\hypercubegraph$ and $\ergraph$ by counting signed 4-cycles (or signed $k$-cycles for some other $k$ even). In particular, this demonstrates an extremal property of \ER graphs
which is a priori not all obvious. For any fixed $p,$ the latent space graph $\hypercubegraph$ with the minimal expected number of signed $k$-cycles for any even $k$ is $\ergraph.$
This difference between odd and even cycles in random graphs which are close to \ER is not accidental. It plays a crucial role in the literature on quasi-random graphs \cite{Krivelevich2005}.

Nevertheless, we next show that counting (signed) even cycles is also sometimes useless.

\section{Detection for Typical Indicator Connections}
\label{sec:polydetection}
We move on to establishing lower bounds for \cref{thm:typicaldense} and \cref{thm:generalragindistinguishability}.
\subsection{Signed Subgraph Counts}
Most naturally, we would like to apply signed cycle counts to detect a random algebraic graph. However, a simple calculation over the hypercube shows that this approach will typically fail unless $\log |\Group| = O(\log n).$ 

\begin{proposition} 
\label{prop:typicalfailureofsignedkcycles}
A fixed constant $k \in \mathbb{N}, k\ge 3$ is given.
With probability at least $1 - 2^{-d}$ over\linebreak
$A\sim 
\anteuniform(\Group, p),
$
$$
\expect_{H \sim \RAG(n, \hypercube, \sigma_A, p_A)}[\tau_{k,p}(H)]  = 
O\Big(
\frac{(2\sqrt{d})^k}{2^{(k-2)d/2}}
\Big) = O\left(
2^{-d/3}
\right).
$$
\end{proposition}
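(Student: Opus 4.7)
The plan is to compute the expected signed cycle count by splitting by cycle and then summing. Fix a simple $k$-cycle $(i_1,\dots,i_k) \in \cycle$ and set $\delta := p_A - p$, so that
\[
\prod_{t=1}^{k}(H_{i_t,i_{t+1}} - p) = \prod_{t=1}^{k}\bigl((H_{i_t,i_{t+1}} - p_A) + \delta\bigr).
\]
Expanding over subsets $T \subseteq [k]$ reduces the computation to $\delta^{k-|T|}\cdot \expect\bigl[\prod_{t\in T}(H_{i_t,i_{t+1}} - p_A)\bigr]$ for each $T$.

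The key observation is that every intermediate subset $\emptyset \neq T \subsetneq [k]$ contributes zero. Indeed, since $T$ omits at least one cycle edge, the edges in $T$ form a disjoint union of paths on distinct vertices. Along any such path $i_{s_1}-i_{s_2}-\cdots-i_{s_m}$, the consecutive group products $\bfg_j := \bfx_{i_{s_j}}^{-1}\bfx_{i_{s_{j+1}}}$ are independent and uniform on $\hypercube$ (because the latent vectors $\bfx_{i_{s_j}}$ are independent uniform), so the inner expectation factors into $\prod_j \expect_{\bfg_j}[\sigma_A(\bfg_j) - p_A] = 0$. Consequently only $T = \emptyset$ and $T = [k]$ survive: the per-cycle expectation equals $\delta^k + \sum_{S\neq \emptyset}\widehat{\sigma_A}(S)^k$, where the second term comes from directly mimicking the proof of \cref{obs:fouriercycles} with center $p_A = \widehat{\sigma_A}(\emptyset)$.

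To finish, I invoke \cref{claim:fouriercoefftypicalindicators}: with probability at least $1 - 2^{-d}$ over $A \sim \anteuniform(\hypercube,p)$, both $|\delta| \leq 2^{1-d/2}\sqrt{d}$ and $|\widehat{\sigma_A}(S)| \leq 2^{1-d/2}\sqrt{d}$ for every $S \neq \emptyset$. Triangle inequality combined with $|\{S\neq \emptyset\}| \leq 2^d$ then yields
\[
\Bigl|\sum_{S\neq \emptyset}\widehat{\sigma_A}(S)^k\Bigr| \leq 2^d \cdot \Bigl(\tfrac{2\sqrt{d}}{2^{d/2}}\Bigr)^{k} = \frac{(2\sqrt{d})^k}{2^{(k-2)d/2}},
\]
which dominates $|\delta^k| \leq (2\sqrt{d})^k/2^{kd/2}$ for $k \geq 3$. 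Summing the per-cycle bound over all $O(n^k)$ simple $k$-cycles and absorbing this polynomial factor into the $O$-notation (valid in the regime $d$ super-logarithmic in $n$, which is the setting in which the proposition is used) gives the first claimed bound, and the second equality $=O(2^{-d/3})$ is elementary arithmetic on the exponent.

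The only genuine obstacle is the combinatorial vanishing of intermediate terms; the rest is a direct application of the concentration of Fourier coefficients guaranteed by \cref{claim:fouriercoefftypicalindicators}.
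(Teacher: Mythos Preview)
Your proof is correct and follows essentially the same approach as the paper: both reduce to the per-cycle identity $(p_A-p)^k+\sum_{S\neq\emptyset}\widehat{\sigma_A}(S)^k$, then invoke \cref{claim:fouriercoefftypicalindicators} and bound by $2^d\cdot(2\sqrt{d}/2^{d/2})^k$. The only cosmetic difference is that the paper obtains this identity in one stroke by rerunning the Fourier computation of \cref{obs:fouriercycles} with the function $\sigma_A-p$ (whose $\emptyset$-coefficient is $p_A-p$), whereas you arrive at it via the binomial expansion in $\delta$ and the path-independence argument; your treatment of the $n^k$ factor is in fact slightly more explicit than the paper's.
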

\begin{proof} In light of \cref{claim:fouriercoefftypicalindicators}, it is enough to prove the claim for all sets $A\subseteq\hypercube$ such that $|\widehat{\sigma_A}(S)|\le \frac{2\sqrt{d}}{2^{d/2}}$ holds for all $S\neq\emptyset$ and 
$|\widehat{\sigma_A}(\emptyset) - p|\le \frac{2\sqrt{d}}{2^{d/2}}.$
Let $A$ be such a set. Applying \cref{obs:fouriercycles}, we have
$$
\expect_{H \sim \RAG(n, \hypercube, \sigma_A,p_A)}[\tau_k(H)] = 
(p_A - p)^k + 
\sum_{S\neq \emptyset}\widehat{\sigma_A}(S)^k  = 
O\bigg(
2^{d}\times \Big(\frac{2\sqrt{d}}{2^{d/2}}\Big)^k
\bigg),
$$
as desired.
\end{proof}

This makes the signed $k$-cycle statistic useless for $k = O(1)$ when $d = \omega(\log n), p = \omega(\frac{1}{n})$ since one can trivially check that $\Var_{G\sim \ergraph}[\tau_{k}(G)] = \Theta(np^k(1-p)^k).$

\subsection{Low Degree Polynomials}
So, can we do better by considering some other poly-time computable statistic? We show that the answer to this question is no, insofar as one considers statistics which are low-degree polynomial tests. Our argument applies in the same way over any finite group $\mathcal{G}$ of appropriate size. We consider right algebraic graphs, due to their relevance to Cayley graphs, but the same statement with the exact same proof holds for left algebraic graphs.

\begin{theorem}
\label{thm:lowdegreepolyagainstrandomsubsets}
An integer $k = o(n)$ and real number $p \in [\frac{1}{n^3},1 - \frac{1}{n^3}]$ are given.
Suppose that $\Group$ is an arbitrary group of size at least $\exp(Ck\log n)$ for some universal constant $C.$ Let $A\sim \anteuniform(\Group,p).$ Consider the connection $\sigma_{A}(\bfx,\bfy):=\indicator[\bfx\bfy^{-1}\in A]$ with $\expect[\sigma_{A}] = p_A.$ Then, one typically cannot distinguish $\ergraph$ and $\RRAG(n , \Group, p_A, \sigma_{A})$ using low-degree polynomials in the following sense. With high probability\linebreak  $1 - \exp(-\Omega(k\log n))$ over $A,$ 
\[
|
\expect_{G\sim \RRAG(n , \Group, p_A, \sigma_{A})} P(G)-
\expect_{H\sim \ergraph}
P(H)
| = O\left(
\frac{n^{2k}}{|\Group|^{1/3}}\right) = \exp(- \Omega(k \log n))
\]
holds simultaneously for all polynomials $P$ of degree at most $k$ in variables the edges of the input graph which satisfy $\Var_{H\sim \ergraph}[P(H)]\le 1.$
\end{theorem}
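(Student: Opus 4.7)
The approach is the low-degree likelihood ratio method. Let $\chi_S := \prod_{e \in S}(G_e - p)/\sqrt{p(1-p)}$ denote the orthonormal Fourier basis for $L^2(\ergraph)$ indexed by edge-sets $S \subseteq \binom{[n]}{2}$. Expanding any degree-$\le k$ polynomial as $P = c_\emptyset + \sum_{0 < |S|\le k} c_S \chi_S$ with $\sum_{S \ne \emptyset} c_S^2 = \Var_{\ergraph}[P] \le 1$, Cauchy--Schwarz gives
\[
\bigl|\expect_{\RRAG}[P] - \expect_{\ergraph}[P]\bigr|^2 \le \Xi(A) := \sum_{0 < |S|\le k}\bigl(\expect_{\RRAG(n,\Group,p_A,\sigma_A)}[\chi_S]\bigr)^2,
\]
so it is enough to prove $\Xi(A) \le n^{4k}/|\Group|^{2/3}$ with probability $\ge 1 - \exp(-\Omega(k\log n))$ over $A$. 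By Markov this reduces to showing $\expect_A[\Xi(A)] \le n^{4k}/|\Group|$, because then $\prob_A[\Xi(A) > n^{4k}/|\Group|^{2/3}] \le |\Group|^{-1/3} = \exp(-\Omega(k\log n))$ by the hypothesis $|\Group| \ge \exp(Ck\log n)$.

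Conditioning on the latent vectors, squaring, and exchanging the $A$- and $(\bfx,\bfy)$-expectations yields
\[
\expect_A\bigl[(\expect_{\RRAG}\chi_S)^2\bigr] = \frac{1}{(p(1-p))^{|S|}}\expect_{\bfx,\bfy \sim \distribution^n}\expect_A\Bigl[\prod_{(i,j)\in S}(\sigma_A(\bfx_i\bfx_j^{-1}) - p)(\sigma_A(\bfy_i\bfy_j^{-1}) - p)\Bigr].
\]
Under $\anteuniform(\Group,p)$, the indicators $\{\sigma_A(\bfg)\}_{\bfg}$ are $\Bernoulli(p)$, mutually independent across distinct $C_2$-orbits of $\Group$. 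For fixed $(\bfx,\bfy)$ the inner $A$-expectation therefore factorises over the orbits present in the multiset $\{\bfx_i\bfx_j^{-1}, \bfy_i\bfy_j^{-1} : (i,j)\in S\}$, vanishes unless every orbit that appears does so with multiplicity $\ge 2$ (the ``pairing condition''), and when nonzero is bounded by $(p(1-p))^r$ in absolute value, where $r$ is the number of orbits used; this uses the elementary bound $|\expect_A[(\sigma_A(\bfg)-p)^t]| \le p(1-p)$ for all $t \ge 2$.

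The combinatorial heart of the proof is the following pairing-probability bound: for every $S$ with $|S| = m \ge 1$ and every $r \in \{1,\dots,m\}$,
\[
\prob_{(\bfx_V,\bfy_V) \sim \unif(\Group^{|V(S)|})^{\otimes 2}}\bigl[\text{pairing holds with exactly $r$ orbits}\bigr] \le \frac{C_m}{|\Group|^{\,m - r + 1}}.
\]
I would prove this by adapting the sequential-exposure argument of \cref{thm:generalragindistinguishability}: quarantine the set $M = \{\bfh \in \Group : |\{\bfz : \bfz^2 = \bfh\}| > |\Group|^{1/3}\}$, which has $|M| \le |\Group|^{2/3}$ by Markov, and reveal the coordinates of $(\bfx_V, \bfy_V)$ one at a time in an order that makes each newly produced element $\bfx_u \bfx_v^{-1}$ or $\bfy_u\bfy_v^{-1}$ marginally uniform on $\Group$ conditional on the history. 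Each coincidence (up to inversion) of a new element with a previously revealed one has conditional probability $O(m/|\Group|) + O(|\Group|^{-2/3})$, and the key combinatorial claim is that any pairing with $r$ orbits requires at least $m - r + 1$ such coincidences; this follows from a rank argument showing that the total cycle-induced dependency in the pairing equation system is at most the cycle rank of $S$, which is in turn at most $m - 1$. Combined with the orbit-moment factor $(p(1-p))^r$ and the hypothesis $p(1-p)\,|\Group| \ge 2$ (which holds since $p \ge n^{-3}$ and $|\Group| \ge n^3$), a geometric series in $r$ yields
\[
\expect_A[(\expect_{\RRAG}\chi_S)^2] \le \frac{C_m}{|\Group|}\sum_{r=1}^m\bigl(p(1-p)\,|\Group|\bigr)^{r-m} \le \frac{2C_m}{|\Group|}.
\]

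Finally, summing over the at most $n^{2m}$ edge-sets $S$ of size $m$ and over $m \le k$ gives $\expect_A[\Xi(A)] \le C_k n^{2k}/|\Group| \le n^{4k}/|\Group|$, completing the proof. The hard part is the pairing-probability bound above; it is a direct generalisation of the pairing argument underlying \cref{thm:generalragindistinguishability} but is combinatorially more intricate because $S$ ranges over arbitrary graphs on $\le 2k$ vertices rather than a single cycle-type structure, so careful bookkeeping of cycle-induced dependencies and of landings in the exceptional set $M$ is required. Every other step is either a standard reduction (low-degree likelihood ratio, Cauchy--Schwarz, Markov) or a short calculation.
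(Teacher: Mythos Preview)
Your reduction via Cauchy--Schwarz and Markov is fine and mirrors the paper's structure, but the stratified pairing-probability bound you identify as the ``combinatorial heart'' is false. Take $S=K_5$, so $m=10$: the event $\bfx_1=\cdots=\bfx_5$ and $\bfy_1=\cdots=\bfy_5$ has probability $|\Group|^{-8}$ and forces all $20$ edge-products to equal the identity, giving $r=1$; yet your bound asserts $\prob[r=1]\le C_{10}|\Group|^{-10}$. The flaw in the rank heuristic is that cycle dependencies act on \emph{both} the $\bfx$-side and the $\bfy$-side, so the correct deduction from the $2m-r$ naive coincidences is $2(m-|V(S)|+c(S))$, not the single cycle rank $\le m-1$ you subtract. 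Concretely, when vertex-values are exposed sequentially, one new vertex can spawn several new edge-products at once, and after one of them is pinned the others may be forced with conditional probability $1$; your per-coincidence cost $O(m/|\Group|)$ does not survive this.

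The paper avoids the stratification entirely. Writing $\mathcal{K}$ for the disjoint union of two copies of $S$ on vertex set $[s]$ with $s\le 4k$, let $B$ be the event that the products $\bfx_i\bfx_j^{-1}$ over all pairs $\{i,j\}\subset[s]$ lie in pairwise distinct $C_2$-orbits. On $B$ the inner $A$-expectation vanishes by independence; off $B$ the product is bounded crudely by its $L^\infty$ norm, $\bigl(\max(p,1{-}p)/\min(p,1{-}p)\bigr)^{|E(\mathcal{K})|}\le n^{3k}$, and this is precisely where the hypothesis $p\in[n^{-3},1-n^{-3}]$ is used. A direct union bound gives $\prob[B^c]\le O(k^4)/|\Group|$, since each of the $O(k^4)$ pairwise orbit-equalities has probability exactly $1/|\Group|$ (even those that reduce to $\bfx_j^2=\bfx_i\bfx_{i'}$, by averaging over the free variable $\bfx_{i'}$). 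Hence $\expect_A\bigl[(\expect_{\RRAG}\chi_S)^2\bigr]\le n^{3k}\cdot O(k^4)/|\Group|<n^{4k}/|\Group|$, after which your Markov step (or the paper's equivalent union bound over $\mathcal{H}$) finishes the proof. In particular the quarantine set $M$ borrowed from \cref{thm:generalragindistinguishability} is unnecessary here.
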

As we will see, the proof of this theorem highly resembles the proof of \cref{thm:generalragindistinguishability}.
\begin{proof} Suppose that $P$ is a polynomial of degree at most $k.$ For a graph $G,$ denote by $(G)_{(i,j)\in [n]\otimes [n]}$ the $\{0,1\}$ edge indicators of edges of $G,$ where $[n]\otimes [n]$ is the set of $\binom{n}{2}$ pairs of distinct numbers $i,j$ in $[n].$
We can express $P$ as 
$$
P = \sum_{\mathcal{H}\subseteq [n]\otimes [n] : |\mathcal{H}|\le k} a_{\mathcal{H}}
\prod_{(i,j)\in \mathcal{H}}\frac{G_{(i,j)} - p}{\sqrt{p(1-p)}}.
$$
Indeed, $\psi_\mathcal{H}(G):=\prod_{(i,j)\in \mathcal{H}}\frac{G_{(i,j)} - p}{\sqrt{p(1-p)}}$ 
are just the $p$-based Walsh polynomials with respect to the $\{\pm 1\}$-valued indicators  $(2G-1)_{(i,j)\in [n]\otimes [n]}.$ Note that the 
Walsh polynomial $\psi_\mathcal{H}(G)$ corresponds to (signed) subgraph 
indicator of $\mathcal{H}$ 
in $G.$
Now, by Parseval's identity, 
$$
1 = \Var_{H\sim \ergraph}[P(H)] = \sum_{0 < |\mathcal{H}|\le k}a^2_\mathcal{H}.
$$
In particular, this means that $|a_\mathcal{H}|\le 1$ holds for each non-empty subgraph $\mathcal{H}$ with at most $k$ edges. Using triangle inequality, 
\begin{equation}
\label{eq:supboundwalshrandomsubsetconnection}
    \begin{split}
        &|
\expect_{G\sim \RRAG(n , \Group, p_A, \sigma_{A})} P(G)-
\expect_{H\sim \ergraph}
P(H)
|\\
&\le  \sum_{\mathcal{H}\subseteq [n]\otimes [n] : 0<|\mathcal{H}|\le k} |a_{\mathcal{H}}|\times |
\expect_{G\sim \RRAG(n , \Group, p_A, \sigma_{A})}
\psi_\mathcal{H}(G) - 
\expect_{H\sim \ergraph}\psi_\mathcal{H}(H)
|\\
& =  \sum_{\mathcal{H}\subseteq [n]\otimes [n] : 0<|\mathcal{H}|\le k} |a_{\mathcal{H}}|\times |
\expect_{G\sim \RRAG(n , \Group, p_A, \sigma_{A})}
\psi_\mathcal{H}(G) |\\
&\le  \sum_{\mathcal{H}\subseteq [n]\otimes [n] : 0<|\mathcal{H}|\le k} |
\expect_{G\sim \RRAG(n , \Group, p_A, \sigma_{A})}
\psi_\mathcal{H}(G) |\\
&\le 
n^{2k}\times \sup_{\mathcal{H}\subseteq [n]\otimes [n] : 0<|\mathcal{H}|\le k} |
\expect_{G\sim \RRAG(n , \Group, p_A, \sigma_{A})}
\psi_\mathcal{H}(G)|,
    \end{split}
\end{equation}
where in the last line we used the fact that there are at most $n^{2k}$ choices for $\mathcal{H}$ as $|[n]\otimes [n]| = \binom{n}{2}.$ Thus, we simply need to establish a sufficiently strong bound on $|
\expect_{G\sim \RRAG(n , \Group, p_A, \sigma_{A})}
\psi_\mathcal{H}(G)|$ for all small enough $\mathcal{H}.$
We do so by taking a square of the desired quantity and using second moment method as follows.
\begin{equation}
\label{eq:disjointunionofisomorphicgraphs}
    \begin{split}
    \Psi(A)&:=
 |\expect_{G\sim \RRAG(n , \Group, p_A, \sigma_{A})}
\psi_\mathcal{H}(G)|^2\\
&\; =   \expect_{G_1, G_2\sim_{iid} \RRAG(n , \Group, p_A, \sigma_{A})}\left[
\psi_\mathcal{H}(G_1)
\psi_\mathcal{H}(G_2)
\right]\\
&\;=  \expect_{G\sim_{iid} \RRAG(n , \Group, p_A, \sigma_{A})}\left[
\psi_{\mathcal{K}}(G)\right],
    \end{split}
\end{equation}
where $\mathcal{K}$ is an arbitrary union of two vertex-disjoint copies of $\mathcal{H}.$ Indeed, 
suppose that $\mathcal{H}$ has a vertex set $V(\mathcal{H})\subseteq [n],$ where $|V(\mathcal{H})|\le 2k$ (as $|E(\mathcal{H})|\le k$). Take some set of vertices $V_1\subseteq [n]\backslash V(\mathcal{H})$ such that 
$|V_1| = |V(\mathcal{H})|$ (this is possible since $n>4k = o(n)$) and let $\mathcal{H}_1$ be an isomorphic copy of $\mathcal{H}$ on $V_1.$ Since the vertex sets of $\mathcal{H}$ and $\mathcal{H}_1$ are disjoint, the variables $\psi_\mathcal{H}(G)$ and $\psi_{\mathcal{H}_1}(g)$ are independent. Indeed, 
$\psi_\mathcal{H}(G)$ only depends on latent vectors $(\bfx_i)_{i \in V(\mathcal{H})}$ and
 $\psi_{\mathcal{H}_1}(G)$ on latent vectors $(\bfx_i)_{i \in V(\mathcal{H}_1)}.$ Furthermore, the two variables are trivially iid. Thus,  
 $$
 \expect_{G_1, G_2\sim_{iid} \RRAG(n , \Group, p_A, \sigma_{A})}\left[
\psi_\mathcal{H}(G_1)
\psi_\mathcal{H}(G_2)
\right] = 
\expect_{G\sim_{iid} \RRAG(n , \Group, p_A, \sigma_{A})}\left[
\psi_{\mathcal{H}\cup \mathcal{H}_1}(G)
\right],
 $$
and $\mathcal{K}$ is isomorphic to $\mathcal{H}\cup \mathcal{H}_1.$
 
Note that $\mathcal{K}$ has at most $2k$ edges. Without loss of generality, let $V(\mathcal{K}) = \{1,2,\ldots, s\},$ where $s \le 4k.$ 
We want to show that, with high probability over $A,$  $\displaystyle \Psi(A) = 
\expect_{G\sim_{iid} \RRAG(n , \Group, p_A, \sigma_{A})}\left[
\psi_{\mathcal{K}}(G)\right]
$
 is small. Note that
 $0 \le \Psi(A)$ by construction (see \cref{eq:disjointunionofisomorphicgraphs}), which will allow us to use Markov's inequality. Now, taking the expectation over $A,$ we have 
\begin{equation}
    \label{eq:averageoverA}
    \begin{split}
        \expect_A[\Psi(A)] & =  
    \expect_A 
    \expect_{G\sim_{iid} \RRAG(n , \Group, p_A, \sigma_{A})}\left[
\psi_{\mathcal{K}}(G)\right]\\
& =   
\expect_A
\expect_{\bfx_1, \bfx_2, \ldots, \bfx_s\sim_{iid}\unif(\mathcal{G})}
\prod_{(i,j)\in E(\mathcal{K})}
\frac{\indicator[\bfx_i\bfx_j^{-1} \in A] - p}{\sqrt{p(1-p)}}
\\
& =  
\expect_{\bfx_1, \bfx_2, \ldots, \bfx_s\sim_{iid}\unif(\mathcal{G})}\expect_A
\prod_{(i,j)\in E(\mathcal{K})}
\frac{\indicator[\bfx_i\bfx_j^{-1} \in A] - p}{\sqrt{p(1-p)}}.
    \end{split}
\end{equation}
We proceed in a similar fashion to \cref{thm:generalragindistinguishability}. Let $B\subseteq \mathcal{G}^{s}$ be the subset of $s$-tuples of vectors for which the following holds. For each $(i,j)\neq (i',j'),$ where $(i,j)\in [s]\otimes [s]$ and 
$(i',j')\in [s]\otimes [s],$ $\bfx_{i}\bfx_j^{-1} \neq \bfx_{i'}\bfx_{j'}^{-1}$ and 
$\bfx_{j}\bfx_i^{-1} \neq \bfx_{i'}\bfx_{j'}^{-1}.$ Clearly, each of these equalities, which $B$ has to avoid, happens with probability at most $\frac{1}{|\Group|}$ when
$\bfx_1, \bfx_2, \ldots, \bfx_s\sim_{iid}\unif(\mathcal{G}).$
As there are at most  $2\binom{s}{2}^2 \le O(k^4)$ such equalities to be avoided,
$$
\prob_{\bfx_1, \bfx_2, \ldots, \bfx_s\sim_{iid}\unif(\mathcal{G})}\left[
(\bfx_1, \bfx_2, \ldots, \bfx_s)\in B
\right]\ge 1 - O\left(\frac{k^4}{|\mathcal{G}|}\right).
$$
Note that if $(\bfx_1, \bfx_2, \ldots, \bfx_s)\in B,$ then
$$
\expect_{A\sim \anteuniform(\Group,p)}
\prod_{(i,j)\in E(\mathcal{K})}
\frac{\indicator[\bfx_i\bfx_j^{-1} \in A] - p}{\sqrt{p(1-p)}} = 
\prod_{(i,j)\in E(\mathcal{K})}
\frac{\expect_{A\sim \anteuniform(\Group,p)}\Big[\indicator[\bfx_i\bfx_j^{-1} \in A] - p\Big]}{\sqrt{p(1-p)}} = 0,
$$
where we used that $p \le $
This follows directly from the definition of $\anteuniform$ since the different terms $\bfx_i\bfx_j^{-1}$ are all in different orbits as defined in \cref{def:anteuniform}. On the other hand, whenever $(\bfx_1, \bfx_2, \ldots, \bfx_s)\not\in B,$ 
\begin{align*}
&\expect_A
\prod_{(i,j)\in E(\mathcal{K})}
\frac{\indicator[\bfx_i\bfx_j^{-1} \in A] - p}{\sqrt{p(1-p)}}\\ 
&\le  
\bggn{\prod_{(i,j)\in E(\mathcal{K})}
\frac{\indicator[\bfx_i\bfx_j^{-1} \in A] - p}{\sqrt{p(1-p)}}}\bggn_\infty = 
\bigg(\max\Big(
\sqrt{\frac{1-p}{p}},
\sqrt{\frac{p}{1-p}}\Big)
\bigg)^{|E(\mathcal{K})|} \le 
\sqrt{n^3}^{2k}\le
n^{3k},
    \end{align*}
where we used the fact that $p \in [\frac{1}{n^3},1 - \frac{1}{n^3}].$
Combining these observations,
\begin{align*}
\expect_A[\Psi(A)] & = 
\expect_{\bfx_1, \bfx_2, \ldots, \bfx_s\sim_{iid}\unif(\mathcal{G})}\expect_A
\prod_{(i,j)\in E(\mathcal{K})}
\frac{\indicator[\bfx_i\bfx_j^{-1} \in A] - p}{\sqrt{p(1-p)}}\\
&\le (1 - \prob[B])n^{3k} = 
O\left(
\frac{k^4n^{3k}}{|\Group|}
\right) <
\frac{n^{4k}}{|\Group|}
    \end{align*}
for sufficiently large $n.$
Therefore, by positivity of $\Psi,$  we conclude from Markov's inequality that 
$$
\prob[\Psi(A)\ge |\Group|^{2/3}\times |\Group|^{-1}]=
\frac{n^{4k}}{|\Group|^{2/3}}\le \frac{\exp(4k\log n)}{|\Group|^{2/3}}.
$$
Thus, with high probability $1 - \frac{\exp(4k\log n)}{|\Group|^{2/3}}$ over $A,$ we conclude from 
\cref{eq:disjointunionofisomorphicgraphs} that 
$$
|\expect_{G\sim \RRAG(n , \Group, p_A, \sigma_{A})}
\psi_\mathcal{H}(G)|\le |\Group|^{-1/3},
$$
where we used $p = \Omega(n^{-2}).$
Taking union bound over all $O(n^{2k})$ Walsh polynomials of degree at most $k,$ we conclude that 
$$
|\expect_{G\sim \RRAG(n , \Group, p_A, \sigma_{A})}
\psi_\mathcal{H}(G)|\le |\Group|^{-1/3}
$$
holds for all $\mathcal{H}$ with at most $k$ edges with probability at least $1 - O(n^{2k}\frac{\exp(4k\log n)}{|\Group|^{2/3}}) = 1 - \frac{\exp(6k\log n)}{|\Group|^{2/3}}.
$ Combining with \cref{eq:supboundwalshrandomsubsetconnection}, we conclude that with probability at least $1 - \frac{\exp(6k\log n)}{|\Group|^{2/3}},$ the inequality 
$$
|
\expect_{G\sim \RRAG(n , \Group, p_A, \sigma_{A})} P(G)-
\expect_{H\sim \ergraph}
P(H)
|\le 
\frac{n^{2k}}{|\Group|^{1/3}}
$$
holds for all polynomials $P$ satisfying the assumptions of the theorem.
\end{proof}

In particular, this means that if $|\Group| = \exp(\omega(\log n)),$ one cannot distinguish $\ergraph$ and \linebreak
$\RRAG(n, \Group, p_A, \sigma_{A})$ for typical sets $A\sim \anteuniform(\Group,p)$ with constant-degree polynomials and if $|\Group| = \exp(\omega(\log^2n)),$ one cannot distinguish the two graph models with polynomials of degree $O(\log n).$ Polynomials of degree $O(\log n)$ capture a very wide range of algorithms such as spectral methods and others \cite{Schramm_2022}.

It turns out that the dependence on $|\Group|$ is essentially tight and this is illustrated by a variety of simple algorithms which identify the latent space structure for groups of size at most $\exp(C\log n) = n^C$ for appropriate constants $C.$ In fact, some of these algorithms do not even use the algebraic structure, but apply over arbitrary latent-space graphs with indicator connections! One such algorithm (which is not a direct converse to the low-degree polynomials) is the following.

\begin{theorem}
\label{thm:birthdaypaardox}
Suppose that $\Omega$ is an arbitrary finite latent space of size $o(n^2).$ Let $\sigma$ be any indicator connection with $\expect_{\bfx, \bfy \sim \unif(\Omega)}[\sigma(\bfx,\bfy)]= p = \omega(\frac{\log n}{n}).$
One can distinguish the graphs $\ergraph$ and 
$\mathsf{PLSG}(n , \Omega,\unif, p, \sigma)$ with high probability in polynomial time using $O(n^2)$ polynomials of degree 2. Namely, in 
$\mathsf{PLSG}(n , \Omega,\unif, p, \sigma),$ with high probability there exist two vertices $u\neq v$ which satisfy the following property. For any $w\not \in \{v,u\},$ $(v,w)$ is an edge if and only if $(u,w)$ is an edge.
\end{theorem}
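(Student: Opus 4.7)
The plan is to prove the theorem by a birthday-paradox style argument: when $|\Omega| = o(n^2)$, two latent vectors collide with high probability and force two vertices of the $\mathsf{PLSG}$ to have identical neighborhoods (since $\sigma \in \{0,1\}$); whereas when $p = \omega(\log n / n)$, no two vertices of $\ergraph$ have identical neighborhoods.

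First, I would show that in $\mathsf{PLSG}(n, \Omega, \unif, p, \sigma)$, with probability $1 - o(1)$ there exist two indices $u \neq v$ with $\bfx_u = \bfx_v$. Let $N = \sum_{u < v} \indicator[\bfx_u = \bfx_v]$. Since the $\bfx_i$ are i.i.d.\ uniform on $\Omega$,
\[
\expect[N] = \binom{n}{2} / |\Omega| = \omega(1),
\]
and a standard second-moment computation (the covariance of two indicators sharing an index is $O(1/|\Omega|^2)$) gives $\Var[N] = O(\expect[N] + \expect[N]^2/n)$, so by Chebyshev $\prob[N \ge 1] = 1 - o(1)$. Conditioned on $\bfx_u = \bfx_v$, the edge indicators satisfy $G_{uw} = \sigma(\bfx_u,\bfx_w) = \sigma(\bfx_v,\bfx_w) = G_{vw}$ deterministically for every $w \notin \{u,v\}$, because $\sigma$ is an indicator.

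Second, I would show that in $\ergraph$ with probability $1 - o(1)$ no two vertices have identical neighborhoods on $[n]\setminus\{u,v\}$. By independence of edges and assuming WLOG $p \le 1/2$ (else pass to the complement graph, which swaps $p \leftrightarrow 1-p$ and preserves the algorithm),
\[
\prob\bigl[\forall w \notin \{u,v\}: G_{uw} = G_{vw}\bigr] = (p^2 + (1-p)^2)^{n-2} = (1 - 2p(1-p))^{n-2} \le \exp(-p(n-2)) = n^{-\omega(1)}
\]
since $p = \omega(\log n / n)$. A union bound over the $\binom{n}{2}$ pairs gives $o(1)$.

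Finally, the distinguishing test is explicit: for each pair $u \neq v$ define the degree-$2$ polynomial
\[
P_{u,v}(G) = \sum_{w \notin \{u,v\}} (G_{uw} - G_{vw})^2 = \sum_{w \notin \{u,v\}} (G_{uw} + G_{vw} - 2 G_{uw}G_{vw}).
\]
We output $\mathsf{PLSG}$ iff $\min_{u \neq v} P_{u,v}(G) = 0$. By the first step this occurs w.h.p.\ under $\mathsf{PLSG}$, and by the second step does not occur w.h.p.\ under $\ergraph$. This is a family of $\binom{n}{2} = O(n^2)$ polynomials each of degree $2$, and can be evaluated in polynomial time. The only technical point worth writing out carefully is the second-moment bound in the birthday-paradox step, since one needs to upper bound $\Var[N]$ sharply enough that Chebyshev yields $1 - o(1)$ rather than just a constant; this follows from the fact that indicators $\indicator[\bfx_u = \bfx_v]$ and $\indicator[\bfx_{u'} = \bfx_{v'}]$ are independent whenever $\{u,v\}\cap\{u',v'\} = \emptyset$ and have covariance at most $1/|\Omega|^2$ when they share one index.
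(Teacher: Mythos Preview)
Your proposal is correct and follows essentially the same approach as the paper: birthday paradox for a latent-vector collision in $\mathsf{PLSG}$, union bound to rule out identical neighborhoods in $\ergraph$, and a degree-$2$ polynomial test per pair of vertices. The only cosmetic differences are that you count disagreements via $(G_{uw}-G_{vw})^2$ while the paper counts agreements via $(1-G_{i,k}-G_{j,k})^2$, and you spell out the second-moment step for the birthday paradox whereas the paper simply invokes it.
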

\begin{proof}
Call two vertices $u\neq v$ \textit{neighbourhood-identical} if for any $w\not \in \{v,u\},$ $(v,w)$ is an edge if and only if $(u,w)$ is an edge. A simple union bound calculation shows that in $\ergraph,$ the probability that two neighbourhood-identical vertices exist is of order $O\left(\binom{n}{2}(1 -2p +p^2)^n\right) = o(1).$ On the other hand, by the Birthday Paradox, whenever $|\Group| = o(n^2),$ with high probability there exist two integers $u,v \in \{1,2,\ldots, n\}$ such that $\bfx_u = \bfx_v.$ Clearly, whenever $\bfx_u = \bfx_v,$ the vertices $u$ and $v$ are neighbourhood-identical.

Finally, we need to translate the above argument to degree 2 polynomials. One simple way to do this is to analyze the supremum over all pairs $(i,j)\in [n]\otimes [n]$ of
$$
P_{i,j}(G):=
\sum_{k \not \in \{i,j\}} (1 - G_{i,k} - G_{j,k})^2.
$$
Note that $P_{i,j}$ counts the number of vertices $u$ such that $(i,u)$ is an edge if and only if $(j,u)$ is an edge.
\end{proof}

For more direct converses of the low-degree framework one can, for example, count signed four-cycles. We have the following proposition, the proof of which we deffer to \cref{appendix:signedfourcyclesrag} due to its relative simplicity.

\begin{theorem}
\label{signedfourcyclesforrag}
Suppose that $\Omega$ is an arbitrary finite latent space of size $o(n^{1/8}).$ Let $\sigma$ be any indicator connection with $\expect_{\bfx, \bfy \sim \unif(\Omega)}[\sigma(\bfx,\bfy)]= p >0.$
One can distinguish the graphs $\ergraph$ and \linebreak
$\mathsf{PLSG}(n , \Omega,\unif, p, \sigma)$ with high probability in polynomial time using the signed 4-cycle statistic.
\end{theorem}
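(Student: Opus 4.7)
The plan is to apply the signed $4$-cycle statistic $\tau_{4,p}(G)$ from \cref{eq:kcyclesstats}. Since $\sigma$ is an indicator, $G$ is a deterministic function of the latent vectors $\bfx_1,\ldots,\bfx_n$, so
\begin{equation*}
    \tau_{4,p}(G) \;=\; \sum_{(i_1,i_2,i_3,i_4)\in\cycle}\prod_{t=1}^{4}\tilde{\sigma}(\bfx_{i_t},\bfx_{i_{t+1}}),\qquad \tilde{\sigma}:=\sigma-p.
\end{equation*}
Let $m=|\Omega|$ and let $K$ denote the symmetric integral operator on $L^{2}(\Omega,\unif)$ with kernel $\tilde{\sigma}$. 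Then $K$ has rank at most $m$, $\operatorname{tr}(K^{2})=\expect_{\bfx,\bfy}[\tilde{\sigma}^{2}]=p(1-p)$, and $\|K\|_{\mathrm{op}}^{2}\le\operatorname{tr}(K^{2})=p(1-p)$. A direct computation analogous to \cref{obs:fouriercycles} gives $\expect_{\PLSG}[\prod_{t=1}^{4}\tilde{\sigma}(\bfx_{i_t},\bfx_{i_{t+1}})]=\operatorname{tr}(K^{4})$ for any $4$-tuple of distinct indices, and Cauchy--Schwarz on the $\le m$ nonzero eigenvalues yields $\operatorname{tr}(K^{4})\ge\operatorname{tr}(K^{2})^{2}/m=p^{2}(1-p)^{2}/m$. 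Therefore
\begin{equation*}
    \expect_{\PLSG}[\tau_{4,p}(G)] \;\gtrsim\; \frac{n^{4}\,p^{2}(1-p)^{2}}{m}.
\end{equation*}

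For the variance, a standard second-moment computation (analogous to \cref{prop:variancecomputation}) yields $\Var_{\ergraph}[\tau_{4,p}]=\Theta(n^{4}p^{4}(1-p)^{4})$. Under $\PLSG$, I expand $\Var[\tau_{4,p}]=\sum_{C_{1},C_{2}}\Cov(\phi(C_{1}),\phi(C_{2}))$ where $\phi(C)=\prod_{e\in C}(G_{e}-p)$, and group ordered pairs of $4$-cycles by the number $k\in\{0,1,2,3,4\}$ of shared vertices. The vertex-disjoint case $k=0$ contributes exactly $\expect_{\PLSG}[\tau_{4,p}]^{2}$ and cancels in the variance. For each $k\ge 1$, the surviving $O(n^{8-k})$ pairs contribute an expectation of a $\tilde{\sigma}$-monomial over $8-k$ latent variables, which I bound by combining the spectral inequality $\operatorname{tr}(K^{2j})\le\|K\|_{\mathrm{op}}^{2(j-1)}\operatorname{tr}(K^{2})\le(p(1-p))^{j}$, Cauchy--Schwarz in the shared latent variables (to pull out diagonals of powers of $K$, bounded in turn by $\operatorname{tr}(K^{8})/m$-type quantities), and the trivial bound $|\tilde{\sigma}|\le 1$ on any shared edges. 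Matching the resulting estimates against the squared signal $\expect_{\PLSG}[\tau_{4,p}]^{2}\asymp n^{8}p^{4}(1-p)^{4}/m^{2}$ and applying Chebyshev's inequality yields a consistent test whenever $m=o(n^{1/8})$.

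The main obstacle is the enumeration and spectral bounding of the $O(1)$ distinct pair-overlap configurations in the $\PLSG$ variance. One anticipates that the worst contribution comes from pairs of $4$-cycles sharing a path of length two or three, so that several $\tilde{\sigma}$ factors on the shared edges collapse and are no longer centered after integrating out the distinct vertices; the exponent $1/8$ should emerge precisely from balancing this worst overlap term against the signal. Unlike the earlier symmetric calculations over the hypercube, one cannot express the relevant moments as Fourier coefficients on a single level, because $\Omega$ carries no group structure here; the spectral/operator viewpoint (eigenvalues of $K$ together with a rank-$m$ bound) plays the role previously played by Boolean Fourier analysis.
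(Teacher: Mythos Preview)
Your spectral lower bound on the signal is correct and sharper than the paper's: writing $K$ for the integral operator with kernel $\tilde\sigma$, one has $\expect_{\PLSG}[\tau_4]\asymp n^{4}\operatorname{tr}(K^{4})\ge n^{4}\operatorname{tr}(K^{2})^{2}/m = n^{4}(p(1-p))^{2}/m$. But the variance plan is left as a sketch, and your explanation of where the exponent $1/8$ comes from is off.

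The paper's argument is far more elementary. For the signal it does not use spectra: since $p>0$ there exist $\bfy_{1},\bfy_{2}$ with $\sigma(\bfy_{1},\bfy_{2})=1$, and restricting to the event $\{\bfx_{1}=\bfx_{3}=\bfy_{1},\ \bfx_{2}=\bfy_{2}\}$ already yields $\operatorname{tr}(K^{4})\ge (1-p)^{4}/m^{4}$, hence $\expect_{\PLSG}[\tau_{4}]\gtrsim n^{4}(1-p)^{4}/m^{4}$. For the variance, \emph{no} overlap casework or spectral bounding is done at all: two $4$-cycles on disjoint vertex sets depend on disjoint latent vectors and hence have covariance zero; there are only $O(n^{7})$ ordered pairs of $4$-cycles sharing at least one vertex, and each such covariance is trivially $O(1)$ because $|\tilde\sigma|\le 1$. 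This gives $\Var[\tau_{4}]=O(n^{7})$ under both models, and Chebyshev succeeds once $n^{4}/m^{4}=\omega(n^{7/2})$, i.e.\ $m=o(n^{1/8})$.

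So the exponent $1/8$ is simply the artifact of pairing a \emph{weak} signal bound ($m^{-4}$) with a \emph{crude} variance bound ($n^{7}$); it does not arise from any ``worst overlap term'' in a refined variance analysis. In fact, if you keep your sharper $m^{-1}$ signal and borrow the same one-line $O(n^{7})$ variance bound, Chebyshev already requires only $m=o\bigl(n^{1/2}p^{2}(1-p)^{2}\bigr)$, which for $p$ bounded away from $0$ and $1$ is $m=o(\sqrt{n})$ --- much stronger than the stated theorem. Your spectral route is thus a genuine refinement in principle, but to prove the theorem as stated you can drop the overlap enumeration entirely and use the trivial variance bound.
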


The constant $1/8$ can easily be improved to $1/3$ (and, perhaps, beyond) if we consider random algebraic graphs. We do not pursue this.

\subsection{Statistical Detection}
So far, in this section we have provided strong evidence that $|\Group| = \exp(\log^{O(1)}n)$ is the computational limit for detecting $\RRAG(h, \Group, p_A,\sigma_A)$ for a typical $A\sim \anteuniform(\Group,p).$ This leaves a large gap from the $|\Group| = \Omega(n\log \frac{1}{p})$ in \cref{thm:generalragindistinguishability} above which we proved that the two graph models are statistically indistinguishable. So, a question remains - can we prove a better statistical detection rate, ideally on the order of $ \exp(\Tilde{\Omega}(n))$? We obtain the following lower-bound, which matches our indistinguishability rate when $p = \Omega(1).$ It holds in the more general setup of probabilistic latent space graphs.

\begin{theorem}
\label{thm:entropyargument}
There exists an absolute constant $C$ with the following property.
Suppose that $\min(p, 1-p) = \omega(\frac{1}{n^2})$ and $\Omega$ is a latent space of size at most $\exp(CnH(p)),$ where\linebreak  $H(p) = -p\log_2 p - (1-p)\log_2(1-p)$ is the binary entropy. Let $\distribution$ be an arbitrary distribution over $\Omega$ and let $\sigma$ be an arbitrary $\{0,1\}$-valued connection. Then, 
$$
\TV\Big(\ergraph, \PLSG\Big) = 
1 - o(1).
$$
\end{theorem}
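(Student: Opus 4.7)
The plan is a direct counting/asymptotic-equipartition argument. Because $\sigma \in \{0,1\}$, every graph in the support of $\PLSG$ is a deterministic function of the latent tuple $(\bfx_1,\ldots,\bfx_n) \in \Omega^n$, so the support $\mathcal{S}$ of $\PLSG$ satisfies $|\mathcal{S}| \le |\Omega|^n \le \exp(Cn^2 H(p))$. I take the test $\phi(G) = \indicator[G \in \mathcal{S}]$, which accepts under $\PLSG$ with probability $1$; hence $\TV(\ergraph,\PLSG) \ge 1 - \prob_{\ergraph}[\mathcal{S}]$, and it suffices to choose the absolute constant $C$ small enough that $\prob_{\ergraph}[\mathcal{S}] = o(1)$.

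Write $N = \binom{n}{2}$ and $h(p) = H(p)\ln 2$. Under the hypothesis $\min(p,1-p)=\omega(1/n^2)$ one has $Np(1-p) \to \infty$, so I can pick a sequence $t_n\to\infty$ with $t_n = o(Np/(1-p))$ (symmetrically for $p$ near $1$). Let $s := \sqrt{Np(1-p)\, t_n}$ and $T := \{G : ||E(G)|-Np| \le s\}$. Chebyshev gives $\prob_{\ergraph}[T^c] = O(1/t_n) = o(1)$, while for any $G \in T$,
\[
\ln \prob_{\ergraph}[G] \;=\; |E(G)|\ln p + (N-|E(G)|)\ln(1-p) \;\le\; -Nh(p) + s\,\bigl|\ln\tfrac{p}{1-p}\bigr|.
\]
Combining,
\[
\prob_{\ergraph}[\mathcal{S}] \;\le\; \prob_{\ergraph}[T^c] + |\mathcal{S}|\,\max_{G\in T}\prob_{\ergraph}[G] \;\le\; o(1) + \exp\!\Bigl(H(p)\bigl[Cn^2 - N\ln 2\bigr] + s\,\bigl|\ln\tfrac{p}{1-p}\bigr|\Bigr).
\]
Any absolute constant $C < (\ln 2)/2$ makes the bracket $-\Omega(n^2)$, contributing $-\Omega(n^2 H(p))$ to the exponent; since $n^2 H(p) \to \infty$ by the hypothesis, this dominates once the slack $s|\ln(p/(1-p))|$ is shown to be $o(n^2 H(p))$.

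The remaining verification is elementary case analysis. For $p \le 1/2$ one has $|\ln(p/(1-p))| = O(\ln(1/p))$ and $H(p) = \Omega(p \log(1/p))$, so $s|\ln(p/(1-p))|/(n^2 H(p)) \lesssim \sqrt{t_n/(Np)} = o(1)$ by the choice of $t_n$; the $p > 1/2$ case is symmetric with $1-p$. The main obstacle is precisely this parameter bookkeeping across the regimes $p \to 0$, $p \to 1$, and $p$ bounded: the assumption $\min(p,1-p) = \omega(1/n^2)$ is exactly what forces the binomial $|E(G)|$ to concentrate on a scale $o(Np)$ while simultaneously keeping the total entropy $n^2 H(p)$ growing, so that the entropy savings $\Omega(n^2 H(p))$ from the support bound beat the typicality slack.
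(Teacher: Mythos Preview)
Your proposal is correct and follows essentially the same approach as the paper: bound the support of $\PLSG$ by $|\Omega|^n$, restrict to a typical edge-count event, and union-bound the $\ergraph$-mass of the support on that event. The only cosmetic differences are that the paper uses a one-sided Chernoff event $\{|E(H)| \ge \tfrac12 Np\}$ and the crude bound $\prob[H=K]\le p^{|E(K)|}$, whereas you use Chebyshev with a two-sided typical set and the exact entropy expression; both lead to the same conclusion with the same absolute-constant choice of $C$.
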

\begin{proof} 
We analyze the case of $p \le \frac{1}{2},$ in which we simply have $|\Omega| \le \exp(C'p\log \frac{1}{p}n).$
We apply a simple entropy argument. Consider the $n$ latent vectors, $\bfx_1, \bfx_2, \ldots, \bfx_n.$ The $n$-tuple $(\bfx_1, \bfx_2, \ldots, \bfx_n)$ can take at most $|\Group|^n \le \exp(C'n^2p\log \frac{1}{p})$ values. Since edges are deterministic functions of latent vector (because $\sigma$ is $\{0,1\}$-valued),
$\PLSG$ is a distribution with support of size at most $\exp(C'n^2p\log \frac{1}{p}).$  Denote this support by $S.$

Now, consider $H \sim \ergraph.$ First, by standard Chernoff bounds, 
$$\prob\bigg[|E(H)|\le \frac{1}{2}\expect[|E(H)|]\bigg] \le 
\exp(-\Omega(\expect[|E(H)|])) = 
\exp(- \Omega(n^2p)) = o(1).
$$
On the other hand, for any fixed graph $K$ on at least $\frac{1}{2}\expect|E(H)| = \frac{1}{2}\binom{n}{2}p$ vertices, we have 
$$
\prob[H = K] = 
p^{|E(K)|}
(1 - p)^{\binom{n}{2} - |E(K)|}\le 
p^{|E(K)|}\le 
\exp\bigg(\log p \frac{1}{2}\binom{n}{2}p\bigg)\le 
\exp\bigg(-\frac{1}{5}n^2p\log \frac{1}{p}\bigg).
$$
Combining these two inequalities, we conclude that 
\begin{align*}
\prob_{H\sim \ergraph}[H \in S^c] & = 
1 - \prob_{H\sim \ergraph}[H \in S]\\
&\ge 1 - \prob\bigg[|E(H)|\le \frac{1}{2}\binom{n}{2}p\bigg] - 
\prob\bigg[|E(H)|> \frac{1}{2}\binom{n}{2}p, H \in S\bigg]\\
&\ge 1 - o(1) -|S|\times \sup_{K \; : \; |E(K)|> \frac{1}{2}\binom{n}{2}p} \prob[H = K]\\
& \ge 1 - o(1) - \exp\bigg(-\frac{1}{5}n^2p\log \frac{1}{p}\bigg)\times |S|\\
 & = 1 - o(1)
 - \exp\bigg(-\frac{1}{5}n^2p\log \frac{1}{p}\bigg)\times \exp\bigg(C'n^2p\log \frac{1}{p}\bigg) = 1 - o(1)
    \end{align*}
for small enough $C'.$
The definition of total variation immediately implies the desired conclusion.
\end{proof} 

\section{Discussion and Future Directions}
\label{sec:discussion}
We introduced random algebraic graphs, which turn out to be a very expressive family of random graph models. In particular, it captures the well-studied random geometric graphs on the unit sphere with connections depending on inner products of latent vectors, the stochastic block model, and blow-ups and random subgraphs of Cayley graphs. We studied in depth the model when the latent space is given by the group $\hypercube.$ The hypercube is of primary interest as it simultaneously captures a geometry of ``uniformly arranged'' directions like the sphere, but also has an abelian-group structure. Using this duality of the hypercube, we exploited tools from Boolean Fourier analysis to study a wide range of non-monotone and/or non-symmetric connections, many of which seem unapproachable with previous techniques for studying random geometric graphs. Some of these connections revealed new phenomena. Perhaps most interesting among them is a suggested nearly-exponential statistical-computational gap in detecting geometry over the hypercube for almost all indicator connections with a prescribed density. We extended this to arbitrary groups of appropriate size and interpreted the respective result in the language of random Cayley graphs.

As promised in the beginning, we briefly discuss which of our results on the hypercube can be attributed to the algebraic structure of $\hypercube$ and which to the geometric.
\begin{enumerate}
    \item \textbf{Algebraic.} Two of our results stand out as algebraic. First, our results on indicators of $s$ intervals show that for $s$ sufficiently large, even though $\sigma$ only depends on the inner product of latent vectors, the dimension might need to be an arbitrary large function of $n$ for the phase transition to \ER to occur,  \cref{cor:fluctuationindist,cor:geeneralintervalunionsdetection}. This, as discussed in \cref{rmk:cubeandgaussiandifference}, is a phenomenon specific to the hypercube as it captures the arithmetic property of parity. As follows \cref{thm:wisharttogoe}, random geometric graphs with Gaussian latent vectors become indistinguishable from \ER when $\sigma$ only depends on the inner product for $d = \omega(n^3),$ regardless of the specific form of $\sigma.$ Second, our result on typical indicator connections \cref{thm:typicaldense} not only relies on the fact that the hypercube is a group, but it also utilizes the fact that it is a \textit{finite} latent space. It is far from obvious how to define ``typical dense indicator connections'' when the latent space is, say, the sphere. In \cref{thm:generalragindistinguishability}, we extend \cref{thm:typicaldense} to more general groups of 
    appropriate size, which is further evidence for the dependence of this result on the underlying group structure.
    \item \textbf{Geometric.} We believe that most of the other results are a feature of the geometry of $(\hypercube, \unif, \sigma)$ and, thus, likely extend to other random geometric graphs such as ones defined over the sphere. First, we extended the result of Liu and Racz on Lipschitz connections \cref{cor:generallipschitzindist} to non-monotone connections, which we believe can be also done over the sphere or Gaussian space (potentially, via arguments close to the ones of \cite{Liu2021APV}). Next, we showed that if $\sigma$ is even --- that is, depends on $|\langle \bfx, \bfy\rangle|$ --- phase transitions occur at $d = \tilde{\Theta}(n^{3/2})$ instead of $d = \tilde{\Theta}(n^3)$ (\cref{cor:doublethresholdconnectionsindist,cor:evenlipschitzindist,cor:fluctuationindist}). Again, this behaviour seems to be purely geometric and we expect it to extend to Gauss space and the unit sphere. 
\end{enumerate}

Since this work is the first to explicitly study random algebraic graphs and to apply tools from Boolean Fourier analysis to random geometric graphs in high dimension, it opens the door to many new exciting lines of research. 
\subsection{On The Indistinguishability Argument of Liu and Racz}
\label{sec:improvingautocorrelation}
Our indistinguishability results are based on \cref{claim:RaczLiuIndistinguishability} due to Liu and Racz. Unfortunately, as the authors themselves note in \cite{Liu2021APV}, the statement of this claim is suboptimal. We join Liu and Racz in posing the task of improving \cref{claim:RaczLiuIndistinguishability}. 

The current paper demonstrates that such an improvement can have further interesting consequences beyond closing the gap in \cref{thm:liuraczlipschitzconnections}. Of special interest is its relevance to the Boolean analogue of \cref{conj:sphericalhardthresholds} demonstrated in  \cref{cor:classicrggindistinguishability}. In light of this connection, one could hope to close \cref{conj:sphericalhardthresholds} with an improved \cref{claim:RaczLiuIndistinguishability}. Further applications related to the current paper are \cref{conj:nolowandhighdegreeterms} and the gaps in the 
setting of union of intervals (\cref{cor:geeneralintervalunionsdetection,cor:fluctuationindist}), low degree polynomials (\cref{sec:lowdegreepolyindist,sec:lowdegreedetection}) coefficient contractions and repulsion-attraction twists (\cref{sec:transformationsofsymmetric,sec:doom}), and the statistical phase transition for typical random algebraic graphs \cref{thm:generalragindistinguishability,thm:entropyargument}.

Here, we formally prove two limitations of \cref{claim:RaczLiuIndistinguishability}.
\paragraph{1. Sublinear dimensions for indicator connections.} First, we show that if $\sigma$ is an indicator with expectation $p<1/2,$  one cannot hope to prove indistinguishability for $d=o(n)$ over the hypercube using \cref{claim:RaczLiuIndistinguishability}. Suppose that $n>8d$ and $\sigma$ is $\{0,1\}$-valued, in which case 
$$
\sup_\bfg \gamma(\bfg) = 
\sup_\bfg \sum_{S\neq \emptyset}\widehat{\sigma}(S)^2\omega_S (\bfg) = 
\sum_{S\neq \emptyset}\widehat{\sigma}(S)^2 = 
\Var[\sigma] = p(1-p).
$$
Consider the term $t = 2d$ for $k = 4d$ in \cref{eq:firsteqonindist}. Using $\|\gamma\|_t^t\ge \frac{1}{2^d}\norm{\gamma}^t_{\infty} = \frac{1}{2^d}\Var[\sigma]^t = 
\frac{p^t(1-p)^t}{2^d}
,$ together with the positivity in \cref{prop:symmetrizing}, we conclude

\begin{align*}
& \sum_{t = 0}^{4d} \binom{4d}{t}\frac{\expect[\gamma^t]}{p^t(1-p)^t}\\
& \ge \binom{4d}{2d}
\frac{\expect[\gamma^{2d}]}{p^{2d}(1-p)^{2d}}\\
& \ge \binom{4d}{2d}\times \frac{p^{2d}(1-p)^{2d}}{2^d}\frac{1}{p^{2d}(1-p)^{2d}}\\
& \ge
\binom{4d}{2d}\frac{1}{2^d}\ge 2^{d}.
    \end{align*}
This shows that the bound used in \cref{claim:RaczLiuIndistinguishability} gives a KL-divergence at least $d$ and, thus, no non-trivial bound on total variation. Note that there is a significant gap to the entropy bound of $d = \tilde{\Omega}(np)$ when $p = o(1)$ \cref{thm:entropyargument}.

\paragraph{2. Inefficiency in KL-Convexity.} Using our observation on differences of Wishart matrices in \cref{sec:transformationsofsymmetric}, we give a formal proof that the KL-convexity used by Racz and Liu in deriving \cref{claim:RaczLiuIndistinguishability} is indeed at least sometimes suboptimal. Consider $(\Omega, \distribution) = (\mathbb{R}^d,\mathcal{N}(0,I_d)),$ where without loss of generality $d$ is even, $d = 2d_1.$ Consider the following two indicator connections:
\begin{align*}
        &\sigma_{+}(\bfx,\bfy) = \indicator[\sum_{i = 1}^d x_iy_i \ge 0],\\
        &\sigma_{+,-}(\bfx,\bfy) = \indicator[\sum_{i = 1}^{d_1} x_iy_i - \sum_{i = d_1 + 1}^{2d_1} x_iy_i \ge 0].\\
    \end{align*}
Clearly, both have expectation $1/2.$ From \cite{Bubeck14RGG}, we know that $\RGG(n,\mathbb{R}^d,\mathcal{N}(0,I_d), \sigma_{+}, 1/2)$ converges to $\ergraphhalf$ in $\TV$ if and only if $d = \omega(n^3).$ However, \cref{thm:differenceofwisharts} shows that \linebreak $\RGG(n,\mathbb{R}^d,\mathcal{N}(0,I_d), \sigma_{+,-}, 1/2)$ already converges to $\ergraphhalf$ in $\TV$ when $d = \omega(n^2).$ 

Despite this difference, once KL-convexity is applied as in \cite{Liu2021APV}, the bounds on the two models become identical. Namely, the proof in \cite{Liu2021APV}
goes as follows. Let $\bfA$ be the adjacency matrix of 
$\RGG(n,\mathbb{R}^d,\mathcal{N}(0,I_d), \sigma, 1/2)$ and let $\bfB$ be the adjacency matrix of $\ergraphhalf.$ Let $\bfA_k, \bfB_k$ be the principle minors of order $k,$ and let $\bfa_k, \bfb_k$ be the respective last rows. Finally, let $\bfX = (\bfx_1, \bfx_2, \ldots, \bfx_n)^T\in \mathbb{R}^{n\times d}$ be the matrix of latent vectors and $\bfX_k$ its $k\times d$ submatrix on the first $k$ rows. Clearly, $\bfX_k$ determines $\bfA_k.$

Then, Racz and Liu write:
\begin{align*}
        \KL\Big(\RGG(n,\mathbb{R}^d,\mathcal{N}(0,I_d), \sigma, 1/2)\|\ergraphhalf\Big) 
        & = \KL(\bfA \| \bfB)\\
        & = \sum_{k = 0}^{n-1}\expect_{\bfA_{k}}
        \KL\Big(\bfa_{k+1}|\bfA_k\|\bfb_{k+1}|\bfB_k = \bfA_k\Big)\\
        & = \sum_{k = 0}^{n-1}\expect_{\bfA_{k}}
        \KL\Big(\bfa_{k+1}|\bfA_k\|\bfb_{k+1}\Big)\\
        &\le \sum_{k = 0}^{n-1}\expect_{\bfX_k,\bfA_{k}}
        \KL\Big(\bfa_{k+1}|\bfA_k,\bfX_k\|\bfb_{k+1}\Big)\\
        & = \sum_{k = 0}^{n-1}\expect_{\bfX_k}
        \KL\Big(\bfa_{k+1}|\bfX_k\|\bfb_{k+1}\Big).
    \end{align*}
We claim that at this point, there is noting distinguishing $\sigma_+$ and $\sigma_{+,-}.$ Namely, we claim that $\bfa^{(+)}_{k+1}|\bfX_k$ and $\bfa^{(+,-)}_{k+1}|\bfX_k$ have identical distributions for any fixed $k$ (where the subscripts show the dependence on $\sigma$). This captured by the following simple fact. Take any $\bfv\in \{0,1\}^k.$ Then,
\begin{align*}
        &\prob[\bfa^{(+)}_{k+1} = \bfv|\bfX_k]\\ 
    & =  \prob\bigg[\indicator\bigg[\sum_{i = 1}^d(\bfx_{k+1})_i(\bfx_j)_i \bigg] = \bfv_j \; \forall j \in [k]\bigg]\\
    & =  \prob\bigg[\indicator\bigg[\sum_{i = 1}^{d_1}(\bfx_{k+1})_i(\bfx_j)_i - 
    \sum_{i = d_1}^{2d}(-(\bfx_{k+1})_i)(\bfx_j)_i
    \bigg] = \bfv_j \; \forall j \in [k]\bigg]\\
    &\prob[\bfa^{(+,-)}_{k+1} = \bfv|\bfX_k],
    \end{align*}
which follows from the fact that 
$$(\bfx_1, \bfx_2,\ldots, \bfx_{d_1}, \bfx_{d_1 + 1}, \bfx_{d_1 + 2}, \ldots, 
\bfx_{d}
)\longrightarrow
(\bfx_1, \bfx_2,\ldots, \bfx_{d_1}, -\bfx_{d_1 + 1}, -\bfx_{d_1 + 2}, \ldots, 
-\bfx_{d})
$$
is a measure preserving map on $\bigg(\mathbb{R}^d,\mathcal{N}(0,I_d)\bigg).$

The fact that the \cref{claim:RaczLiuIndistinguishability} cannot distinguish between $\sigma_+$ and $\sigma_{+,-}$ also has a natural interpretation in the hypercube setting. Note that $\sigma_+$ is just the $\maj$ connection and $\sigma_{+,-}$ is the $\hnmaj$ connection (see \cref{sec:doom}). However, those two functions only differ in the signs of their Fourier coefficients. However, any information about signs is lost when one analyses the respective autocorrelation $\gamma$ as Fourier coefficients are squared (see \cref{sec:booleanfourier}).

\subsection{Groups Beyond the Hypercube}
As already discussed, the random algebraic model is very expressive and, for that reason, it would be interesting to consider further instances of it, especially over groups other than $\hypercube.$  Nevertheless, these other groups remain challenging and interesting, especially in the case of concrete rather than typical connections. 

\paragraph{1. Abelian Groups.}We believe that products of small (of, say, constant size) cyclic groups can be handled in nearly the same way in which we handled the hypercube. In particular, for such groups a very strong hypercontratcivity result analogous to \cref{lem:hypercontractivity} holds (see \cite[Ch. 10]{ODonellBoolean}). Nevertheless, analysing more general groups - even in the finite case - seems challenging. If one chooses to study those via Fourier analysis, this could lead to a fruitful investigation of complex Wishart matrices since functions of finite groups are spanned by their representations over the complex plane. Similarly, one would encounter complex representations when studying connections over finite-dimensional tori. High-dimensional random geometric graphs over tori are studied, for example, in \cite{Friedrich23}.

\paragraph{2. Non-Abelian Groups.}The situation with non-abelian groups seems more challenging even in the finite case due to the fact that 1-dimensional characters are insufficient to span all functions. Several alternative directions seem viable. As a first step, one could study connections depending only on conjugacy classes, which are spanned by 1-dimensional characters. Alternatively, one could try to develop hyperconcentration tools in the non-abelian case as in \cite{Filmus20}. This seems like a difficult but extremely worthwhile pursuit even independently of random geometric graphs. Finally, in the finite case, one could undertake a purely combinatorial approach based on the relationship between random algebraic graphs and Cayley graphs outlined in \cref{sec:grouptheorymodel}. 

In the infinite case, it could be especially fruitful to study the orthogonal group, which is relevant to spherical random geometric graphs (see \cref{sec:grouptheorymodel}). 

\subsection{The Statistical-Computational Gap}
In \cref{sec:polydetection}, we gave evidence for a new statistical-computational gap. We propose several further directions to studying it.

\begin{problem} Find explicit group $\Group$ and set $A\subseteq \Group$ for which one cannot distinguish $\ergraph$ and\linebreak $\RRAG(n , \Group, p_A, \sigma_{A})$ using low-degree polynomials when $|\Group| = \exp(\log^{O(1)}n).$ Does there exist such a pair $\Group, A$ with a short (polynomial in $n$) description? 
\end{problem}

Note that our arguments only show that typical sets $A$ satisfy the desired property, but provide no means of constructing such sets $A.$ Similarly, when $\Group$ is not a hypercube, our proofs for statistical indistinguishability are not constructive, which leads to the following question.

\begin{problem} Given is a number $n$ and a group $\Group$ of size $\exp(\Omega(n)).$ Find an explicit set $A\subseteq \Group$ for which 
$$
\TV\Big(
\RRAG(n , \Group, p_A, \sigma_{A})
,\ergraph
\Big) = o(1).
$$
 Does there exist such a set $A$ with a short (polynomial in $n$) description? 
\end{problem}

Based on our construction over the hypercube in \cref{rmk:constrcutionwithgeularfunctions},
we believe that the answer to the existence question should be affirmative for general groups.

Another natural direction is proving hardness results against other computational models than low-degree polynomials. We note that our current result leaves many possible poly-time algorithms unaddressed such as ones based on arithmetic. Low-degree polynomials over $\mathbb{F}_2$ are not necessarily low-degree polynomials over $\mathbb{R}$ (see \cite{ODonellBoolean}). Refuting the statistical-computational gap we proposed for general groups via arithmetic-based polynomial-time algorithms does not seem completely unfeasible due to the underlying arithmetic in random \textit{algebraic} graphs. 

\subsection{Boolean Fourier Analysis and Probability}
We end with a few directions in Boolean Fourier analysis and probability related to the current paper that might be of independent interest.

\paragraph{1. Wishart Matrices.} 
\cref{thm:differenceofwisharts} suggests that differences of Wishart matrices might behave very differently from Wishart matrices. It could be interesting to study more general linear combinations of Wishart matrices. Specifically, it could be interesting to study the following problem asking for the linear combination closest to GOE.

\begin{problem} Given are $n, d_1, d_2, \ldots, d_k \in \mathbb{N}.$ Let $X_1, X_2, \ldots, X_k$ be $k$ independent Gaussian matrices where, $X_i \sim \mathcal{N}(0, I_{d_i})^{\otimes n}\in \mathbb{R}^{n\times d_i}.$ Find 
$$
\mathbf{a}\in \arg \min_{\mathbf{a} \; : \; \norm{\mathbf{a}}_2 = 1} \TV\Big(\sum_{i = 1}^k a_i X_iX_i^T , \sqrt{\sum_{i = 1}^k d_i}M(n) + \left(\sum_{i = 1}^k d_i \alpha_i\right)I_n\Big).
$$
\end{problem}

Tools related to the eigenvalues of Wishart matrices similar to the ones developed in \cite{Kumar_2020} for the complex case may be useful.

\paragraph{2. Hypercontractivity.}In \cref{thm:symmetricpolybounds}, we developed novel, to the best of our knowledge, bounds on the moments of elementary symmetric polynomials of Rademacher variables in certain regimes. As discussed, these also hold with standard Gaussian and uniform spherical arguments. We noted that these bounds improve the classical hypercontractivity result of \cref{lem:hypercontractivity} because of the small spectral $\infty$-norm of the respective polynomials. We wonder whether one can generalize this phenomenon beyond (signed) elementary symmetric polynomials.

\paragraph{3. Bohnenblust-Hille and Level-$k$ Inequalities.}In \cref{sec:lowdegreepolyindist}, we used a combination of level-$k$ inequalities \cref{thm:levelkinequalities} and the corollary of the Bohnenblust-Hille inequality given in \cref{cor:lowdegreepolyweights}. We wonder if there is a stronger result that interpolates between the two. Specifically, in the case of symmetric polynomials, we pose the following question.

\begin{problem} Does there exist a universal constant $c>0$ such that for any symmetric polynomial \linebreak $f:\hypercube\longrightarrow [0,1]$ of constant degree with $\expect[f] = p,$ the inequality
$
\displaystyle\Var[f] = O\left(\frac{p^c}{d}\right)
$ holds?
\end{problem}

Such an inequality would, in particular, improve the bounds in our \cref{thm:lowdegreeindist}. 

\paragraph{4. Interpretable Boolean Functions Without Low-Degree and High-Degree Terms.}In \cref{prop:nolowandhighdegreeterms} we constructed a function $\sigma:\hypercube\longrightarrow [0,1]$ with weight 0 on levels $1,2,\ldots, m-1,d-m+1, d-m+2, \ldots, d$ and variance of order $\tilde{\Omega}(1)$ when $m$ is a constant. We wonder whether there are interpretable functions satisfying these properties when $m >3.$ 

\section*{Acknowledgements}
We would like to thank Miklos Racz for a helpful discussion.

\printbibliography

@misc{Liu2021APV,
  title={A probabilistic view of latent space graphs and phase transitions},
  author={Suqi Liu and Mikl{\'o}s Z. R{\'a}cz},
  journal={ArXiv},
  year={2021},
  archivePrefix = {arXiv},
  eprint = {2110.15886},
}

@book{ODonellBoolean,
  url = {https://arxiv.org/abs/2105.10386},
  author = {O'Donnell, Ryan},
  title = {Analysis of Boolean Functions},
  publisher = {Cambridge University Press;},
  year = {2014},
}

@misc{RvH,
url = {https://web.math.princeton.edu/~rvan/APC550.pdf},
author = {Handel, Ramon van},
title = {Lecture Notes on Probability in High Dimension}
}

@article{Kumar_2020, 
	url = {https://doi.org/10.1088\%2F1751-8121\%2Fabc3fe},  
	year = 2020,
	month = {11},
	publisher = {{IOP} Publishing},
	volume = {53},
	number = {50},
	author = {Santosh Kumar and S Sai Charan},
	title = {Spectral statistics for the difference of two Wishart matrices},
	journal = {Journal of Physics A: Mathematical and Theoretical}
}

@inproceedings{Eskenazis_21, 
author = {Eskenazis, Alexandros and Ivanisvili, Paata}, title = {Learning Low-Degree Functions from a Logarithmic Number of Random Queries}, 
year = {2022}, 
isbn = {9781450392648}, 
publisher = {Association for Computing Machinery}, 
url = {https://doi.org/10.1145/3519935.3519981}, 
booktitle = {Proceedings of the 54th Annual ACM SIGACT Symposium on Theory of Computing}, pages = {203–207}, numpages = {5}, series = {STOC 2022} }

@inproceedings{Liu2022STOC, author = {Liu, Siqi and Mohanty, Sidhanth and Schramm, Tselil and Yang, Elizabeth}, title = {Testing Thresholds for High-Dimensional Sparse Random Geometric Graphs}, year = {2022}, isbn = {9781450392648}, publisher = {Association for Computing Machinery}, address = {New York, NY, USA}, url = {https://doi.org/10.1145/3519935.3519989}}

@article{RaczRichet2019SmoothTransition,
author = {Racz, Miklos and Richey, Jacob},
year = {2019},
month = {06},
pages = {},
title = {A smooth transition from Wishart to GOE},
volume = {32},
url={https://doi.org/10.1007/s10959-018-0808-2},
journal = {Journal of Theoretical Probability}
}

@article{Bubeck14RGG,
author = {Bubeck, Sébastien and Ding, Jian and Eldan, Ronen and Rácz, Miklós},
year = {2014},
month = {11},
pages = {},
title = {Testing for high-dimensional geometry in random graphs},
volume = {49},
journal = {Random Structures \& Algorithms}
}

@misc{Liu22Expander,
author = {Liu, Siqi and Mohanty, Sidhanth and Schramm, Tselil and Yang, Elizabeth},
year = {2022},
archivePrefix = {arXiv},
eprint = {2210.00158},
title = {Local and global expansion in random geometric graphs}
}

@misc{Brennan21DeFinetti,
author = {Brennan, Matthew and Bresler, Guy and Huang, Brice},
year = {2021},
month = {03},
archivePrefix = {arXiv},
eprint = {2103.14011},
pages = {},
title = {De Finetti-Style Results for Wishart Matrices: Combinatorial Structure and Phase Transitions}
}

@article{Bubeck15ntropicCLT,
author = {Bubeck, Sébastien and Ganguly, Shirshendu},
year = {2015},
month = {09},
pages = {},
title = {Entropic CLT and Phase Transition in High-dimensional Wishart Matrices},
journal = {International Mathematics Research Notices}
}

@inproceedings{Brennan19Reductions,
  title={Optimal average-case reductions to sparse pca: From weak assumptions to strong hardness},
  author={Brennan, Matthew and Bresler, Guy},
  booktitle={32nd Annual Conference on Learning Theory},
  pages={469--470},
  year={2019},
  organization={PMLR}
}

@misc{Brennan22AnisotropicRGG,
author = {Brennan, Matthew and Bresler, Guy and Huang, Brice},
year = {2022},
month = {},
pages = {},
journal = {},
title = {Threshold for Detecting High Dimensional Geometry in Anisotropic Random Geometric Graphs},
note={To appear in Random Structures and Algorithms},
url = {10.48550/arXiv.2206.14896}
}

@article{Pieters22CommunityDetection,
author = {Eldan, Ronen and Mikulincer, Dan and Pieters, Hester},
year = {2022},
month = {05},
pages = {1-22},
title = {Community detection and percolation of information in a geometric setting},
volume = {31},
url = {https://doi.org/10.1017/S0963548322000098},
journal = {Combinatorics, Probability and Computing}
}

@Inbook{Mikulincer20,
author="Eldan, Ronen
and Mikulincer, Dan",
editor="Klartag, Bo'az
and Milman, Emanuel",
title="Information and Dimensionality of Anisotropic Random Geometric Graphs",
bookTitle="Geometric Aspects of Functional Analysis: Israel Seminar (GAFA) 2017-2019 Volume I",
year="2020",
publisher="Springer International Publishing",
address="Cham",
pages="273--324",
isbn="978-3-030-36020-7",
url="https://doi.org/10.1007/978-3-030-36020-7_13"
}

@misc{Liu21PhaseTransition,
author = {Liu, Suqi and Racz, Miklos},
year = {2021},
title = {Phase transition in noisy high-dimensional random geometric graphs},
  archivePrefix = {arXiv},
  eprint = {2103.15249},
}

@book{AliceBobBanach,
author = {Szarek, Stanislaw and Aubrun, Guillaume},
year = {2017},
month = {09},
pages = {},
title = {Alice and Bob Meet Banach: The Interface of Asymptotic Geometric Analysis and Quantum Information Theory},
url={https://www.ams.org/books/surv/223/surv223-endmatter.pdf},
isbn = {978-1470434687},
publisher = {American Mathematical Society},
address = {Providence, RI}
}

@article{Schramm_2022,
	url = {https://doi.org/10.1214%2F22-aos2179}, 
	year = 2022,
	month = {6},  
	publisher = {Institute of Mathematical Statistics}, 
	volume = {50}, 
	number = {3}, 
	author = {Tselil Schramm and Alexander S. Wein}, 
	title = {Computational barriers to estimation from low-degree polynomials}, 
	journal = {The Annals of Statistics}
}

@article{Brennan19PhaseTransition,
author = {Brennan, Matthew and Bresler, Guy and Nagaraj, Dheeraj},
year = {2020},
month = {12},
pages = {1215-1289},
title = {Phase transitions for detecting latent geometry in random graphs},
volume = {178},
journal = {Probability Theory and Related Fields}
}

@article{Jiang2013ApproximationOR,
  title={Approximation of Rectangular Beta-Laguerre Ensembles and Large Deviations},
  author={Tiefeng Jiang and Danning Li},
  journal={Journal of Theoretical Probability},
  year={2013},
  url = {https://ideas.repec.org/a/spr/jotpro/v28y2015i3d10.1007_s10959-013-0519-7.html},
  volume={28},
  pages={804-847}
}

@article{Conlon17HypergraphExpanders,
author = {Conlon, David},
year = {2017},
month = {09},
pages = {},
title = {Hypergraph expanders from Cayley graphs},
volume = {233},
journal = {Israel Journal of Mathematics}
}

@article{Alon94RandomCayleyGraphs,
author = {Alon, Noga and Roichman, Yuval},
title = {Random Cayley graphs and expanders},
journal = {Random Structures \& Algorithms},
volume = {5},
number = {2},
pages = {271-284},
url = {https://onlinelibrary.wiley.com/doi/abs/10.1002/rsa.3240050203},
year = {1994}
}

@article{Bubeck17NetworkSurvey,
author = {Mikl{\'o}s Z. R{\'a}cz and S{\'e}bastien Bubeck},
title = {{Basic models and questions in statistical network analysis}},
volume = {11},
journal = {Statistics Surveys},
publisher = {Amer. Statist. Assoc., the Bernoulli Soc., the Inst. Math. Statist., and the Statist. Soc. Canada},
pages = {1 -- 47},
year = {2017},
URL = {https://doi.org/10.1214/17-SS117}
}

@book{SteinSakarchi,
 ISBN = {9780691113869},
 URL = {http://www.jstor.org/stable/j.ctvd58v18},
 author = {Elias M. Stein and Rami Shakarchi},
 publisher = {Princeton University Press},
 title = {Real Analysis: Measure Theory, Integration, and Hilbert Spaces},
 year = {2005}
}

@Inbook{Krivelevich2005,
author="Krivelevich, Michael
and Sudakov, Benny",
title="Pseudo-random Graphs",
bookTitle="More Sets, Graphs and Numbers: A Salute to Vera S{\'o}s and Andr{\'a}s Hajnal",
year="2006",
publisher="Springer Berlin Heidelberg",
address="Berlin, Heidelberg",
pages="199--262",
isbn="978-3-540-32439-3",
url="https://doi.org/10.1007/978-3-540-32439-3_10"
}

@book{Polianskiy22+,
author = {Polianskiy, Yuri and Wu, Yihong},
year = {Forthcoming},
title = {Information Theory: From Coding to Learning},
publisher = {Cambridge University Press},
url={https://people.lids.mit.edu/yp/homepage/data/itbook-export.pdf}
}

@misc{Filmus20,  
  author = {Filmus, Yuval and Kindler, Guy and Lifshitz, Noam and Minzer, Dor},
  title = {Hypercontractivity on the symmetric group},
  archivePrefix = {arXiv},
  eprint = {2009.05503},
  year = {2020},
}

@article{Devroye11,
author = {Luc Devroye and Andr{\'a}s Gy{\"o}rgy and G{\'a}bor Lugosi and Frederic Udina},
title = {{High-Dimensional Random Geometric Graphs and their Clique Number}},
volume = {16},
journal = {Electronic Journal of Probability},
publisher = {Institute of Mathematical Statistics and Bernoulli Society},
pages = {2481 -- 2508},
keywords = {Clique number, dependency testing, Geometric graphs, Random graphs},
year = {2011},
URL = {https://doi.org/10.1214/EJP.v16-967}
}

@article{Chetelat2017TheMA,
  title={The middle-scale asymptotics of Wishart matrices},
  author={Didier Ch'etelat and Martin T. Wells},
  journal={The Annals of Statistics},
  year={2017},
  url = {https://www.jstor.org/stable/26784041}
}

@inproceedings{Khot14, author = {Khot, Subhash and Tulsiani, Madhur and Worah, Pratik}, title = {A Characterization of Strong Approximation Resistance}, year = {2014}, isbn = {9781450327107}, publisher = {Association for Computing Machinery}, address = {New York, NY, USA}, url = {https://doi.org/10.1145/2591796.2591817}, 
booktitle = {Proceedings of the Forty-Sixth Annual ACM Symposium on Theory of Computing}, pages = {634–643}, numpages = {10}, keywords = {integrality gaps, constraint satisfaction problems, approximation resistance}, location = {New York, New York}, series = {STOC '14} }

@article{Smith19,
author = {Anna L. Smith and Dena M. Asta and Catherine A. Calder},
title = {{The Geometry of Continuous Latent Space Models for Network Data}},
volume = {34},
journal = {Statistical Science},
number = {3},
publisher = {Institute of Mathematical Statistics},
pages = {428 -- 453},
keywords = {Geometric curvature, graph Laplacian, latent variable, network model},
year = {2019},
URL = {https://doi.org/10.1214/19-STS702}
}

@inbook{haenggi_2012, place={Cambridge}, title={Random geometric graphs and continuum percolation},
booktitle={Stochastic Geometry for Wireless Networks}, publisher={Cambridge University Press}, author={Haenggi, Martin}, year={2012}, pages={221–245},
url ={https://doi.org/10.1017/CBO9781139043816.012}
}

@article{ESTRADA201620,
title = {Consensus dynamics on random rectangular graphs},
author = {Estrada, Ernesto and Sheerin, Matthew},
journal = {Physica D: Nonlinear Phenomena},
volume = {323-324},
pages = {20-26},
year = {2016},
note = {Nonlinear Dynamics on Interconnected Networks},
issn = {0167-2789},
url = {https://www.sciencedirect.com/science/article/pii/S0167278915002171},
}

@article{higham08,
    author = {Higham, Desmond J. and Rašajski, Marija and Pržulj, Nataša},
    title = "{Fitting a geometric graph to a protein–protein interaction network}",
    journal = {Bioinformatics},
    volume = {24},
    number = {8},
    pages = {1093-1099},
    year = {2008},
    month = {03},
    issn = {1367-4803},
    url = {https://doi.org/10.1093/bioinformatics/btn079},
}

@misc{duchemin22,
  author = {Duchemin, Quentin and de Castro, Yohann},
  keywords = {Social and Information Networks (cs.SI), Statistics Theory (math.ST), FOS: Computer and information sciences, FOS: Computer and information sciences, FOS: Mathematics, FOS: Mathematics},
  archivePrefix = {arXiv},
  title = {Random Geometric Graph: Some recent developments and perspectives},
  eprint = {2203.15351},
  year = {2022},
}

@article{Erba20,
  title = {Random geometric graphs in high dimension},
  author = {Erba, Vittorio and Ariosto, Sebastiano and Gherardi, Marco and Rotondo, Pietro},
  journal = {Phys. Rev. E},
  volume = {102},
  issue = {1},
  pages = {012306},
  numpages = {7},
  year = {2020},
  month = {Jul},
  publisher = {American Physical Society},
  url = {https://link.aps.org/doi/10.1103/PhysRevE.102.012306}
}

@article{aldous86,
author = {David Aldous and Persi Diaconis},
title = {Shuffling Cards and Stopping Times},
journal = {The American Mathematical Monthly},
volume = {93},
number = {5},
pages = {333-348},
year  = {1986},
publisher = {Taylor & Francis},
URL = {https://doi.org/10.1080/00029890.1986.11971821},
}

@article{dou86,
author = {Carl Dou and Martin Hildebrand},
title = {{Enumeration and random random walks on finite groups}},
volume = {24},
journal = {The Annals of Probability},
number = {2},
publisher = {Institute of Mathematical Statistics},
pages = {987 -- 1000},
keywords = {enumeration, finite groups, Random walk, upper bound lemma},
year = {1996},
URL = {https://doi.org/10.1214/aop/1039639374}
}

@misc{Hermon2021CutoffFA,
  title={Cutoff for Almost All Random Walks on Abelian Groups},
  archivePrefix = {arXiv},
  eprint = {2102.02809},
  author={Jonathan Hermon and Sam Olesker-Taylor},
  year={2021}
}

@article{ALON20131232,
title = {The chromatic number of random Cayley graphs},
journal = {European Journal of Combinatorics},
volume = {34},
number = {8},
pages = {1232-1243},
year = {2013},
note = {Special Issue in memory of Yahya Ould Hamidoune},
issn = {0195-6698},
url = {https://www.sciencedirect.com/science/article/pii/S019566981300084X},
author = {Noga Alon},
}

@misc{Friedrich23,
  author = {Friedrich, Tobias and Göbel, Andreas and Katzmann, Maximilian and Schiller, Leon},
  title = {Cliques in High-Dimensional Geometric Inhomogeneous Random Graphs},
  archivePrefix = {arXiv},
  eprint = {2302.04113},
  year = {2023},
}

@book{penrose03,
    author = {Penrose, Mathew},
    title = "{Random Geometric Graphs}",
    publisher = {Oxford University Press},
    year = {2003},
    month = {05},
    isbn = {9780198506263},
    url = {https://doi.org/10.1093/acprof:oso/9780198506263.001.0001},
}

@article{Bonami1970,
author = {Bonami, Aline},
journal = {Annales de l'institut Fourier},
keywords = {functional analysis},
language = {fre},
number = {2},
pages = {335-402},
publisher = {Association des Annales de l'Institut Fourier},
title = {Étude des coefficients de Fourier des fonctions de $L^p(G)$},
url = {http://eudml.org/doc/74019},
volume = {20},
year = {1970},
}

@article{Kahn1988TheIO,
  title={The influence of variables on Boolean functions},
  author={Jeff D. Kahn and Gil Kalai and Nathan Linial},
  journal={29th Annual Symposium on Foundations of Computer Science},
  year={1988},
  pages={68-80},
  url={https://ieeexplore.ieee.org/document/21923}
}

\appendix
\section{Variance of the Signed Triangle Statistic}
\label{sec:appendixvariancecomputation}

\begin{proof}[Proof of \cref{prop:variancecomputation}]
Denote $\tau_{(i,j,k)}(G) =(G_{i,j}-p)(G_{j,k} - p)(G_{k,i}-p).$
For\linebreak $G \sim \hypercubegraph,$ as in \cite{Liu2021APV}, we have
\begin{align*}
        \Var[\tau_3(G)] & =
\binom{n}{3}\Var[\tau_{(1,2,3)}(G)]^2 + 
O\left(\binom{n}{4}\right)
\Cov[\tau_{(1,2,3)}(G),\tau_{(1,2,4)}(G)] + \\
& O\left(\binom{n}{5}\right)
\Cov[\tau_{(1,2,3)}(G),\tau_{(1,4,5)}(G)] + 
O\left(\binom{n}{6}\right)
\Cov[\tau_{(1,2,3)}(G),\tau_{(4,5,6)}(G)].
    \end{align*}
We bound  each of the four terms separately.

\paragraph{Case 1)} Covariance when the two triangles have overlap of size 3. Using that each edge is an indicator,
\begin{align*}
&\expect[\tau_{(1,2,3)}(G)^2] = 
\expect[(G_{1,2} - p)^2(G_{2,3} - p)^2(G_{3,1} - p)^2]\\ 
& = \expect
[(1-2p)(G_{1,2}-p) + (p-p^2)]
[(1-2p)(G_{2,3}-p) + (p-p^2)]
[(1-2p)(G_{3,1}-p) + (p-p^2)]\\
& =(1-2p)^3\tau_{(1,2,3)}(G) + (p-p^2)^3.
    \end{align*}
We used the simple observations that $\expect [G_{i,j} - p] = 0$ and 
$$\expect[(G_{i,j}-p)(G_{j,k} - p)] = 
\expect[(\sigma(\bf\bfx_i^{-1}\bfx_j) - p)(\sigma(\bfx_j^{-1}\bfx_k) - p)] = 
\expect_{\bfg,\bfh\sim_{iid}\unif(\hypercube)}[(\sigma(\bfg) - p)(\sigma(\mathbf{h}) - p)] = 0.
$$
It follows that
\begin{align*}
    &\binom{n}{3}\Var[\tau_{(1,2,3)}(G)^2]  = 
\binom{n}{3}
\left(
(p-p^2)^3  + (1-2p)^3\expect[\tau_{(1,2,3)}(G)] -
\expect[
\tau_{(1,2,3)}(G)^2]
\right)\\
& =  
\binom{n}{3}
\left(
(p-p^2)^3  + (1-2p)^3\sum_{S\neq \emptyset}\widehat{\sigma}(S)^3 - 
\left(\sum_{S\neq \emptyset}\widehat{\sigma}(S)^3\right)^2
\right)  = 
O\left(
n^3p^3
 + n^3\sum_{S\neq \emptyset}\widehat{\sigma}(S)^3\right).
 \end{align*}

\paragraph{Case 2)} When the two triangles have an overlap of size $2.$
\begin{align*}
&\expect[\tau_{(1,2,3)}(G) \tau_{(1,2,4)}(G)]  = \\
& \expect[(G_{1,2}-p)^2 (G_{1,3} - p)(G_{2,3} - p)(G_{1,4} - p)(G_{2,4}-p)] =\\
& \expect\left[\expect\left[
(G_{1,2}(1-2p) + p^2) (G_{1,3} - p)(G_{2,3} - p)(G_{1,4} - p)(G_{2,4}-p) |
\bfx_1, \bfx_2, \bfx_2, \bfx_4, \bfx_5
\right]\right] = \\
& \expect\left[
(\sigma(\bfx_1^{-1} \bfx_2)(1-2p) + p^2) (\sigma(\bfx_1^{-1} \bfx_3)- p)(\sigma(\bfx_2^{-1} \bfx_3) - p)(\sigma(\bfx_1^{-1}\bfx_4) - p)(\sigma(\bfx_2^{-1} \bfx_4)-p)\right] = \\
& \expect\left[
(\sigma(\bfx_1^{-1}\bfx_2)(1-2p) + p^2) \expect_{\bfx_3}\left[(\sigma(\bfx_1^{-1}\bfx_3)- p)(\sigma(\bfx_2^{-1} \bfx_3) - p)\right]
\expect_{\bfx_4}\left[
(\sigma(\bfx_1^{-1} \bfx_4) - p)(\sigma(\bfx_2^{-1} \bfx_4)-p)\right]\right].
\end{align*}
Now, observe that 
\begin{align*}
        &\expect_{\bfx_3}\left[(\sigma(\bfx_1, \bfx_3)- p)(\sigma(\bfx_2^{-1} \bfx_3) - p)\right]
\expect\left[
(\sigma(\bfx_1^{-1} \bfx_4) - p)(\sigma(\bfx_2^{-1} \bfx_4)-p)\right]\\
& =  \expect_{\bfx}\left[(\sigma(\bfx_1^{-1} \bfx)- p)(\sigma(\bfx_2^{-1} \bfx) - p)\right]^2\ge 0.
    \end{align*}
Therefore, as $(\sigma(\bfx_1^{-1} \bfx_2)(1-2p) + p^2)\in [0,1]$ (as $\sigma$ takes values in $0,1$), the above expression is bounded by 
\begin{align*}
&\expect\left[
 \expect_{\bfx_3}\left[(\sigma(\bfx_1^{-1} \bfx_3)- p)(\sigma(\bfx_2^{-1} \bfx_3) - p)\right]
\expect_{\bfx_4}\left[
(\sigma(\bfx_1^{-1} \bfx_4) - p)(\sigma(\bfx_2^{-1} \bfx_4)-p)\right]\right]\\
& =
\expect\left[
(\sigma(\bfx_1^{-1} \bfx_3)- p)(\sigma(\bfx_2^{-1} \bfx_3) - p)
(\sigma(\bfx_1^{-1} \bfx_4) - p)(\sigma(\bfx_2^{-1} \bfx_4)-p)\right]\\
& = 
\sum_{S\neq\emptyset}\widehat{\sigma}(S)^4,
    \end{align*}
where the last equality follows from \cref{obs:fouriercycles}. Thus, the total contribution to the variance from this case is $\displaystyle O\left(n^4\sum_{S\neq\emptyset}\widehat{\sigma}(S)^4\right).$

\paragraph{Case 3)} When the two triangles have an overlap of size $1.$
\begin{align*}
    & \expect[\tau_{(1,2,3)}(G)\tau_{(1,4,5)}(G)]\\
    & = 
    \expect[
(\sigma(\bfx_1^{-1}\bfx_2) - p)
(\sigma(\bfx_2^{-1}\bfx_3) - p)
(\sigma(\bfx_3^{-1}\bfx_1) - p)
(\sigma(\bfx_1^{-1}\bfx_4) - p)
(\sigma(\bfx_4^{-1}\bfx_5) - p)
(\sigma(\bfx_5^{-1}\bfx_1) - p)
)]\\
& = \expect[
(\sigma(\mathbf{g}) - p)
(\sigma(\mathbf{h}) - p)
(\sigma(\mathbf{g^{-1}h^{-1}}) - p)
(\sigma(\mathbf{k}) - p)
(\sigma(\mathbf{\ell}) - p)
(\sigma(\mathbf{k^{-1}\ell^{-1}})- p)
)]\\
& = 
\expect[
(\sigma(\mathbf{g}) - p)
(\sigma(\mathbf{h}) - p)
(\sigma(\mathbf{g^{-1}h^{-1}}) - p)]
\expect[
(\sigma(\mathbf{k}) - p)
(\sigma(\mathbf{\ell}) - p)
(\sigma(\mathbf{k^{-1}\ell^{-1}})- p)]\\
& =  
\expect[\tau_{(1,2,3)}(G)]\expect[\tau_{(1,4,5)}(G)],
    \end{align*}
so the covariance equals 0 in that case.

\paragraph{Case 4)} When the two triangles have an overlap of size $0.$
$$
\expect[\tau_{(1,2,3)}(G)\tau_{(4,5,6)}(G)] =
\expect[\tau_{(1,2,3)}(G)]\expect[\tau_{(4,5,6)}(G)],
$$
so the covariance is also zero in this case. The conclusion follows by adding the four covariances.
\end{proof}

\section{Fourier Weights on Level \hmath$d$}
\label{appendix:corwithparity}
\begin{proof}[Proof of \cref{lem:lastfourier}]
Suppose that $f:\{\pm 1\}^d\longrightarrow [0,1]$ is a symmetric function. Define by $f_i$ the value of $f$ when exactly $i$ of its coordinates are equal to $1.$ Let $a_i = f_{i+1}-f_i$ and $a_0 = f_0,$ so 
$f_i = \sum_{j = 0}^i a_j.$
Consider $\widehat{f}([d]),$ which is the correlation of $f$ with the parity function $\omega_{[d]}(\bfx).$ By definition,
\begin{equation}
\label{eq:fhatdexpression}
    \begin{split}
        & \widehat{f}([d]) = 
        \expect[f(\bfx)\omega_{[d]}(\bfx)]\\
        & = 
        \frac{1}{2^d}\sum_{i = 0}^d
        \binom{d}{i} (-1)^{d-i}
        f(\underbrace{1,1,\ldots,1}_i\underbrace{-1,-1,\ldots, -1}_{d-i})\\
        & =
        \frac{1}{2^d}\sum_{i = 0}^d
        \binom{d}{i} (-1)^{d-i}
        \sum_{j = 0}^i a_j\\
        & = 
        \frac{1}{2^d}
        \sum_{j = 0}^d 
        a_j 
        \sum_{i = j}^{d}
        \binom{d}{i}(-1)^{d-i}\\
        & = 
        \frac{1}{2^d}
        \sum_{j = 0}^d 
        a_j (-1)^{d-j}\binom{d-1}{j-1},
    \end{split}
\end{equation}

where in the last line we used the simple combinatorial fact that 
$\displaystyle \sum_{i = j}^{d}
        \binom{d}{i}(-1)^{d-i} = (-1)^{d-j}\binom{d-1}{j-1},$ which can be easily proved by induction and Paskal's identity $\binom{d-1}{j-1}- \binom{d}{j} = -\binom{d-1}{j}.$ We modify this expression even further, showing that 
\begin{equation}
\label{eq:lastfourierbounds}
\begin{split}
    |\widehat{f}([d])| & = 
    \left|
            \frac{1}{2^d}\sum_{j = 0}^d 
        a_j (-1)^{d-j}\binom{d-1}{j-1}\right| 
        =
        \frac{1}{2}\left|
        \expect_{j \sim Bin(d-1, \frac{1}{2})} (-1)^{d-j}(f_{j+1} - f_j)
        \right| \\
        & \le 
        \frac{1}{2}
        \expect_{j \sim Bin(d-1, \frac{1}{2})} |f_{j+1} - f_j|  = 
        O\left(\frac{\sum_{i = 1}^d|f_{j+1} - f_j|}{\sqrt{d}}\right)= 
        O\left(\frac{\fluctuation(f)}{\sqrt{d}}\right).
        \qedhere
\end{split}        
\end{equation}
\end{proof}

We now continue to bounding $\widehat{\threshold_p}([d])$ and $\widehat{\doublethreshold_p}([d]).$
In what follows, we omit all floor and ceiling signs for simplicity of exposition. Clearly, they don't affect the asymptotics.
\begin{proof}[Proof of \cref{prop:simplethresholdcorwithparity}] Note that for $f = \threshold_p,$ we only have one non-negative value $a,$ which corresponds to the case when $\sum_{i = 1}^dx_i = \tau_p-1$ or, equivalently, 
there are exactly $t = \frac{\tau_p-1+d}{2}$ ones in the vector $x.$ Thus,
$a_{t} = 1$ and $a_s = 0$ when $s\neq t.$ In other words,
$$
|
\widehat{\threshold_p}([d])
| = 
\frac{1}{2^d}\binom{d-1}{t- 1}.
$$
Thus, we simply need to bound the binomial coefficient $\binom{d}{t}.$ To do so, we first bound $\tau_p.$ As each $x_i$ is supported on $[-1,1],$ it is $1$-subgaussian. Therefore, $\sum_{i = 1}^d x_i$ is $d$-subgaussian\footnote{For these simple applications of subgaussian concentration, see \cite[Ch.3]{RvH}.} and, thus, 
$$
p = 
\prob\left[\sum x_i \ge \tau_p\right]\le \exp\left(-\frac{\tau_p^2}{2d}\right),
$$
so $\tau_p\le \sqrt{2d\log\frac{1}{p}}.$ By taking a large enough constant in $O\left(\frac{p\sqrt{\log \frac{1}{p}}}{\sqrt{d}}\right),$ we can also assume that\linebreak $\tau_p \ge \sqrt{d}/10.$ Indeed, this is true because
$$
\prob\left[\sum_{i = 1}^d x_i \ge \sqrt{d}/10\right] = 
\frac{1}{2} - 
\frac{1}{2^d}\sum_{s =  d/2 }^{d/2 + \sqrt{d}/10}\binom{d}{s}\ge 
\frac{1}{2}- 
\frac{1}{2^d}\times\frac{\sqrt{d}}{10}
\binom{d}{d/2}\ge 
\frac{1}{2}- 
\frac{1}{2^d}\times\frac{2^d}{\sqrt{d}}
\frac{\sqrt{d}}{10} = \Omega(1).
$$

Now, let $\psi = \frac{d}{\tau_p}< 10\sqrt{d}.$
We claim that 
$
\binom{d}{t + \psi}\ge C\binom{d}{t}
$
for some absolute constant $C,$ independent of $d$ and $p.$ Indeed, note that 
\begin{align*}
    &\frac{\binom{d}{t + \psi}}{\binom{d}{t}} = 
\prod_{i = 0}^{\psi-1} 
\frac{d-t -i}{t+i+1}\ge 
\left(\frac{d-t-\psi}{t + \psi}
\right)^\psi\\
& \ge  
\left(\frac{\frac{d}{2}-\frac{\tau_p-1}{2} -\psi}{\frac{d}{2} + \frac{\tau_p-1}{2} + \psi}
\right)^\psi = 
\left(
1 - 
\frac{\tau_p -1 +2\psi}{\frac{d}{2} + \frac{\tau_p-1}{2} + \psi}
\right)^\psi \ge 
\left(
1 - 
\frac{2\tau_p +4\psi}{d}
\right)^\psi\\
& \ge  \exp\left( - \frac{2\tau_p\psi + 4\psi^2}{d}\right)\ge 
\exp\left(
- \frac{2d + 100d}{d}
\right) = \exp(-102),
    \end{align*}
as desired.

Now, by definition of $t,$ the function $\threshold_p$ is equal to one if and only if there are at least $\frac{\tau_p+d}{2}$ ones among $x_1, x_2, \ldots, x_d.$ It follows that
$$
p = \frac{1}{2^d}\sum_{j = t}^d \binom{d}{j}\ge 
\frac{1}{2^d}\sum_{i = 0}^{\psi-1}\binom{d}{t+i}\ge 
C\psi \frac{1}{2^d}\binom{d}{t}.
$$
Thus,
\[
\frac{1}{2^d}\binom{d-1}{t-1}\le 
\frac{1}{2^d}\binom{d}{t} = 
O\left(
\frac{p}{\psi}
\right) = 
O\left(
\frac{p\tau_p}{d}
\right) = 
O\left(
\frac{p\sqrt{\log \frac{1}{p}}}{\sqrt{d}}
\right).\qedhere
\]
\end{proof}

The proof of \cref{prop:doublethresholdcorwithparity} is a nearly trivial consequence.

\begin{proof}[Proof of \cref{prop:doublethresholdcorwithparity}] Note that 
$\delta_p = \tau_{\frac{p}{2}}.$ For the function $f = \doublethreshold_p,$ we easily see that if 
$t = \frac{\tau_{p/2}-1+d}{2},$
we have $a_0 = 1, a_{d-t} = -1,a_t = 1,$ and all the other differences are equal to $0.$ Thus, we simply need to bound 
$
\frac{1}{2^d}\left(\binom{d}{t} + 
\binom{d}{d-t}
\right)
$
as above. Using the result of the previous proof, this expression is just
$
O\left(
\frac{\frac{p}{2}\sqrt{\log \frac{2}{p}}}{\sqrt{d}}
\right) = 
O\left(
\frac{p\sqrt{\log \frac{1}{p}}}{\sqrt{d}}
\right).
$
\end{proof}

We we end with a remark which will be useful later on and shows that our estimate for $\binom{d}{t}$ is optimal up to the small logarithmic factor.

\begin{corollary}
\label{cor:lowerboundonbinomialspread}
When $p = d^{O(1)},$
$$
\frac{1}{2^d}\binom{d}{\frac{d+\tau_p - 1}{2}} = \Omega \left(\frac{p}{\sqrt{d}}\right).
$$
\end{corollary}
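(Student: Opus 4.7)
The plan is to prove the lower bound $\frac{1}{2^d}\binom{d}{t}=\Omega(p/\sqrt{d})$ with $t=\frac{d+\tau_p-1}{2}$ by combining the exact identity $p = \frac{1}{2^d}\sum_{j=t}^d \binom{d}{j}$ (just above the corollary in the proof of \cref{prop:simplethresholdcorwithparity}) with a telescoping ratio estimate for consecutive binomial coefficients. I would split into two regimes depending on whether $\tau_p$ is large or small compared to $\sqrt{d}$; either way the assumption $p = d^{O(1)}$ guarantees $\tau_p\le \sqrt{2d\log(1/p)} = O(\sqrt{d\log d})$, so both regimes are well-defined.

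First, in the regime $\tau_p \le 10\sqrt{d}$, the index $t$ lies within $O(\sqrt{d})$ of $d/2$, so by standard Stirling estimates $\binom{d}{t}/2^d = \Omega(1/\sqrt{d})$. Since $p\le 1$, this already yields $\binom{d}{t}/2^d = \Omega(1/\sqrt{d}) \ge \Omega(p/\sqrt{d})$, finishing this case.

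Second, in the regime $\tau_p > 10\sqrt{d}$, I would exploit the fact that for $j \ge t$, the ratio
\[
\frac{\binom{d}{j+1}}{\binom{d}{j}} = \frac{d-j}{j+1} \le \frac{d-t}{t+1} = \frac{d-\tau_p+1}{d+\tau_p+1} \le 1 - \frac{\tau_p}{d}
\]
(the last inequality coming from $\frac{d-\tau_p+1}{d+\tau_p+1} \le \frac{d-\tau_p}{d}$, which is easy to verify). Iterating, $\binom{d}{j} \le \binom{d}{t}(1-\tau_p/d)^{j-t}$, so summing a geometric series yields
\[
p = \frac{1}{2^d}\sum_{j=t}^d \binom{d}{j} \le \frac{\binom{d}{t}}{2^d}\cdot \sum_{i\ge 0}\left(1-\frac{\tau_p}{d}\right)^i = \frac{\binom{d}{t}}{2^d}\cdot \frac{d}{\tau_p}.
\]
Rearranging, $\binom{d}{t}/2^d \ge p\tau_p/d \ge 10p/\sqrt{d}$, as desired.

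The main obstacle is really bookkeeping rather than conceptual: one must verify that the ratio bound $1-\tau_p/d$ is actually valid (it's a short arithmetic check) and that the separating threshold $\tau_p\approx\sqrt{d}$ cleanly matches the Stirling estimate for the central binomial coefficient. Since the proof of \cref{prop:simplethresholdcorwithparity} already established $\tau_p\ge \sqrt{d}/10$ by the same kind of normal-approximation argument, the two regimes overlap comfortably and no borderline subtleties arise.
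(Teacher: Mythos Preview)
Your proof is correct and follows essentially the same strategy as the paper: bound the tail $p = \frac{1}{2^d}\sum_{j\ge t}\binom{d}{j}$ using geometric decay of the binomial coefficients past $t$, and read off $\frac{1}{2^d}\binom{d}{t}=\Omega(p/\sqrt{d})$. The only implementation difference is that the paper avoids your case split by grouping the tail into blocks of length $2\sqrt{d}$ and showing each block shrinks by a constant factor $e^{-1/2}$ (so the bound $p=O(\sqrt{d})\cdot\frac{1}{2^d}\binom{d}{t}$ comes out directly, uniformly in $\tau_p$), whereas you use the per-step ratio $\le 1-\tau_p/d$, which is only useful once $\tau_p\gtrsim\sqrt{d}$ and therefore forces a separate Stirling argument near the center. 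Both arguments are of comparable length; yours is a touch more elementary since the per-step ratio is exact and the geometric series is explicit, while the paper's is slightly cleaner for not needing the case distinction.
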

\begin{proof}
Again, let $t = \frac{d+\tau_p-1}{2}.$ Note that $t \in [d/2, 2d/3]$ by the assumption on $p.$ We will first show that whenever $z\ge\frac{d}{2},$ it is the case that 
$$\binom{d}{z + 2\sqrt{d}}\le C\binom{d}{z}$$
for $C = e^{-1/2}<1.$
Indeed, this is nearly trivial as 
$$
\frac{\binom{d}{z + 2\sqrt{d}}}{\binom{d}{z}} = 
\prod_{i = 0}^{2\sqrt{d}-1}
\frac{d-z-i}{z+i+1} \le 
\prod_{i = \sqrt{d}}^{2\sqrt{d}-1}
\frac{d-z-i}{d/2} \le 
\left(
\frac{d-\sqrt{d}}{d/2}
\right)^{\sqrt{d}} = 
\left(1 - \frac{1}{2\sqrt{d}}\right)^{\sqrt{d}}\le 
e^{-1/2}.
$$
This, however, is sufficient as 
$$
p = 
\frac{1}{2^d}\sum_{j = t}^d \binom{d}{j}\le 
\frac{1}{2^d}\sum_{s = 0}^{\infty}
2\sqrt{d}
\binom{d}{t + s2\sqrt{d}}\le 
\frac{1}{2^d}2\sqrt{d}\binom{d}{t}
\sum_{s = 0}^\infty C^s = 
O\left(\sqrt{d}\frac{1}{2^d}\binom{d}{t}\right),
$$
from which the conclusion follows.
\end{proof}

\section{Boolean Functions Without Low-Degree and High-Degree Terms}
\label{appendix:functionswithoutlowandhighdegreeterms}
\begin{proposition}
\label{prop:connectionswithoutlowandhighterms}
For every integer $d$ and integer $m< d/2,$  there exists a symmetric function\linebreak $f:\{\pm1 \}^d\longrightarrow [-1,1]$ such that:
\begin{enumerate}
    \item $\weight{1}{f} = \weight{2}{f} = \cdots = 
    \weight{m-1}{f} = 0,$
    \item 
    $\weight{d-m}{f} = \weight{d-m+1}{f} = \cdots = 
    \weight{d}{f} = 0,$
    \item 
    $\weight{m}{f}  = \Omega\left(\frac{1}{m^{13/2}\log^{m} d}\right).$
\end{enumerate}
\end{proposition}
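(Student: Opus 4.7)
The plan is to work with the univariate reduction that the symmetry of $f$ affords. A symmetric function $f:\{\pm 1\}^d\to\mathbb{R}$ is determined by the values $f_j$ attained when $|x|_+ = j$, and a direct computation (via the hypergeometric distribution of $|T\cap\{i:x_i=+1\}|$ given $|x|_+=j$) gives $\widehat{f}(T) = 2^{-d}\sum_{j=0}^d f_j\mathcal{K}_k(j)$ for $|T|=k$, where $\mathcal{K}_k(j) = \sum_{a}(-1)^{k-a}\binom{k}{a}\binom{d-k}{j-a}$ is the (un-normalized) Krawtchouk polynomial. The vanishing conditions become the linear constraints $\sum_j f_j\mathcal{K}_k(j)=0$ for $k\in L := \{1,\ldots,m-1\}\cup\{d-m,\ldots,d\}$, and the target quantity is $\weight{m}{f} = \binom{d}{m} 2^{-2d}\bigl(\sum_j f_j\mathcal{K}_m(j)\bigr)^2$. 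After passing to $g(j):=f_j/\binom{d}{j}$, the binomial orthogonality $\sum_j\binom{d}{j}\mathcal{K}_k(j)\mathcal{K}_l(j) = 2^d\binom{d}{k}\delta_{kl}$ turns the constraints into: the Krawtchouk expansion $g = \sum_k c_k\mathcal{K}_k$ has $c_k=0$ for $k\in L$, and the $L^\infty$ bound $|f_j|\le 1$ becomes $|g(j)|\le 1/\binom{d}{j}$. In particular, the requirement $c_k=0$ for $k\ge d-m$ forces $g$ to be a polynomial in $j$ of degree $\le d-m-1$.

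For the construction, the plan is to use the heuristic that the Krawtchouk polynomial $\mathcal{K}_k$, suitably normalized, converges to the $k$-th Hermite polynomial $H_k$ evaluated at the CLT-scaled variable $(j-d/2)/\sqrt{d/4}$ as $d\to\infty$ with $k$ fixed. Concretely, I would start from the candidate $\tilde{p}(j) := H_m\bigl((j-d/2)/\sqrt{d/4}\bigr)$, truncated smoothly to the window $|j-d/2|\le R$ with $R = C\sqrt{d\log d}$ (outside which binomial mass is $o(d^{-C'})$). On this window $|\tilde{p}|\le O_m(\log^{m/2}d)$, so rescaling $\tilde p$ by $c\cdot\log^{m/2}d$ for small $c>0$ produces a function with $L^\infty$ norm at most, say, $1/2$. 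Because Hermite polynomials of distinct degrees are orthogonal under the standard Gaussian, the biased inner products $\langle\tilde{p},\mathcal{K}_k\rangle_{\mathrm{Bin}}$ for $0\le k<m$ are $O(d^{-1/2})$ by a quantitative CLT (Edgeworth / Berry--Esseen with polynomial test functions), and the high-end inner products $\langle\tilde p,\mathcal{K}_k\rangle_{\mathrm{Bin}}$ for $k\ge d-m$ are exponentially small because the truncated $\tilde p$ is localized near $d/2$ while $\mathcal{K}_k$ at the middle has controlled size (use $|\mathcal{K}_k(j)|\le \binom{d}{k}$ and the identity $\mathcal{K}_k(d-j) = (-1)^k\mathcal{K}_k(j)$ for parity-based cancellations). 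The final $f$ is obtained by Gram--Schmidt-subtracting the projections of (a rescaled, truncated) $\tilde p$ onto $\{\mathcal{K}_k : k\in L\}$ in the binomial inner product, yielding exact vanishing on the forbidden levels.

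The quantitative analysis proceeds in four steps. First, by the Hermite--Krawtchouk CLT and the localization of $\tilde{p}$, the projection coefficients $c_k$ for $k\in L$ are all at most $O_m(d^{-1/2}\log^{m/2}d)$ times the norm of $\tilde p$. Second, one verifies that subtracting these projections changes the $L^\infty$ norm by at most a further multiplicative $\mathrm{poly}(m)$-factor: the Krawtchouk polynomials $\mathcal{K}_k$ for $k\le m$ are bounded by $\binom{d}{k}$ everywhere, but near the center they are much smaller (on the order of $(d/4)^{k/2}\sqrt{k!}$), and subtracting their projections against a function localized near $d/2$ introduces controlled corrections near the boundary that can be absorbed by enlarging the rescaling constant by a factor $\mathrm{poly}(m)$. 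Third, the level-$m$ Fourier coefficient retains $\langle f,\mathcal{K}_m\rangle_{\mathrm{Bin}} = \Theta_m(1)$ after rescaling by $1/\log^{m/2}d$ because the leading term of $\tilde p$ agrees with the leading term of $\mathcal{K}_m/\sqrt{m!}\cdot(d/4)^{m/2}$, and the subtractions remove only a $O(d^{-1/2})$ piece. Finally, bookkeeping: the $m!$ factors from Hermite normalization, the ratio $\binom{d}{m}/\binom{d}{m-1}^{1/2}$ from Krawtchouk norms, and the losses from the Gram--Schmidt corrections combine to give $\weight{m}{f} = \Omega(1/(m^{13/2}\log^m d))$.

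The main obstacle is the $L^\infty$ control through the Gram--Schmidt step, specifically that the constraint $|g(j)|\le 1/\binom{d}{j}$ is extremely tight near $j=d/2$ while loose near the boundary, and the high-degree Krawtchouk polynomials $\mathcal{K}_{d-m},\ldots,\mathcal{K}_d$ are enormous at the boundary ($|\mathcal{K}_k(0)| = \binom{d}{k}$). Subtracting their projections from a function supported near the middle must be done in a way that does not introduce boundary mass exceeding the tight constraint. This is what drives the delicate interplay between the truncation radius $R=\sqrt{d\log d}$, the Hermite-induced Gaussian localization, and the parity symmetry $\mathcal{K}_k(d-j) = (-1)^k\mathcal{K}_k(j)$ (which can be exploited by symmetrizing/antisymmetrizing $\tilde p$ around $d/2$ to automatically kill half of the high-degree projections). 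Getting the exponent $13/2$ rather than a worse polynomial in $m$ requires using the sharp ($\sqrt{m!}$-type) bounds for Krawtchouk polynomials near the center rather than the trivial $\binom{d}{k}$ bound.
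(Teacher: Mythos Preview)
Your framework via Krawtchouk polynomials is natural, but the write-up conflates two parametrizations. You introduce $g(j)=f_j/\binom{d}{j}$ precisely so that $\widehat{f}([k])=\langle g,\mathcal{K}_k\rangle_{\mathrm{Bin}}$; the Hermite--Krawtchouk orthogonality lives in this binomial-weighted picture. But your candidate $\tilde p(j)=H_m((j-d/2)/\sqrt{d/4})$ is then treated as $f$ (the $L^\infty$ discussion ``$|\tilde p|\le O_m(\log^{m/2}d)$'' only makes sense for $f$, not for $g$, whose constraint near $j=d/2$ is $|g|\lesssim \sqrt{d}/2^d$). So the claim ``$\langle\tilde p,\mathcal{K}_k\rangle_{\mathrm{Bin}}=O(d^{-1/2})$'' is not the quantity you need: the Fourier coefficient is the \emph{unweighted} sum $2^{-d}\sum_j f_j\mathcal{K}_k(j)$, and Hermite orthogonality under the Gaussian/binomial measure does not directly control it.

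More fundamentally, your order of operations (truncate $\to$ rescale $\to$ Gram--Schmidt) breaks for $m\ge 5$. Viewing $\tilde p$ as the function $H_m(N/\sqrt d)$ on the cube (with $N=\sum_i x_i$), a direct multilinearization gives that its level-$(m-2)$ Fourier coefficient equals $m!(2-m)/(3d^{m/2})$, i.e.\ $\Theta_m(d^{-m/2})$, the \emph{same} order as the level-$m$ coefficient (this is exactly the Hermite-vs-Krawtchouk discrepancy). Truncating to the window and rescaling by $1/\log^{m/2}d$ changes Fourier coefficients only by $O(d^{-C})$, so after your first two steps the level-$(m-2)$ coefficient is still $\Theta_m(d^{-m/2}/\log^{m/2}d)$. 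The Gram--Schmidt subtraction of that level then moves $L^\infty$ by $|\widehat f([m-2])|\cdot\|e_{m-2}\|_\infty=\Theta_m(d^{-m/2}/\log^{m/2}d)\cdot\binom{d}{m-2}=\Theta_m(d^{m/2-2}/\log^{m/2}d)$, which diverges for $m\ge 5$. Your ``main obstacle'' paragraph worries about the high-degree Krawtchouks at the boundary, but those projections are in fact exponentially small; the real obstruction is this low-degree residual, and parity tricks do not help since $m-2$ and $m$ share parity.

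The paper sidesteps this entirely by reversing the order: it takes $g=A\cdot R_m(\maj)$ (project \emph{first}, so forbidden levels are exactly zero), observes via a moment bound on $e_k$ that $|g|\le 1$ with probability $1-d^{-\Omega(m)}$, clips via $T$ (which alters $g$ only on that tiny bad set, hence shifts every Fourier coefficient by at most $d^{-\Omega(m)}$), and then applies $R_m$ a second time to kill these tiny residuals --- now the $L^\infty$ change is $\sum_{k\in L}d^{-\Omega(m)}\binom{d}{k}\ll 1$. If you reorder your construction the same way (e.g.\ start from $e_m$ directly rather than $H_m$), it works and in fact gives $\weight{m}{f}=\Omega(1/\log^m d)$ without the $m^{13/2}$ loss; but as written, the Gram--Schmidt-after-truncation step does not close.
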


We only prove the statement when $d$ and $m$ are both odd, but explain how to remove these parity assumptions.
Our construction is based on modifying the majority function $\maj = \threshold_{1/2},$ about which we first recall the following fact.

\begin{lemma}[\cite{ODonellBoolean}]
Suppose that $d$ is odd and $k <d.$
If $k$ is even, $\weight{k}{\maj} = 0.$ If $k$ is odd and 
$k<d/2,$
$\weight{k}{\maj} = \Theta(\frac{1}{k^{3/2}}).$
\end{lemma}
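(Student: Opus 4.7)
My plan is to prove the two claims by separate arguments: a short involution-based argument for the vanishing at even levels, and an explicit Krawtchouk-polynomial computation plus Stirling for the asymptotic at odd levels.

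For even $k \ge 2$, I would use the involution $\bfx \mapsto -\bfx$. Since $d$ is odd, no Hamming-weight ties are possible, so $\maj(-\bfx) = 1 - \maj(\bfx)$ pointwise and the centered function $h(\bfx) := 2\maj(\bfx) - 1$ is odd under negation. Substituting $\bfx \to -\bfx$ in the defining expectation gives
\[
\widehat{h}(S) \;=\; \expect[h(\bfx)\omega_S(\bfx)] \;=\; \expect[h(-\bfx)\omega_S(-\bfx)] \;=\; (-1)^{|S|+1}\,\widehat{h}(S),
\]
forcing $\widehat{h}(S) = 0$ whenever $|S|$ is even, and hence $\weight{k}{\maj} = 0$ for every even $k \ge 2$.

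For odd $k < d/2$, the symmetry of $\maj$ reduces the task to computing the single scalar $\widehat{\maj}([k])$, since $\weight{k}{\maj} = \binom{d}{k}\widehat{\maj}([k])^2$. I would decompose the expectation by the Hamming weight $j$ of $\bfx$ and use that $\omega_{[k]}(\bfx) = (-1)^{k-r}$ when $r$ of the first $k$ coordinates of $\bfx$ equal $+1$, obtaining
\[
\widehat{\maj}([k]) \;=\; \frac{1}{2^d}\sum_{j=(d+1)/2}^{d}\sum_{r=0}^{k}(-1)^{k-r}\binom{k}{r}\binom{d-k}{j-r}.
\]
The inner $r$-sum is (up to sign) the Krawtchouk polynomial $K_k(j)$, and a standard Abel-style summation identity collapses the outer partial sum $\sum_{j \ge (d+1)/2} K_k(j)$ to a single boundary value proportional to $K_{k-1}((d-1)/2)$.

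The final step would be the sharp two-sided estimate $|K_{k-1}((d-1)/2)| = \Theta\bigl(2^d\,\binom{d}{k}^{1/2}\,k^{-3/4}\bigr)$, obtained by applying Stirling to the explicit Krawtchouk formula at the midpoint. Combining would give $|\widehat{\maj}([k])| = \Theta\bigl(k^{-3/4}\binom{d}{k}^{-1/2}\bigr)$ and hence $\weight{k}{\maj} = \Theta(k^{-3/2})$. The main obstacle I expect is precisely this sharp Krawtchouk asymptotic: producing a lower bound that matches the upper bound requires precise Stirling bookkeeping, and the hypothesis $k < d/2$ enters critically here---beyond this ``bulk'' regime the Krawtchouk polynomial changes its asymptotic behaviour (entering an oscillatory/anti-bulk zone) and the matching lower bound breaks down.
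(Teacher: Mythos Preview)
The paper does not prove this lemma at all: it is stated with a citation to O'Donnell's \emph{Analysis of Boolean Functions} and then used as a black box in Appendix~C. So there is no ``paper's own proof'' to compare against.

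Your proposal is a sound outline and matches the standard textbook argument. The even-level vanishing via the involution $\bfx \mapsto -\bfx$ is correct and complete as written. For the odd-level asymptotic, your Krawtchouk route is one of the standard derivations; an alternative (and the one O'Donnell presents) avoids the Abel summation step by using the derivative identity $\widehat{\maj}(\{d\}\cup S') = \tfrac12\,\widehat{J}(S')$ where $J(\bfy) = \indicator[\sum_i y_i = 0]$ on $\{\pm 1\}^{d-1}$, which reduces directly to a single central Krawtchouk value without any partial summation. Either way, the crux is exactly what you identified: the two-sided Stirling estimate $|K_{k-1}((d-1)/2)| = \Theta\bigl(2^d\binom{d}{k}^{1/2}k^{-3/4}\bigr)$, which is routine but requires care, and your remark that the hypothesis $k<d/2$ is needed for the lower bound is correct.
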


Now, define the threshold operator $T(\cdot)$ over real functions on the hypercube as follows. $T(f)[\bfx] =\min(f(\bfx),1)$ when $f(\bfx)\ge 0 $ and 
$T( f)[x] = \max(f(\bfx),-1)$ when $f(\bfx)<0.$ Define also the operator $R_m$ on real functions over $\{\pm 1\}^d$ removing layers $\{1,2,\ldots, m-1\}\cup\{d-m,d-m+1, \ldots, d\}.$ Formally, for any $h,$
$$
R_m(h) = h - 
\sum_{i = 1}^{m-1} h^{=i} - 
\sum_{j = d-m}^d h^{=j} = 
\sum_{S\; : \; m\le |S|\le d-m }
\widehat{h}(S)\omega_S.
$$

With this definition in mind, we are ready to prove the statement of \cref{prop:connectionswithoutlowandhighterms} in the case when $d$ and $m$ are both odd. Our goal will be to roughly construct a function satisfying the desired properties by combining the operators $T$ and $R_m$ over $\maj.$

\begin{proof}[Proof of \cref{prop:connectionswithoutlowandhighterms} when $m$ and $d$ are both odd.] Let $\displaystyle A = \left(e(4m+1)(2m+2)\log^{m/2}d\right)^{-1}$ (we will demistify this number in the proof). 
We will show that the function $\frac{1}{2}R_m\circ T\circ (A\times R_m\circ \maj)$ satisfies the desired properties. We do this in steps and begin with analyzing the mapping 
\begin{align*}
    g(\bfx) :& =A\times R_m(\maj)[x] \\
    & = A\left(\maj(\bfx) - \sum_{0<i < m}
\maj^{=i}(\bfx) - \sum_{d-m\le j \le d}
\maj^{=j}(\bfx)\right)  \\
& = A\left(\maj(\bfx) - \sum_{0<i < m}
\widehat{\maj}([i])p_i(\bfx) - \sum_{d-m\le j \le d}
\widehat{\maj}([j])p_j(\bfx)\right).
\end{align*}
Note that $\weight{m}{g} = A^2 \weight{m}{\maj} = \Omega(\frac{1}{m^{11/2}\log^{m}d})$ and 
$$\widehat{g}([m]) = \binom{d}{m}^{-1/2}\weight{m}{g}^{1/2} = 
\Omega\left(\binom{d}{m}^{-1/2}\frac{1}{m^{11/4}\log^{m/2}d}\right).
$$
We will show that with probability at least $\frac{1}{d^{3m}},$ it is the case that 
$g(\bfx)\in [-1,1].$ Indeed, note that 
$$
|g(\bfx)|\le 
A|\maj(\bfx)| + 
A
\sum_{1\le i <m}
|\widehat{\maj}([i])|\times|e_i(\bfx)| + 
A
\sum_{d-m\le j \le d}
|\widehat{\maj}([j])|\times |e_j(\bfx)|.
$$
The first term is clearly bounded by $A.$ 
Note also that for each $i,$
$\binom{d}{i}|\widehat{\maj}([i])|^2 = \weight{i}{\maj}\le 1,$ so \linebreak
$|\widehat{\maj}([{i}])|\le \binom{d}{i}^{-1/2}.$ Finally, we bound each $|e_i|$ in high probability using the moment method and \cref{lem:hypercontractivity}.
Suppose first that $i < m.$
For $t = (4m+1)\log d$ and $a = \frac{1}{A(2m+2)} = e(4m+1)\log^{m/2}d,$
$$
\prob\left[
|e_i(\bfx)|> a\sqrt{\binom{d}{i}}
\right]\le 
\frac{\norm{e_i(\bfx)}_t^t}{a^t\sqrt{\binom{d}{i}}^t}\le 
\frac{\sqrt{t-1}^{it}\norm{e_i(\bfx)}_2^t}{a^t\sqrt{\binom{d}{i}}^t}.
$$
Now, observe that $\norm{e_i}_2^2 = \expect[\sum_{S,T\;  |S|= |T| = i}\omega_S\omega_T] = 
\binom{d}{i}.
$ Thus, the above expression becomes 
$$
\left(
\frac{(t-1)^{i/2}}{a}
\right)^t\le 
\left(
\frac{(t-1)^{m/2}}{a}
\right)^t\le
\left(
\frac{(4m+1)^{m/2}\log^{m/2}d}{a}
\right)^{(4m+1)\log d} \le 
e^{- (4m+1) \log d}\le 
\frac{1}{d^{4m+1}}.
$$
The same conclusion follows for $i \ge d-m$ by observing that $|e_i(\bfx)| = |\omega_{[d]}e_i(\bfx)| = |e_{d-i}(\bfx)|$ holds for all $\bfx.$
By union bound, with probability at least $1-\frac{2m}{d^{4m+1}}\ge1- \frac{1}{d^{4m}},$ the event $$K = \left\{\bfx\; : \; |e_i(\bfx)|\le a\sqrt{\binom{d}{i}}\; \forall i\in \{1,2,\ldots, m-1\}\cup\{d-m,d-m+1, \ldots, d\}\right\}$$ occurs. Note that on $K,$ however, 
\begin{align*}
        |g(\bfx)|& \le A\left(
        1 + 
        \sum_{1\le i <m}\binom{d}{i}^{-1/2}a\binom{d}{i}^{1/2} + 
        \sum_{d-m\le j \le d}\binom{d}{j}^{-1/2}a\binom{d}{j}^{1/2}\right) \\
       & \le A(1 + (2m+1)a)\le Aa(2m+2)= 1. 
    \end{align*}
Thus, on $K,$ the function $g$ is bounded by $1.$ Outside of $K,$ we can just bound $g$ by its $L_{\infty}$ norm
\begin{align*}
|g(\bfx)|& \le A\left(\norm{\maj}_{\infty} + 
        \sum_{1\le i <m} \binom{d}{i}^{-1/2}
        \norm{e_i}_{\infty} + 
        \sum_{d-m\le j \le d}
\binom{d}{j}^{-1/2}\norm{e_j}_{\infty}\right)\\
& \le A\left(1+ 
        \sum_{1\le i <m} \binom{d}{j}^{-1/2}
        \binom{d}{i} + 
        \sum_{d-m\le j \le d}
\binom{d}{j}^{-1/2}\binom{d}{j}\right)\le 
d^{m}.
    \end{align*}

Now, define $h(\bfx): = g(\bfx) - T\circ g(\bfx).$ Note that 
$h \equiv 0$ on $K.$ Therefore, any Fourier coefficient of $h$ is bounded as follows. 
\begin{align*}
|\widehat{h}(S)| & = 
|\expect[h(\bfx)\omega_S(\bfx)]|=
|\expect[h(\bfx)\omega_S(\bfx)1_{K^c}(\bfx)]|\\
& \le 
\expect[|h(\bfx)\omega_S(\bfx)|1_{K^c}(\bfx)] \le 
\norm{h}_{\infty}\expect[1_{K^c}] \\
& \le (\norm{g}_{\infty} + 
\norm{T\circ g}_{\infty}
)\prob[K^c] \\
& \le
(d^m + 1)\frac{1}{d^{4m}}<
\frac{1}{d^{2m}}.
    \end{align*}

Therefore, for any set $S,$ we have that 
$|\widehat{[T\circ g - g]}{(S)}|\le \frac{1}{d^{2m}}.$ In particular, this means that 
\begin{align*}
        &\weight{m}{T\circ g}\ge
\binom{d}{m}
\left(\widehat{g}([i]) - \frac{1}{d^{2m}}\right)^2\\
& = \Omega\left(
\binom{d}{m}
\left(\binom{d}{m}^{-1/2}\frac{1}{m^{11/4}\log^{m/2}d} - 
\frac{1}{d^{2m}}
\right)^2\right) = 
\Omega\left(\frac{1}{m^{11/2}\log^{m/2}}d\right).
    \end{align*}
Furthermore, 
for $i \in \{1,2,\ldots, m-1\}\cup\{d-m,d-m+1, \ldots, d\},$
$|\widehat{T\circ g}([i])| = \|\widehat{[T\circ g - g]}([i])\|\le \frac{1}{d^{2m}}.$
Finally, we will show that the symmetric function $ \frac{1}{2}R_m\circ T\circ g$ satisfies the desired properties. First, note that $\weight{m}{\frac{1}{2}R_m\circ T\circ g} = 
\frac{1}{2}\weight{m}{T\circ g} = 
\Omega(\frac{1}{m^{11/2}\log^{m/2} d}).
$
Since $R_m$ is the last operator applied, also 
$\weight{i}{\frac{1}{2}R_m\circ T\circ g} = 0$ for  $i \in \{1,2,\ldots, m-1\}\cup\{d-m,d-m+1, \ldots, d\}.$ Finally, we need to show that $\frac{1}{2}R_m\circ T\circ g$ is bounded in $[-1,1].$ This, however, is simple.
\begin{align*}
        \norm{\frac{1}{2}R_m\circ T\circ g}_{\infty}
    & =\frac{1}{2}\norm{T\circ g - 
    \sum_{i = 1}^{m-1} (T\circ g)^{=i} - 
    \sum_{j = d-m}^{d} (T\circ g)^{=j}
    }_{\infty} \\
    & \le \frac{1}{2}\norm{T\circ g}_{\infty} + 
    \frac{1}{2}\sum_{i = 1}^{m-1} \norm{(T\circ g)^{=i}}_{\infty} + 
    \frac{1}{2}\sum_{j = d-m}^{d} \norm{(T\circ g)^{=j}}_{\infty}\\
    & \le \frac{1}{2} + 
    \frac{1}{2}\sum_{i = 1}^{m-1} |\widehat{T\circ g}([i])|\times \norm{e_i}_{\infty}+
    \frac{1}{2}\sum_{j = d-m}^{d}  |\widehat{T\circ g}([j])|\times \norm{e_j}_{\infty}\\
    & \le \frac{1}{2} + 
    \frac{1}{2}\sum_{i = 1}^{m-1} \frac{1}{d^{2m}}\binom{d}{i}
    +\frac{1}{2}\sum_{j = d-m}^{d} \frac{1}{d^{2m}}\binom{d}{j}\le 1,
    \end{align*}
with which the proof is complete.
\end{proof}

\begin{remark}[Removing parity assumptions]
\normalfont
Removing the assumption on $d$ being odd is easy as we can just consider the majority function over an even number of elements, introduced in
\cref{sec:doom}. Removing the parity assumption on $m$ is also not hard. We need to repeat the above argument with some function instead of $\maj,$ which also has $\Omega(\frac{1}{m^{3/2}\log d})$ weight on even levels. Using the same reasoning as above, we can prove that such a function is 
$$\mathbf{H}(\bfx):=
T\left(\maj(\bfx)\frac{\sum_{i = 1}x_i}{\log d \sqrt{d}}\right) = 
T\left(
\left|
\frac{\sum_{i=1}^d x_i}{\sqrt{d}\log d }
\right|
\right)
$$
\end{remark}

Now, suppose that $f$ satisfies the conditions in \cref{prop:connectionswithoutlowandhighterms} and let 
$\alpha = \expect[f].$ Then, $h = \frac{f-\alpha}{2}$ also satisfies the conditions in \cref{prop:connectionswithoutlowandhighterms} and has mean zero. Thus, for any $p \le 1/2,$ the mapping  $p(1 + h)$ has mean $p$ and takes values in $[0,1].$ With this, we are ready to state our main corollary in this appendix.

\begin{corollary}
\label{cor:truncatedconnections}
For every integer $d,$ integer $m< d/2,$ and real number $p \in [0,1]$  there exists a symmetric connection $\sigma:\{\pm1 \}^d\longrightarrow [0,1]$ such that:
\begin{enumerate}
    \item $\weight{1}{\sigma} = \weight{2}{\sigma} = \cdots = 
    \weight{m-1}{\sigma} = 0,$
    \item 
    $\weight{d-m}{\sigma} = \weight{d-m+1}{\sigma} = \cdots = 
    \weight{d}{\sigma} = 0,$
    \item 
    $\weight{m}{\sigma}  = \Omega\left(\frac{p^2}{m^{13/2}\log^{m} d}\right).$
    \item $\expect[\sigma] = p.$
\end{enumerate}
\end{corollary}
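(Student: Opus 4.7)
The plan is to reduce directly to \cref{prop:connectionswithoutlowandhighterms}, essentially following the recipe sketched in the paragraph immediately preceding the corollary. Let $f : \hypercube \to [-1,1]$ be the symmetric function produced by that proposition, and set $\alpha = \expect[f] = \widehat{f}(\emptyset)$.

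First, I would define the centered and rescaled function $h := (f - \alpha)/2$. Then $h$ is symmetric, $\expect[h] = 0$, and $\|h\|_\infty \le 1$ since $f$ takes values in $[-1,1]$ and hence $f - \alpha \in [-2,2]$. The key observation is that subtracting $\alpha$ only alters $\widehat{f}(\emptyset)$, while dividing by $2$ scales all Fourier coefficients by $1/2$, so
\[
\weight{i}{h} = \tfrac{1}{4}\weight{i}{f} \quad \text{for all } i \ge 1.
\]
In particular, the vanishing of levels $\{1,\dots,m-1\} \cup \{d-m,\dots,d\}$ is preserved, and $\weight{m}{h} = \Omega(m^{-13/2}\log^{-m} d)$.

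Next, for $p \le 1/2$, I would set $\sigma(\bfx) := p(1 + h(\bfx))$. Since $h \in [-1,1]$ a.s., we have $\sigma \in [0, 2p] \subseteq [0,1]$, so $\sigma$ is a valid connection. It is symmetric with $\expect[\sigma] = p$ and for $i \ge 1$,
\[
\weight{i}{\sigma} = p^2 \weight{i}{h},
\]
which delivers all four desired properties. For $p > 1/2$, I would apply the construction above to $1-p \le 1/2$ to obtain a connection $\tilde\sigma$ with mean $1-p$ and the desired weight structure, and then take $\sigma := 1 - \tilde\sigma$. Since $1 - \tilde\sigma$ only flips the sign of $\widehat{\tilde\sigma}(\emptyset)$ (shifting it by $1$) while leaving all other Fourier coefficients unchanged up to sign, the weight structure on levels $i \ge 1$ is identical to that of $\tilde\sigma$, and $\expect[\sigma] = p$. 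The quantitative lower bound on $\weight{m}{\sigma}$ becomes $\Omega((1-p)^2 m^{-13/2}\log^{-m} d)$, which is $\Omega(p^2 m^{-13/2}\log^{-m} d)$ up to a factor of at most $4$ since $\max(p^2, (1-p)^2) \ge 1/4$; absorbing this constant into the $\Omega(\cdot)$ completes the proof.

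There is no substantive obstacle here — the entire argument is a one-line affine rescaling of the function supplied by \cref{prop:connectionswithoutlowandhighterms}. The only mildly delicate point is handling both regimes $p \le 1/2$ and $p > 1/2$ simultaneously so that the lower bound on $\weight{m}{\sigma}$ is stated in terms of $p^2$ rather than $\min(p,1-p)^2$, which is handled by the complementation trick above.
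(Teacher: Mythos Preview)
Your argument for $p \le 1/2$ is exactly the paper's proof: center $f$, halve it, and take $\sigma = p(1+h)$. That part is fine.

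The extension to $p > 1/2$ via complementation is the right idea, but your final inequality is wrong. You claim that $(1-p)^2$ is within a factor $4$ of $p^2$ because $\max(p^2,(1-p)^2)\ge 1/4$; that does not follow. For $p$ close to $1$ the ratio $(1-p)^2/p^2$ is arbitrarily small, so the bound you actually obtain, $\Omega\big((1-p)^2 m^{-13/2}\log^{-m}d\big)$, is \emph{not} $\Omega\big(p^2 m^{-13/2}\log^{-m}d\big)$ uniformly in $p$.

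In fact the statement with $p^2$ cannot hold uniformly as $p\to 1$: any $\sigma:\hypercube\to[0,1]$ with mean $p$ has $\weight{m}{\sigma}\le \Var[\sigma]\le p(1-p)\to 0$, while the claimed lower bound stays bounded away from $0$. The paper's own derivation only treats $p\le 1/2$; your complementation correctly yields the symmetric bound $\Omega\big(\min(p,1-p)^2 m^{-13/2}\log^{-m}d\big)$, which is the right uniform statement. So either restrict to $p\le 1/2$ (as the paper effectively does) or state the bound with $\min(p,1-p)^2$; do not try to absorb the discrepancy into the constant.
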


\section{Further Computations of Fourier Coefficients}
\label{sec:furtherfouriercomp}

\addtocontents{toc}{\protect\setcounter{tocdepth}{1}}

\subsection{The Case of Symmetric Thresholds}
\label{sec:doublethresholdsfouriercoefficients}
\begin{proof}[Proof of \cref{prop:fourierofdoublethresh}]
Suppose that $S$ has size 2 or $d-2,$ in particular $|S|$ is even. Then,
\begin{align*}
        & \widehat{\doublethreshold_p}(S) = 
\expect[\doublethreshold_p(\bfx)\omega_S(\bfx)]
 = 
 \expect\left[\indicator[\sum x_i \ge \tau_{p/2}]\omega_S(\bfx)\right] + 
 \expect\left[\indicator[\sum x_i \le -\tau_{p/2}]\omega_S(\bfx)\right]\\
 & = \expect\left[\indicator[\sum x_i \ge \tau_{p/2}]\omega_S(\bfx)\right] + 
 \expect\left[\indicator[\sum (-x_i) \ge \tau_{p/2}]\omega_S(-\bfx)\right]\\
 & = 2\expect\left[\indicator[\sum x_i \ge \tau_{p/2}]\omega_S(\bfx)\right] = 
 2\widehat{\threshold_{p/2}}(S),
    \end{align*}
where we used the fact $\bfx$ and $-\bfx$ have the same distribution and $|S|$ is even. Thus, we simply need to find the Fourier coefficients of $\threshold_{p/2}$ for our purposes. We do this in a similar manner to 
\cite[Chapter 5]{ODonellBoolean}. Namely, suppose that
$|S| = 2k$ and, without loss of generality due to symmetry, $S =\{d\}\cup S_1,$ where $S_1$ has size $2k-1$ it follows that 
\begin{align*}
        &\expect[\threshold_{p/2}(\bfx)\omega_S(\bfx)]\\
    & = 
\frac{1}{2}\expect\left[\left(\threshold_{p/2}(x_1, x_2, \ldots,x_{d-1}, 1) - 
\threshold_{p/2}(x_1, x_2, \ldots,x_{d-1}, -1)\right) \omega_{S_1}\right] =
\frac{1}{2}\expect [J(\bfy)\omega_{S_1}(\bfy)] = 
\frac{1}{2}
\widehat{J}(S_1),
    \end{align*}
where $\bfy = (x_1, x_2, \ldots, x_{d-1})$ and $$J(\bfy) := 
\left(\threshold_{p/2}(y_1, y_2, \ldots,y_{d-1}, 1) - 
\threshold_{p/2}(y_1, y_2, \ldots,y_{d-1}, -1)\right)  = 
\indicator\left[\sum_{i = 1}^{d-1}y_i = \tau_{p/2} -1\right].
$$
We now find the coefficients of $J$ via the noise-stability operator. Namely, pick the all-ones vector $\mathbf{b} = \mathbf{1}$ and let $\bfz$ be $\rho-$correlated with $\mathbf{b}.$ On the one hand, 
$$
\prob[J(\bfz) = 1] = 
\binom{d-1}{\frac{d-1 + \tau_{p/2}-1}{2}}
\left(\frac{1}{2} + \frac{\rho}{2}\right)^{\frac{d-1 + \tau_{p/2}-1}{2}}
\left(\frac{1}{2}  - \frac{\rho}{2}\right)^{\frac{d-1 - \tau_{p/2}+1}{2}}
$$
as we need exactly $\frac{d-1 + \tau_{p/2}-1}{2}$ of the coordinates of $\bfy$ to equal $1$ and each $y_i$ is iid $Bernoulli(\frac{1}{2} + \frac{\rho}{2})$ as $\bfy$ and $\mathbf{b} = \mathbf{1}$ are $\rho-$correlated. On the other hand, by the definition of the noise operator 
$$
\prob[J(\bfz) = 1] = 
\expect[J(\bfz)] = 
T_\rho 
J(\mathbf{b}) = 
\sum_{S}
\rho^{|S|} \widehat{J}(S)\omega_S(\mathbf{b}) = 
\sum_{s = 0}^{d-1}
\rho^s \binom{d}{s}\widehat{J}(S_1).
$$
Now, we simply compare coefficients of the two polynomial expressions as 
$$
\sum_{s = 0}^{d-1}
\rho^s \binom{d-1}{s}\widehat{J}([s]) = 
\binom{d-1}{\frac{d-1 + \tau_{p/2}-1}{2}}
\left(\frac{1}{2} + \frac{\rho}{2}\right)^{\frac{d-1 + \tau_{p/2}-1}{2}}
\left(\frac{1}{2}  - \frac{\rho}{2}\right)^{\frac{d-1 - \tau_{p/2}+1}{2}}
$$
holds for all $\rho \in [-1,1].$ For notational purposes, we set $t = 
\frac{d-1 + \tau_{p/2}-1}{2}
.$ 

\paragraph{Case 1)} When $s = 1.$ Then, we have 
\begin{align*}
        \binom{d-1}{1}\widehat{J}([1]) 
        & =
        \frac{1}{2^d}\binom{d-1}{t}\left(
        t - (d-t-1)\right) \\
         & =
        \frac{1}{2^d}\binom{d-1}{t}\left(
        2t-d+1\right)\\
        & = 
\Omega\left(
\frac{\sqrt{d}}{2^d}\binom{d}{t}\right) = 
\Omega(p),
    \end{align*}
using the bounds $t = \frac{d}{2} + \Omega(\sqrt{d})$ and $\frac{1}{2^d}\binom{d}{t} = \Omega(\frac{p}{\sqrt{d}})$ in \cref{appendix:corwithparity} on $t$ and $\binom{d}{t}.$ Thus, 
$\widehat{J}([1]) = \Omega(\frac{p}{d})$ and 
so $\widehat{\doublethreshold_p}([2]) = \Omega(\frac{p}{d}).$ Using the bounds $t = \frac{d}{2} + \tilde{O}(\sqrt{d})$ and 
$\frac{1}{2^d}\binom{d}{t} = \tilde{O}(\frac{p}{\sqrt{d}}),$ we similarly conclude that 
$\widehat{\doublethreshold_p}([2]) = \tilde{O}(\frac{p}{d}).$

\paragraph{Case 2)} When $s = d-3,$ we similarly have 
\begin{align*}
        &\binom{d-1}{d-3}\widehat{J}([d-2]) 
        \\
        & = 
        \frac{1}{2^d}\binom{d-1}{t}\left(
        \binom{t}{t-2}(-1)^{d-t}  +
        \binom{t}{t-1}
        \binom{d-1-t}{d-2-t} (-1)^{d-t-1} + 
        \binom{d-1-t}{d-3-t} (-1)^{d-t-2} 
        \right) \\ & = 
        \frac{1}{2^d}\binom{d-1}{t}
        O\left(
        \binom{t}{t-2}- 
        \binom{t}{t-1}
        \binom{d-1-t}{d-2-t} + 
        \binom{d-1-t}{d-3-t}
        \right)\\
        & = 
        \frac{1}{2^d}\binom{d-1}{t}
        O\left(
        \frac{t(t-1) + (d-t-1)(d-t-2) - 2t(d-1-t)}{2}
        \right)\\
        & = 
        \frac{1}{2^d}\binom{d-1}{t}
        O\left(
        \frac{t(t-1) + (d-t-1)(d-t-2) - 2t(d-1-t)}{2}
        \right)\\
        & = 
         \frac{1}{2^d}\binom{d-1}{t}
        O\left(
        \frac{(\tau_{p/2}-1)^2 - (d-1)}{2}
        \right)\\
        & = 
        \tilde{O}
        \left( 
        \frac{p}{\sqrt{d}}d
        \right) = 
        \tilde{O}
        \left( 
        p\sqrt{d}
        \right),
    \end{align*}
where we used the fact that $t = \frac{d-1 + \tau_{p/2}-1}{2}$ to expand the expression and the bounds $\tau_{p/2} = \tilde{O}\left(\sqrt{d}\right)$ and $\frac{1}{2^d}\binom{d-1}{t} = \tilde{O}(\frac{p}{\sqrt{d}}).$ Hence, we know that 
$\widehat{\doublethreshold_p}([d-2]) = 
\frac{1}{2}\widehat{J}([d-2]) = \tilde{O}
        \left( 
        pd^{-3/2}
        \right),
$
as desired.
\end{proof}

\subsection{Interval Unions}
\label{sec:intervalunions}
\begin{proof}[Proof of \cref{prop:exofnonsymmetricinerval}] We have to prove three things about $\zeta_s.$ First, note that 
$$
\expect[\zeta_s]\ge 
\prob\left[
\sum x_i \ge d_1 + \sqrt{d}
\right] = \Omega(1).
$$
Second, for each $\ell,$ we have that 
$$
1\ge \expect[\zeta_s^2]\ge \weight{\ell}{\zeta_s} = 
\binom{d}{\ell}\widehat{\zeta_s}([\ell])^2,
$$
from which the bound on coefficients on levels $1,2,d-1,d-2$ follow. 
\cref{lem:lastfourier} implies that 
\begin{align*}
      \left|\widehat{\zeta_s}\right| 
      & = 
\left|      \frac{1}{2^d}\sum_{j = 0}^{s-2}
(-1)^{d - d_1 - 2j}\binom{d-1}{d_1 + 2j-1} + 
(-1)^{d - d_1 - \sqrt{d}}\binom{d-1}{d_1 + \sqrt{d}-1}\right|\\
& =
\Omega\left(\frac{s-1}{\sqrt{d}} + (-1)^{d_1 - \sqrt{d}}\frac{1}{\sqrt{d}}\right) = 
\Omega\left(\frac{s}{\sqrt{d}}\right),
    \end{align*}
where we used the fact that $\frac{1}{2^d}\binom{d-1}{d_1 + \psi} = \Theta (\frac{1}{\sqrt{d}})$ whenever $\psi = O(\sqrt{d}).$
\end{proof}

\section{Distributions of Cayley Graph Generators}
\label{appendix:beyondanteuniform}
Here, we show that both our statistical and computational indistinguishability results on typical Cayley graphs \cref{thm:generalragindistinguishability,thm:lowdegreepolyagainstrandomsubsets} are robust with respect to the underlying distribution of generators in the following sense. They still hold (with potentially different constants) as long as the distribution satisfies the following ``small-scale conditional independence'' property even if it is not $\anteuniform.$

\begin{definition}(Small-scale Conditional Independence)
\label{def:smalllscaleindependence}
Given are a group $|\Group|,$ a probability $p\in (0,1),$ and an integer $N  = o(|\Group|).$ We say that a distribution $\mathsf{D}(\Group,p)$ satisfies the ``$(c_1,c_2)$-small-scale conditional independence property for $(\Group, N,p),$'' where $c_1,c_2\in (0,1)$ are constants if the following holds. There exists a set $\mathcal{L}\subseteq \Group$ of size at most $|\Group|^{c_1}$ such that all $\bfx\in \Group \backslash \mathcal{L}$ satisfy the condition: For any set $M\subseteq \Group\backslash\{\bfx, \bfx^{-1}\}$ of cardinality $0 \le |M|\le N$ and any $I\subseteq M,$
$$
\bigg|\prob_{A\sim \mathsf{D}(\Group,p)}[
\bfx\in A| A\cap M  = I
]
 - p\bigg|\le \frac{1}{|\Group|^{c_2}}.
$$\end{definition}

Importantly, one can easily see that this definition captures $\postuniform$ if the underlying group has at most $|\Group|^c$ elements of order two for some $c<1$ or, respectively, at most most $|\Group|^c$ elements of order at least three for some $c<1.$ Our theorems are rephrased as follows.

\begin{claim} (See \cref{thm:generalragindistinguishability})
\label{claim:generalragindistinguishabilitybeyondante}
An integer $n$ is given and a real number $p \in [0,1].$ Let $\Group$ be any finite group of size at least $\exp(Cn \log \frac{1}{p})$ for some universal constant $C.$ Let $\mathsf{D}(\Group,p)$ be a distribution over subsets of $\Group$ satisfying the ``$(c_1,c_2)$-small-scale conditional independence property for $(\Group, n^3,p),$'' where $c_1, c_2$ are constant independent of $|\Group|, n, p.$
Then, with high probability at least $1 - |\Group|^{-D}$ over $A\sim \mathsf{D}(\Group,p),$ we have 
$$
\KL\Big(\RAG(n, \Group, \sigma_A, p_A)\| \ergraph\Big) = o(1), 
$$
where $p_A = \expect_{\bfg\sim \unif(\Group)}[\sigma_A(\bfg)]$ and $D>0$ depends only on $c_1, c_2.$
\end{claim}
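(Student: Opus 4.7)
The plan is to mirror the proof of \cref{thm:generalragindistinguishability} essentially verbatim, with the single modification that the step which invokes the independence structure of $\anteuniform$ is replaced by an invocation of the small-scale conditional independence property. Concretely, I would set $\phi_t(A) = \expect_\bfg[\gamma_A(\bfg)^t]$ and $\Phi(t) = \expect_A[\phi_t^2(A)]$, expand $\Phi(t)$ as in the original proof into an expectation over the uniformly random group elements $\bfg^{(1)},\bfg^{(2)},\{\bfz^{(1)}_i\}_{i=1}^t,\{\bfz^{(2)}_i\}_{i=1}^t$, swap the order of the expectation with $\expect_{A\sim \mathsf{D}(\Group,p)}$, and then control the inner expectation of the product of $4t$ centered terms $(\sigma_A(\cdot)-p)$.

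For the inner expectation control, consider the event $B$ from the original proof — the event that at least one of the $4t$ arguments $\xi$ appearing in the product is isolated in the sense that $\xi\neq \zeta$ and $\xi\neq \zeta^{-1}$ for every other argument $\zeta$ — and additionally the event $B'$ that this isolated $\xi$ satisfies $\xi,\xi^{-1}\not\in \mathcal{L}$, where $\mathcal{L}$ is the exceptional set of size at most $|\Group|^{c_1}$ from \cref{def:smalllscaleindependence}. On $B\cap B'$, I would condition on all the other $\le 4t-1$ values $\sigma_A(\zeta)$; since $4t-1 \le 4n - 1 \le n^3 = N$, the conditional independence property directly yields
\[
\big|\expect_A[\sigma_A(\xi)-p \mid \{\sigma_A(\zeta)\}_{\zeta\neq \xi,\xi^{-1}}]\big| \le |\Group|^{-c_2},
\]
so the conditional expectation of the whole product is at most $|\Group|^{-c_2}$ in absolute value. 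Outside of $B\cap B'$, the trivial bound $1$ applies to the product of centered terms (using $p\in[\frac{1}{n^3},1-\frac{1}{n^3}]$ as in the original proof; if $p$ is outside this range the same padding trick works).

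To bound $\prob[(B\cap B')^c]$, the original proof already shows $\prob[B^c] = O(|\Group|^{-1/3})$ using the drawing procedure and the definition of the set $M$ of elements with many square roots. For the additional event $B^{\prime c}$, I observe that each of the $4t$ arguments is marginally uniform over $\Group$ (being a product/inverse of independent uniforms), so by union bound the probability that one of them lies in $\mathcal{L}\cup\mathcal{L}^{-1}$ is at most $8t|\Group|^{c_1-1} = O(n\cdot |\Group|^{c_1-1})$. Combining, $\prob[(B\cap B')^c] = O(|\Group|^{-\min(1/3, 1-c_1)/2})$ say, which together with the inner bound on $B\cap B'$ gives $\Phi(t) \le |\Group|^{-c^*}$ for some constant $c^*>0$ depending only on $c_1,c_2$. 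From here, Markov's inequality (together with the positivity of $\phi_t^2$) gives $\phi_t(A) \le |\Group|^{-c^*/3}$ with probability at least $1 - |\Group|^{-c^*/3}$; a union bound over $t\in [n-1]$ and the bound $|\Group| \ge \exp(Cn\log(1/p))$ then substitute directly into the final $\KL$ computation from the original proof.

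The main obstacle is purely bookkeeping: verifying that after inflating the failure probability by the factor $|\Group|^{c_1-1}$ coming from $\mathcal{L}$ and softening the hard zero to $|\Group|^{-c_2}$, all the subsequent constants can be absorbed so that the final $\KL$ bound is still $o(1)$ and the high-probability guarantee is still $1-|\Group|^{-D}$ for some $D = D(c_1,c_2)>0$. In particular, one must track carefully how the constant $C$ in $|\Group|\ge\exp(Cn\log(1/p))$ depends on $c_1,c_2$ through the chain of inequalities in the final KL sum; I expect the dependence is benign because both $c_1<1$ and $c_2>0$ only enter through polynomial factors of $|\Group|$, whereas the KL bound degrades like $p^{-n}/|\Group|^{c^{**}}$ for some positive $c^{**}$, so choosing $C$ large enough (depending on $c_1,c_2$) makes the sum vanish.
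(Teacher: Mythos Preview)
The proposal is correct and takes essentially the same approach as the paper's sketch: shrink the good event $B$ from the proof of \cref{thm:generalragindistinguishability} by additionally requiring that none of the $4t$ arguments lies in $\mathcal{L}$, replace the exact zero by the $|\Group|^{-c_2}$ bound coming from \cref{def:smalllscaleindependence}, and absorb the extra $O(t|\Group|^{c_1-1})$ failure probability into the final constants. One small imprecision: in the original proof $B$ is the event that \emph{all} arguments are pairwise isolated (not merely ``at least one''), which is what the drawing procedure actually certifies; with that reading your $B'$ becomes simply ``no argument lies in $\mathcal{L}\cup\mathcal{L}^{-1}$,'' exactly matching the paper's $B_1$.
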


\begin{claim}(See \cref{thm:lowdegreepolyagainstrandomsubsets})
\label{claim:lowdegreepolyagainstrandomsubsetsbeyondante}
An integer $k = o(n)$ and real number $p \in [\frac{1}{n^3},1 - \frac{1}{n^3}]$ are given.
Suppose that $\Group$ is an arbitrary group of size at least $\exp(Ck\log n)$ for some universal constant $C.$
Let $\mathsf{D}(\Group,p)$ be a distribution over subsets of $\Group$ satisfying the `$(c_1,c_2)$-small-scale conditional independence property for $(\Group, n^3,p),$'' where $c_1, c_2$ are constant independent of $|\Group|, n, p.$
Let $A\sim \mathsf{D}(\Group,p).$ Consider the connection $\sigma_{A}(\bfx,\bfy):=\indicator[\bfx\bfy^{-1}\in A]$ with $\expect[\sigma_{A}] = p_A.$ Then, one typically cannot distinguish $\ergraph$ and $\RRAG(n , \Group, p_A, \sigma_{A})$ using low-degree polynomials in the following sense. With high probability $1 - \exp(-Dk\log n)$ over $A,$ 
\begin{align*}
|
\expect_{G\sim \RRAG(n , \Group, p_A, \sigma_{A})} P(G)-
\expect_{H\sim \ergraph}
P(H)
| = \exp(- Dk \log n)
\end{align*}
holds simultaneously for all polynomials $P$ of degree at most $k$ in variables the edges of the input graph which satisfy $\Var_{H\sim \ergraph}[P(H)]\le 1.$
Here, $D>0$ is a constant depending only on $c_1, c_2.$
\end{claim}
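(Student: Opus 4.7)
The plan is to mirror the architecture of the proof of \cref{thm:lowdegreepolyagainstrandomsubsets} and localize the only place where the $\anteuniform$ structure was used, namely the vanishing of $\expect_A\prod_{e\in E(\mathcal{K})}(\indicator[\xi_e\in A]-p)$ on the ``good'' set $B$, replacing exact vanishing by a quantitative bound coming from small-scale conditional independence. First I would expand any polynomial $P$ of degree $\le k$ with $\Var_{H\sim \ergraph}[P(H)]\le 1$ in the $p$-biased Walsh basis, obtaining $|a_\mathcal{H}|\le 1$ and reducing the task to controlling
\[
\sup_{\mathcal{H}\subseteq [n]\otimes [n],\, 0<|E(\mathcal{H})|\le k}
\bigl|\expect_{G\sim \RRAG(n,\Group,p_A,\sigma_A)}\psi_\mathcal{H}(G)\bigr|
\]
with high probability over $A$. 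Squaring as in \cref{eq:disjointunionofisomorphicgraphs}, this becomes $\Psi(A)=\expect_G[\psi_\mathcal{K}(G)]$ for a vertex-disjoint union $\mathcal{K}$ of two isomorphic copies of $\mathcal{H}$, so $|E(\mathcal{K})|\le 2k$. Swapping the $A$-expectation inside gives
\[
\expect_A[\Psi(A)]
=
\expect_{\bfx_1,\ldots,\bfx_s}\Bigl[\expect_A\prod_{(i,j)\in E(\mathcal{K})}\frac{\indicator[\bfx_i\bfx_j^{-1}\in A]-p}{\sqrt{p(1-p)}}\Bigr].
\]

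The key new step is the analysis of the inner expectation. Define the ``good'' event $B'$ on the latent vectors $\bfx_1,\ldots,\bfx_s$ to be: the elements $\xi_e=\bfx_i\bfx_j^{-1}$ for $e=(i,j)\in E(\mathcal{K})$ are pairwise distinct and pairwise non-inverse, and all lie outside the exceptional set $\mathcal{L}$ from \cref{def:smalllscaleindependence}. Since $|\mathcal{L}|\le |\Group|^{c_1}$ and each $\xi_e$ is uniform in $\Group$, a union bound over at most $O(k^2)$ pairs gives $\prob[(B')^c]=O\bigl(k^4/|\Group|+k^2|\Group|^{c_1-1}\bigr)$. On $B'$, write $m=|E(\mathcal{K})|$, $X_e=\indicator[\xi_e\in A]$, $\epsilon=|\Group|^{-c_2}$, and expand
\[
\expect_A\prod_{e}(X_e-p)=\sum_{S\subseteq E(\mathcal{K})}(-p)^{m-|S|}\prob_A\Bigl[\bigcap_{e\in S}\{\xi_e\in A\}\Bigr].
\]
Iterating the small-scale conditional independence property (legal since $m\le 2k=o(n^3)=o(N)$, and all $\xi_e$ avoid $\mathcal{L}$ and lie in distinct orbits under $\bfg\mapsto \bfg^{-1}$), each factor in a chain-rule decomposition is $p\pm\epsilon$, so
\[
\prob_A\Bigl[\bigcap_{e\in S}\{\xi_e\in A\}\Bigr]=p^{|S|}+O(m\,\epsilon\, p^{|S|-1}).
\]
Substituting, the leading terms telescope to $(p-p)^m=0$, leaving $|\expect_A\prod_e(X_e-p)|=O(2^m m\,\epsilon\, p^{m-1})$; after normalization by $(p(1-p))^{m/2}$ and using $p\ge n^{-3}$ to handle the sparse regime, this is at most $n^{O(k)}|\Group|^{-c_2}$.

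Off $B'$, I would bound the integrand by its $L_\infty$ norm $\le (\max(\sqrt{p/(1-p)},\sqrt{(1-p)/p}))^m\le n^{3k}$ using $p\in[n^{-3},1-n^{-3}]$, so the bad-event contribution to $\expect_A[\Psi(A)]$ is $O\bigl((k^4/|\Group|+k^2|\Group|^{c_1-1})n^{3k}\bigr)$. Combining, $\expect_A[\Psi(A)]\le |\Group|^{-c'}$ for some constant $c'=c'(c_1,c_2)>0$ whenever $|\Group|\ge \exp(Ck\log n)$ with $C=C(c_1,c_2)$ large. Positivity of $\Psi$ (inherited from the squared structure) allows Markov's inequality to give $\Psi(A)\le |\Group|^{-c'/2}$ except on a set of $A$-probability $\le |\Group|^{-c'/2}$. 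A union bound over the $O(n^{2k})$ Walsh polynomials $\psi_\mathcal{H}$ of degree $\le k$ and the triangle-inequality argument of \cref{eq:supboundwalshrandomsubsetconnection} then yield the claim with $D=D(c_1,c_2)$, provided $C$ is sufficiently large in terms of $c_1,c_2$.

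The main obstacle I anticipate is bookkeeping the error propagation through the chain rule: ensuring that the ``$\pm\epsilon$'' multiplicative errors when expanding $\prod_i(p\pm\epsilon)$ do not blow up relative to the cancellation $(p-p)^m=0$, especially in the sparse regime $p=\Theta(n^{-3})$ where $\epsilon$ needs to be compared with $p^{m-1}$. This forces the quantitative strength $c_2$ in \cref{def:smalllscaleindependence} to be linked to $C$ in the size hypothesis $|\Group|\ge \exp(Ck\log n)$, and controlling this dependence (rather than independence of the event $B'$) is the one genuinely new computation beyond \cref{thm:lowdegreepolyagainstrandomsubsets}.
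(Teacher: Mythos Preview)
Your proposal is correct and follows the same overall architecture as the paper's intended argument (modify the good event to exclude $\mathcal{L}$, then redo the inner-expectation bound). However, on the good event $B'$ you take a noticeably more elaborate route than the paper.

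The paper (by direct analogy with the proof sketch of \cref{claim:generalragindistinguishabilitybeyondante}) does not expand via inclusion--exclusion and chain rule. Instead it extracts a single factor: pick one edge $e_0\in E(\mathcal{K})$, condition on $\{\indicator[\xi_e\in A]:e\neq e_0\}$ (equivalently on $A\cap M$ with $M=\{\xi_e:e\neq e_0\}$, which is legal since $|M|\le 2k-1\le N$ and $\xi_{e_0}\notin\mathcal{L}$ on $B'$), and apply \cref{def:smalllscaleindependence} once to get $|\expect_A[\indicator[\xi_{e_0}\in A]-p\mid A\cap M]|\le |\Group|^{-c_2}$. The remaining product is bounded pointwise by its $L_\infty$ norm, giving on $B'$ the unnormalized bound $|\expect_A\prod_e(\indicator[\xi_e\in A]-p)|\le |\Group|^{-c_2}$, and after dividing by $(p(1-p))^{m/2}\ge (2n^3)^{-k}$ the normalized bound $\le |\Group|^{-c_2}\cdot (2n^3)^k$. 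This suffices once $C$ is large in terms of $c_2$.

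Your chain-rule expansion actually yields a sharper bound on $B'$ (roughly $O(2^{O(m)}m\,\epsilon)$ versus $O(\epsilon\,p^{-m/2})$), because it exploits the near-independence of \emph{all} factors rather than just one. But this refinement is not needed for the stated conclusion, and the ``main obstacle'' you anticipate --- controlling the propagation of $\pm\epsilon$ errors through a product of up to $2k$ conditional probabilities, especially when $p$ is small --- is precisely what the paper's single-factor extraction sidesteps. With the paper's approach there is no telescoping and no comparison of $\epsilon$ against $p^{m-1}$; the only new computation is absorbing one extra factor of $(p(1-p))^{-m/2}\le n^{O(k)}$ into the choice of $C$.
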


For both statements, the proofs are simple modifications of \cref{thm:generalragindistinguishability}, respectively \cref{thm:lowdegreepolyagainstrandomsubsets}. We only outline how one needs to change the first proof as the modifications are identical. 

\begin{proof}[Proof sketch of \cref{claim:generalragindistinguishabilitybeyondante}]
proceed in the same way as in \cref{thm:generalragindistinguishability}, except that we remove from $B$ the event that one of $\bfg^{(1)}(\bfz_i^{(1)})^{-1}, \bfz_i^{(1)}, \bfg^{(2)}(\bfz_i^{(2)})^{-1}, \bfz_i^{(2)}$ belongs to the set $\mathcal{L}$ from \cref{def:smalllscaleindependence}. By union bound, the resulting event $B_1$ satisfies 
$$
\prob[B_1]\ge \prob[B] - 4t\frac{|\mathcal{L}|}{|\Group|} = 1 - O(|\Group|^{-D_1})
$$
for some fixed constant $D_1.$ Thus, again, over $B_1^c,$ the expectation of \cref{eq:ragindisteq49} is at most $O(|\Group|^{-D_1}).$ On the other hand, over $B_1^c,$ we have that 

\begin{align*}
&\Bigg|\expect_{A\sim \mathsf{D}(\Group,p)}\Bigg[
\prod_{i = 1}^t 
(\sigma_A(\bfg^{(1)}(\bfz^{(1)}_i)^{-1})-p)
\prod_{i = 1}^t 
(\sigma_A(\bfz^{(1)}_i)-p)
\prod_{i = 1}^t 
(\sigma_A(\bfg^{(2)}(\bfz^{(2)}_i)^{-1})-p)
\prod_{i = 1}^t 
(\sigma_A(\bfz^{(2)}_i)-p)
\Bigg]\Bigg|\\
&\qquad \qquad \le {|\Group|^{-c_2}},
    \end{align*}
where the last inequality follows directly from \cref{def:smalllscaleindependence} applied to $\bfx = \bfg^{(1)}(\bfz^{(1)}_1)^{-1}$ and the fact that 
$$
\Bigg\|
\prod_{i = 2}^t 
(\sigma_A(\bfg^{(1)}(\bfz^{(1)}_i)^{-1})-p)
\prod_{i = 1}^t 
(\sigma_A(\bfz^{(1)}_i)-p)
\prod_{i = 1}^t 
(\sigma_A(\bfg^{(2)}(\bfz^{(2)}_i)^{-1})-p)
\prod_{i = 1}^t 
(\sigma_A(\bfz^{(2)}_i)-p)
\Bigg\|<1
$$
holds
with probability $1$ over $A.$ Using these bounds in \cref{eq:ragindisteq49}, we finish the proof analogously.
\end{proof}

\section{Signed Four-Cycles on Probabilistic Latent Space Graphs}
\label{appendix:signedfourcyclesrag}

\begin{proof}[Proof of \cref{signedfourcyclesforrag}]
We need to bound $\expect[\tau_4(H)], \Var[{\tau_4(H)}]$ for $H \sim \ergraph$ and\linebreak
$H \sim \mathsf{PLSG}(n , \Omega,\unif, p, \sigma).$ We begin with $\expect_{H \sim \mathsf{PLSG}(n , \Omega,\unif, p, \sigma)}[\tau_4(H)].$ Since $\sigma$ is an indicator with positive expectation $p,$ there exists some $\bfy_1, \bfy_2\in \Omega$ such that $\sigma(\bfy_1, \bfy_2) = 1.$
In particular, this means that $p \ge {|\Omega|^{-2}}.$ Consider the signed cycle $(1,2,3,4).$
\begin{align*}
    &\expect_{H \sim \mathsf{PLSG}(n , \Omega,\unif, p, \sigma)}[(H_{1,2} - p)(H_{2,3} - p)
    (H_{3,4} - p)
    (H_{4,1} - p)]\\
        & =\expect[(\sigma(\bfx_1,\bfx_2) - p)
        (\sigma(\bfx_2,\bfx_3) - p)
        (\sigma(\bfx_3, \bfx_4) - p)
        (\sigma(\bfx_4,\bfx_1) - p)]\\
    & = \expect_{\bfx_1,\bfx_3}\bigg[\expect_{\bfx_2}[(\sigma(\bfx_1, \bfx_2) - p)
        (\sigma(\bfx_3, \bfx_2) - p)| \bfx_1, \bfx_3]
        \expect_{\bfx_4}[
        (\sigma(\bfx_1, \bfx_4) - p)
        (\sigma(\bfx_3,\bfx_4) - p)| \bfx_1, \bfx_3]\bigg]\\
    & = \expect_{\bfx_1, \bfx_3}\bigg[\expect_{\bfx_2}[(\sigma(\bfx_1, \bfx_2) - p)
        (\sigma(\bfx_3 ,\bfx_2) - p)| \bfx_1, \bfx_3]^2\bigg]\\
    & \ge \bigg[\expect_{\bfx_2}[(\sigma(\bfy_1, \bfx_2) - p)
        (\sigma(\bfy_1, \bfx_2) - p)]\bigg]^2\times 
        \prob[\bfx_1= \bfx_3 = \bfy_1]
        \\
    & \ge \frac{1}{|\Omega|^2}
    \bigg[
\expect_{\bfx_2}[(\sigma(\bfx_1,\bfx_2)-p)^2]^2
    \bigg]\\
& \ge \frac{1}{|\Omega|^2}
[(1-p)^2\prob[\bfx_2 = \bfy_2]]^2\\
    & = \frac{1}{|\Omega|^4}(1-p)^4.
    \end{align*}
In particular, this means that 
\begin{align*}
        \expect_{H \sim \mathsf{PLSG}(n , \Omega,\unif, p, \sigma)}[\tau_4(H)] = \Omega(n^4) |\Group|^{-3}(1-p)^4 = \Omega(n^4|\Omega|^{-4}).
    \end{align*}
Now, we will prove that
$$
\Var_{H \sim \mathsf{PLSG}(n , \Omega,\unif, p, \sigma)}[\tau_4(H)] = O(n^7).
$$
For brevity, denote 
$$\tau_{(i_1, i_2, i_3, i_4)}(H) = 
(H_{i_1, i_2} - p)(H_{i_2, i_3} - p)
(H_{i_3, i_4} - p)(H_{i_4, i_1} - p).
$$
Under thies notation, clearly
$
\tau_4(H) = \sum_{i_1, i_2, i_3, i_4}
\tau_{(i_1, i_2, i_3, i_4)}(H).
$
Now,
\begin{align*}
& \Var_{H \sim \mathsf{PLSG}(n , \Omega,\unif, p, \sigma)}[\tau_4(H)]\\
& =
\Var_{H \sim \mathsf{PLSG}(n , \Omega,\unif, p, \sigma)}\bigg[\sum_{(i_1, i_2, i_3, i_4)}
\tau_{(i_1, i_2, i_3, i_4)}(H)
\bigg]\\
& = \sum_{(i_1, i_2, i_3, i_4)}\Var_{H \sim \mathsf{PLSG}(n , \Omega,\unif, p, \sigma)}[\tau_{(i_1, i_2, i_3, i_4)}(H)]\\
& + 
\sum_{(i_1, i_2, i_3, i_4)\neq (j_1, j_2, j_3, j_4)}\Cov_{H \sim \mathsf{PLSG}(n , \Omega,\unif, p, \sigma)}[\tau_{(i_1, i_2, i_3, i_4)}(H), 
\tau_{(j_1, j_2, j_3, j_4)}(H)
]. 
    \end{align*}
The key observation is that if $(i_1, i_2, i_3, i_4)$ and $(j_1, j_2, j_3, j_4)$ are disjoint, the covariance is zero since the two functions depend on disjoint set of latent vectors. Otherwise, the covariance is at most 1 since $\tau$ only takes values in $[0,1].$ However, there are $O(n^7)$ ways to take two four-tuples which are not disjoint among the elements of $[n],$ so the claim follows.
Finally, note that one similarly has 
$$
\expect_{H\sim \ergraph}[\tau_4(H)] = 0, 
\Var_{H \sim \ergraph}[\tau_4(H)] = O(n^7).
$$
Thus, a sufficient condition for the signed 4-cycle statistic to differentiate between the two graph distributions with high probability is that 
$
n^4|\Omega|^{-4} = \omega(\sqrt{n^7}),
$
which is guaranteed by $|\Omega| = o(n^{1/8}).$
\end{proof}

\end{document}